\newcommand{\zed}{\mathbb{Z}}
\newcommand{\Q}{\mathbb{Q}}
\newcommand{\fH}{\mathcal{H}}
\newcommand{\fC}{\mathcal{C}}
\newcommand{\fP}{\mathcal{P}}
\newcommand{\Hom}{\mathrm{Hom}}
\newcommand{\Sym}{\mathrm{Sym}}
\newcommand{\ve}{\varepsilon}
\newcommand{\id}{\mathrm{id}}
\newcommand{\gdim}{\mathrm{gdim}}
\newcommand{\mf}{\mathrm{mf}}
\newcommand{\hmf}{\mathrm{hmf}}
\newcommand{\slmf}{\mathfrak{sl}}
\theoremstyle{plain}
\newtheorem{theorem}{Theorem}[section]
\newtheorem{lemma}[theorem]{Lemma}
\newtheorem{proposition}[theorem]{Proposition}
\newtheorem{corollary}[theorem]{Corollary}
\newtheorem{question}[theorem]{Question}
\theoremstyle{definition}
\newtheorem{definition}[theorem]{Definition}
\theoremstyle{remark}
\newtheorem{remark}[theorem]{Remark}
\numberwithin{equation}{section}
\begin{document}

\title{A Family of Transverse Link Homologies}

\author{Hao Wu}

\thanks{The author was partially supported by NSF grant DMS-1205879 and a Collaboration Grant for Mathematicians from the Simons Foundation.}

\address{Department of Mathematics, The George Washington University, Monroe Hall, Room 240, 2115 G Street, NW, Washington DC 20052, USA. Telephone: 1-202-994-0653, Fax: 1-202-994-6760}

\email{haowu@gwu.edu}

\subjclass[2010]{Primary 57M25, 57R17}

\keywords{transverse link, Khovanov-Rozansky homology, HOMFLYPT polynomial} 

\begin{abstract}
We define a homology $\fH_N$ for closed braids by applying Khovanov and Rozansky's matrix factorization construction with potential $ax^{N+1}$. Up to a grading shift, $\fH_0$ is the HOMFLYPT homology defined in \cite{KR2}. We demonstrate that, for $N\geq 1$, $\fH_N$ is a $\zed_2\oplus\zed^{\oplus 3}$-graded $\Q[a]$-module that is invariant under transverse Markov moves, but not under negative stabilization/de-stabilization. Thus, for $N\geq 1$, this homology is an invariant for transverse links in the standard contact $S^3$, but not for smooth links. We also discuss the decategorification of $\fH_N$ and the relation between $\fH_N$ and the $\slmf(N)$ Khovanov-Rozansky homology defined in \cite{KR1}.
\end{abstract}

\maketitle

\section{Introduction}\label{sec-intro}

\subsection{Transverse links in the standard contact $S^3$}

A contact structure $\xi$ on an oriented $3$-manifold $M$ is an oriented tangent plane distribution such that there is a $1$-form $\alpha$ on $M$ satisfying $\xi=\ker\alpha$, $d\alpha|_{\xi}>0$ and $\alpha\wedge d\alpha>0$. Such a $1$-form is called a contact form for $\xi$. The standard contact structure $\xi_{st}$ on $S^3$ is given by the contact form $\alpha_{st} = dz-ydx+xdy=dz+r^2d\theta$. 

We say that an oriented smooth link $L$ in $S^3$ is transverse if $\alpha_{st}|_L>0$. Two transverse links are said to be transverse isotopic if there is an isotopy from one to the other through transverse links. In \cite{Ben}, Bennequin proved that every transverse link is transverse isotopic to a counterclockwise transverse closed braid around the $z$-axis. Clearly, any smooth counterclockwise closed braid around the $z$-axis can be smoothly isotoped into a transverse closed braid around the $z$-axis without changing the braid word. In the rest of this paper, all closed braids are counterclockwise and around the $z$-axis.

Recall that two closed braids represent the same smooth link if and only if one of them can be changed into the other by a finite sequence of Markov moves, which are:
\begin{itemize}
    \item Braid group relations generated by
    \begin{itemize}
	  \item $\sigma_i\sigma_i^{-1}=\sigma_i^{-1}\sigma_i=\emptyset$,
	  \item $\sigma_i\sigma_j=\sigma_j\sigma_i$, when $|i-j|>1$,
	  \item $\sigma_i\sigma_{i+1}\sigma_i=\sigma_{i+1}\sigma_i\sigma_{i+1}$.
    \end{itemize}
    \item Conjugations: $\mu\leftrightsquigarrow\eta^{-1}\mu\eta$,
    where $\mu,~\eta\in \mathbf{B}_m$.
    \item Stabilizations and destabilizations:
    \begin{itemize}
	  \item positive: $\mu~(\in \mathbf{B}_m)\leftrightsquigarrow \mu\sigma_m~(\in \mathbf{B}_{m+1})$,
	  \item negative: $\mu~(\in \mathbf{B}_m)\leftrightsquigarrow \mu\sigma_m^{-1}~(\in \mathbf{B}_{m+1})$.
    \end{itemize}
\end{itemize}
In the above, $\mathbf{B}_m$ is the braid group on $m$ strands.

The following theorem by Orevkov, Shevchishin \cite{OSh} and Wrinkle \cite{Wr} describes when two transverse closed braids are transverse isotopic. 

\begin{theorem}\cite{OSh,Wr}\label{transversal-markov}
Two transverse closed braids are transverse isotopic if and only if the braid word of one of them can be changed into that of the other by a finite sequence of braid group relations, conjugations and positive stabilizations and destabilizations.
\end{theorem}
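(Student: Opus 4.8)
The plan is to prove the two implications separately; the ``if'' direction is the easier one, while the ``only if'' direction is the substantial content of \cite{OSh,Wr}. For the ``if'' direction, I would check that each listed move on braid words is realized by a transverse isotopy. Any counterclockwise closed braid around the $z$-axis can be radially/vertically rescaled to be transverse, and two braided links whose words differ by braid group relations or a conjugation are ambient isotopic \emph{through} braided links; carrying such an isotopy along while keeping transversality (equivalently, using that the transverse isotopy class of a braided link depends only on the conjugacy class of its braid word, via Bennequin's normal form) handles the first two families of moves. For positive stabilization $\mu \leftrightsquigarrow \mu\sigma_m$ one exhibits an explicit local transverse isotopy near the binding: a short strand running almost parallel to the $z$-axis can be pushed across a meridional page so as to create one positive kink, and the move stays transverse precisely because the new crossing is \emph{positive}, i.e.\ the added strand pierces each page in the direction co-oriented by $\xi_{st}$. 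The same local picture shows why negative stabilization cannot be done this way.

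For the ``only if'' direction, suppose $T_0$ and $T_1$ are transverse closed braids joined by a transverse isotopy $\{L_t\}_{t\in[0,1]}$ (so $\alpha_{st}|_{L_t}>0$ for all $t$). \textbf{Step 1.} View the trace $\mathcal{A}=\bigcup_{t} L_t\times\{t\}$ as an embedded annulus in $S^3\times[0,1]$ and, after a generic perturbation rel endpoints, study how it interacts with the product of the disk open book of $(S^3,\xi_{st})$ whose binding is the $z$-axis and whose pages are meridional disks. \textbf{Step 2.} For all but finitely many times $L_t$ is braided around the axis; the failures are organized by a braid-foliation/movie analysis in the style of Birman--Menasco, producing a finite list of elementary transitions (births and deaths of small trivial loops, finger moves of a band across a page, and controlled tangencies of the foliation with the binding).

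\textbf{Step 3.} Translate each elementary transition into an operation on braid words: far commutation of bands gives $\sigma_i\sigma_j=\sigma_j\sigma_i$ for $|i-j|>1$; triangle/exchange moves give the braid relation $\sigma_i\sigma_{i+1}\sigma_i=\sigma_{i+1}\sigma_i\sigma_{i+1}$ together with conjugations; creation or cancellation of a kink gives a stabilization or destabilization. \textbf{Step 4.} Do the orientation bookkeeping: because $\alpha_{st}|_{L_t}>0$ throughout the isotopy, every strand always crosses every page in the co-oriented direction, which forces every kink created or destroyed in Step 3 to be a \emph{positive} kink. Hence only positive stabilizations/destabilizations, braid relations, and conjugations appear, and concatenating the transitions expresses the braid word of $T_1$ as obtained from that of $T_0$ by a finite sequence of the allowed moves.

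The main obstacle is Steps 2--4: making the foliation/movie analysis complete and rigorous --- in particular ruling out degenerate behavior of the foliation near the binding and showing the list of transitions is exhaustive --- and, above all, carrying out the sign analysis of Step 4, which is exactly what separates the transverse statement from the ordinary (smooth) Markov theorem, where negative stabilizations are unavoidable. I would follow the combinatorial route of \cite{Wr} (Birman--Menasco braid foliations adapted to the contact setting); the alternative is the pseudo-holomorphic/braided-surface argument of \cite{OSh}, which instead proves the sign restriction by an induction on the complexity of a Bennequin-type spanning surface.
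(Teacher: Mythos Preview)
The paper does not give a proof of this theorem at all: it is stated with the attribution \cite{OSh,Wr} and then used as a black box throughout, so there is no ``paper's own proof'' to compare your proposal against. Your outline is a reasonable high-level sketch of the arguments in those references --- the ``if'' direction via explicit local transverse isotopies and the ``only if'' direction via a braid-foliation/movie analysis with the crucial sign bookkeeping forcing only positive stabilizations --- but the present paper treats the result as input rather than as something to be proved.
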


From now on, braid group relations, conjugations and positive stabilizations and destabilizations will be called transverse Markov moves. Theorem \ref{transversal-markov} tells us that there is a one-to-one correspondence 
\[
\{\text{Transverse isotopy classes of transverse links}\} \longleftrightarrow \{\text{Closed braids modulo transverse Markov moves}\}.
\] 

Note that $\xi_{st}$ admits a nowhere vanishing basis $\{\partial_x + y\partial_z, \partial_y-x\partial_z\}$. For each transverse link $L$, this basis induces a trivialization of the normal bundle of $L$ in $S^3$, that is, a framing of $L$. We call this framing the contact framing of $L$. With its contact framing, any transverse link is also a framed link. It is easy to see that, if two transverse links are transverse isotopic, then they are isotopic as framed links. It is possible for two transverse links to be non-isotopic as transverse links, but still isotopic as framed links. If a smooth link type contains two transverse links that are isotopic as framed links but not as transverse links, then we call this smooth link type ``transverse non-simple". An invariant for transverse links is called classical if it depends only on the framed link type of the transverse link. Otherwise, it is called non-classical or effective (in the sense that it is effective in detecting transverse non-simplicity.) 

See, for example, \cite{Birman-Menasco,Etnyre-contact-notes-1,Etnyre-Honda-cabling,Ng-contact-homology} for more about transverse links and their invariants.

\subsection{The Khovanov-Rozansky homology} In \cite{KR1}, Khovanov and Rozansky introduced an approach to construct link homologies using matrix factorizations, which consists of the following steps:
\begin{enumerate}
	\item Choose a base ring $R$ and a potential polynomial $p(x) \in R[x]$.
	\item Define matrix factorizations associated to MOY graphs using this potential $p(x)$.
	\item Define chain complexes of matrix factorizations associated to link diagrams using the crossing information.
\end{enumerate}

This approach has been carried out for the following potential polynomials:
\begin{itemize}
	\item $x^{N+1}\in \Q[x]$, which gives the $\slmf(N)$ Khovanov-Rozansky homology in \cite{KR1};
	\item $ax \in \Q[a,x]$, which gives the HOMFLYPT homology in \cite{KR2};
	\item $x^{N+1} + \sum_{l=1}^N \lambda_l x^l \in \Q[x]$, which gives the deformed $\slmf(N)$ Khovanov-Rozansky homology in \cite{Gornik,Wu7};
	\item $x^{N+1} + \sum_{l=1}^N a_l x^l \in \Q[a_1,\dots,a_N,x]$, which gives the equivariant $\slmf(N)$ Khovanov-Rozansky homology in \cite{Krasner}.
\end{itemize}

Among these link homologies, the HOMFLYPT homology appears somewhat different. All the other homologies are invariant under all Reidemeister moves. Therefore, these homologies of a given smooth link can be computed from any diagram of this link. But the HOMFLYPT homology is only invariant under braid-like Reidemeister moves. So the HOMFLYPT homology of a smooth link can only be computed from its braid diagrams.

In \cite{KR2}, Khovanov and Rozansky proposed to study the homology defined by the potential polynomial $\sum_{l=1}^{N+1} a_l x^l \in \Q[a_1,\dots,a_N,a_{N+1},x]$, which generalize the HOMFLYPT homology.

\subsection{A family of transverse link homologies} By Theorem \ref{transversal-markov}, one can construct an invariant for transverse links by constructing an invariant for closed braids that is invariant under transverse Markov moves. This is what we will do in the current paper.

More precisely, we will generalize the construction in \cite{KR2} to a matrix factorization construction with the potential polynomial $p(x)=ax^{N+1} \in \Q[a,x]$. We will:
\begin{enumerate}
	\item work with the potential polynomial $ax^{N+1} \in \Q[a,x]$,
	\item define matrix factorizations associated to MOY graphs in Definition \ref{def-MOY-mf},
	\item define chain complexes of matrix factorizations associated to link diagrams in Definition \ref{def-chain-tangle}.
\end{enumerate}

For each $N \geq0$, this construction gives a $\zed_2\oplus\zed^{\oplus 3}$-graded homology $\fH_N$. Of course, when $N=0$, the homology we get is just the HOMFLYPT homology with a grading shift. It turns out that, when $N\geq 1$, $\fH_N$ is only invariant under positive Reidemeister move I and braid-like Reidemeister moves II and III. So it is not a smooth link invariant. But, when we restrict to closed braids, this homology is invariant under transverse Markov moves. Thus, by Theorem \ref{transversal-markov}, it is a transverse link invariant. The following is our main result.

\begin{theorem}\label{thm-trans-link-homology}
Suppose $N\geq 1$. Let $B$ be a closed braid and $(\fC_N(B),d_{mf},d_\chi)$ the chain complex of matrix factorizations associated to $B$ defined in Definition \ref{def-chain-tangle}. Then the homotopy type of $\fC_N(B)$ does not change under transverse Markov moves. Moreover, the homotopy equivalences induced by transverse Markov moves preserve the $\zed_2\oplus \zed^{\oplus 3}$-grading of $\fC_N(B)$, where the $\zed_2$-grading is the $\zed_2$-grading of the underlying matrix factorization and the three $\zed$-gradings are the homological, $a$- and $x$-gradings of $\fC_N(B)$.

Consequently, for the homology $\fH_N(B)=H(H(\fC_N(B),d_{mf}),d_\chi)$ of $\fC_N(B)$ defined in Definition \ref{def-homology-braid}, every transverse Markov move on $B$ induces an isomorphism of $\fH_N(B)$ preserving the $\zed_2\oplus \zed^{\oplus 3}$-grading of $\fH_N(B)$ inherited from $\fC_N(B)$.
\end{theorem}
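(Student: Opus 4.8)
The plan is to verify invariance under each transverse Markov move separately, following the standard Khovanov–Rozansky strategy of analyzing the chain complexes of matrix factorizations associated to the relevant local tangle diagrams. Since the braid group relations and conjugations only involve braid-like Reidemeister moves, the bulk of the work reduces to three local computations: braid-like Reidemeister II (the relation $\sigma_i\sigma_i^{-1}=\emptyset$), braid-like Reidemeister III (the relation $\sigma_i\sigma_{i+1}\sigma_i=\sigma_{i+1}\sigma_i\sigma_{i+1}$), and positive Reidemeister I (for positive stabilization/destabilization). For each, I would exhibit an explicit homotopy equivalence between the chain complexes $\fC_N$ of the two sides, built out of the morphisms of matrix factorizations attached to the elementary MOY relabelings (the ``edge absorption,'' ``square,'' and ``digon'' type maps). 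Throughout, I would keep careful track of the four gradings, so that each homotopy equivalence is checked to be grading-preserving once the gradings have been normalized correctly by the appropriate shifts built into Definition~\ref{def-chain-tangle}.

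\medskip

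\noindent\textbf{Key steps, in order.} First I would record the basic homological algebra of matrix factorizations over $\Q[a,x]$ with potential $ax^{N+1}$: Gaussian elimination for chain complexes of matrix factorizations, the behavior of the mapping cone, and the fact that a contractible direct summand can be cancelled without changing the homotopy type. Second, I would establish the local MOY-type relations for the matrix factorizations of Definition~\ref{def-MOY-mf} with this potential---decomposition of the factorization of a ``thick edge'' or a ``square'' MOY graph into shifted copies of simpler ones---exactly as in \cite{KR1,KR2}, checking that the presence of the extra variable $a$ (degree-shifted appropriately) does not disrupt these decompositions because $a$ is not a zero divisor. Third, armed with these local relations, I would handle braid-like RII by writing out the two-term and three-term complexes for $\sigma_i\sigma_i^{-1}$, using the square decomposition to split off a contractible summand, and applying Gaussian elimination to land on the complex of the identity braid; RIII then follows by the usual argument, resolving both $\sigma_i\sigma_{i+1}\sigma_i$ and $\sigma_{i+1}\sigma_i\sigma_{i+1}$ down to a common complex. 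Fourth, for positive stabilization I would compute the chain complex associated to a single positive crossing $\sigma_m$ closing off a strand, use the digon/Reidemeister-I type decomposition to identify a contractible summand, and cancel it, recovering $\fC_N$ of the unstabilized braid up to the grading shift dictated by the writhe normalization. Finally, I would assemble these local equivalences into the global statement by the usual ``locality'' principle---a homotopy equivalence of the local complexes, tensored with the (fixed) rest of the braid closure, yields a homotopy equivalence of the global complexes---and verify that the composite of the grading shifts matches on both sides, so that $\zed_2\oplus\zed^{\oplus3}$-grading is preserved. The statement about $\fH_N$ is then immediate, since a grading-preserving homotopy equivalence of $(\fC_N,d_{mf},d_\chi)$ induces a grading-preserving isomorphism on $H(H(-,d_{mf}),d_\chi)$.

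\medskip

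\noindent\textbf{Main obstacle.} The genuinely new content---and the step I expect to be hardest---is the \emph{positive Reidemeister I} move with potential $ax^{N+1}$ for $N\geq1$. For the HOMFLYPT case ($N=0$, potential $ax$) both RI moves hold, and for the undeformed $\slmf(N)$ theory both RI moves hold up to shift; here the asymmetry between positive and negative stabilization must emerge from the computation, so I cannot simply quote an existing lemma. Concretely, the digon-removal relation for the potential $ax^{N+1}$ will produce, for a positively-curled strand, a complex one of whose summands is contractible (giving invariance), while for a negatively-curled strand the analogous summand fails to be contractible---this is exactly where $N\geq1$ versus $N=0$ matters, because the relevant morphism spaces of matrix factorizations change. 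I would therefore devote the most care to computing the matrix factorization of the MOY graph obtained by resolving $\sigma_m$ near the closure, identifying precisely which morphism plays the role of the differential after the digon decomposition, and checking its invertibility (or the contractibility of the resulting cone) only in the positive case. The grading bookkeeping around this move is also delicate, since the writhe-dependent shift is what ultimately makes the answer a transverse (framed-sensitive) rather than smooth invariant; getting the $a$- and $x$-degrees of the stabilization map exactly right is essential for the grading-preservation claim.
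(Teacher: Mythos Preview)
Your overall architecture matches the paper exactly: the theorem is proved by establishing local homotopy equivalences for braid-like Reidemeister~II (Proposition~\ref{prop-RIIinv}), braid-like Reidemeister~III (Proposition~\ref{prop-RIIIinv}), and positive Reidemeister~I (Proposition~\ref{prop-RI+inv}), using Gaussian elimination and MOY-type decompositions throughout. For RII and RIII your sketch is essentially what the paper does, and the remark that ``$a$ is not a zero divisor'' is adequate for those cases since the relevant matrix factorizations remain homotopically finite and the category $\hmf_{R_\partial,w}$ is fully additive.

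The gap is in your treatment of positive RI. You plan to invoke a ``digon/Reidemeister-I type decomposition'' and cancel a contractible summand, as one does in \cite{KR1}. But once you close off the extra strand, the matrix factorizations $\fC_N(\hat\Gamma_0)$ and $\fC_N(\hat\Gamma_1)$, viewed over the boundary ring $R_\partial=\Q[a,y_1,x_2]$, are \emph{not homotopically finite}: as $R_\partial$-modules they are free of infinite rank (coming from the internal variable $x_1$). The idempotent-splitting / Krull--Schmidt machinery that underlies the usual MOY decomposition arguments is proved only for $\hmf_{R_\partial,w}$ (homotopically finite objects), not for $\hmf^{\mathrm{all}}_{R_\partial,w}$, so you cannot simply assert that the expected decomposition exists and then cancel. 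The paper circumvents this by writing down explicit homogeneous $R_\partial$-bases for $M_0\simeq\fC_N(\hat\Gamma_0)$ and $M_1\simeq\fC_N(\hat\Gamma_1)\{0,1\}$, decomposing each into an \emph{infinite} direct sum of small rank-$2$ and rank-$4$ sub-factorizations ($\Omega_k$, $\Theta_k$, $\tilde\Omega_k$, $\tilde\Theta_k$), and checking by hand that the morphism $f$ (a simplified form of $\chi^1$, obtained via Lemma~\ref{lemma-chi-twist-commute}) carries all but one of these summands isomorphically onto their counterparts. The lone survivor $\Omega_0$ is then identified with $\fC_N(T)$. The digon-type decomposition you have in mind (Corollary~\ref{cor-decomp-I}) does hold, but in the paper it is a \emph{consequence} of this explicit analysis, not an input to it. Your plan would need to either supply such an explicit basis argument or give an independent reason why the needed splitting holds in $\hmf^{\mathrm{all}}_{R_\partial,w}$.
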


Theorem \ref{thm-trans-link-homology} follows directly from Propositions \ref{prop-RI+inv}, \ref{prop-RIIinv} and \ref{prop-RIIIinv}. The proofs of the invariances under braid-like Reidemeister moves II and III in Propositions \ref{prop-RIIinv} and \ref{prop-RIIIinv} are fairly similar to that in \cite{KR1,KR2}. But the proof of the invariance under positive Reidemeister move I in Proposition \ref{prop-RI+inv} is quite different from that in \cite{KR1,KR2}. This is mainly because we need to handle matrix factorizations that are not homotopically finite. 

\begin{question}
Is $\fH_N(B)$ a classical or a non-classical invariant for transverse links?
\end{question}

\subsection{Negative stabilization}\label{subsec-intro-neg-stab} Next we describe how $\fH_N$ changes under negative stabilizations and demonstrate by a simple example that $\fH_N$ is not invariant under negative stabilizations. Theorem \ref{thm-neg-stabilization} and Corollary \ref{cor-unknot-neg-stab} in this subsection will be proved in Subsection \ref{subsec-neg-stab} below. We will use the following notations in our statements.

\begin{definition}\label{def-star-notation}
For a $\zed_2 \oplus \zed^{\oplus 3}$-graded space $H$ with a $\zed_2$-grading, a homological grading, an $a$-grading and an $x$-grading. We denote by $H^{\ve,i,j,k}$ the subspace of $H$ of homogeneous elements of $\zed_2$-degree $\ve$, homological degree $i$, $a$-degree $j$ and $x$-degree $k$. 

Replacing one of these indices by a ``$\star$" means direct summing over all possible values of this index. For example:
\begin{eqnarray*}
H^{\ve,i,\star,k} & = & \bigoplus_{j\in\zed} H^{\ve,i,j,k}, \\
H^{\ve,i,\star,\star} & = & \bigoplus_{(j,k)\in\zed^{\oplus 2}} H^{\ve,i,j,k}.
\end{eqnarray*}

Moreover, we denote by $H\{q,r\}$ the $\zed_2 \oplus \zed^{\oplus 3}$-graded space obtained from $H$ by shifting the $a$-grading by $q$ and the $x$-grading by $r$. That is, $(H\{q,r\})^{\ve,i,j,k} = H^{\ve,i,j-q,k-r}$. We also use the notation $H^{\ve,i,\star,\star}\{q,r\}:=(H\{q,r\})^{\ve,i,\star,\star}$.
\end{definition}

To state Theorem \ref{thm-neg-stabilization}, we need to introduce a homomorphism $\pi_0$. Note that, for any $\zed_2\oplus \zed^{\oplus 2}$-graded matrix factorization $M$ of $0$ over $\Q[a]$, $M/aM$ is a $\zed_2\oplus \zed^{\oplus 2}$-graded matrix factorization of $0$ over $\Q$. Denote by $\pi_0: M \rightarrow M/aM$ the standard quotient map, which induces a homomorphism $\pi_0: H(M) \rightarrow H(M/aM)$ of homology of matrix factorizations. For a chain complex $(C,d)$ of $\zed_2\oplus \zed^{\oplus 2}$-graded matrix factorizations of $0$ over $\Q[a]$, this further induces a homomorphism $\pi_0: H(H(C,d_{mf}),d) \rightarrow H(H(C/aC,d_{mf}),d)$. Generally speaking, these induced homomorphisms are no longer quotient maps.

\begin{theorem}\label{thm-neg-stabilization}
Let $L$ be a transverse closed braid, and $L_-$ a transverse closed braid obtained from $L$ by a single negative stabilization. Then the chain complex $(H(\fC_N(L_-),d_{mf}),d_\chi)$ is isomorphic to the total chain complex of
\[
0\rightarrow \underbrace{(H(\fC_N(L),d_{mf}),d_\chi)\{-2,0\}}_{0} \xrightarrow{\pi_0} \underbrace{(H(\fC_N(L)/a\fC_N(L),d_{mf}),d_\chi)\{-2,0\}}_{1} \rightarrow 0,
\]
where the underbraces indicate shifts of the homological grading. This isomorphism preserves the $\zed_2 \oplus \zed^{\oplus 3}$-grading. In particular, there is a long exact sequence
{\footnotesize
\[
\cdots \rightarrow \fH_N^{\ve,i-1,\star,\star}(L)\{-2,0\} \xrightarrow{\pi_0} \mathscr{H}_N^{\ve,i-1,\star,\star}(L)\{-2,0\} \rightarrow \fH_N^{\ve,i,\star,\star}(L_-) \rightarrow \fH_N^{\ve,i,\star,\star}(L)\{-2,0\} \xrightarrow{\pi_0} \mathscr{H}_N^{\ve,i,\star,\star}(L)\{-2,0\} \rightarrow \cdots
\]}

\noindent preserving the $a$- and $x$-gradings, where $\mathscr{H}_N(L) := H(H(\fC_N(L)/a\fC_N(L),d_{mf}),d_\chi)$.
\end{theorem}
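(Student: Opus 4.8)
The plan is to analyze the matrix factorization associated to the negatively stabilized braid $L_-$ directly and identify it, up to homotopy, with a two-step mapping cone. Since $L_- = L\sigma_m^{-1}$ where $L$ lives in the braid group $\mathbf{B}_m$ on strands at the bottom, the extra negative crossing $\sigma_m^{-1}$ is resolved (in the Khovanov--Rozansky framework) into a complex involving the identity MOY graph and the ``wide edge'' MOY graph. The chain complex $\fC_N(L_-)$ is thus obtained from $\fC_N(L)$ by tensoring (over the appropriate polynomial ring) with the local complex for the negative crossing and then closing up the extra strand, i.e. applying a Markov trace / partial closure that glues the top of the new strand to its bottom. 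The first step is therefore to write out this composite explicitly: $\fC_N(L_-)$ is the total complex of a two-term complex whose entries are the closures of $\fC_N(L)$ against two MOY graphs, namely the graph with a free strand carrying a circle and the graph with a wide edge.

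The second step is to compute these two closures. Closing up a free strand carrying a circle with potential $ax^{N+1}$ on a single strand contributes the Koszul matrix factorization of the pair $(a,0)$ or more precisely the Jacobi algebra / matrix factorization associated to $ax^{N+1}$ on one variable; its homology is, up to grading shift, a copy of $\Q[a]$ in a single $\zed_2$-degree (this is the $N\geq 1$ analog of the MOY circle evaluation, but note that the potential is not homogeneous of the usual degree, which is exactly why things differ from the $\slmf(N)$ case). Closing up the wide-edge graph instead produces, after the standard ``dumbbell'' / edge-splitting computation, a matrix factorization whose homology is $\Q[a]/(a)$ up to shift — this is the crucial point where the $a$-torsion enters and where negative stabilization fails to be an invariant. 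I would carry out this computation using the explicit Koszul description of the wide-edge matrix factorization from Definition \ref{def-MOY-mf}, reducing via row operations exactly as in \cite{KR1,KR2}, keeping careful track of the $a$- and $x$-gradings and the $\zed_2$-grading so that all the shifts $\{-2,0\}$ and the homological shifts in the underbraces come out correctly.

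The third step is to identify the connecting differential of the resulting two-term complex with $\pi_0$. After the computations above, $\fC_N(L_-)$ is homotopy equivalent to the total complex of $\fC_N(L)\otimes(\text{circle closure}) \to \fC_N(L)\otimes(\text{wide-edge closure})$; the first factor's homology gives $(H(\fC_N(L),d_{mf}),d_\chi)\{-2,0\}$ and the second gives $(H(\fC_N(L)/a\fC_N(L),d_{mf}),d_\chi)\{-2,0\}$, because tensoring with the wide-edge closure has the effect of killing multiplication by $a$. The differential of the local crossing complex is, by construction, built from the standard ``unzip/zip'' maps between the identity and wide-edge graphs; under the identifications just made these become precisely the quotient map $\pi_0$. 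Passing to homology of $(\fC_N(L_-),d_{mf})$ and then $d_\chi$, and invoking the standard long exact sequence of a mapping cone of chain complexes, yields the stated long exact sequence with $\mathscr{H}_N(L) = H(H(\fC_N(L)/a\fC_N(L),d_{mf}),d_\chi)$.

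The main obstacle I expect is the non-finiteness issue flagged in the discussion after Theorem \ref{thm-trans-link-homology}: the matrix factorizations arising here are not homotopically finite, so one cannot freely invoke the usual finiteness-based simplification lemmas from \cite{KR1,KR2}, and one must be careful that $H(-,d_{mf})$ commutes with the relevant operations (in particular with the partial closure and with reduction mod $a$) and that the mapping-cone long exact sequence survives the passage through the two successive homologies $H(-,d_{mf})$ then $H(-,d_\chi)$. Concretely, the delicate verification is that reducing the wide-edge closure mod $a$ is compatible with taking $d_{mf}$-homology, i.e. that there is no higher Tor obstruction, which should follow from flatness of the relevant modules over $\Q[a]$ (they are free) — but this freeness must be checked rather than assumed. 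Once that compatibility is in hand, the rest is the bookkeeping of gradings, which is routine.
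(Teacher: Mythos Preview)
Your overall architecture---reduce to a local computation at the negative kink, obtain a two-term complex, and extract a mapping-cone long exact sequence---matches the paper's. But your second step, where you assert that the circle closure has homology $\Q[a]$ and the wide-edge closure has homology $\Q[a]/(a)$, is wrong, and this is a genuine gap.

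Both local resolutions $\hat{\Gamma}_0$ (circle next to arc) and $\hat{\Gamma}_1$ (wide edge with loop) give matrix factorizations $M_0$, $M_1$ that are \emph{infinite-rank} over the boundary ring $R_\partial=\Q[a,y_1,x_2]$; neither is a single copy of $\Q[a]$ or $\Q[a]/(a)$. The paper handles this by writing down explicit homogeneous $R_\partial$-bases for $M_0$ and $M_1$ (the elements $u^{[k]}_{\ve\delta}$, $v^{[k]}_{\ve\delta}$, etc.), decomposing each into an infinite direct sum of small pieces $\Theta_k$, $\Omega_k$, $\tilde{\Theta}_k$, $\tilde{\Omega}_k$, and then checking that the $\chi^0$-morphism maps $\Theta_k$ isomorphically to $\tilde{\Theta}_{k+1}$ and $\Omega_k$ isomorphically to $\tilde{\Omega}_k$ for all but two indices. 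Gaussian elimination then cancels all matched pairs, leaving precisely $\Omega_{N-1}\to\tilde{\Theta}_0\{0,-2\}$. Only after this cancellation does one identify $\Omega_{N-1}\cong\fC_N(T)\{-2,0\}$ and $\tilde{\Theta}_0\cong\fC_N(T)\otimes_{\Q[a]}(0,a)_{\Q[a]}$, and the surviving map with $\jmath$. The quotient $\fC_N(L)/a\fC_N(L)$ appears only at the very end, via a quasi-isomorphism $\alpha:\fC_N(L)\otimes(0,a)\to\fC_N(L)/a\fC_N(L)$ coming from Proposition~\ref{prop-contraction-weak}, and one checks $\alpha\circ\jmath=\pi_0$.

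So the $a$-torsion does not arise from the wide-edge closure alone; it emerges from the interaction of the two infinite-rank objects under $\chi^0$. Your plan to compute each closure separately by ``row operations exactly as in \cite{KR1,KR2}'' will not produce this---indeed the paper explicitly notes that the Reidemeister I argument here is quite different from \cite{KR1,KR2} precisely because of non-finiteness. Your final paragraph correctly flags non-finiteness as the obstacle, but the resolution is not a flatness/Tor argument; it is the explicit infinite-basis decomposition and Gaussian elimination just described.
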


The following corollary shows that, for $N \geq 1$, $\fH_N$ is not invariant under negative stabilizations.

\begin{corollary}\label{cor-unknot-neg-stab}
Let $U$ be the transverse unknot represented by the $1$-strand braid, and $U_-$ the transverse unknot obtained from $U$ by a single negative stabilization. Then, for $N\geq 1$, as $\zed^{\oplus2}$-graded $\Q[a]$-modules,
\begin{eqnarray*}
\fH_N^{\ve,i,\star,\star}(U) & \cong & {\begin{cases}
(\bigoplus_{l=0}^{N-1} \Q[a]\{-1,-N+1+2l\}) \oplus (\bigoplus_{m=0}^{\infty} \Q[a]/(a) \{-1,N+1+2m\}) & \text{if } \ve=1 \text{ and  } i=0,\\
0 & \text{otherwise,} 
\end{cases}} \\
\fH_N^{\ve,i,\star,\star}(U_-) & \cong & {\begin{cases}
\bigoplus_{l=0}^{N-1} \Q[a]\{-1,-N+1+2l\} & \text{if } \ve=1 \text{ and  } i=0,\\
\bigoplus_{m=0}^{\infty} \Q[a]/(a) \{-2,2m\} & \text{if } \ve=0 \text{ and  } i=1,\\
0 & \text{otherwise.} 
\end{cases}}
\end{eqnarray*}
\end{corollary}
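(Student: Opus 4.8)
The plan is to read both homologies directly off the definitions: compute $\fH_N(U)$ by hand from the matrix factorization of a single circle, then obtain $\fH_N(U_-)$ by feeding that computation into Theorem~\ref{thm-neg-stabilization}.

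First I would handle $U$. The $1$-strand braid has no crossings, so $d_\chi=0$ and, by Definition~\ref{def-chain-tangle}, $\fC_N(U)$ is just the matrix factorization assigned to one circle in Definition~\ref{def-MOY-mf}. Marking the circle once --- or marking it several times and contracting along the linear entries $x_i-x_{i+1}$, as in \cite{KR1,KR2} --- one sees that, up to homotopy, $\fC_N(U)$ is the Koszul matrix factorization of $0$ over $\Q[a]$
\[
\Q[a,x]\ \xrightarrow{\ a(N+1)x^{N}\ }\ \Q[a,x]\{-1,-N+1\}\ \xrightarrow{\ 0\ }\ \Q[a,x],
\]
with $\zed_2$-degrees $0,1,0$ from left to right; in the normalization $\deg_a a=\deg_x x=2$ the shift $\{-1,-N+1\}$ is the one making the differential homogeneous of bidegree $(1,N+1)$, half that of $ax^{N+1}$. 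Since $\Q[a,x]$ is a domain, multiplication by $a(N+1)x^{N}$ is injective, so $H(\fC_N(U),d_{mf})$ vanishes in $\zed_2$-degree $0$ and equals $\bigl(\Q[a,x]/(ax^{N})\bigr)\{-1,-N+1\}$ in $\zed_2$-degree $1$ and homological degree $0$; as $d_\chi=0$ this is $\fH_N(U)$ (Definition~\ref{def-homology-braid}). Decomposing
\[
\Q[a,x]/(ax^{N})\ \cong\ \bigoplus_{l=0}^{N-1}\Q[a]\,x^{l}\ \oplus\ \bigoplus_{k=N}^{\infty}\bigl(\Q[a]/(a)\bigr)x^{k}
\]
as a graded $\Q[a]$-module --- the $x^{l}$ with $l<N$ spanning free summands, each $x^{k}$ with $k\ge N$ being annihilated by $a$ --- and recording $a$- and $x$-degrees through the shift $\{-1,-N+1\}$ gives the stated formula for $\fH_N(U)$.

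Next I would apply Theorem~\ref{thm-neg-stabilization} with $L=U$. Reducing the displayed matrix factorization modulo $a$ annihilates the differential $a(N+1)x^{N}$, so $\fC_N(U)/a\fC_N(U)$ has zero differentials and
\[
\mathscr{H}_N(U)\ \cong\ \Q[x]\ \oplus\ \Q[x]\{-1,-N+1\}
\]
in homological degree $0$, the first summand in $\zed_2$-degree $0$ and the second in $\zed_2$-degree $1$. Under these identifications $\pi_0\colon\fH_N(U)\to\mathscr{H}_N(U)$ is, in $\zed_2$-degree $1$, the reduction $\bigl(\Q[a,x]/(ax^{N})\bigr)\{-1,-N+1\}\to\Q[x]\{-1,-N+1\}$, $a\mapsto 0$, which is onto with kernel $\bigl(aR/(ax^{N})\bigr)\{-1,-N+1\}$ where $R=\Q[a,x]$; and in $\zed_2$-degree $0$ it is the zero map $0\to\Q[x]$. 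Hence the total complex of $0\to\fH_N(U)\{-2,0\}\xrightarrow{\pi_0}\mathscr{H}_N(U)\{-2,0\}\to 0$ from Theorem~\ref{thm-neg-stabilization} has homology $\bigl(aR/(ax^{N})\bigr)\{-3,-N+1\}$ in $(\ve,i)=(1,0)$ (the kernel of $\pi_0$) and $\Q[x]\{-2,0\}$ in $(\ve,i)=(0,1)$ (the cokernel of $\pi_0$), and nothing else. Writing $aR/(ax^{N})=\bigoplus_{l=0}^{N-1}\Q[a]\,(ax^{l})$ and $\Q[x]=\bigoplus_{m=0}^{\infty}\bigl(\Q[a]/(a)\bigr)x^{m}$ and tracking the bigrading through the shifts $\{-3,-N+1\}$ and $\{-2,0\}$ produces the two non-vanishing cases in the statement for $\fH_N(U_-)$.

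There is no serious obstacle here: the corollary is a direct application of Definitions~\ref{def-MOY-mf}, \ref{def-chain-tangle} and~\ref{def-homology-braid} and of Theorem~\ref{thm-neg-stabilization} to the simplest diagrams. What needs care is (i) pinning down the circle matrix factorization with its grading shift and $\zed_2$-normalization from Definition~\ref{def-MOY-mf} --- in particular which of its two $\zed_2$-summands carries the shift $\{-1,-N+1\}$, since this controls in which $\zed_2$- and homological degree each piece of $\fH_N(U_-)$ lands --- and (ii) the bookkeeping of all four gradings through reduction modulo $a$ and through the isomorphism of Theorem~\ref{thm-neg-stabilization}. The one substantive feature, and the reason $\fH_N$ is not negative-stabilization invariant, is that $\pi_0$ is neither injective nor surjective: it has a kernel because $a(N+1)x^N$ dies modulo $a$, so the $a$-multiples of the free part of $\fH_N(U)$ are killed, and it has a cokernel, namely the $\zed_2$-degree-$0$ homology of $\fC_N(U)/a\fC_N(U)$, which has no counterpart in $\fH_N(U)$. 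Thus under negative stabilization the free part of $\fH_N(U)$ survives, shifted, in homological degree $0$, while a fresh $a$-torsion module appears in the opposite $\zed_2$-degree in homological degree $1$, so $\fH_N(U_-)$ cannot be a grading shift of $\fH_N(U)$.
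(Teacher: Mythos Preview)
Your proposal is correct and follows essentially the same route as the paper: compute $\fC_N(U)\simeq(ax^N,0)_{\Q[a,x]}$ and read off $\fH_N(U)\cong\Q[a,x]/(ax^N)\{-1,-N+1\}$ in $(\ve,i)=(1,0)$, reduce modulo $a$ to get $\mathscr{H}_N(U)$, identify $\pi_0$ as reduction mod $a$ in $\zed_2$-degree $1$ and the zero map in $\zed_2$-degree $0$, and extract $\fH_N(U_-)$ from Theorem~\ref{thm-neg-stabilization}. The only cosmetic difference is that you compute the kernel and cokernel of $\pi_0$ directly in the mapping cone, whereas the paper phrases the same calculation via the long exact sequence of Theorem~\ref{thm-neg-stabilization}.
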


\subsection{Decategorification}

\begin{definition}\label{def-decat}
We define the decategorification $\fP_N$ of $\fH_N$ by
\[
\fP_N(B) := \sum_{(\ve,i,j,k)\in \zed_2 \oplus \zed^{\oplus3}}  (-1)^i \tau^\ve \alpha^j \xi^k \dim_\Q\fH_N^{\ve,i,j,k}(B) \in \zed[[\alpha,\xi]][\alpha^{-1},\xi^{-1},\tau]/(\tau^2-1)
\]
for any closed braid $B$.\footnote{From the definition of $\fC_N(B)$, one can see that its homological grading is bounded and its $a$- and $x$-gradings are bounded below.}
\end{definition}

Next, we describe $\fP_N$ by a skein definition that is very similar to the classical HOMFLYPT skein relation.

\begin{theorem}\label{thm-decat}
\begin{enumerate}[1.]
	\item $\fP_N$ is invariant under transverse Markov moves.
	\item $\alpha^{-1}\xi^{-N}\fP_N(\setlength{\unitlength}{.5pt}
\begin{picture}(40,30)(-20,10)

\put(-20,0){\vector(1,1){40}}

\put(20,0){\line(-1,1){18}}

\put(-2,22){\vector(-1,1){18}}

\end{picture}) - \alpha\xi^{N}\fP_N(\setlength{\unitlength}{.5pt}
\begin{picture}(40,30)(-20,10)

\put(20,0){\vector(-1,1){40}}

\put(-20,0){\line(1,1){18}}

\put(2,22){\vector(1,1){18}}

\end{picture})= \tau(\xi^{-1}-\xi)\fP_N(\setlength{\unitlength}{.5pt}
\begin{picture}(40,30)(-20,10)

\qbezier(-20,0)(0,20)(-20,40)

\qbezier(20,0)(0,20)(20,40)

\put(20,40){\vector(1,1){0}}

\put(-20,40){\vector(-1,1){0}}

\end{picture})$. \vspace{5pt}
	\item $\fP_N(U^{\sqcup m}) = (\tau \alpha^{-1} [N])^m ( \frac{1}{1-\alpha^2} + \frac{\left(\frac{\tau \alpha \xi^{-1} + \xi^{-N}}{\xi^{-N}-\xi^N}\right)^m-1}{\tau\alpha\xi^{-N-1} +1})$, where $U^{\sqcup m}$ is the $m$-strand closed braid with no crossings and $[N]:= \frac{\xi^{-N}-\xi^N}{\xi^{-1} -\xi}$.
	\item Parts 1--3 above uniquely determine the value of $\fP_N$ on every closed braid.
\end{enumerate}
\end{theorem}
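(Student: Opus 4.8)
The plan is to deduce the four parts in turn, each building on the previous ones and on the results already established in the paper. Part~1 is immediate from Theorem~\ref{thm-trans-link-homology}: every transverse Markov move on $B$ induces a $\zed_2\oplus\zed^{\oplus3}$-grading preserving isomorphism $\fH_N(B)\cong\fH_N(B')$, hence $\dim_\Q\fH_N^{\ve,i,j,k}(B)=\dim_\Q\fH_N^{\ve,i,j,k}(B')$ for all $(\ve,i,j,k)$ and so $\fP_N(B)=\fP_N(B')$; the only preliminary point is that $\fP_N(B)$ is a well-defined element of the stated ring, which follows from the finite generation of $\fC_N(B)$ in each multidegree together with the boundedness of the homological grading and the lower-boundedness of the $a$- and $x$-gradings (the footnote to Definition~\ref{def-decat}). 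Part~2 follows by computing graded Euler characteristics of the crossing complexes. Let $D_+,D_-,D_0$ be diagrams of closed braids agreeing outside a disk in which they look like the three local pictures, and let $D_w$ be the diagram with a wide (singular) edge in that disk. By Definition~\ref{def-chain-tangle}, $\fC_N(D_\pm)$ are, up to the prescribed homological, $a$-, $x$- and $\zed_2$-grading shifts, mapping cones built from $\fC_N(D_0)$ and $\fC_N(D_w)$; since such a cone gives (after passing to $d_{mf}$- then $d_\chi$-homology) a long exact sequence, the graded Euler characteristic is additive over it, so $\fP_N(D_\pm)$ equals, up to the grading-shift monomials in $\alpha,\xi,\tau$, the difference $\pm(\fP_N(D_0)-\fP_N(D_w))$. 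Forming the combination $\alpha^{-1}\xi^{-N}\fP_N(D_+)-\alpha\xi^N\fP_N(D_-)$ and tracking the shifts from Definition~\ref{def-chain-tangle} (including the $\zed_2$-shift, which produces the factor $\tau$), the two shifted copies of $\fP_N(D_w)$ cancel and the $\fP_N(D_0)$ terms combine to $\tau(\xi^{-1}-\xi)\fP_N(D_0)$. This part is pure bookkeeping once the exact shifts are recorded.

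Part~3 is the computational heart. I would compute $\fH_N(U^{\sqcup m})$ as a $\zed_2\oplus\zed^{\oplus2}$-graded $\Q[a]$-module by induction on $m$, the base case $m=1$ being Corollary~\ref{cor-unknot-neg-stab} together with the value of the auxiliary invariant $\mathscr{H}_N(U)$, which is read off from the negative-stabilization long exact sequence of Theorem~\ref{thm-neg-stabilization} applied to $U$ (equivalently, from $\fH_N(U)$ by universal coefficients over the PID $\Q[a]$). For the inductive step, note that the closure of $\sigma_{m-1}\in\mathbf{B}_m$ is a positive destabilization of $U^{\sqcup(m-1)}$, the closure of $\sigma_{m-1}^{-1}\in\mathbf{B}_m$ is a negative stabilization of $U^{\sqcup(m-1)}$, and the oriented resolution of that crossing is $U^{\sqcup m}$. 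Applying the skein relation of Part~2 at this crossing, together with Part~1 and the Euler-characteristic consequence of Theorem~\ref{thm-neg-stabilization} (namely that $\fP_N$ of a negative stabilization of $L$ equals $\alpha^{-2}(\fP_N(L)-\mathscr{P}_N(L))$, where $\mathscr{P}_N$ denotes the Euler characteristic of $\mathscr{H}_N$), yields a recursion for $\fP_N(U^{\sqcup m})$ in terms of $\fP_N(U^{\sqcup(m-1)})$ and $\mathscr{P}_N(U^{\sqcup(m-1)})$; the latter is supplied by the parallel, and simpler, computation of $\mathscr{H}_N(U^{\sqcup m})$ (the $a=0$ theory, which carries its own skein relation by reducing the crossing complexes mod $a$). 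One then checks by routine algebra that the closed form in the statement satisfies this recursion with the correct initial value. Alternatively, $\fC_N(U^{\sqcup m})$ is the $m$-fold tensor product over $\Q[a]$ of the one-circle complex, so $\fH_N(U^{\sqcup m})$ can be obtained directly by the Künneth formula over $\Q[a]$ from the $m=1$ answer; the $\mathrm{Tor}$ terms in that formula are exactly what appear as the summand $\tfrac{1}{1-\alpha^2}$ and the second fraction.

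Part~4 is the standard HOMFLYPT-type reduction argument. Well-order closed braid words by the pair (number of crossings $c$, number $b$ of crossings at which the word disagrees with its ``descending'' version), where the descending version of $\beta$ is obtained by choosing the sign of every letter compatibly with a fixed ordering of the strands; such a descending braid has closure the $k$-component unlink, $k$ being the number of cycles of the underlying permutation of $\beta$. If $b=0$ then $\hat\beta=U^{\sqcup k}$ and $\fP_N(\hat\beta)$ is given by Part~3. If $b>0$, pick a crossing whose sign disagrees with the descending version and apply the skein relation of Part~2 there: this expresses $\fP_N(\hat\beta)$ in terms of $\fP_N$ of the diagram with that crossing switched (same $c$, but $b$ decreased by $1$) and $\fP_N$ of the diagram with that crossing replaced by its oriented resolution (a braid word with one fewer letter, hence $c$ decreased by $1$), both determined by induction; Part~1 allows braid relations and conjugation to be used freely throughout. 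Hence $\fP_N$ is uniquely determined.

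I expect the \emph{main obstacle} to be Part~3: assembling $\fH_N(U^{\sqcup m})$ and the auxiliary $\mathscr{H}_N(U^{\sqcup m})$ correctly, tracking the infinite direct summands that arise because the relevant matrix factorizations are not homotopically finite, and then recognizing the resulting generating function as the compact closed form in the statement. Parts~1, 2 and~4 are essentially formal given the earlier results of the paper, whereas Part~3 demands a genuine computation and careful grading bookkeeping.
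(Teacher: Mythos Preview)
Your treatment of Parts~1--3 is essentially what the paper does. Part~1 is immediate from Theorem~\ref{thm-trans-link-homology}; Part~2 comes from reading off graded Euler characteristics of the two crossing complexes \eqref{eq-def-chain-crossing+}--\eqref{eq-def-chain-crossing-} and eliminating the wide-edge term; and Part~3 is obtained exactly as you outline, by applying Part~2 at the crossing $\sigma_{m-1}^{\pm1}$ in $\mathbf{B}_m$, using Part~1 for the positive side and the Euler-characteristic consequence $\fP_N(L_-)=\alpha^{-2}(\fP_N(L)-\mathscr{P}_N(L))$ of Theorem~\ref{thm-neg-stabilization} for the negative side, and then solving the resulting first-order recursion. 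One simplification: you do not need a skein relation for $\mathscr{P}_N$. Since $U^{\sqcup m}$ has no crossings, both $d_{mf}$ and $d_\chi$ vanish on $\fC_N(U^{\sqcup m})/a\fC_N(U^{\sqcup m})$, and $\mathscr{P}_N(U^{\sqcup m})=\left(\frac{1+\tau\alpha^{-1}\xi^{-N+1}}{1-\xi^2}\right)^m$ drops out directly (this is the content of Lemma~\ref{lemma-hat-P-unlink}).

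Your Part~4, however, has a genuine gap. The descending-diagram argument you sketch is the classical HOMFLYPT uniqueness proof, and it relies on the fact that a descending diagram represents an unlink \emph{as a smooth link}, after which one uses \emph{all} Reidemeister moves (in particular negative stabilization and non-braid-like $\mathrm{R}_{\mathrm{II}}$) to pass to a standard crossing-free diagram. Here you only have Part~1, i.e.\ invariance under \emph{transverse} Markov moves. A descending closed braid on $m$ strands with $k$ permutation-cycles is a smooth $k$-component unlink, but it is in general \emph{not} transversely isotopic to $U^{\sqcup k}$: its self-linking number is $w-m$ with $w$ the writhe, and there is no reason for this to equal $-k$. (Already the closure of $\sigma_1^{-1}\in\mathbf{B}_2$, a smooth unknot, is not transversely isotopic to $U$; compare Corollary~\ref{cor-unknot-neg-stab}.) So when $b=0$ you cannot invoke Part~3, and the induction does not close.

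What is actually needed is a reduction procedure that uses only transverse Markov moves together with Conway splits and terminates at \emph{crossing-free} closed braids. This is precisely the content of the Franks--Williams ``invariant computation tree'' (Theorem~\ref{thm-invariant-computation-tree}): for every closed braid there is a finite rooted binary tree whose non-terminal nodes are related to their children by transverse Markov moves followed by a Conway split, and whose leaves are crossing-free braids $U^{\sqcup m}$. Granting that theorem, Part~4 follows immediately from Parts~1--3. The paper invokes \cite{FW} for exactly this reason; your argument would need to either cite it or reproduce it.
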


Theorem \ref{thm-decat} will be proved in Subsection \ref{subsec-skein} below. In a nutshell, Parts 1--3 of this theorem follow from the definition of $\fH_N$ and Theorems \ref{thm-trans-link-homology}, \ref{thm-neg-stabilization}, while Part 4 follows from the ``invariant computation tree" constructed by Franks and Williams in \cite{FW}. 

Although Theorem \ref{thm-decat} looks like the classical HOMFLYPT skein relation, there are two differences between $\fP_N$ and the HOMFLYPT polynomial:
\begin{enumerate}
	\item With an appropriate normalization, the HOMFLYPT polynomial is invariant under all Reidemeister moves. But, by Corollary \ref{cor-unknot-neg-stab}, $\fP_N$ is not invariant under negative stabilizations, which also implies that $\fP_N$ is not invariant under the non-braid-like Reidemeister move II.
	\item With an appropriate normalization, the HOMFLYPT polynomial is multiplicative under disjoint union of link diagrams. But, by Part 3 of Theorem \ref{thm-decat}, $\fP_N$ is not.
\end{enumerate}

It is not clear if one can resolve these differences by a re-normalization of the HOMFLYPT polynomial.

\begin{question}\label{que-decat-classical}
Is $\fP_N$ a classical or a non-classical invariant for transverse links?
\end{question}

$\fP_N$ does not seem to detect transverse non-simplicity from flype moves. We demonstrate this with some examples from \cite{Birman-Menasco,Ng-contact-homology}.

\begin{corollary}\label{cor-flype}
\begin{enumerate}[1.]
	\item Let $B_1$ be the closed $3$-braid $\sigma_1^{2p+1}\sigma_2^{2r}\sigma_1^{2q}\sigma_2^{-1}$ and $B_2$ be the closed $3$-braid $\sigma_1^{2p+1}\sigma_2^{-1}\sigma_1^{2q}\sigma_2^{2r}$. Then $\fP_N(B_1) = \fP_N(B_2)$.
	\item Let $B_3$ be the closed $4$-braid $\sigma_1\sigma_2^{-1}\sigma_1\sigma_2^{-1}\sigma_3^3\sigma_2\sigma_3^{-1}$ and $B_4$ the closed $4$-braid $\sigma_1\sigma_2^{-1}\sigma_1\sigma_2^{-1}\sigma_3^{-1}\sigma_2\sigma_3^{3}$. Then $\fP_N(B_3) = \fP_N(B_4)$.
\end{enumerate}
\end{corollary}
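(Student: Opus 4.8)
The plan is to derive both identities from one symmetry property of $\fP_N$, using only the HOMFLYPT-type skein relation of Theorem \ref{thm-decat}(2) and the transverse Markov invariance of Theorem \ref{thm-decat}(1); no appeal to the classical HOMFLYPT polynomial itself is needed. Abbreviate $\mathbf{a} = \alpha\xi^N$ and $\mathbf{z} = \tau(\xi^{-1}-\xi)$, so that the skein relation reads $\mathbf{a}^{-1}\fP_N(L_+) - \mathbf{a}\fP_N(L_-) = \mathbf{z}\fP_N(L_0)$, with the positive braid generator $\sigma_i$ playing the role of the $L_+$ resolution and the oriented smoothing that of $L_0$. Both pairs in the statement are of the form $B_1 = \widehat{W_1\sigma_k^v W_2\sigma_k^\epsilon W_3}$ and $B_2 = \widehat{W_1\sigma_k^\epsilon W_2\sigma_k^v W_3}$, where $\sigma_k = \sigma_{n-1}$ is the top generator of $\mathbf{B}_n$ and none of $W_1, W_2, W_3$ involves $\sigma_k$: Part 1 is the case $n = 3$, $(k,v,\epsilon) = (2,2r,-1)$, $W_1 = \sigma_1^{2p+1}$, $W_2 = \sigma_1^{2q}$, $W_3 = \varnothing$, and Part 2 is the case $n = 4$, $(k,v,\epsilon) = (3,3,-1)$, $W_1 = \sigma_1\sigma_2^{-1}\sigma_1\sigma_2^{-1}$, $W_2 = \sigma_2$, $W_3 = \varnothing$. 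So it suffices to prove $\fP_N(B_1) = \fP_N(B_2)$ under these hypotheses (and in fact for all $v, \epsilon \in \zed$).

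First I would linearize $\fP_N$ in a single $\sigma_k$-exponent. Iterating the skein relation on the crossings inside a power $\sigma_k^m$ yields Laurent polynomials $\phi_m, \psi_m \in \zed[\mathbf{a}^{\pm 1}, \mathbf{z}]$, depending only on $m$, such that
\[
\fP_N(\widehat{U\sigma_k^m U'}) = \phi_m\,\fP_N(\widehat{UU'}) + \psi_m\,\fP_N(\widehat{U\sigma_k U'})
\]
for all braid words $U, U'$, with $\phi_0 = \psi_1 = 1$, $\psi_0 = \phi_1 = 0$, and $\phi_m = \mathbf{a}^2\phi_{m-2} + \mathbf{a}\mathbf{z}\,\phi_{m-1}$ (the same recursion for $\psi_m$, read downward when $m < 0$). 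Applying this identity to each of the two $\sigma_k$-blocks of $B_1$, then to those of $B_2$, and subtracting, the terms in which the two blocks are resolved alike cancel (their coefficients, $\phi_v\phi_\epsilon$ and $\psi_v\psi_\epsilon$, are symmetric in $v,\epsilon$), and the cross terms collapse to
\[
\fP_N(B_1) - \fP_N(B_2) = \big(\phi_v\psi_\epsilon - \phi_\epsilon\psi_v\big)\Big(\fP_N(\widehat{W_1 W_2\sigma_k W_3}) - \fP_N(\widehat{W_1\sigma_k W_2 W_3})\Big).
\]

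It then remains to show $\fP_N(\widehat{W_1 W_2\sigma_k W_3}) = \fP_N(\widehat{W_1\sigma_k W_2 W_3})$. In each of these words the top generator $\sigma_k = \sigma_{n-1}$ occurs exactly once and every other letter lies in $\mathbf{B}_{n-1}$. Hence, after cycling $\sigma_{n-1}$ to the end of the word (a conjugation), a positive destabilization carries $\widehat{W_1 W_2\sigma_{n-1}W_3}$ to $\widehat{W_3 W_1 W_2}$ and $\widehat{W_1\sigma_{n-1}W_2 W_3}$ to $\widehat{W_2 W_3 W_1}$ as closed $(n-1)$-braids, and $\widehat{W_3 W_1 W_2} = \widehat{W_2 W_3 W_1}$ by another conjugation. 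So the two closed braids are related by transverse Markov moves, and Theorem \ref{thm-decat}(1) gives the equality; in the two cases at hand one may instead just cancel $\sigma_2^{-1}\sigma_2$ and land on the same braid. The bracket in the displayed identity therefore vanishes, so $\fP_N(B_1) = \fP_N(B_2)$, proving both parts.

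The main obstacle is this last step. A priori $B_1$ and $B_2$ need not be transverse-isotopic at all — whether they are is precisely the kind of question the corollary is testing — so one cannot expect to connect $B_1$ to $B_2$ directly by transverse Markov moves. What rescues the computation is that the skein relation makes $\fP_N$ bilinear in the two $\sigma_k$-exponents, and that the asymmetry between those exponents disappears in the single-crossing specialization, where the extra top strand can be removed by a \emph{positive} destabilization — a positive and not a negative one, which is exactly why Theorem \ref{thm-decat}(1) applies. The only point at which the precise shape of the braids enters is the observation that $W_1, W_2, W_3$ involve only $\sigma_1, \dots, \sigma_{n-2}$, which is what makes that destabilization available.
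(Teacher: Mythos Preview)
Your proof is correct and uses the same two ingredients as the paper (the skein relation of Theorem~\ref{thm-decat}(2) together with transverse Markov invariance), but you organize the reduction differently and, in doing so, arrive at a cleaner and more uniform argument. The paper reduces only the ``variable'' $\sigma_k$-block to exponents $\{0,-1\}$, chosen so that one of the resulting terms matches the fixed $\sigma_k^{-1}$ block exactly; the remaining $0$-term then still carries a negative $\sigma_k^{-1}$ crossing, and in Part~2 the paper has to apply one more round of skein relations (plus a positive destabilization and a Reidemeister~III) to show the two $0$-terms agree. Your bilinear reduction of \emph{both} blocks to $\{0,1\}$ is what makes the difference: the symmetric $\phi_v\phi_\epsilon$ and $\psi_v\psi_\epsilon$ contributions cancel immediately, and in the surviving cross term the single remaining $\sigma_{n-1}$ is positive, so it can be removed by a \emph{positive} destabilization --- exactly the move Theorem~\ref{thm-decat}(1) allows. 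This handles both parts at once and in fact proves the statement for all $v,\epsilon\in\zed$, whereas the paper's ad~hoc reduction to $-1$ depends on the coincidence $\epsilon=-1$ in the given examples.
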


Corollary \ref{cor-flype} will be proved in Subsection \ref{subsec-flype} below. $B_1$ and $B_2$ (resp. $B_3$ and $B_4$) are related a flype move defined in \cite{Birman-Menasco}. So they are isotopic as smooth knots and have the same contact framing. In \cite{Birman-Menasco}, Birman and Menasco proved that, for an infinite family of $(p,q,r)$, $B_1$ and $B_2$ are not isotopic as transverse knots. In \cite{Ng-contact-homology}, Ng proved that $B_3$ and $B_4$ are not isotopic as transverse knots. Corollary \ref{cor-flype} shows that $\fP_N$ does not distinguish between of the transverse knots $B_1$ and $B_2$ (resp. $B_3$ and $B_4$.)

\subsection{Relation to the $\slmf(N)$ Khovanov-Rozansky homology} Denote by $H_N$ the $\slmf(N)$ Khovanov-Rozansky homology defined in \cite{KR1}. $H_N$ is a $\zed_2\oplus \zed^{\oplus2}$-graded link homology theory, where the $\zed_2$-grading is the $\zed_2$-grading of the underlying matrix factorization and the two $\zed$-gradings are the homological grading and the $x$-grading. We denote by $H_N^{\ve,i,k}$ the homogeneous component of $H_N$ of $\zed_2$-degree $\ve$, homological grading $i$ and $x$-grading $k$. 

The following theorem, which will be proved in Section \ref{sec-module-structure} below, describes the relation between $\fH_N$ and $H_N$.

\begin{theorem}\label{thm-sl-N-rel} 
Let $B$ be a closed braid, and $(\ve,i,k) \in\zed_2\oplus \zed^{\oplus2}$.
\begin{enumerate}[1.]
	\item $H_N^{\ve,i,k}(B) \cong \fH_N^{\ve,i,\star,k}(B)/(a-1)\fH_N^{\ve,i,\star,k}(B)$.
	\item As a $\zed$-graded $\Q[a]$-module, 
	\[
	\fH_N^{\ve,i,\star,k}(B)\cong (\bigoplus_{p=1}^{m_{\ve,i,k}} \Q[a]\{s_p\}) \bigoplus (\bigoplus_{q=1}^{n_{\ve,i,k}} \Q[a]/(a^{l_q})\{t_q\}),
	\] 
	where
	\begin{itemize}
	  \item $\{s\}$ means shifting the $a$-grading by $s$,
	  \item $m_{\ve,i,k} = \dim_\Q H_N^{\ve,i,k}(B)<\infty$, 
	  \item $n_{\ve,i,k}$ is a finite non-negative integer determined by $B$ and the triple $(\ve,i,k)$,
	  \item $\{s_1,\dots,s_{m_{\ve,i,k}}\} \subset \zed$ is a sequence determined up to permutation by $B$ and the triple $(\ve,i,k)$,
	  \item $\{(l_1,t_1),\dots,(l_{n_{\ve,i,k}},t_{n_{\ve,i,k}})\} \subset \zed^{\oplus2}$ is a sequence determined up to permutation by $B$ and the triple $(\ve,i,k)$.
	\end{itemize}
\end{enumerate}
\end{theorem}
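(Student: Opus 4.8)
The plan is to exploit the fact that, after fixing the $x$-grading, $\fC_N(B)$ is a bounded complex of finitely generated free $\Q[a]$-modules, and to compare $\fC_N(B)$ with its specializations at $a=c$: faithfully flat descent handles the torsion statement, and universal coefficients handle the comparison with $H_N$.

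First I would record the basic structural fact, immediate from Definitions \ref{def-MOY-mf} and \ref{def-chain-tangle}: for each fixed $x$-degree $k$, the data $(\fC_N(B),d_{mf},d_\chi)$ consist of finitely generated free $\Q[a]$-modules---each MOY matrix factorization is a finite-rank free module over a polynomial ring $\Q[a][x_1,\dots,x_n]$, and a single $x$-degree of such a ring is a finite free $\Q[a]$-module---and $d_{mf}^2=0$ because $B$ is closed. Hence both $\mathcal{M}^{\ve,i,\star,k}:=H(\fC_N(B),d_{mf})^{\ve,i,\star,k}$ and $\fH_N^{\ve,i,\star,k}(B)$ are finitely generated modules over the PID $\Q[a]$, so each is a finite direct sum of a free module and cyclic torsion modules $\Q[a]/(p(a)^{e})$ with $p$ irreducible; Part 2 is then the assertion that every such $p$ equals $a$ and that the free rank is $\dim_\Q H_N^{\ve,i,k}(B)$, its uniqueness-up-to-permutation clauses being precisely the uniqueness in the structure theorem.

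The core point is that $\fH_N^{\ve,i,\star,k}(B)$ has no torsion supported away from $a=0$. Because $\fC_N(B)$ is free over $\Q[a]$ in each $x$-degree, flat base change commutes with both $H(-,d_{mf})$ and $H(-,d_\chi)$; so I would base change to $\overline{\Q}[b,b^{-1}]$ with $b^{N+1}=a$, which is faithfully flat over $\Q[a,a^{-1}]$. Over $\overline{\Q}[b,b^{-1}][x_1,\dots,x_n]$ the grading-preserving substitution $x_i\mapsto b^{-1}x_i$ carries the matrix-factorization construction for the potential $ax^{N+1}=b^{N+1}x^{N+1}$ onto Khovanov--Rozansky's construction for $x^{N+1}$, compatibly with every MOY matrix factorization and every $d_\chi$ map; hence $\fC_N(B)\otimes_{\Q[a]}\overline{\Q}[b,b^{-1}]$ is homotopy equivalent to $C_N(B)\otimes_\Q\overline{\Q}[b,b^{-1}]$, where $C_N(B)$ is the complex of \cite{KR1} computing $H_N(B)$, and therefore $\fH_N^{\ve,i,\star,k}(B)\otimes_{\Q[a]}\overline{\Q}[b,b^{-1}]\cong H_N^{\ve,i,k}(B)\otimes_\Q\overline{\Q}[b,b^{-1}]$, which is free of rank $\dim_\Q H_N^{\ve,i,k}(B)$. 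Faithfully flat descent then forces $\fH_N^{\ve,i,\star,k}(B)\otimes_{\Q[a]}\Q[a,a^{-1}]$ to be finitely generated projective, hence free over the PID $\Q[a,a^{-1}]$, of rank $\dim_\Q H_N^{\ve,i,k}(B)$; equivalently $\fH_N^{\ve,i,\star,k}(B)$ has no $(a-c)$-torsion for $c\neq 0$ and its free rank is $\dim_\Q H_N^{\ve,i,k}(B)$. The same argument applies verbatim to $\mathcal{M}^{\ve,i,\star,k}$, and together with the structure theorem this completes Part 2.

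For Part 1, I would use that $\fC_N(B)\otimes_{\Q[a]}\Q[a]/(a-1)$ is, by construction, exactly $C_N(B)$, so $H(H(C_N(B),d_{mf}),d_\chi)^{\ve,i,k}=H_N^{\ve,i,k}(B)$. Two universal-coefficient sequences over $\Q[a]$---one for the free complex $(\fC_N(B),d_{mf})$, one for the complex $(\mathcal{M},d_\chi)$, the latter legitimate because $\mathcal{M}$ has no $(a-1)$-torsion so that $\mathcal{M}\otimes^{\mathbf{L}}_{\Q[a]}\Q[a]/(a-1)\simeq\mathcal{M}\otimes_{\Q[a]}\Q[a]/(a-1)$---then relate $H_N^{\ve,i,k}(B)$ to $\fH_N^{\ve,i,\star,k}(B)/(a-1)\fH_N^{\ve,i,\star,k}(B)$ up to $\mathrm{Tor}_1^{\Q[a]}$-terms built from $\mathcal{M}^{\ve,i\pm1,\star,k}$ and $\fH_N^{\ve,i\pm1,\star,k}(B)$; all of these are the $(a-1)$-torsion of the respective modules and hence vanish by the previous paragraph, yielding the natural isomorphism of Part 1 induced by the quotient map $\fC_N(B)\to C_N(B)$. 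The step I expect to require the most care is this universal-coefficient bookkeeping in the presence of the two differentials $d_{mf}$ (which is $2$-periodic, not bounded) and $d_\chi$, together with the verification that the rescaling $x_i\mapsto b^{-1}x_i$ genuinely intertwines all of the chain-level structure maps defining the two theories; both are routine in spirit but not automatic.
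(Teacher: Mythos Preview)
Your approach is correct but takes a substantially different and more elaborate route than the paper. The paper exploits the $a$-grading directly: since $\fH_N^{\ve,i,\star,k}(B)$ and $H(\fC_N(B),d_{mf})^{\ve,i,\star,k}$ are $\zed$-graded $\Q[a]$-modules (by $a$-degree) with grading bounded below, multiplication by any $f(a)$ with nonzero constant term---in particular by $a-1$---is automatically injective (Lemma~\ref{lemma-a-1-inj}), so the long exact sequences associated to $0\to\fC_N(B)\xrightarrow{a-1}\fC_N(B)\to C(B)\to 0$ split into short exact sequences and give Part~1 immediately; and the \emph{graded} structure theorem over $\Q[a]$ (Lemma~\ref{lemma-fg-graded-module-structure}) already forces every torsion summand to be $\Q[a]/(a^l)$ up to a grading shift, since $a$ is the only homogeneous prime. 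Your rescaling $x_i\mapsto b^{-1}x_i$ over $\overline{\Q}[b,b^{-1}]$ gives a more conceptual reason why $\fH_N$ and $H_N$ agree once $a$ is inverted---the two constructions literally coincide after a change of variables---but it costs you the verification that the rescaling intertwines the $\chi$-morphisms, faithfully-flat descent for projectivity, and a derived universal-coefficient argument; and since your ungraded structure theorem does not by itself produce the grading shifts $\{s_p\},\{t_q\}$ of Part~2, you would still need the graded version, at which point the $a$-primality of the torsion comes for free and the rescaling detour becomes unnecessary.
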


\subsection{Organization of this paper} We review the definition and basic properties of matrix factorizations in Section \ref{sec-mf-review}. Then we define the matrix factorizations associated to MOY graphs and chain complexes associated to link diagrams in Sections \ref{sec-MOY-mf} and \ref{sec-def}. The invariance is established in Sections \ref{sec-R1}--\ref{sec-R3}. Finally, we discuss the decategorification and the relation to the $\slmf(N)$ Khovanov-Rozansky homology in Sections \ref{sec-decat} and \ref{sec-module-structure}.

Although this paper is mostly self-contained, some prior experiences with the Khovanov-Rozansky homology would certainly be helpful.

\section{Matrix Factorizations}\label{sec-mf-review}

In this section, we review the definition and some basic properties of matrix factorizations over the bigraded polynomial ring $\Q[a,X_1,\dots,X_k]$. 

We write $R=\Q[a,X_1,\dots,X_k]$ and fix a non-negative integer $N$ throughout this section.

\subsection{$\zed^{\oplus2}$-graded $R$-modules}

\begin{definition}\label{def-bigrading-ring}
We define a $\zed^{\oplus2}$-grading on $R=\Q[a,X_1,\dots,X_k]$ by letting $\deg a = (2,0)$ and $\deg X_i =(0,2n_i)$ for $i=1,\dots, k$, where each $n_i$ is a positive integer. We call the first component of this $\zed^{\oplus2}$-grading the $a$-grading and denote its degree function by $\deg_a$. We call the second component of this $\zed^{\oplus2}$-grading the $x$-grading and denote its degree function by $\deg_x$. An element of $R$ is said to be homogeneous if it is homogeneous with respect to both the $a$-grading and the $x$-grading.

A $\zed^{\oplus2}$-graded $R$-module $M$ is a $R$-module $M$ equipped with a $\zed^{\oplus2}$-grading such that, for any homogeneous element\footnote{An element of $M$ is said to be homogeneous if it is homogeneous with respect to both $\zed$-gradings.} $m$ of $M$, $\deg (am) = \deg m + (2,0)$ and $\deg (X_i m) = \deg m +(0,2n_i)$ for $i=1,\dots, k$. Again, we call the first component of this $\zed^{\oplus2}$-grading of $M$ the $a$-grading and denote its degree function by $\deg_a$. We call the second component of this $\zed^{\oplus2}$-grading of $M$ the $x$-grading and denote its degree function by $\deg_x$. 

We say that the $\zed^{\oplus2}$-grading on $M$ is bounded below if both the $a$-grading and the $x$-grading are bounded below.

For a $\zed^{\oplus2}$-graded $R$-module $M$, we denote by $M\{j,k\}$ the $\zed^{\oplus2}$-graded $R$-module obtained by shifting the $\zed^{\oplus2}$-grading of $M$ by $(j,k)$. That is, for any homogeneous element $m$ of $M$, $\deg_{M\{j,k\}} m = \deg_M m + (j,k)$.
\end{definition}

In our construction of $\mathcal{H}_N$, we need to use the fact that, if the $\zed^{\oplus2}$-grading of a free $\zed^{\oplus2}$-graded $R$-module $M$ is bounded below, then $M$ admits a homogeneous basis over $R$. To prove this, we start with the following lemma, which is implicitly given in \cite{Passman-book}.

\begin{lemma}\cite[Lemma 4.4]{Wu-color}\label{lemma-homogeneous-basis-exists}
Suppose that $M$ is a $\zed$-graded free $\Q[X_1,\dots,X_k]$-module whose grading is bounded below. Then $M$ admits a homogeneous basis over $\Q[X_1,\dots,X_k]$.
\end{lemma}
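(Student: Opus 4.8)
The plan is to reduce to the local/graded-Nakayama situation and build a homogeneous basis degree by degree. Write $A = \Q[X_1,\dots,X_k]$ with its (positive) $\zed$-grading, and let $A_+ = \bigoplus_{d>0} A_d$ be the irrelevant ideal, so that $A/A_+ \cong \Q$. Since the grading on the free module $M$ is bounded below, say by $d_0$, each graded piece $M_d$ is a finite-dimensional $\Q$-vector space (it is a subspace of a finite direct sum of shifted copies of $A$, each of which is finite-dimensional in each degree), and $M_d = 0$ for $d < d_0$. First I would consider $\wbar M := M / A_+ M$, which is a $\zed$-graded $\Q$-vector space, and choose a homogeneous $\Q$-basis $\{\wbar e_\lambda\}_{\lambda \in \Lambda}$ of it, lifting each $\wbar e_\lambda$ to a homogeneous element $e_\lambda \in M$ of the same degree. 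The claim is that $\{e_\lambda\}$ is an $A$-basis of $M$.

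To prove spanning, I would argue inductively on degree. Let $M' = \sum_\lambda A e_\lambda \subseteq M$, a graded submodule. By construction $M' + A_+ M = M$, i.e. $M/M' = A_+ (M/M')$. I would then show $M/M' = 0$ by a graded Nakayama argument: if $M/M' \neq 0$, pick the smallest degree $d$ with $(M/M')_d \neq 0$; then $(A_+(M/M'))_d = \sum_{e>0} A_e \cdot (M/M')_{d-e} = 0$ since every $(M/M')_{d-e}$ vanishes for $e>0$, contradicting $(M/M')_d = (A_+(M/M'))_d$. Hence $M = M'$ and the $e_\lambda$ span $M$ over $A$.

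For linear independence, suppose $\sum_\lambda f_\lambda e_\lambda = 0$ with $f_\lambda \in A$ not all zero; by taking homogeneous components we may assume all terms $f_\lambda e_\lambda$ lie in a single degree $d$, so only finitely many $f_\lambda$ are involved. Among the nonzero $f_\lambda$, those of minimal $X$-degree (equivalently, writing $f_\lambda = c_\lambda + (\text{higher order})$ with $c_\lambda \in \Q$, those with $c_\lambda \neq 0$) — reducing the relation modulo $A_+ M$ gives $\sum_\lambda c_\lambda \wbar e_\lambda = 0$ in $\wbar M$, forcing all $c_\lambda = 0$ since the $\wbar e_\lambda$ are $\Q$-independent. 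Iterating (or using that $M$ is a torsion-free, indeed free, $A$-module so that a minimal-degree obstruction cannot occur), one concludes all $f_\lambda = 0$; more cleanly, since $M$ is free of finite rank in each bounded range and the $e_\lambda$ are a minimal generating set with the right count of generators in each degree (matching $\dim_\Q \wbar M_d$ against the rank data), independence follows from a rank/Hilbert-series comparison.

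The main obstacle is bookkeeping rather than a deep difficulty: one must make sure the index set $\Lambda$ is handled correctly when $M$ has infinite rank, which is why boundedness below is essential — it guarantees each $M_d$ and each $\wbar M_d$ is finite-dimensional, so the graded Nakayama induction proceeds one finite-dimensional layer at a time and the lifted basis is well-defined. I would also remark that this is exactly the graded analogue of the statement that finitely generated projective modules over a local ring are free, and cite \cite{Passman-book} for the underlying commutative-algebra input, as the excerpt indicates.
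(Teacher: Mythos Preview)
Your overall strategy---lift a homogeneous $\Q$-basis of $\wbar M = M/A_+M$ and verify it is an $A$-basis via graded Nakayama---is the standard one, and the paper itself gives no argument here beyond a citation to \cite{Wu-color}, so there is nothing to compare against directly. Your spanning argument is fine as written: boundedness below of the grading is exactly what makes the ``smallest degree'' step in Nakayama go through, and finite-dimensionality of the graded pieces is irrelevant there.

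There is, however, a genuine gap in your linear independence argument. First, the parenthetical claim that each $M_d$ is finite-dimensional is unjustified and false in general: the lemma must cover free modules of infinite rank (this is how it is used in the proof of Lemma~\ref{lemma-bi-homogeneous-basis-exists} and later in Proposition~\ref{prop-contractible-essential-decomp}), and for instance $M = A^{(\nat)}$ with trivial shifts has $M_0$ infinite-dimensional. More seriously, your ``iterating'' step does not terminate: after killing the coefficients with $\deg e_\lambda = d$ you are left with a relation whose coefficients all lie in $A_+$, and reducing modulo $A_+M$ again yields nothing new. The ``rank/Hilbert-series comparison'' alternative likewise presupposes finite rank. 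The clean fix is to use freeness of $M$ once more: from the graded short exact sequence $0 \to K \to F \to M \to 0$ (with $F = \bigoplus_\lambda A\{\deg e_\lambda\}$), freeness of $M$ gives $\mathrm{Tor}_1^A(M,\Q) = 0$, so tensoring with $\Q = A/A_+$ yields $0 \to K/A_+K \to F/A_+F \to \wbar M \to 0$; the last map is your chosen isomorphism, hence $K/A_+K = 0$, and since $K \subset F$ inherits a grading bounded below, graded Nakayama gives $K = 0$.
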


\begin{proof}
See \cite[Subsection 4.1]{Wu-color}.
\end{proof}

\begin{lemma}\label{lemma-bi-homogeneous-basis-exists}
Suppose that $M$ is a $\zed^{\oplus2}$-graded free $R$-module and its $a$-grading and $x$-grading are both bounded below. Then $M$ admits a homogeneous basis over $R$.

In particular, if $M$ is a $\zed^{\oplus2}$-graded finitely generated free $R$-module, then $M$ admits a homogeneous basis over $R$.
\end{lemma}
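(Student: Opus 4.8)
The statement is that a $\zed^{\oplus2}$-graded free $R$-module $M$ with $a$-grading and $x$-grading both bounded below admits a homogeneous basis, where $R = \Q[a,X_1,\dots,X_k]$. The plan is to reduce the two-variable-grading situation to the single-grading situation already handled by Lemma \ref{lemma-homogeneous-basis-exists}. The key observation is that $R = \Q[a,X_1,\dots,X_k]$ can be viewed as $\Q[Y_0,Y_1,\dots,Y_k]$ where $Y_0 = a$, $Y_i = X_i$, and that a $\zed^{\oplus2}$-grading on a module amounts to a pair of compatible $\zed$-gradings. The difficulty is that Lemma \ref{lemma-homogeneous-basis-exists} applies to a single $\zed$-grading, so applying it once will only produce a basis homogeneous with respect to one of the two gradings, not both simultaneously.

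\medskip

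\textbf{Step 1: Collapse to a single auxiliary grading.} Introduce the single $\zed$-grading on $R$ defined by $\deg' a = 2$ and $\deg' X_i = 2n_i$ (i.e.\ the sum, or any fixed positive linear combination, of the $a$- and $x$-degrees); equivalently, view $R$ as the polynomial ring $\Q[a,X_1,\dots,X_k]$ with all generators in positive degree. The given $\zed^{\oplus2}$-grading on $M$ refines into a $\zed$-grading for $\deg'$ by setting $\deg' m = \deg_a m + \deg_x m$ on homogeneous elements; since each of $\deg_a$ and $\deg_x$ is bounded below on $M$, so is $\deg'$, and moreover each $\deg'$-graded piece $M_d$ is a \emph{finite} direct sum $\bigoplus_{j+k=d} M^{(j,k)}$ of the bigraded pieces (finiteness because $\deg_a,\deg_x$ bounded below forces only finitely many $(j,k)$ with $j+k=d$ to be nonzero). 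So $M$, as a $\zed$-graded $R$-module for $\deg'$, is free with grading bounded below, and Lemma \ref{lemma-homogeneous-basis-exists} (applied with the single grading $\deg'$) yields a basis $\{e_\alpha\}$ of $M$ over $R$ consisting of elements homogeneous with respect to $\deg'$.

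\medskip

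\textbf{Step 2: Split each $\deg'$-homogeneous basis element into bigraded pieces.} This is the main obstacle and the heart of the argument. A $\deg'$-homogeneous element $e_\alpha$ of degree $d$ decomposes as $e_\alpha = \sum_{j+k=d} e_\alpha^{(j,k)}$, a finite sum of its bihomogeneous components under the original $\zed^{\oplus2}$-grading. I claim one can replace the collection $\{e_\alpha\}$ by the collection of all nonzero bihomogeneous components $\{e_\alpha^{(j,k)}\}$ and still have a basis. The clean way to see this: filter $R$ and $M$ by the $a$-grading. Because $\deg a = (2,0)$, the $a$-grading is multiplicative in the sense that $R$ is a graded ring for the $a$-grading and $M$ a graded module; since the $a$-grading on $M$ is bounded below, consider the top-nonzero-degree truncations or, more systematically, argue by induction on the number of distinct $(j,k)$-components appearing. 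Concretely: let $j_0$ be the \emph{minimal} $a$-degree occurring among all the $e_\alpha^{(j,k)}$ with $j+k=d$, for a fixed $e_\alpha$; the projection of the basis relation onto lowest $a$-degree shows $e_\alpha^{(j_0,d-j_0)}$ together with the remaining $\{e_\beta : \beta\neq\alpha\}$ and the higher-$a$-degree components still span, and a standard Nakayama/grading argument over the graded local situation shows they remain a basis. Iterating over all $\alpha$ and all components produces a bihomogeneous basis.

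\medskip

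\textbf{Alternative cleaner Step 2 (preferred).} Rather than the iterative splitting, I would instead apply Lemma \ref{lemma-homogeneous-basis-exists} \emph{twice in succession using the structure $R = \Q[a]\otimes_\Q \Q[X_1,\dots,X_k]$}, or better: first regard $M$ as a $\zed$-graded free module over $S := \Q[X_1,\dots,X_k]$ for the $x$-grading alone (it is free over $S$ because it is free over $R = S[a]$ and $R$ is free over $S$; its $x$-grading is bounded below), and invoke Lemma \ref{lemma-homogeneous-basis-exists} to get an $x$-homogeneous $S$-basis of $M$. Hmm — but that basis need not be an $R$-basis. The genuinely robust route is the one-grading collapse of Step 1 followed by the bihomogeneous-splitting of Step 2; the splitting step works because for a free graded module over a graded ring all of whose generators sit in positive degree, any graded basis can be refined to respect a finer grading, by the matrix-reduction argument in \cite[Subsection 4.1]{Wu-color} applied now with the $a$-grading to the change-of-basis matrix expressing the $e_\alpha$ in terms of bihomogeneous generators. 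The ``in particular'' clause for finitely generated $M$ is immediate, since a finitely generated free module has its grading automatically bounded below. I expect writing down Step 2 rigorously — controlling the change-of-basis matrices so that refining one grading does not destroy homogeneity in the other — to be the only delicate point; everything else is bookkeeping about boundedness of gradings.
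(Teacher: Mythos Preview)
Your Step 1 is fine, but Step 2 has a genuine gap. The claim that ``one can replace the collection $\{e_\alpha\}$ by the collection of all nonzero bihomogeneous components $\{e_\alpha^{(j,k)}\}$ and still have a basis'' is false as stated: a single $\deg'$-homogeneous $e_\alpha$ may have several nonzero bihomogeneous components, so this replacement produces strictly more elements than a basis can have, and they will be $R$-linearly dependent in general. Your subsequent sketch (``project onto lowest $a$-degree, induct on number of components'') gestures at a Nakayama-type argument but never specifies which subset of the components should be kept, nor checks linear independence; the ``alternative cleaner Step 2'' you yourself flag as not producing an $R$-basis. So as written the proof does not close.

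The paper's argument avoids this difficulty entirely by working modulo $a$ from the start rather than collapsing to a total degree. Since $a$ is homogeneous of bidegree $(2,0)$, the quotient $M/aM$ is a $\zed^{\oplus2}$-graded $\Q[X_1,\dots,X_k]$-module that decomposes as $\bigoplus_j \mathcal{M}^j$ along the $a$-grading, with each $\mathcal{M}^j$ a $\zed$-graded (by $x$-degree) $\Q[X_1,\dots,X_k]$-module whose grading is bounded below. Lemma \ref{lemma-homogeneous-basis-exists} applied to each $\mathcal{M}^j$ gives a bihomogeneous $\Q[X_1,\dots,X_k]$-basis of $M/aM$; one then lifts each basis element to a bihomogeneous element of $M$ and checks directly that the lifts form an $R$-basis (linear independence by dividing out the highest power of $a$ and reducing mod $a$; spanning by induction on $a$-degree using the lower bound). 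This is exactly the Nakayama-style step you were reaching for, but executed on $M/aM$ rather than on an auxiliary total grading, which is what makes the bookkeeping tractable.
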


\begin{proof}
Assume that $M$ is a free $\zed^{\oplus2}$-graded $R$-module and both the $a$-grading and the $x$-grading on $M$ are bounded below. Denote by $j_0$ the lowest $a$-degree for any non-zero homogeneous element of $M$. Since $a$ is homogeneous, the $R$-module $M/aM$ inherits the $\zed^{\oplus2}$-grading of $M$. Also, note that the multiplication by $X_i$ does not affect the $a$-grading. So, as a $\zed$-graded $\Q[X_1,\dots,X_k]$-module, $M/aM = \bigoplus_{j=j_0}^\infty \mathcal{M}^j$, where $\mathcal{M}^j$ is the component of $M/aM$ of element homogeneous with respect to the $a$-grading of $a$-degree $j$. On each $\mathcal{M}^j$, the $x$-grading is bounded below. So, by Lemma \ref{lemma-homogeneous-basis-exists}, each $\mathcal{M}^j$ admits a homogeneous basis $\{\hat{v}_{j,p}~|~ p \in I_j\}$ with respect to the $x$-grading, where $I_j$ is an index set. Thus, $\{\hat{v}_{j,p}~|~ j\geq j_0,~p \in I_j\}$ is a homogeneous basis for the $\zed^{\oplus2}$-graded $\Q[X_1,\dots,X_k]$-module $M/aM$.

Denote by $\pi_M$ the standard quotient map $\pi_M:M\rightarrow M/aM$. For each $\hat{v}_{j,p}$, there exists a homogeneous element $v_{j,p}$ of $M$ such that $\pi_M(v_{j,p})= \hat{v}_{j,p}$, $\deg_a v_{j,p} = \deg_a\hat{v}_{j,p} = j$ and $\deg_x v_{j,p} = \deg_x \hat{v}_{j,p}$. We claim that $\{v_{j,p}~|~ j\geq j_0,~p \in I_j\}$ is a homogeneous basis for the $\zed^{\oplus2}$-graded $R$-module $M$. 

First, we prove $\{v_{j,p}~|~ j\geq j_0,~p \in I_j\}$ is $R$-linearly independent. Assume $\sum_{i=1}^l f_{j_i,p_i} v_{j_i,p_i} =0$ for some $\{v_{j_1,p_1},\dots,v_{j_l,p_l}\} \subset \{v_{j,p}~|~ j\geq j_0,~p \in I_j\}$, where $f_{j_i,p_i}$ is a non-zero element in $R$ for each $i=1,\dots,l$. After possibly dividing this sum by a power of $a$, we assume without loss of generality that, for some $i$, $f_{j_i,p_i}$ is not a multiple of $a$. Denote by $\pi$ the standard quotient map $\pi:R\rightarrow R/aR \cong \Q[X_1,\dots,X_k]$. Then, in $M/aM$, we have $\pi_M(\sum_{i=1}^l f_{j_i,p_i} v_{j_i,p_i}) = \sum_{i=1}^l \pi(f_{j_i,p_i}) \hat{v}_{j_i,p_i} =0$, where $\pi(f_{j_i,p_i}) \neq 0$ for some $i$. This is a contradiction since $\{\hat{v}_{j,p}~|~ j\geq j_0,~p \in I_j\}$ is a basis for the $\Q[X_1,\dots,X_k]$-module $M/aM$.

Now we prove by an induction on $\deg_a u$ that any homogeneous element $u$ of $M$ is in the span of $\{v_{j,p}~|~ j\geq j_0,~p \in I_j\}$. Recall that $j_0$ is the lowest $a$-degree for any non-zero homogeneous element of $M$. So, if $\deg_a u < j_0$ then $u=0$, which is in the span of $\{v_{j,p}~|~ j\geq j_0,~p \in I_j\}$. Now assume that, for some $j\geq j_0$, $u$ is in the span of $\{v_{j,p}~|~ j\geq j_0,~p \in I_j\}$ whenever $\deg_a u<j$. Suppose $\deg_a u = j$. Then $\pi_M(u) \in \mathcal{M}^j$ and, therefore, $\pi_M(u) = \sum_{p\in I_j} c_p \hat{v}_{j,p}$, where $c_p \in \Q[X_1,\dots,X_k]$. Thus, $\pi_M(u-\sum_{p\in I_j} c_p v_{j,p}) = 0$ and $u-\sum_{p\in I_j} c_p v_{j,p} \in aM$. Then, there is a element $v$ in $M$ such that 
\begin{itemize}
	\item $av= u-\sum_{p\in I_j} c_p v_{j,p}$,
	\item $v$ is homogeneous with respect to the $a$-grading and $\deg_a v = j-2$.
\end{itemize}
Note that each homogeneous part of $v$ is of $a$-degree $j-2$. By the induction hypothesis, $v$ is in the span of $\{v_{j,p}~|~ j\geq j_0,~p \in I_j\}$. Hence, $u$ is also in the span of $\{v_{j,p}~|~ j\geq j_0,~p \in I_j\}$. 

This completes the induction and proves that $M$ admits a homogeneous basis over $R$ if the $a$-grading and $x$-grading on $M$ are both bounded below.

If $M$ is finitely generated, then $M$ is generated by a finite set of homogeneous element. Then the lowest $a$-degree and $x$-degree of these element are lower bounds for the $a$-grading and the $x$-grading of $M$. So the lemma applies to $M$.  
\end{proof}

\subsection{Matrix factorizations and morphisms of matrix factorizations}

\begin{definition}\label{def-mf}
Let $w$ be a homogeneous element of $R=\Q[a,X_1,\dots,X_k]$ with bidegree $(2,2N+2)$. A $\zed_2\oplus\zed^{\oplus2}$-graded matrix factorization $M$ of $w$ over $R$ is a collection of two $\zed^{\oplus2}$-graded free $R$-modules $M_0$, $M_1$ and two homogeneous $R$-module maps $d_0:M_0\rightarrow M_1$, $d_1:M_1\rightarrow M_0$ of bidegree $(1,N+1)$, called differential maps, such that 
\[
d_1 \circ d_0=w\cdot\id_{M_0}, \hspace{1cm}  d_0 \circ d_1=w\cdot\id_{M_1}.
\]
We usually write $M$ as $M_0 \xrightarrow{d_0} M_1 \xrightarrow{d_1} M_0$.

The $\zed_2$-grading of $M$ takes value $\ve$ on $M_\ve$. The $a$- and $x$-gradings of $M$ are the $a$- and $x$-gradings of the underlying $\zed^{\oplus 2}$-graded $R$-module $M_0 \oplus M_1$.

Following \cite{KR1}, we denote by $M\left\langle 1\right\rangle$ the matrix factorization $M_1 \xrightarrow{d_1} M_0 \xrightarrow{d_0} M_1$ and write $M\left\langle j\right\rangle = M \underbrace{\left\langle 1\right\rangle\cdots\left\langle 1\right\rangle}_{j \text{ times }}$.

For any $\zed_2\oplus\zed^{\oplus2}$-graded matrix factorization $M$ of $w$ over $R$ and $j,k \in \zed$, $M\{j,k\}$ is naturally a $\zed_2\oplus\zed^{\oplus2}$-graded matrix factorization of $w$ over $R$.

For any two $\zed_2\oplus\zed^{\oplus2}$-graded matrix factorizations $M$ and $M'$ of $w$ over $R$, $M\oplus M'$ is naturally a $\zed_2\oplus\zed^{\oplus2}$-graded matrix factorization of $w$ over $R$.

Let $w$ and $w'$ be two homogeneous elements of $R$ with bidegree $(2,2N+2)$. For $\zed_2\oplus\zed^{\oplus2}$-graded matrix factorizations $M$ of $w$ and $M'$ of $w'$ over $R$, the tensor product $M\otimes_R M'$ is the $\zed_2\oplus\zed^{\oplus2}$-graded matrix factorization of $w+w'$ over $R$ such that:
\begin{itemize}
	\item $(M\otimes M')_0 = (M_0\otimes M'_0)\oplus (M_1\otimes M'_1)$, $(M\otimes M')_1 = (M_1\otimes M'_0)\oplus (M_1\otimes M'_0)$,
	\item The differential is given by the signed Leibniz rule. That is, $d(m\otimes m')=(dm)\otimes m' + (-1)^\ve m \otimes (dm')$ for $m\in M_\ve$ and $m'\in M'$.
\end{itemize}
\end{definition}

\begin{definition}\label{def-morph-mf}
Let $w$ be a homogeneous element of $R$ with bidegree $(2,2N+2)$, and $M$, $M'$ any two $\zed_2\oplus\zed^{\oplus2}$-graded matrix factorizations of $w$ over $R$. 
\begin{enumerate}
	\item A morphism of $\zed_2\oplus\zed^{\oplus2}$-graded matrix factorizations from $M$ to $M'$ is a homogeneous $R$-module homomorphism $f:M\rightarrow M'$ preserving the $\zed_2\oplus\zed^{\oplus2}$-grading satisfying $d_{M'}f=fd_{M}$. We denote by $\Hom_\mf(M,M')$ the $\Q$-space of all morphisms of $\zed_2\oplus\zed^{\oplus2}$-graded matrix factorizations from $M$ to $M'$.
	\item Two morphisms $f$ and $g$ of $\zed_2\oplus\zed^{\oplus2}$-graded matrix factorizations from $M$ to $M'$ are called homotopic if there is an $R$-module homomorphism $h:M\rightarrow M'$ shifting the $\zed_2$-grading by $1$ such that $f-g = d_{M'}h+hd_M$. In this case, we write $f \simeq g$. We denote by $\Hom_\hmf(M,M')$ the $\Q$-space of all homotopy classes of morphisms of $\zed_2\oplus\zed^{\oplus2}$-graded matrix factorizations from $M$ to $M'$. That is, $\Hom_\hmf(M,M') = \Hom_\mf(M,M') / \simeq$.
	\item Two morphisms $f$ and $g$ of $\zed_2\oplus\zed^{\oplus2}$-graded matrix factorizations from $M$ to $M'$ are called projectively homotopic if there is a $c\in\Q\setminus\{0\}$ such that $f \simeq cg$. In this case, we write $f \approx g$.
\end{enumerate}
\end{definition}

Let $M$ and $M'$ be as in Definition \ref{def-morph-mf}. Consider the $R$-module $\Hom_R(M,M')$ of $R$-module homomorphisms from $M$ to $M'$. It admits a $\zed_2$-grading that takes value 
\begin{itemize}
	\item $0$ on $\Hom^0_R(M,M')=\Hom_R(M_0,M'_0)\oplus\Hom_R(M_1,M'_1)$,
	\item $1$ on $\Hom^1_R(M,M')=\Hom_R(M_1,M'_0)\oplus\Hom_R(M_0,M'_1)$.
\end{itemize}
Moreover, $\Hom_R(M,M')$ admits a differential map $d$ given by $d(f)=d_{M'} \circ f -(-1)^\ve f \circ d_M$ for $f \in \Hom^\ve_R(M,M')$, which makes $\Hom_R(M,M')$ a chain complex with a $\zed_2$-homological grading.

\begin{lemma}\label{lemma-Hom-space}
Let $w$ be a homogeneous element of $R$ with bidegree $(2,2N+2)$, and $M$, $M'$ any two $\zed_2\oplus\zed^{\oplus2}$-graded matrix factorizations of $w$ over $R$. 
\begin{enumerate}
	\item A homogeneous $R$-module homomorphism $f:M\rightarrow M'$ preserving the $\zed_2\oplus\zed^{\oplus2}$-grading is a morphism of $\zed_2\oplus\zed^{\oplus2}$-graded matrix factorizations if and only if $df=0$.
	\item Two morphisms $f$ and $g$ of $\zed_2\oplus\zed^{\oplus2}$-graded matrix factorizations from $M$ to $M'$ are homotopic if and only if $f-g=dh$ for some $h \in \Hom^1_R(M,M')$.
	\item If $M$ is finitely generated over $R$, then $\Hom_R(M,M')$ is naturally $\zed_2\oplus\zed^{\oplus2}$-graded and
	      \begin{itemize}
	        \item $\Hom_\mf(M,M') = (\ker d)^{0,0,0}$, where $(\ker d)^{\ve,j,k}$ is the $\Q$-subspace of $\ker d$ of homogeneous elements of $\zed_2\oplus\zed^{\oplus2}$-degree $(\ve,j,k)$.
	        \item $\Hom_\hmf(M,M') = H^{0,0,0}(\Hom_R(M,M'),d)$, where $H^{\ve,j,k}(\Hom_R(M,M'),d)$ is the $\Q$-subspace of $H(\Hom_R(M,M'),d)$ of homogeneous elements of $\zed_2\oplus\zed^{\oplus2}$-degree $(\ve,j,k)$. 
        \end{itemize}
\end{enumerate}
\end{lemma}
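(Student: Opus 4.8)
The plan is to treat the three parts in order. Parts~1 and~2 should follow immediately by unwinding the sign convention in the differential $d$ on $\Hom_R(M,M')$, and the only point requiring care is Part~3, where the finite generation hypothesis is genuinely used to make $\Hom_R(M,M')$ a $\zed_2\oplus\zed^{\oplus2}$-graded complex.

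For Part~1, I would note that a homogeneous $R$-module map $f\colon M\to M'$ preserving the $\zed_2$-grading is precisely an element of $\Hom^0_R(M,M')$, so $d(f)=d_{M'}\circ f-(-1)^0f\circ d_M=d_{M'}f-fd_M$; hence $d(f)=0$ is literally the condition $d_{M'}f=fd_M$ of Definition~\ref{def-morph-mf}, which together with the assumption that $f$ preserves the full $\zed_2\oplus\zed^{\oplus2}$-grading says exactly that $f$ is a morphism of matrix factorizations. For Part~2, the homotopy $h$ in Definition~\ref{def-morph-mf} is by definition an $R$-module map shifting the $\zed_2$-grading by $1$, i.e.\ an element of $\Hom^1_R(M,M')$; for such $h$ one has $d(h)=d_{M'}\circ h-(-1)^1h\circ d_M=d_{M'}h+hd_M$, so the defining relation $f-g=d_{M'}h+hd_M$ is exactly $f-g=dh$ with $h\in\Hom^1_R(M,M')$, and conversely. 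Both directions are then immediate.

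For Part~3, I would first invoke the finite generation of $M$: then $M_0$ and $M_1$ are finitely generated free $\zed^{\oplus2}$-graded $R$-modules, so by Lemma~\ref{lemma-bi-homogeneous-basis-exists} each admits a \emph{finite} homogeneous basis. Writing $M_i=\bigoplus_p R\cdot e_p$ with the $e_p$ homogeneous, one gets $\Hom_R(M_i,M'_j)\cong\bigoplus_p\Hom_R(R\cdot e_p,M'_j)$ --- a direct sum rather than a product precisely because the index set is finite --- and the $p$-th summand is isomorphic to $M'_j$ with its $\zed^{\oplus2}$-grading shifted by $-\deg e_p$; being a finite direct sum of $\zed^{\oplus2}$-graded modules, $\Hom_R(M,M')$ is thereby $\zed_2\oplus\zed^{\oplus2}$-graded. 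Under this grading $d$ is homogeneous: it shifts the $\zed_2$-grading by $1$ and raises the $\zed^{\oplus2}$-grading by $(1,N+1)$, the bidegree of $d_M$ and $d_{M'}$. Hence $d$ carries $\Hom_R(M,M')^{0,0,0}$ into $\Hom_R(M,M')^{1,1,N+1}$, so $(\ker d)^{0,0,0}=\{f\in\Hom_R(M,M')^{0,0,0}:df=0\}$, which by Part~1 is exactly $\Hom_\mf(M,M')$. For the homology statement, I would observe that the degree-$(0,0,0)$ part of $\im d$ equals $d\bigl(\Hom_R(M,M')^{1,-1,-N-1}\bigr)$, since that is the unique homogeneous subspace that $d$ sends into degree $(0,0,0)$; then, given $f,g\in\Hom_\mf(M,M')$ with $f-g=dh$ for some $h\in\Hom^1_R(M,M')$ (Part~2), decomposing $h$ into its finitely many $\zed^{\oplus2}$-homogeneous components shows that only the component of bidegree $(-1,-N-1)$ can contribute to $f-g$, whose bidegree is $(0,0)$, so $f-g$ already lies in $(\im d)^{0,0,0}$; the converse is trivial. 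Thus $f\simeq g$ iff $f-g\in(\im d)^{0,0,0}$, and therefore $\Hom_\hmf(M,M')=\Hom_\mf(M,M')/{\simeq}=(\ker d)^{0,0,0}/(\im d)^{0,0,0}=H^{0,0,0}(\Hom_R(M,M'),d)$.

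The main obstacle --- indeed the only step with real content --- will be establishing that $\Hom_R(M,M')$ is genuinely $\zed_2\oplus\zed^{\oplus2}$-graded: this is where finiteness of a homogeneous basis of $M$ is used, to turn a product into a direct sum so that every $R$-module map $M\to M'$ is a \emph{finite} sum of homogeneous maps; without it the clean identifications $(\ker d)^{0,0,0}=\Hom_\mf(M,M')$ and $H^{0,0,0}=\Hom_\hmf(M,M')$ could fail. Everything else is formal sign- and degree-bookkeeping.
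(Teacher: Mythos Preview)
Your proposal is correct and follows essentially the same approach as the paper's proof, which is extremely terse: the paper simply says Parts~1 and~2 are ``simple reformulations of definitions'' and for Part~3 notes that finite generation of $M$ induces the $\zed_2\oplus\zed^{\oplus2}$-grading on $\Hom_R(M,M')$, that $d$ is homogeneous, and that ``the rest follows easily.'' Your write-up spells out precisely the details the paper elides --- the sign check for Parts~1 and~2, the use of a finite homogeneous basis (via Lemma~\ref{lemma-bi-homogeneous-basis-exists}) to turn the product into a direct sum, and the homogeneous-component argument for $h$ --- but there is no difference in strategy.
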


\begin{proof}
The first two parts of the lemma are simple reformulations of definitions. For Part (3), note that, when $M$ is finitely generated, the $\zed_2\oplus\zed^{\oplus2}$-gradings of $M$ and $M'$ induces a $\zed_2\oplus\zed^{\oplus2}$-grading on $\Hom_R(M,M')$. Since $d$ is homogeneous under this grading, both $\ker d$ and $H(\Hom_R(M,M'),d)$ inherits this $\zed_2\oplus\zed^{\oplus2}$-grading. The rest follows easily.
\end{proof}

\begin{definition}\label{def-mf-iso-homo}
Let $w$ be a homogeneous element of $R$ with bidegree $(2,2N+2)$, and $M$, $M'$ any two $\zed_2\oplus\zed^{\oplus2}$-graded matrix factorizations of $w$ over $R$. 
\begin{enumerate}
	\item An isomorphism of $\zed_2\oplus\zed^{\oplus2}$-graded matrix factorizations from $M$ to $M'$ is a morphism of $\zed_2\oplus\zed^{\oplus2}$-graded matrix factorizations that is also an isomorphism of the underlying $R$-modules. We say that $M$ and $M'$ are isomorphic, or $M\cong M'$, if there is an isomorphism from $M$ to $M'$.
	\item $M$ and $M'$ are called homotopic, or $M\simeq M'$, if there are morphisms $f:M\rightarrow M'$ and $g:M'\rightarrow M$ such that $g\circ f \simeq \id_M$ and $f\circ g \simeq \id_{M'}$. $f$ and $g$ are called homotopy equivalences between $M$ and $M'$.
\end{enumerate}
\end{definition}

\subsection{Koszul matrix factorizations}

In the definition of $\fH_N$, we will use matrix factorizations of a special form, called Koszul matrix factorizations. We now review the definition and basic properties of Koszul matrix factorizations. 

\begin{definition}\label{def-koszul-mf}
If $a_0,a_1\in R$ are homogeneous elements with $\deg a_0 +\deg a_1=(2,2N+2)$, then denote by $(a_0,a_1)_R$ the $\zed_2\oplus\zed^{\oplus2}$-graded matrix factorization $R \xrightarrow{a_0} R\{1-\deg_a a_0,~N+1-\deg_x{a_0}\} \xrightarrow{a_1} R$ of $a_0a_1$ over $R$. More generally, if $a_{1,0},a_{1,1},\dots,a_{l,0},a_{l,1}\in R$ are homogeneous with $\deg a_{j,0} +\deg a_{j,1}=(2,2N+2)$, then denote by 
\[
\left(%
\begin{array}{cc}
  a_{1,0}, & a_{1,1} \\
  a_{2,0}, & a_{2,1} \\
  \dots & \dots \\
  a_{l,0}, & a_{l,1}
\end{array}%
\right)_R
\]
the tenser product $(a_{1,0},a_{1,1})_R \otimes_R (a_{2,0},a_{2,1})_R \otimes_R \cdots \otimes_R (a_{l,0},a_{l,1})_R$. This is a $\zed_2\oplus\zed^{\oplus2}$-graded matrix factorization of $\sum_{j=1}^l a_{j,0} a_{j,1}$ over $R$, and is call the Koszul matrix factorization associated to the above matrix. We drop``$R$" from the notation when it is clear from the context. 

Note that the above Koszul matrix factorization is finitely generated over $R$.
\end{definition}

\begin{lemma}\cite{KR1,KR2}\label{lemma-zed-2-shift} 
Let $a_0,~a_1$ and  $a_{1,0},a_{1,1},\dots,a_{l,0},a_{l,1}$ be as in Definition \ref{def-koszul-mf}. Then 
\begin{eqnarray*}
(a_1,a_0)_R & \cong & (a_0,a_1)_R\left\langle 1\right\rangle\{1-\deg_a a_1,~N+1-\deg_x{a_1}\}, \\
\left(%
\begin{array}{cc}
  a_{1,1}, & a_{1,0} \\
  a_{2,1}, & a_{2,0} \\
  \dots & \dots \\
  a_{l,1}, & a_{l,0}
\end{array}%
\right)_R
& \cong &
\left(%
\begin{array}{cc}
  a_{1,0}, & a_{1,1} \\
  a_{2,0}, & a_{2,1} \\
  \dots & \dots \\
  a_{l,0}, & a_{l,1}
\end{array}%
\right)_R \left\langle l \right\rangle\{\sum_{j=1}^l (1-\deg_a a_{j,1}),~\sum_{j=1}^l (N+1-\deg_x{a_{j,1}})\}.
\end{eqnarray*}
\end{lemma}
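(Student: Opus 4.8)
The plan is to prove Lemma~\ref{lemma-zed-2-shift} by a direct examination of the underlying complexes, relying only on Definition~\ref{def-koszul-mf} and the bookkeeping of the bidegree shifts that occur in it. First I would establish the single-row case. By definition, $(a_0,a_1)_R$ is the matrix factorization $R\xrightarrow{a_0} R\{1-\deg_a a_0,\,N+1-\deg_x a_0\}\xrightarrow{a_1} R$ of $a_0a_1$, while $(a_1,a_0)_R$ is $R\xrightarrow{a_1} R\{1-\deg_a a_1,\,N+1-\deg_x a_1\}\xrightarrow{a_0} R$. Applying $\langle 1\rangle$ to $(a_0,a_1)_R$ swaps the two terms, producing $R\{1-\deg_a a_0,\,N+1-\deg_x a_0\}\xrightarrow{a_1} R\xrightarrow{a_0} R\{1-\deg_a a_0,\,N+1-\deg_x a_0\}$. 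I would then apply the overall shift $\{1-\deg_a a_1,\,N+1-\deg_x a_1\}$ and use $\deg a_0+\deg a_1=(2,2N+2)$ to check that the three free modules, listed with their gradings, match those of $(a_1,a_0)_R$ on the nose: the term currently carrying the shift acquires total shift $(2-\deg_a a_0-\deg_a a_1,\,2N+2-\deg_x a_0-\deg_x a_1)=(0,0)$, so it becomes $R$, and the two outer copies of $R$ each acquire the shift $\{1-\deg_a a_1,\,N+1-\deg_x a_1\}$, exactly as required. Since the differentials $a_0,a_1$ are honest module maps of the correct bidegree $(1,N+1)$ after these shifts, the identity map on underlying modules is the desired isomorphism of $\zed_2\oplus\zed^{\oplus2}$-graded matrix factorizations.

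Next I would pass to the general multi-row statement by induction on $l$, using that the left-hand side is by definition the tensor product $(a_{1,1},a_{1,0})_R\otimes_R\cdots\otimes_R(a_{l,1},a_{l,0})_R$ and the right-hand side is built from $(a_{1,0},a_{1,1})_R\otimes_R\cdots\otimes_R(a_{l,0},a_{l,1})_R$. The base case $l=1$ is precisely the first part just proved. For the inductive step I would use two compatibility facts about the tensor product of matrix factorizations from Definition~\ref{def-mf}: first, $(M\otimes_R M')\langle 1\rangle\cong M\langle 1\rangle\otimes_R M'$ (equivalently $\cong M\otimes_R M'\langle1\rangle$), which follows from how $\langle1\rangle$ permutes the two summands in each homogeneous piece of the tensor product; and second, that the grading shift $\{j,k\}$ distributes over $\otimes_R$, i.e. $(M\{j,k\})\otimes_R M'\cong (M\otimes_R M')\{j,k\}$. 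Granting these, I would write $\bigl(\text{row }l\text{ swapped}\bigr)_R\cong (\text{rows }1..l-1\text{ swapped})_R\otimes_R(a_{l,1},a_{l,0})_R$, apply the inductive hypothesis to the first factor and the $l=1$ case to the second, and then collect the accumulated $\langle\cdot\rangle$ shifts (which add up to $\langle l-1\rangle$ plus $\langle1\rangle=\langle l\rangle$) and the accumulated $\{\cdot,\cdot\}$ shifts (which sum to $\{\sum_{j=1}^{l-1}(1-\deg_a a_{j,1}),\ldots\}$ plus the $j=l$ term, giving the claimed $\{\sum_{j=1}^{l}(1-\deg_a a_{j,1}),\,\sum_{j=1}^{l}(N+1-\deg_x a_{j,1})\}$).

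I expect the main obstacle to be purely organizational rather than conceptual: getting every grading shift to land in the right place after composing $\langle\cdot\rangle$ with $\{\cdot,\cdot\}$ and after tensoring, since $\langle1\rangle$ and the bidegree shift $\{\cdot,\cdot\}$ interact through the relation $\deg a_{j,0}+\deg a_{j,1}=(2,2N+2)$ in a way that is easy to misplace by a sign or an off-by-one. In particular I would be careful that $\langle1\rangle$, as defined in Definition~\ref{def-mf}, does not by itself introduce any grading shift—it only reindexes $M_0\leftrightarrow M_1$—so that all the shift arises from the explicit $\{\cdot,\cdot\}$ in Definition~\ref{def-koszul-mf}. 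A secondary, minor point to verify is that the tensor-product differential defined by the signed Leibniz rule is compatible with the identity-map isomorphisms used above; this is automatic because reordering the two factors of each $(a_{j,0},a_{j,1})_R$ and the global $\langle\cdot\rangle$-reindexing are both realized by permutations of basis elements that intertwine the differentials up to the bookkeeping already accounted for. Since all the maps involved are identities on underlying free modules, no homotopy argument is needed—this is an isomorphism, not merely a homotopy equivalence—so the proof reduces entirely to the shift arithmetic outlined above.
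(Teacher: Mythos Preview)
Your proposal is correct and is precisely the standard direct verification one would write out; the paper itself omits the proof entirely, remarking only that it is ``fairly easy'' and can be found in \cite{KR1,KR2,KR3,Ras2,Wu7}. Your explicit single-row check together with the inductive step via the compatibilities $(M\otimes_R M')\langle 1\rangle\cong M\langle 1\rangle\otimes_R M'$ and $(M\{j,k\})\otimes_R M'\cong(M\otimes_R M')\{j,k\}$ is exactly the argument being deferred.
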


\begin{lemma}\cite{KR1,KR2}\label{lemma-dual-Koszul}
Let $a_{1,0},a_{1,1},\dots,a_{l,0},a_{l,1}$ be as in Definition \ref{def-koszul-mf}. Then for any $\zed_2\oplus\zed^{\oplus2}$-graded matrix factorization $M$ of $\sum_{j=1}^l a_{j,0} a_{j,1}$ over $R$,
\begin{eqnarray*}
&& \Hom_R(\left(%
\begin{array}{cc}
  a_{1,0}, & a_{1,1} \\
  a_{2,0}, & a_{2,1} \\
  \dots & \dots \\
  a_{l,0}, & a_{l,1}
\end{array}%
\right)_R, M) \cong 
M \otimes_R \left(%
\begin{array}{cc}
  -a_{1,1}, & a_{1,0} \\
 -a_{2,1}, & a_{2,0} \\
  \dots & \dots \\
  -a_{l,1}, & a_{l,0}
\end{array}%
\right)_R \\
& \cong &  M \otimes_R \left(%
\begin{array}{cc}
  a_{1,0}, & -a_{1,1} \\
  a_{2,0}, & -a_{2,1} \\
  \dots & \dots \\
  a_{l,0}, & -a_{l,1}
\end{array}%
\right)_R \left\langle l \right\rangle\{\sum_{j=1}^l (1-\deg_a a_{j,1}),~\sum_{j=1}^l (N+1-\deg_x{a_{j,1}})\}
\end{eqnarray*}
as $\zed_2\oplus\zed^{\oplus2}$-graded chain complexes.
\end{lemma}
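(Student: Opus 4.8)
The plan is to reduce the statement to the single‑row case $l=1$ and then propagate it through the tensor product using the tensor–Hom adjunction. The algebraic input is this: a Koszul matrix factorization is finitely generated free over $R$ (Definition~\ref{def-koszul-mf}), and for any finitely generated free $R$-module $K$ and \emph{any} $R$-module $M$ the natural map $\Hom_R(K,R)\otimes_R M\to\Hom_R(K,M)$ is an isomorphism of $\zed^{\oplus2}$-graded $R$-modules; no finiteness hypothesis on $M$ is needed, which is exactly what makes the lemma usable for the non‑homotopically‑finite matrix factorizations occurring later. First I would check that, when $K$ is a matrix factorization of $w$ and $M$ one of $w'$, this isomorphism carries the differential $d(f)=d_M\circ f-(-1)^\ve f\circ d_K$ on $\Hom_R(K,M)$ to the signed‑Leibniz differential on the tensor product. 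Equivalently, $\Hom_R(K,R)$ with its induced structure is the dual matrix factorization $K^\bullet$ of $-w$, and $\Hom_R(K,M)\cong K^\bullet\otimes_R M$ as $\zed_2\oplus\zed^{\oplus2}$-graded chain complexes. This is a direct check on free modules, the only real content being the placement of signs.

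Second, I would identify $K^\bullet$ for a single factor $K=(a_0,a_1)_R=R\xrightarrow{a_0}R\{s\}\xrightarrow{a_1}R$, where $s=(1-\deg_a a_0,\,N+1-\deg_x a_0)$. Dualizing interchanges the two free modules up to the shift $s\mapsto-s$ and turns the structure maps into multiplication by $a_1$ and by $-a_0$; using $\deg_a a_0+\deg_a a_1=2$ and $\deg_x a_0+\deg_x a_1=2N+2$ one sees $-s=(1-\deg_a a_1,\,N+1-\deg_x a_1)$, so $K^\bullet\cong(a_1,-a_0)_R\cong(-a_1,a_0)_R$, the last isomorphism being $\mathrm{diag}(1,-1)$ on the periodic complex. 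Combined with the first step this gives the $l=1$ case of the first displayed isomorphism; and applying Lemma~\ref{lemma-zed-2-shift} to $(-a_1,a_0)_R$ (noting $\deg(-a_1)=\deg a_1$) rewrites it as $(a_0,-a_1)_R\langle1\rangle\{1-\deg_a a_1,\,N+1-\deg_x a_1\}$, which is the $l=1$ case of the second displayed isomorphism.

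Third, I would pass to general $l$ by induction, using the tensor–Hom adjunction over $R$, $\Hom_R(K_1\otimes_R K_2,M)\cong\Hom_R(K_1,\Hom_R(K_2,M))$, after verifying it respects the matrix factorization structures. Iterating and applying the first step one factor at a time yields
\[
\Hom_R\Bigl(\bigotimes_{j=1}^l(a_{j,0},a_{j,1})_R,\,M\Bigr)\;\cong\;\Bigl(\bigotimes_{j=1}^l(a_{j,0},a_{j,1})_R^\bullet\Bigr)\otimes_R M\;\cong\;M\otimes_R\bigotimes_{j=1}^l(-a_{j,1},a_{j,0})_R,
\]
where associativity and the Koszul‑signed symmetry of the tensor product of matrix factorizations are used to move $M$ to the front (this symmetry carries no grading shift). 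Finally, applying Lemma~\ref{lemma-zed-2-shift} to each of the $l$ factors and collecting the $l$ copies of $\langle1\rangle$ into $\langle l\rangle$ and the internal shifts into $\{\sum_{j}(1-\deg_a a_{j,1}),\,\sum_{j}(N+1-\deg_x a_{j,1})\}$ produces the second displayed isomorphism.

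The main obstacle is bookkeeping rather than ideas: one must follow the signs in the signed‑Leibniz differential, in the $\Hom$–tensor comparison, and in the symmetry isomorphism precisely enough to conclude that every step is an isomorphism of $\zed_2\oplus\zed^{\oplus2}$-graded chain complexes carrying exactly the stated $\langle\cdot\rangle$- and $\{\cdot,\cdot\}$-shifts, rather than an isomorphism up to an unaccounted twist. In particular, the reordering that brings $M$ to the front of the tensor product, and the repeated use of $M\langle1\rangle\otimes M'\cong(M\otimes M')\langle1\rangle$ and $M\{j,k\}\otimes M'\cong(M\otimes M')\{j,k\}$, must be done consistently so that no residual $\zed_2$-shift is created or destroyed.
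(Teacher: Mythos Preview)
The paper omits the proof of this lemma entirely, stating only that ``the proofs of the above lemmas are omitted here since they are fairly easy and can be found in for example \cite{KR1,KR2,KR3,Ras2,Wu7}.'' Your outline is correct and is precisely the standard argument one finds in those references: identify $\Hom_R(K,M)$ with $K^\bullet\otimes_R M$ using that $K$ is finitely generated free, compute the dual of a single Koszul factor $(a_0,a_1)_R$ explicitly, propagate through the tensor product, and then invoke Lemma~\ref{lemma-zed-2-shift} factor by factor to obtain the second form. The only caveat worth flagging is the one you already flagged yourself: the sign and shift bookkeeping (particularly in the symmetry isomorphism bringing $M$ to the front and in tracking the $\langle 1\rangle$ shifts through the tensor product) is where errors creep in, but there is no missing idea.
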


\begin{lemma}\cite[Proposition 2]{KR1}\label{entries-null-homotopic}
Let $a_{1,0},a_{1,1},\dots,a_{l,0},a_{l,1}$ be as in Definition \ref{def-koszul-mf} and
\[
M = \left(%
\begin{array}{ll}
  a_{1,0}, & a_{1,1} \\
  a_{2,0}, & a_{2,1} \\
  \dots & \dots \\
  a_{l,0}, & a_{l,1}
\end{array}%
\right)_R.
\]
If $r$ is an element of the ideal $(a_{1,0}, a_{1,1}, \dots, a_{l,0}, a_{l,1})$ of $R$, then the multiplication by $r$, as an endomorphism of $M$, is homotopic to $0$.
\end{lemma}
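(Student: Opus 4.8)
The plan is to reduce to the single-row case $l=1$ and then use multilinearity of the tensor product. First I would treat $M=(a_0,a_1)_R$, the matrix factorization $R\xrightarrow{a_0} R'\xrightarrow{a_1} R$ (with $R'$ the appropriately shifted copy of $R$). An endomorphism of $M$ of $\zed_2$-degree $1$ is a pair $(h_0,h_1)$ with $h_0\colon R'\to R$ and $h_1\colon R\to R'$, i.e. a pair of elements of $R$ (up to the grading shift), and the homotopy formula $d_Mh+hd_M$ applied to this pair gives, on the two summands, multiplication by $a_1h_1+h_0a_0$ and by $a_0h_0+h_1a_1$ — the same element $a_0h_0+a_1h_1$ in both components. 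Hence multiplication by $r$ on $M$ is null-homotopic as soon as $r\in(a_0,a_1)$: just write $r=h_0a_0+h_1a_1$ and use $(h_0,h_1)$ as the homotopy. (One checks the homotopy is a genuine $R$-module map of the correct degree; the bidegree bookkeeping is exactly as in Definition \ref{def-koszul-mf}.)

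Next I would pass to general $l$ by writing $M=(a_{1,0},a_{1,1})_R\otimes_R M'$, where $M'$ is the Koszul factorization on rows $2,\dots,l$. If $r\in(a_{1,0},a_{1,1})$, then by the $l=1$ case there is a homotopy $h$ on $(a_{1,0},a_{1,1})_R$ with $d h+h d = r\cdot\id$, and $h\otimes\id_{M'}$ (with the appropriate Koszul sign) is a homotopy exhibiting $r\cdot\id_M\simeq 0$: since $M'$ is a matrix factorization of its own potential and the differential on the tensor product is the signed Leibniz rule, $d_M(h\otimes\id)+(h\otimes\id)d_M = (dh+hd)\otimes\id_{M'} = r\cdot\id_M$. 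The same argument applied to the $j$-th tensor factor handles any $r\in(a_{j,0},a_{j,1})$ for each $j$.

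Finally, for the full ideal $(a_{1,0},a_{1,1},\dots,a_{l,0},a_{l,1})$, write a general element as $r=\sum_{j=1}^l(b_{j,0}a_{j,0}+b_{j,1}a_{j,1})$ with $b_{j,\cdot}\in R$. Each summand $b_{j,0}a_{j,0}+b_{j,1}a_{j,1}$ lies in the single-row ideal $(a_{j,0},a_{j,1})$, so by the previous paragraph multiplication by it is null-homotopic on $M$; summing the corresponding homotopies gives a null-homotopy for multiplication by $r$. The one genuine subtlety — the step I expect to need the most care — is the grading and sign bookkeeping: one must check that the homotopy $h$ in the $l=1$ case has the correct bidegree $(-1,-N-1)$ shifting the $\zed_2$-grading by $1$, and that tensoring with $\id_{M'}$ introduces the Koszul sign consistently so that the Leibniz computation $d(h\otimes\id)+(h\otimes\id)d=(dh+hd)\otimes\id$ holds on the nose (rather than up to sign on one summand). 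This is routine but is the only place where something could go wrong; everything else is formal manipulation of the definitions in Definitions \ref{def-mf}, \ref{def-morph-mf} and \ref{def-koszul-mf}.
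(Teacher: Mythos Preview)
Your argument is correct and is the standard proof; the paper itself omits the proof entirely, citing \cite[Proposition 2]{KR1} and remarking that the proofs of these lemmas are ``fairly easy and can be found in for example \cite{KR1,KR2,KR3,Ras2,Wu7}.'' The reduction to the single-row case via an explicit homotopy $(h_0,h_1)$ with $r=h_0a_0+h_1a_1$, followed by tensoring with the identity on the remaining factors, is precisely what one finds in those references.

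One small correction to your stated concerns: the grading worry is milder than you suggest. Definition~\ref{def-morph-mf}(2) only requires the homotopy $h$ to be an $R$-module map shifting the $\zed_2$-grading by $1$; it need not be homogeneous for the $\zed^{\oplus 2}$-grading. (And your claimed bidegree $(-1,-N-1)$ for $h$ would only be right if $r$ itself had bidegree $(0,0)$; for general homogeneous $r$ the homotopy, if chosen homogeneous, has bidegree $\deg r - (1,N+1)$.) The Koszul sign check you flag is the only genuine verification needed, and it goes through as you describe.
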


\begin{lemma}\cite{Ras2}\label{lemma-twist}
Suppose $a_{1,0},a_{1,1},a_{2,0},a_{2,1},k$ are homogeneous elements of $R$ satisfying $\deg a_{j,0}+\deg a_{j,1}=(2,2N+2)$ and $\deg k = \deg a_{1,0} +\deg a_{2,0} -(2,2N+2)$. Then
\[
\left(%
\begin{array}{cc}
  a_{1,0} & a_{1,1} \\
  a_{2,0} & a_{2,1} 
\end{array}%
\right)_R
\cong
\left(%
\begin{array}{cc}
  a_{1,0}+ka_{2,1} & a_{1,1} \\
  a_{2,0}-ka_{1,1} & a_{2,1} 
\end{array}%
\right)_R.
\]
\end{lemma}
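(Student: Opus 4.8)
The plan is to write down both Koszul matrix factorizations explicitly as rank-four objects and exhibit an elementary change of basis on the underlying free modules that intertwines the two differentials. First I would unwind Definition \ref{def-koszul-mf}: writing $e_1$, $e_2$ for the generators of the middle copies of $R$ in $(a_{1,0},a_{1,1})_R$ and $(a_{2,0},a_{2,1})_R$ respectively, the tensor product $M=\left(\begin{array}{cc} a_{1,0} & a_{1,1} \\ a_{2,0} & a_{2,1}\end{array}\right)_R$ has even part freely generated by $1\otimes 1$ and $e_1\otimes e_2$ and odd part freely generated by $e_1\otimes 1$ and $1\otimes e_2$. Applying the signed Leibniz rule of Definition \ref{def-mf}, in these bases the differential $d_0$ (even to odd) has matrix $\begin{pmatrix} a_{1,0} & -a_{2,1} \\ a_{2,0} & a_{1,1}\end{pmatrix}$ and $d_1$ has matrix $\begin{pmatrix} a_{1,1} & a_{2,1} \\ -a_{2,0} & a_{1,0}\end{pmatrix}$. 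The right-hand side $M'$ has the same underlying $\zed^{\oplus2}$-graded modules, with differentials $d_0'$, $d_1'$ obtained by replacing $a_{1,0}$ with $a_{1,0}+ka_{2,1}$ and $a_{2,0}$ with $a_{2,0}-ka_{1,1}$ throughout; note the hypotheses on $\deg k$ and on $\deg a_{j,0}+\deg a_{j,1}$ force $\deg(ka_{2,1})=\deg a_{1,0}$ and $\deg(ka_{1,1})=\deg a_{2,0}$, so $M'$ is well-defined.

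Next I would introduce the candidate isomorphism $\Phi=(\Phi_0,\Phi_1)$ given by $\Phi_1=\id$ on the odd part and $\Phi_0=\begin{pmatrix}1 & 0 \\ k & 1\end{pmatrix}$ on the even part, i.e. $\Phi_0(1\otimes 1)=1\otimes 1+k\,(e_1\otimes e_2)$ and $\Phi_0(e_1\otimes e_2)=e_1\otimes e_2$. A direct $2\times 2$ matrix computation then verifies the intertwining relations $\Phi_1 d_0=d_0'\Phi_0$ and $\Phi_0 d_1=d_1'\Phi_1$; both reduce to the cancellations $a_{1,0}+ka_{2,1}-ka_{2,1}=a_{1,0}$ and $a_{2,0}-ka_{1,1}+ka_{1,1}=a_{2,0}$, using nothing but the ring structure of $R$. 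Since $\Phi_0$ is lower triangular with unit diagonal it is invertible, with inverse $\begin{pmatrix}1 & 0 \\ -k & 1\end{pmatrix}$, and $\Phi$ clearly preserves the $\zed_2$-grading. Hence $\Phi$ is an isomorphism of matrix factorizations once we check it is homogeneous for the internal $\zed^{\oplus2}$-grading.

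That last check is the only place the degree hypothesis on $k$ is really used, and it is where one must be careful with the grading-shift conventions. It amounts to verifying that $k\,(e_1\otimes e_2)$ sits in the same bidegree as $1\otimes 1$, namely $(0,0)$. From Definition \ref{def-koszul-mf}, $\deg e_i=(1-\deg_a a_{i,0},\,N+1-\deg_x a_{i,0})$, which by $\deg a_{i,0}+\deg a_{i,1}=(2,2N+2)$ equals $\deg a_{i,1}-(1,N+1)$; therefore $\deg(e_1\otimes e_2)=\deg a_{1,1}+\deg a_{2,1}-(2,2N+2)$. On the other hand $\deg k=\deg a_{1,0}+\deg a_{2,0}-(2,2N+2)=(2,2N+2)-\deg a_{1,1}-\deg a_{2,1}$, so $\deg k+\deg(e_1\otimes e_2)=0$, exactly as needed. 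Thus $\Phi$ preserves all gradings and is the desired isomorphism of $\zed_2\oplus\zed^{\oplus2}$-graded matrix factorizations.

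I do not expect a genuine obstacle here: the content is a single explicit change of basis, and the proof is essentially bookkeeping. The parts demanding attention are getting the signs in the Leibniz rule right (an error there would flip signs in $d_0$, $d_1$ and break the intertwining) and tracking the grading shifts of $e_1$, $e_2$ carefully so that the degree of $k$ lands correctly. An alternative would be to iterate the rank-one statement along the lines of \cite{Ras2}, but the direct argument above is the shortest route.
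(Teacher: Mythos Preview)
Your proof is correct and is essentially the same approach the paper uses. The paper omits a direct proof of this lemma, but in the proof of Lemma \ref{lemma-chi-twist-commute} it writes out the twist isomorphism explicitly (there called $\psi_0$, $\psi_1$): identity on the odd part and the lower-triangular matrix $\begin{pmatrix}1&0\\ k&1\end{pmatrix}$ on the even part, exactly as you have it.
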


\begin{lemma}\cite{KR1,Ras2}\label{lemma-row-op} 
Suppose $a_{1,0},a_{1,1},a_{2,0},a_{2,1},c$ are homogeneous elements of $R$ satisfying $\deg a_{j,0}+\deg a_{j,1}=(2,2N+2)$ and $\deg c = \deg a_{1,0} -\deg a_{2,0}$. Then 
\[
\left(%
\begin{array}{cc}
  a_{1,0} & a_{1,1} \\
  a_{2,0} & a_{2,1} 
\end{array}%
\right)_R
\cong
\left(%
\begin{array}{cc}
  a_{1,0}+ca_{2,0} & a_{1,1} \\
  a_{2,0} & a_{2,1}-ca_{1,1} 
\end{array}%
\right)_R.
\]
\end{lemma}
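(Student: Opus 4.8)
The plan is to write down an explicit isomorphism of matrix factorizations rather than appeal to any abstract machinery. First I would unravel both Koszul matrix factorizations in the statement using the tensor-product rule of Definition \ref{def-mf}. Writing $e$ and $f$ for the generators of the rank-one free modules $R\{\cdots\}$ occurring as the odd parts of $(a_{1,0},a_{1,1})_R$ and $(a_{2,0},a_{2,1})_R$ respectively, the left-hand side $M = \left(\begin{smallmatrix} a_{1,0} & a_{1,1} \\ a_{2,0} & a_{2,1}\end{smallmatrix}\right)_R$ has $M_0 = R\cdot(1\otimes 1) \oplus R\cdot(e\otimes f)$ and $M_1 = R\cdot(e\otimes 1) \oplus R\cdot(1\otimes f)$, and the signed Leibniz rule gives, in these ordered bases,
\[
d_0 = \begin{pmatrix} a_{1,0} & -a_{2,1} \\ a_{2,0} & a_{1,1}\end{pmatrix},\qquad d_1 = \begin{pmatrix} a_{1,1} & a_{2,1} \\ -a_{2,0} & a_{1,0}\end{pmatrix}.
\]
Carrying out the same computation for the right-hand side $M' = \left(\begin{smallmatrix} a_{1,0}+ca_{2,0} & a_{1,1} \\ a_{2,0} & a_{2,1}-ca_{1,1}\end{smallmatrix}\right)_R$ produces matrices $d_0',d_1'$ obtained from $d_0,d_1$ by the evident substitutions. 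A first bookkeeping observation: the hypothesis $\deg c = \deg a_{1,0}-\deg a_{2,0}$ forces $\deg(a_{1,0}+ca_{2,0}) = \deg a_{1,0}$ and $\deg(a_{2,1}-ca_{1,1}) = \deg a_{2,1}$, so the grading shifts appearing in $M'$ coincide with those in $M$; in other words $M$ and $M'$ have the same underlying $\zed^{\oplus2}$-graded free modules and differ only in their differentials.

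Next I would propose the isomorphism $\phi = (\phi_0,\phi_1)\colon M\to M'$ given by $\phi_0 = \id_{M_0}$ and, in the basis $\{e\otimes 1,\ 1\otimes f\}$ of $M_1$, by the unipotent matrix $\phi_1 = \left(\begin{smallmatrix} 1 & c \\ 0 & 1\end{smallmatrix}\right)$. This $\phi_1$ is invertible over $R$, with inverse $\left(\begin{smallmatrix} 1 & -c \\ 0 & 1\end{smallmatrix}\right)$, so $\phi$ is at once an isomorphism of the underlying $R$-modules, and it preserves the $\zed_2$-grading since $\phi_0$ and $\phi_1$ act on $M_0$ and $M_1$ separately. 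For the $\zed^{\oplus2}$-grading the only point to check is that the off-diagonal entry $c$ defines a homogeneous map $R\cdot(1\otimes f)\to R\cdot(e\otimes 1)$, i.e.\ that $\deg c + \deg(e\otimes 1) = \deg(1\otimes f)$; since $\deg(e\otimes 1) = (1,N+1)-\deg a_{1,0}$ and $\deg(1\otimes f) = (1,N+1)-\deg a_{2,0}$, this is exactly the hypothesis on $\deg c$.

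Finally I would verify that $\phi$ intertwines the differentials, i.e.\ $\phi_1 d_0 = d_0'\phi_0$ and $\phi_0 d_1 = d_1'\phi_1$. Both boil down to one-line matrix identities: left-multiplying $d_0$ by $\left(\begin{smallmatrix} 1 & c \\ 0 & 1\end{smallmatrix}\right)$ adds $c$ times the second row of $d_0$ to its first row, turning the top row $(a_{1,0},-a_{2,1})$ into $(a_{1,0}+ca_{2,0},\,-a_{2,1}+ca_{1,1})$, which is precisely the top row of $d_0'$; the identity $d_1'\left(\begin{smallmatrix} 1 & c \\ 0 & 1\end{smallmatrix}\right) = d_1$ is the dual column-operation statement. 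Together these show $(\phi_0,\phi_1)$ is a grading-preserving isomorphism $M\cong M'$, which is the assertion of the lemma.

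I do not expect a genuine obstacle here: the statement is of Gaussian-elimination type and the proof is a direct verification. The only place needing care is the sign convention in the tensor-product differential of Definition \ref{def-mf}, which dictates whether the intertwiner is $\phi_1 = \left(\begin{smallmatrix} 1 & c \\ 0 & 1\end{smallmatrix}\right)$ or its transpose/negative, together with the grading bookkeeping, where one must confirm that the stated condition on $\deg c$ is exactly (and not merely up to a sign) what makes $\phi_1$ homogeneous of degree zero. As an alternative route, one could derive the lemma from Lemma \ref{lemma-twist} combined with the column-swap isomorphism of Lemma \ref{lemma-zed-2-shift} applied to the second row $(a_{2,0},a_{2,1})$, but the direct construction above is shorter and keeps the grading conditions transparent.
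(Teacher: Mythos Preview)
Your proof is correct. The paper itself omits the proof of this lemma entirely, stating only that it is ``fairly easy'' and referring to \cite{KR1,KR2,KR3,Ras2,Wu7}; your explicit unipotent change of basis is exactly the standard verification one would expect and matches what those references do.
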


\begin{definition}\label{def-regular-sequence}
Let $a_1,\dots,a_k$ be elements of $R$. The sequence $\{a_1,\dots,a_k\}$ is called $R$-regular if $a_1\neq 0$ and $a_j$ is not a zero divisor in $R/(a_1,\dots,a_{j-1})$ for $j=2,\dots,k$.
\end{definition}

\begin{lemma}\cite{KR3,Ras2}\label{lemma-freedom}
Suppose that $\{a_1,\dots,a_k\}$ is an $R$-regular sequence of homogeneous elements of $R$. Assume that $f_1,\dots,f_k,g_1,\dots,g_k$ are homogeneous elements of $R$ such that $\deg f_j = \deg g_j = (2,2N+2) -\deg a_j$ and $\sum_{j=1}^k f_ja_j=\sum_{j=1}^kg_ja_j$. Then
\[
\left(%
\begin{array}{ll}
  f_1, & a_1 \\
  \dots & \dots \\
  f_k, & a_k
\end{array}%
\right)_R
\cong
\left(%
\begin{array}{ll}
  g_1, & a_1 \\
  \dots & \dots \\
  g_k, & a_k
\end{array}%
\right)_R.
\]
\end{lemma}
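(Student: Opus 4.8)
The plan is to prove Lemma \ref{lemma-freedom} by reducing it, through a sequence of elementary row and twist operations (Lemmas \ref{lemma-twist} and \ref{lemma-row-op}), to the case where only one of the $f_j$ differs from the corresponding $g_j$, and then to handle that case directly using $R$-regularity. First I would set $h_j = f_j - g_j$, so that the hypothesis $\sum_j f_j a_j = \sum_j g_j a_j$ becomes $\sum_{j=1}^k h_j a_j = 0$, i.e. the tuple $(h_1,\dots,h_k)$ is a syzygy of the regular sequence $\{a_1,\dots,a_k\}$. The key algebraic input is the classical fact that, for an $R$-regular sequence, the module of syzygies is generated by the ``Koszul'' syzygies $a_i e_j - a_j e_i$; this is exactly the statement that the Koszul complex on a regular sequence is a resolution, so the only relations among the $a_j$ in degree one are the obvious ones. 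Hence $(h_1,\dots,h_k)$ is an $R$-linear combination $\sum_{i<j} c_{ij}(a_i e_j - a_j e_i)$ of these generators, with the $c_{ij}$ chosen homogeneous by comparing degrees.

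Next I would show that adding a single Koszul syzygy to the first column of the Koszul matrix factorization does not change its isomorphism type. Concretely, replacing $(f_i, f_j)$ in rows $i$ and $j$ by $(f_i + c\,a_j,\ f_j - c\,a_i)$ leaves the factorization isomorphic: this is precisely Lemma \ref{lemma-twist} applied to the two-row submatrix
\[
\left(
\begin{array}{cc}
  f_i & a_i \\
  f_j & a_j
\end{array}
\right)_R,
\]
after noting that the tensor factors commute so we may bring rows $i$ and $j$ adjacent, and that the degree conditions on $c = c_{ij}$ required by Lemma \ref{lemma-twist} are met since $\deg c_{ij} = \deg a_i + \deg a_j - (2,2N+2)$ follows from homogeneity of the syzygy relation. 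Iterating over all pairs $i<j$ appearing in the expansion $h = \sum c_{ij}(a_i e_j - a_j e_i)$, each step an isomorphism by Lemma \ref{lemma-twist}, transforms the matrix with first column $(f_1,\dots,f_k)^{T}$ into the one with first column $(g_1,\dots,g_k)^{T}$, while the second column $(a_1,\dots,a_k)^{T}$ is untouched throughout. Composing these isomorphisms gives the desired isomorphism of Koszul matrix factorizations, which is manifestly compatible with the $\zed_2\oplus\zed^{\oplus2}$-grading because each intermediate operation was.

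I expect the main obstacle to be the structural input that the syzygy module of a regular sequence is generated by Koszul syzygies, together with the bookkeeping needed to keep all intermediate data homogeneous of the correct bidegree. For the former, one can either cite the standard commutative algebra fact (the Koszul complex $K_\bullet(a_1,\dots,a_k;R)$ is acyclic in positive degrees when $\{a_j\}$ is regular, so $H_1 = 0$ forces every syzygy to be a boundary, i.e. a combination of Koszul syzygies) or, since the claimed lemma is attributed to \cite{KR3,Ras2}, simply invoke their argument. For the latter, the point is that in the graded ring $R$ the relation $\sum h_j a_j = 0$ can be decomposed into its bihomogeneous components, so it suffices to treat homogeneous $h_j$; then each $c_{ij}$ produced by the Koszul decomposition can be taken bihomogeneous of bidegree $\deg a_i + \deg a_j - (2,2N+2)$, which is exactly what Lemma \ref{lemma-twist} demands. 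A secondary technical point is reordering the tensor factors so that rows $i$ and $j$ are adjacent before applying Lemma \ref{lemma-twist}: this is harmless because $\otimes_R$ of matrix factorizations is symmetric up to canonical isomorphism, and these reorderings do not disturb the grading.
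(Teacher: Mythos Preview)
Your approach is correct and is essentially the standard argument from \cite{KR3,Ras2}: the paper itself omits the proof entirely, simply citing those references, so there is nothing to compare against beyond noting that your outline matches the expected one (Koszul syzygies of a regular sequence combined with repeated applications of Lemma~\ref{lemma-twist}). One small slip: your stated formula $\deg c_{ij} = \deg a_i + \deg a_j - (2,2N+2)$ has the sign reversed; from $\deg h_j = (2,2N+2) - \deg a_j$ and $\deg(c_{ij} a_i) = \deg h_j$ you get $\deg c_{ij} = (2,2N+2) - \deg a_i - \deg a_j$, which is exactly $\deg f_i + \deg f_j - (2,2N+2)$ as Lemma~\ref{lemma-twist} requires, so the argument goes through.
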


The proofs of the above lemmas are omitted here since they fairly easy and can be found in for example \cite{KR1,KR2,KR3,Ras2,Wu7}. The following are two versions of \cite[Proposition 9]{KR1}, which are very useful in computations.

\begin{proposition}[strong version]\label{prop-b-contraction}
Let $X$ be a homogeneous indeterminate such that $\deg X= (0, 2n)$ and $n \leq N+1$. Denote by $P:R[X]\rightarrow R$ the evaluation map at $X=0$. That is, $P(f(X))=f(0)$ $\forall ~ f(X) \in R[X]$.

Suppose that $a_1,\dots,a_l,b_1,\dots,b_l$ are homogeneous elements of $R[X]$ such that 
\begin{itemize}
	\item $\deg a_j +\deg b_j = (2, 2N+2)$ $\forall~j=1,\dots,l$,
	\item $\sum_{j=1}^l a_jb_j \in R$,
	\item $\exists~ i\in \{1,\dots,l\}$ such that $b_i=X$.
\end{itemize}
Then
\[
M=\left(%
\begin{array}{cc}
  a_1 & b_1 \\
  a_2 & b_2 \\
  \dots & \dots \\
  a_k & b_k
\end{array}%
\right)_{R[X]}
\text{ and }
M'=\left(%
\begin{array}{cc}
  P(a_1) & P(b_1) \\
  P(a_2) & P(b_2) \\
  \dots & \dots \\
  P(a_{i-1}) & P(b_{i-1}) \\
  P(a_{i+1}) & P(b_{i+1}) \\
  \dots & \dots \\
  P(a_k) & P(b_k)
\end{array}%
\right)_{R}
\]
are homotopic as $\zed_2\oplus\zed^{\oplus2}$-graded matrix factorizations over $R$.
\end{proposition}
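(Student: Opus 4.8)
## Proof proposal

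The plan is to realize $M'$ as an explicit direct summand of $M$ (up to homotopy), by exhibiting a "Gaussian elimination" of the indeterminate $X$ out of the $i$-th Koszul factor. After reordering the rows (which only permutes tensor factors, hence changes nothing up to isomorphism by associativity and commutativity of $\otimes_R$), I may assume $i = l$, so the last row is $\bigl(a_l,\ X\bigr)_{R[X]}$. The first step is to use the row and twist operations of Lemmas \ref{lemma-twist} and \ref{lemma-row-op} to arrange that the remaining entries $a_1,\dots,a_{l-1},b_1,\dots,b_{l-1}$ do not involve the "problematic" part of $a_l$: since $\sum_{j=1}^l a_j b_j \in R$ and $b_l = X$, we can write $a_l = c_0 + X c_1 + \cdots$ and subtract multiples of $X = b_l$ from the other $a_j$'s (adjusting the $b_j$'s accordingly) to kill all $X$-dependence coming from the $l$-th factor; this is exactly the mechanism behind \cite[Proposition 9]{KR1}.

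The core of the argument is then the following. Tensoring $\bigl(a_l, X\bigr)$ against the matrix factorization $M'' := \bigl(a_1,b_1;\ \dots;\ a_{l-1},b_{l-1}\bigr)_{R[X]}$ of $w - a_l X$ gives a square-shaped complex; I would write it out as a bicomplex with the $X$-differential in one direction, and then run the standard contraction. Concretely, over $R[X]$ the module $R[X]$ is free with basis $\{1, X, X^2,\dots\}$ over $R$, and the map "multiplication by $X$" on $R[X]$ is injective with cokernel $R$; one builds an explicit homotopy, the "division by $X$" operator $\partial_X$ on the second free module of the last factor, and checks that $d_{\mf}\partial_X + \partial_X d_{\mf}$ differs from the identity by the projection onto the $X=0$ part. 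Feeding this back through the signed Leibniz rule for $M'' \otimes (a_l,X)$ and using that $a_1,\dots,a_{l-1}$ have been arranged $X$-free, one obtains a deformation retraction of $M$ onto the subcomplex where $X$ has been set to $0$ everywhere, which is precisely $M'$. The grading bookkeeping is automatic: $\deg X = (0,2n)$ with $n \le N+1$ ensures $\deg a_l = (2, 2N+2-2n)$ has non-negative $x$-degree, so all the homotopies in sight are homogeneous of the required bidegrees, and one checks the shifts match those in Definition \ref{def-koszul-mf}.

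Alternatively — and this may be cleaner to write — I would invoke \cite[Proposition 9]{KR1} essentially verbatim for the underlying ungraded (or $\zed_2$-graded) matrix factorizations, and spend the bulk of the proof verifying that the homotopy equivalence constructed there is homogeneous with respect to the $\zed^{\oplus 2}$-grading introduced in Definition \ref{def-bigrading-ring}. Since the present setup differs from \cite{KR1} only in the base ring ($\Q[a,X_1,\dots]$ with the extra $a$-variable of degree $(2,0)$) and in carrying an extra $a$-grading, and since all the structure maps in \cite{KR1}'s proof of Proposition 9 are built from multiplication by entries $a_j, b_j$ and from the division-by-$X$ operator, each of which has a well-defined bidegree under our conventions, this reduces to a degree count.

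The main obstacle I anticipate is not the contraction itself but the hypothesis that $M$ and $M'$ need not be homotopically finite — the modules $M_0, M_1$ are genuinely infinitely generated over $R$ (e.g. $R[X]$ has infinite rank over $R$). So I must be careful that: (i) Lemma \ref{lemma-bi-homogeneous-basis-exists} applies to guarantee the free modules involved have homogeneous bases (this needs the $a$- and $x$-gradings bounded below, which holds here since everything is built from the polynomial ring), and (ii) the homotopy $\partial_X$ is a well-defined $R$-linear map on the infinitely-generated module, i.e. it is defined termwise on the basis $\{X^m\}$ and the infinite sums that appear are actually finite when evaluated on any given element. Both points are routine once set up correctly, but they are the reason one cannot simply cite finiteness and must instead exhibit the homotopy by hand; I would state this explicitly and do the termwise check.
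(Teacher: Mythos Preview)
The paper does not actually prove this proposition; its entire proof is the single line ``See \cite[Proposition 3.19]{Wu-color}.'' So there is no in-paper argument to compare against. Your outline is a reasonable sketch of the standard contraction argument (essentially \cite[Proposition 9]{KR1} upgraded to the bigraded setting), which is presumably what appears in the cited reference. The main point you correctly identify --- that the nontrivial content over and above \cite{KR1} is the grading bookkeeping and the fact that the homotopy is well-defined on modules that are infinitely generated over $R$ --- is exactly why the paper distinguishes this ``strong version'' from the ``weak version'' in Proposition~\ref{prop-contraction-weak}.

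One small muddle: in your first paragraph you say you will ``subtract multiples of $X = b_l$ from the other $a_j$'s'' to kill $X$-dependence. The cleaner and standard move is the other way around: use row operations (Lemma~\ref{lemma-row-op}) with the row $(a_l, X)$ to eliminate $X$ from the \emph{right} column entries $b_1,\dots,b_{l-1}$ (writing $b_j = b_j^{(0)} + X c_j$ and replacing the $j$-th row by $(a_j + c_j a_l,\ b_j^{(0)})$, iterating in the $X$-degree). Once all $b_j$ with $j\neq l$ lie in $R$, the hypothesis $\sum a_j b_j \in R$ forces the contraction to close up correctly, and the division-by-$X$ homotopy on the last Koszul factor does the rest. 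Your ``alternative'' route --- cite \cite[Proposition 9]{KR1} for the underlying homotopy equivalence and then check homogeneity for the extra $a$-grading --- is also perfectly fine and likely closest to what \cite{Wu-color} does.
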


\begin{proof}
See \cite[Proposition 3.19]{Wu-color}.
\end{proof}

\begin{proposition}[weak version]\label{prop-contraction-weak}
Let $I$ be an ideal of $R$ generated by homogeneous elements. Assume $w$, $a_0$ and $a_1$ are homogeneous elements of $R$ such that $\deg w=\deg a_0 +\deg a_1 = (2,2N+2)$ and $w+a_0a_1 \in I$. Then $w \in I+(a_0)$ and $w \in I+(a_1)$. 

Let $M$ be a $\zed_2\oplus\zed^{\oplus2}$-graded matrix factorization of $w$ over $R$, and $\widetilde{M}=M \otimes_R (a_0,a_1)_R$. Then ${\widetilde{M}/I\widetilde{M}}$, ${M/(I+(a_0))M}$ and ${M/(I+(a_1))M}$ are all $\zed_2 \oplus\zed^{\oplus2}$-graded chain complexes of $R$-modules.
\begin{enumerate}
	\item If $a_0$ is not a zero-divisor in $R/I$, then there is an $R$-linear quasi-isomorphism $f:{\widetilde{M}/I\widetilde{M}} \rightarrow {(M/(I+(a_0))M)\left\langle 1\right\rangle \{1-\deg_a a_0, N+1-\deg_x a_0 \}}$ that preserves the $\zed_2\oplus\zed^{\oplus 2}$-grading.
	\item If $a_1$ is not a zero-divisor in $R/I$, then there is an $R$-linear quasi-isomorphism $g:{\widetilde{M}/I\widetilde{M}} \rightarrow {M/(I+(a_1))M}$ that preserves the $\zed_2\oplus\zed^{\oplus 2}$-grading.
\end{enumerate}
\end{proposition}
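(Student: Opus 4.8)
The first assertion is immediate: from $w+a_0a_1\in I$ we get $w=(w+a_0a_1)-a_0a_1\in I+(a_0)$ and, symmetrically, $w\in I+(a_1)$; consequently $\widetilde M/I\widetilde M$, $M/(I+(a_0))M$ and $M/(I+(a_1))M$ are matrix factorizations of $0$ over the relevant quotient rings, hence $\zed_2\oplus\zed^{\oplus2}$-graded $2$-periodic complexes, as stated. It remains to construct the two quasi-isomorphisms. My plan is to prove statement (2) directly and then deduce statement (1) from it by interchanging the roles of $a_0$ and $a_1$ and invoking Lemma~\ref{lemma-zed-2-shift}.

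For (2), I would pass to $S:=R/I$, write $\bar M:=M/IM$ (a free $S$-module, since $M$ is free over $R$) and write $\bar a_i$ for the image of $a_i$ in $S$, so that $\widetilde M/I\widetilde M=\bar M\otimes_S(\bar a_0,\bar a_1)_S$ has underlying $S$-module $\bar M\,e_0\oplus\bar M\,e_1$, where $e_0,e_1$ are the generators of the Koszul factor ($e_0$ of $\zed_2$-degree $0$ and bidegree $(0,0)$; $e_1$ of $\zed_2$-degree $1$ and bidegree $(1-\deg_a a_0,\,N+1-\deg_x a_0)$), with differential $D(m\,e_0)=(d_{\bar M}m)\,e_0\pm\bar a_0m\,e_1$ and $D(m\,e_1)=(d_{\bar M}m)\,e_1\pm\bar a_1m\,e_0$ (the usual Koszul signs). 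One checks at once that $N:=\bar a_1\bar M\,e_0\oplus\bar M\,e_1$ is an $S$-subcomplex, with quotient the $2$-periodic complex $(\bar M/\bar a_1\bar M)\,e_0$, which is exactly $M/(I+(a_1))M$ carrying no shift; the map $g$ will be this quotient map, and since $0\to N\to\widetilde M/I\widetilde M\to M/(I+(a_1))M\to0$ is exact, $g$ is a quasi-isomorphism provided $N$ is acyclic. That is the heart of the matter, and it is where the hypothesis is used: because $\bar a_1$ is a non-zero-divisor in $S$, multiplication by $\bar a_1$ is an isomorphism $\bar M\xrightarrow{\sim}\bar a_1\bar M$, and transporting $D|_N$ through it identifies $N$ with $\bar M\otimes_S(\bar a_0\bar a_1,\,1)_S$. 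The Koszul factor $(\bar a_0\bar a_1,\,1)_S$ is contractible (the entry $1$ is a unit, so $\id$ is null-homotopic by a one-line homotopy), hence so is its tensor product with $\bar M$; thus $N$ is contractible, in particular acyclic. Finally $g$ is $S$-linear, hence $R$-linear, and it preserves the $\zed_2\oplus\zed^{\oplus2}$-grading since it is the projection onto a graded direct summand.

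For (1), I would apply statement (2) to the pair $(a_1,a_0)$ in place of $(a_0,a_1)$, which is legitimate because $w+a_1a_0\in I$ and $a_0$ is now the non-zero-divisor; this gives a grading-preserving quasi-isomorphism $\bigl(M\otimes_R(a_1,a_0)_R\bigr)/I\bigl(M\otimes_R(a_1,a_0)_R\bigr)\to M/(I+(a_0))M$. It then remains only to rewrite the source: by Lemma~\ref{lemma-zed-2-shift}, $(a_1,a_0)_R\cong(a_0,a_1)_R\langle1\rangle\{1-\deg_a a_1,\,N+1-\deg_x a_1\}$, and since $\langle1\rangle$ and the grading shifts commute with $\otimes_R M$ and with reduction modulo $I$, the source equals $(\widetilde M/I\widetilde M)\langle1\rangle\{1-\deg_a a_1,\,N+1-\deg_x a_1\}$. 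Moving this shift to the other side and using $\deg_a a_0+\deg_a a_1=2$ and $\deg_x a_0+\deg_x a_1=2N+2$, one obtains a grading-preserving quasi-isomorphism $\widetilde M/I\widetilde M\to(M/(I+(a_0))M)\langle1\rangle\{1-\deg_a a_0,\,N+1-\deg_x a_0\}$, which is statement (1).

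The one step that is not routine bookkeeping is the acyclicity of $N$, and the only place the argument can fail is precisely the non-zero-divisor assumption: if $\bar a_1$ kills a nonzero element of $\bar M$, then $\bar M\to\bar a_1\bar M$ is not an isomorphism, $N$ ceases to be a tensor product with a contractible Koszul factor, and it generally carries homology supported on $\mathrm{Ann}_{\bar M}(\bar a_1)$. By contrast, verifying that $N$ is a subcomplex, identifying the quotient, and tracking the degree shifts are all mechanical.
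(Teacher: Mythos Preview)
Your proof is correct and follows essentially the same route as the paper. The paper writes $g$ as the explicit projection $(P_\varepsilon,0)$ and asserts that $\ker g$ is a homotopically trivial subcomplex; your $N=\bar a_1\bar M\,e_0\oplus\bar M\,e_1$ is exactly this kernel, and you supply the detail the paper omits by identifying $N\cong\bar M\otimes_S(\bar a_0\bar a_1,1)_S$ via the injection $\bar a_1\cdot$ and observing that the Koszul factor with a unit entry is contractible. Your derivation of Part~(1) from Part~(2) via Lemma~\ref{lemma-zed-2-shift} and the degree relations $\deg_a a_0+\deg_a a_1=2$, $\deg_x a_0+\deg_x a_1=2N+2$ is a clean addition, as the paper does not argue Part~(1) separately. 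One cosmetic remark: avoid naming the subcomplex $N$, since $N$ already denotes the integer in the potential throughout the paper.
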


\begin{proof}
This proposition is \cite[Proposition 9]{KR1}. Since the quasi-isomorphism $g$ in Part (2) will be used in the proof of Theorem \ref{thm-neg-stabilization}, we sketch a proof for Part (2) here.

Write $M= M_0 \xrightarrow{d_0} M_1 \xrightarrow{d_1} M_0$. Recall that $(a_0, a_1)_R= R \xrightarrow{a_0} R\{1-\deg_a a_0,~N+1-\deg_x{a_0}\} \xrightarrow{a_1} R$. Then $\widetilde{M}=M \otimes_R (a_0,a_1)_R$ is the matrix factorization
{\footnotesize \[
\left.%
\begin{array}{l}
  M_0\\
  \oplus \\
  M_1\{1-\deg_a a_0,~N+1-\deg_x{a_0}\}
\end{array}% 
\right.
\xrightarrow{\tilde{d}_0}
\left.%
\begin{array}{l}
  M_1\\
  \oplus \\
  M_0\{1-\deg_a a_0,~N+1-\deg_x{a_0}\}
\end{array}% 
\right.
\xrightarrow{\tilde{d}_1}
\left.%
\begin{array}{l}
  M_0\\
  \oplus \\
  M_1\{1-\deg_a a_0,~N+1-\deg_x{a_0}\}
\end{array}% 
\right.,
\]}

\noindent where
\[
\tilde{d}_0 = \left(%
\begin{array}{cc}
  d_0 & -a_1\\
  a_0 & d_1 \\
\end{array}% 
\right), \hspace{5pc}
\tilde{d}_1 = \left(%
\begin{array}{cc}
  d_1 & a_1\\
  -a_0 & d_0 \\
\end{array}% 
\right).
\]

For $\ve \in \zed_2$, denote by $P_\ve:M_\ve/IM_\ve \rightarrow M_\ve/(I+(a_1))M_\ve$ the standard quotient map. We define $g:{\widetilde{M}/I\widetilde{M}} \rightarrow (M/(I+(a_1))M)$ by the mappings
\begin{eqnarray*}
\left.%
\begin{array}{l}
  M_0/IM_0\\
  \oplus \\
  (M_1/IM_1)\{1-\deg_a a_0,~N+1-\deg_x{a_0}\}
\end{array}% 
\right.
& \xrightarrow{(P_0,0)} &
M_0/(I+(a_1))M_0, \\
\left.%
\begin{array}{l}
  M_1/IM_1\\
  \oplus \\
  (M_0/IM_0)\{1-\deg_a a_0,~N+1-\deg_x{a_0}\}
\end{array}% 
\right.
& \xrightarrow{(P_1,0)} & M_1/(I+(a_1))M_1.
\end{eqnarray*}

It is straightforward to check that 
\begin{itemize}
	\item $g$ is a surjective $R$-linear chain map that preserves the $\zed_2\oplus\zed^{\oplus 2}$-grading,
	\item $\ker g$ is a homotopically trivial subcomplex of ${\widetilde{M}/I\widetilde{M}}$.
\end{itemize}
So the short exact sequence 
\[
0 \rightarrow \ker g \hookrightarrow {\widetilde{M}/I\widetilde{M}} \xrightarrow{g} (M/(I+(a_1))M) \rightarrow 0
\]
induces an exact triangle
\[
\xymatrix{
H(\widetilde{M}/I\widetilde{M})  \ar[rr]^<<<<<<<<<<<g  & & H(M/(I+(a_1))M) \ar[dl] \\
& 0 \ar[ul]& \\
},
\]
which implies that $g$ is a quasi-isomorphism.
\end{proof}

\subsection{Categories of matrix factorizations}

\begin{definition}\label{categories-def}
Let $w$ be a homogeneous element of $R$ with bidegree $(2,2N+2)$. 

Suppose $M$ is a $\zed_2\oplus\zed^{\oplus2}$-graded matrix factorization of $w$ over $R$. We say that $M$ is homotopically finite if there exists a finitely generated graded matrix factorization $\mathcal{M}$ over $R$ with potential $w$ such that $M\simeq \mathcal{M}$. 

We define categories $\mf^{\mathrm{all}}_{R,w}$, $\mf_{R,w}$, $\hmf^{\mathrm{all}}_{R,w}$ and $\hmf_{R,w}$ by the following table. 

\begin{center}
\small{
\begin{tabular}{|c|l|c|}
\hline
Category & Objects & Morphisms \\
\hline
$\mf^{\mathrm{all}}_{R,w}$ & all $\zed_2\oplus\zed^{\oplus2}$-graded matrix factorizations of $w$ over $R$ with the   & $\Hom_{\mf}$ \\
 & $\zed^{\oplus2}$-grading bounded below &  \\
\hline
$\mf_{R,w}$ & all homotopically finite $\zed_2\oplus\zed^{\oplus2}$-graded matrix factorizations of $w$    & $\Hom_{\mf}$ \\
 & over $R$ with the $\zed^{\oplus2}$-grading bounded below &  \\
\hline
$\hmf^{\mathrm{all}}_{R,w}$ & all $\zed_2\oplus\zed^{\oplus2}$-graded matrix factorizations of $w$ over $R$ with the   & $\Hom_{\hmf}$ \\
 & $\zed^{\oplus2}$-grading bounded below &  \\
\hline
$\hmf_{R,w}$ & all homotopically finite $\zed_2\oplus\zed^{\oplus2}$-graded matrix factorizations of $w$    & $\Hom_{\hmf}$ \\
 & over $R$ with the $\zed^{\oplus2}$-grading bounded below &  \\
\hline
\end{tabular}
}
\end{center}
\end{definition}

\begin{definition}\label{def-homology-mf}
Let $w$ be a homogeneous element of $R$ with bidegree $(2,2N+2)$. Denote by $\mathfrak{I}$ the maximal homogeneous ideal $(a,X_1,\dots,X_k)$ of $R$. Note that $w\in \mathfrak{I}$ and, for a $\zed_2\oplus\zed^{\oplus2}$-graded matrix factorization $M$ of $w$ over $R$, $M/\mathfrak{I}M$ is a chain complex of $\zed^{\oplus2}$-graded $\Q$-spaces with a $\zed_2$-homological grading. We define $H_R(M)$ to be the $\zed_2\oplus\zed^{\oplus2}$-graded homology of $M/\mathfrak{I}M$.

Denote by $H_R^{\ve,j,k}(M)$ the subspace of $H_R(M)$ of homogeneous elements of $\zed_2$-degree $\ve$ and $\zed^{\oplus2}$-degree $(j,k)$. If the $\zed^{\oplus 2}$-grading of $M$ is bounded below and $\dim H_R^{\ve,j,k}(M) < \infty$ for all $\ve,j,k$, then we define the graded dimension of $M$ over $R$ to be
\[
\gdim_R(M) = \sum_{\ve,j,k} \tau^{\ve} \alpha^j\xi^k \dim_\Q H_R^{\ve,j,k}(M) ~\in ~\zed[[\alpha,\xi]][\alpha^{-1},\xi^{-1},\tau]/(\tau^2-1).
\]
\end{definition}

\begin{lemma}\label{lemma-free-module-quotient-tensor}
Let $M$ be a $\zed^{\oplus2}$-graded free $R$-module whose $\zed^{\oplus2}$-grading is bounded below. Define $V=M/\mathfrak{I} M$. Then there is a homogeneous $R$-module isomorphism $F: V \otimes_\Q R \rightarrow M$ preserving the $\zed^{\oplus2}$-grading. In particular, if $\{v_\beta | \beta \in \mathcal{B}\}$ is a homogeneous $\Q$-basis for $V$, then $\{F(v_\beta \otimes 1) | \beta \in \mathcal{B}\}$ is a homogeneous $R$-basis for $M$.
\end{lemma}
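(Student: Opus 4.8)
The plan is to build $F$ by transporting an honest homogeneous $R$-basis of $M$ down to $V$ and back up again. First I would invoke Lemma \ref{lemma-bi-homogeneous-basis-exists}: since $M$ is a $\zed^{\oplus2}$-graded free $R$-module whose $a$- and $x$-gradings are both bounded below, $M$ admits a homogeneous $R$-basis $\{e_\gamma \mid \gamma\in\Gamma\}$. Write $\pi_M:M\to V=M/\mathfrak{I}M$ for the (homogeneous, bidegree-$(0,0)$) quotient map and set $\bar e_\gamma=\pi_M(e_\gamma)$.

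Next I would check that $\{\bar e_\gamma \mid \gamma \in \Gamma\}$ is a homogeneous $\Q$-basis of $V$. From the direct sum decomposition $M=\bigoplus_{\gamma} Re_\gamma$ one gets $\mathfrak{I}M=\bigoplus_{\gamma} \mathfrak{I}e_\gamma$, hence $V=\bigoplus_{\gamma} (R/\mathfrak{I})e_\gamma$; since $R/\mathfrak{I}\cong\Q$ is concentrated in bidegree $(0,0)$, each $\bar e_\gamma$ is a nonzero homogeneous element of $V$ with $\deg\bar e_\gamma=\deg e_\gamma$, and these elements form a $\Q$-basis of $V$. Recall that $V\otimes_\Q R$ carries the tensor-product $\zed^{\oplus2}$-grading, $\deg(v\otimes r)=\deg v+\deg r$ for homogeneous $v$ and $r$; since $V=\bigoplus_{\gamma}\Q\bar e_\gamma$ over $\Q$, we get $V\otimes_\Q R=\bigoplus_{\gamma} R(\bar e_\gamma\otimes 1)$, so $\{\bar e_\gamma\otimes 1\}$ is a homogeneous $R$-basis of $V\otimes_\Q R$ with $\deg(\bar e_\gamma\otimes 1)=\deg e_\gamma$.

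Then I would define $F:V\otimes_\Q R\to M$ to be the $R$-linear map determined by $F(\bar e_\gamma\otimes 1)=e_\gamma$. It carries the homogeneous $R$-basis $\{\bar e_\gamma\otimes 1\}$ bijectively onto the homogeneous $R$-basis $\{e_\gamma\}$ while matching bidegrees, so it is a homogeneous $R$-module isomorphism preserving the $\zed^{\oplus2}$-grading. Finally, the ``in particular'' statement is then immediate: if $\{v_\beta\mid\beta\in\mathcal{B}\}$ is any homogeneous $\Q$-basis of $V$, then $\{v_\beta\otimes 1\}$ is a homogeneous $R$-basis of $V\otimes_\Q R$ with $\deg(v_\beta\otimes 1)=\deg v_\beta$, so applying the isomorphism $F$ yields a homogeneous $R$-basis $\{F(v_\beta\otimes 1)\}$ of $M$ with $\deg F(v_\beta\otimes 1)=\deg v_\beta$.

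As for obstacles, there really is no deep one here: all the weight is carried by Lemma \ref{lemma-bi-homogeneous-basis-exists}, which is already available. The only points needing a moment of care are the bookkeeping that $\mathfrak{I}$ distributes over the direct sum $\bigoplus_\gamma Re_\gamma$ (so that no $\bar e_\gamma$ collapses to $0$ and $\{\bar e_\gamma\}$ stays a basis) and the compatibility of the tensor-product grading on $V\otimes_\Q R$ with the grading on $M$; neither is more than a line. If one preferred to avoid Lemma \ref{lemma-bi-homogeneous-basis-exists}, the alternative would be to lift an arbitrary homogeneous $\Q$-basis of $V$ to homogeneous elements of $M$, define $F$ that way, and prove it is an isomorphism via a graded Nakayama argument (using that $M$, $\ker F$ and the cokernel of $F$ all have total degree $\deg_a+\deg_x$ bounded below and that every homogeneous element of $\mathfrak{I}$ has strictly positive total degree) — but the route through Lemma \ref{lemma-bi-homogeneous-basis-exists} is cleaner.
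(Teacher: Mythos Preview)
Your proof is correct and follows essentially the same approach as the paper: invoke Lemma~\ref{lemma-bi-homogeneous-basis-exists} to obtain a homogeneous $R$-basis $\{e_\gamma\}$ of $M$, observe that its image is a homogeneous $\Q$-basis of $V$, and use this to identify $V\otimes_\Q R$ with $M$. The paper's proof is a terse one-line version of exactly this argument; your write-up simply fills in the details more carefully.
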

\begin{proof}
By Lemma \ref{lemma-bi-homogeneous-basis-exists}, $M$ has a homogeneous basis $\{e_\alpha|\alpha\in\mathcal{A}\}$. Then, as $\zed^{\oplus2}$-graded vector spaces, $V \cong \bigoplus_{\alpha\in\mathcal{A}} \Q \cdot e_\alpha$. So, as graded $R$-modules, $M \cong \bigoplus_{\alpha\in\mathcal{A}} R \cdot e_\alpha \cong V \otimes_\Q R$. This proves the existence of $F$. The rest of the lemma follows easily.
\end{proof}

\begin{proposition}\cite[Proposition 7]{KR1}\label{prop-contractible-essential-decomp}
Let $w$ be a homogeneous element of $R$ with bidegree $(2,2N+2)$, and $M$ a $\zed_2\oplus\zed^{\oplus2}$-graded matrix factorization of $w$ over $R$. Assume the $\zed^{\oplus2}$-grading of $M$ is bounded below. Then there exist $\zed_2\oplus\zed^{\oplus2}$-graded matrix factorizations $M_c$ and $M_{es}$ of $w$ over $R$ such that
\begin{enumerate}[(i)]
	\item $M \cong M_c \oplus M_{es}$,
	\item $M_c \simeq 0$ and, therefore, $M\simeq M_{es}$,
	\item $M_{es} \cong H_R(M)\otimes_\Q R$ as $\zed_2\oplus\zed^{\oplus2}$-graded $R$-modules, and $H_R(M) \cong M_{es}/\mathfrak{I}M_{es}$ as $\zed_2\oplus\zed^{\oplus2}$-graded $\Q$-spaces.
\end{enumerate}
\end{proposition}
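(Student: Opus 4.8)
The statement is essentially a structural decomposition of matrix factorizations into a contractible summand and an "essential" summand that computes the homology; this is Khovanov–Rozansky's Proposition 7, adapted to the present $\zed^{\oplus 2}$-graded-over-$R$ setting. The plan is to build $M_{es}$ and $M_c$ by carefully choosing a homogeneous basis of $M$ adapted to the differential, in the spirit of a Gaussian-elimination/minimal-model argument.

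First I would invoke Lemma \ref{lemma-bi-homogeneous-basis-exists} to get a homogeneous $R$-basis of $M_0$ and of $M_1$, using that the $\zed^{\oplus 2}$-grading is bounded below. Working modulo the maximal homogeneous ideal $\mathfrak{I}=(a,X_1,\dots,X_k)$, the differential descends to a differential $\bar d$ on $M/\mathfrak{I}M$, and $H_R(M)$ is by definition its homology. Over the field $\Q$ one can split $M_\ve/\mathfrak{I}M_\ve$ as $(\text{cycles complement})\oplus(\text{image})\oplus(\text{homology representatives})$, i.e. choose bases so that $\bar d$ pairs off a complementary summand of $\ker\bar d$ in degree $\ve$ isomorphically onto $\im\bar d$ in degree $\ve+1$, and kills the rest. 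The key step is to lift this splitting to $M$ itself: use Lemma \ref{lemma-free-module-quotient-tensor} to transport a homogeneous $\Q$-basis of $M/\mathfrak{I}M$ to a homogeneous $R$-basis of $M$, then adjust the lifts by a Gaussian-elimination argument so that on the "paired" basis vectors $e$ the component of $d_\ve e$ along the matching vector $f$ becomes a unit in $R$ (it is a unit mod $\mathfrak{I}$, hence a unit since $R$ is graded-local with respect to $\mathfrak{I}$), and then absorb the remaining lower-order terms by a change of basis. This produces a decomposition $M\cong M_c\oplus M_{es}$ where $M_c$ is a direct sum of "trivial" pieces $R\{\cdots\}\xrightarrow{u} R\{\cdots\}\xrightarrow{w/u} R\{\cdots\}$ with $u$ a unit, each of which is visibly contractible, giving (i) and the first half of (ii). The complement $M_{es}$ has $M_{es}/\mathfrak{I}M_{es}$ with zero differential, so $M_{es}/\mathfrak{I}M_{es}\cong H_R(M)$ as $\zed_2\oplus\zed^{\oplus 2}$-graded $\Q$-spaces, and then Lemma \ref{lemma-free-module-quotient-tensor} again gives $M_{es}\cong H_R(M)\otimes_\Q R$ as graded $R$-modules; this is (iii). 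Finally $M\simeq M_{es}$ follows from $M_c\simeq 0$, completing (ii).

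The main obstacle I anticipate is the Gaussian-elimination step in the non-Noetherian, infinitely-generated setting: $M$ need not be finitely generated (the whole point of this paper is to handle homotopically infinite factorizations), so one must be careful that the infinitely many basis changes used to clear off-diagonal entries are well-defined and converge in an appropriate sense — concretely, that for each fixed bidegree only finitely many adjustments occur, which is exactly where boundedness below of the $\zed^{\oplus 2}$-grading is used. One organizes the elimination by increasing total degree so that clearing the entry on a degree-$d$ basis vector only involves vectors of degree $<d$, and boundedness below guarantees this terminates degree-by-degree. Once that bookkeeping is in place, everything else is a routine transcription of \cite[Proposition 7]{KR1}, and indeed one could alternatively just cite that result after noting that the proof there only uses that $R$ is a graded-local ring with homogeneous basis lifting, both of which hold here by Lemma \ref{lemma-bi-homogeneous-basis-exists}.
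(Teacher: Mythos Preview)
Your proposal is correct and follows essentially the same approach as the paper's proof: both arguments lift a splitting of $M/\mathfrak{I}M$ over $\Q$ to $M$ via Lemmas \ref{lemma-bi-homogeneous-basis-exists} and \ref{lemma-free-module-quotient-tensor}, then perform a Gaussian-elimination basis change whose convergence is controlled by the bounded-below $\zed^{\oplus2}$-grading (the paper makes this explicit via the chains $C^k_{\beta\alpha}$ of strictly $\prec$-decreasing degrees, which is exactly the degree-by-degree termination you describe). The only cosmetic difference is that the paper separates the elimination into two passes, first stripping off $(1,w)_R$ summands and then $(w,1)_R$ summands, whereas you treat both at once; this does not affect the argument.
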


\begin{proof}
(Following \cite{KR1}.) Write $M$ as $M_0 \xrightarrow{d_0} M_1 \xrightarrow{d_1} M_0$. Then the chain complex $V:=M/\mathfrak{I}M$ is given by $V_0 \xrightarrow{\hat{d}_0} V_1 \xrightarrow{\hat{d}_1} V_0$, where $V_\ve=M_\ve/\mathfrak{I}M_\ve$ for $\ve=0,1$. By Lemma \ref{lemma-bi-homogeneous-basis-exists}, $M_\ve$ has a homogeneous $R$-basis $\{e_\sigma|\sigma\in \mathcal{S}_\ve\}$, which induces a homogeneous $\Q$-basis $\{\hat{e}_\sigma|\sigma\in\mathcal{S}_\ve\}$ for $V_\ve$. Under the homogeneous basis $\{e_\sigma|\sigma\in \mathcal{S}_\ve\}$, the entries of matrices of $d_0$ and $d_1$ are homogeneous elements of $R$. And the matrices of $\hat{d}_0$ and $\hat{d}_1$ are obtained by letting $a=X_1=\cdots=X_m=0$ in the matrices of $d_0$ and $d_1$, which preserves scalar entries and kills entries with positive degrees.

We call $\{(\hat{u}_\rho,\hat{v}_\rho)|\rho\in\mathcal{P}\}$ a ``good" set if
\begin{itemize}
	\item $\{\hat{u}_\rho|\rho\in\mathcal{P}\}$ is a set of linearly independent homogeneous elements in $V_0$,
	\item $\{\hat{v}_\rho|\rho\in\mathcal{P}\}$ is a set of linearly independent homogeneous elements in $V_1$,
	\item $\hat{d}_0(\hat{u}_\rho) =\hat{v}_\rho$ and $\hat{d}_1(\hat{v}_\rho) =0$.
\end{itemize}
Using Zorn's Lemma, we find a maximal ``good" set $G=\{(\hat{u}_\alpha,\hat{v}_\alpha)|\alpha\in\mathcal{A}\}$. Using Zorn's Lemma again, we extend $\{\hat{u}_\alpha|\alpha\in\mathcal{A}\}$ into a homogeneous basis $\{\hat{u}_\alpha|\alpha\in\mathcal{A}\cup \mathcal{B}_0\}$ for $V_0$, and $\{\hat{v}_\alpha|\alpha\in\mathcal{A}\}$ into a homogeneous basis $\{\hat{v}_\alpha|\alpha\in\mathcal{A}\cup \mathcal{B}_1\}$ for $V_1$. For each $\beta \in \mathcal{B}_0$, we can write $\hat{d}_0 \hat{u}_\beta = \sum_{\alpha\in\mathcal{A}\cup \mathcal{B}_1} c_{\alpha\beta} \cdot \hat{v}_\alpha$, where $c_{\alpha\beta}\in \Q$, and the right hand side is a finite sum. 

By Lemma \ref{lemma-free-module-quotient-tensor}, there is a homogeneous isomorphism $F_\ve:V_\ve \otimes_\Q R \xrightarrow{\cong}M_\ve$ preserving the $\zed^{\oplus 2}$-grading. Let $u_\alpha = F_0(\hat{u}_\alpha \otimes 1)$ and  $v_\alpha = F_1(\hat{v}_\alpha \otimes 1)$. Then $\{u_\alpha|\alpha\in\mathcal{A}\cup \mathcal{B}_0\}$ and $\{v_\alpha|\alpha\in\mathcal{A}\cup \mathcal{B}_1\}$ are homogeneous $R$-bases for $M_0$ and $M_1$. Recall that $\hat{d}_0(\hat{u}_\alpha) =\hat{v}_\alpha$ for $\alpha \in \mathcal{A}$. So we have that, for any $\alpha \in \mathcal{A}$,
\[
d u_\alpha = v_\alpha + \sum_{\beta \in \mathcal{A}\cup \mathcal{B}_1,~\beta \neq \alpha} f_{\beta \alpha} v_\beta,
\]
where $f_{\beta \alpha} \in \mathfrak{I}$ and the sum on the right hand side is a finite sum. That is, for each $\alpha$,
\begin{equation}\label{contractible-essential-decomp-finite-sum}
f_{\beta \alpha} = 0 \text{ for all but finitely many } \beta.
\end{equation}

For two pairs of integers $(i,j)$, $(k,l)$, we say that 
\begin{itemize}
	\item $(i,j) \preceq (k,l)$ if $i \leq k$ and $j\leq l$,
	\item $(i,j) \prec (k,l)$ if $(i,j) \preceq (k,l)$ and $(i,j) \neq (k,l)$.
\end{itemize}

For each pair of $(\alpha,\beta)$ with $\alpha \in \mathcal{A}$, $\beta \in \mathcal{A}\cup\mathcal{B}_1$ and $\alpha \neq \beta$, the requirement that $f_{\beta \alpha} \in \mathfrak{I}$ implies 
\begin{equation}\label{contractible-essential-decomp-vanish}
f_{\beta \alpha}\neq 0 \text{ only if } \deg v_\beta \prec \deg v_\alpha.
\end{equation}
For $\alpha\in \mathcal{A}$ and $k>0$, let 
\[
C_{\ast\alpha}^k =\{(\gamma_0,\dots,\gamma_k)\in \mathcal{A}^{k+1} | ~\gamma_k = \alpha, ~  \deg v_{\gamma_0} \prec \cdots \prec\deg v_{\gamma_k},~f_{\gamma_0\gamma_1}\cdots f_{\gamma_{k-1}\gamma_k} \neq0\}.
\] 
By \eqref{contractible-essential-decomp-finite-sum}, $C_{\ast\alpha}^k$ is a finite set. For each $\alpha$, $C_{\ast\alpha}^k = \emptyset$ for large $k$'s since the $\zed^{\oplus 2}$-grading of $M$ is bounded below. For $\alpha, \beta \in \mathcal{A}$ and $k>0$, let 
\[
C_{\beta\alpha}^k =\{(\gamma_0,\dots,\gamma_k)\in C_{\ast\alpha}^k | \gamma_0 = \beta\}.
\] 
Then $\cup_{\beta \in \mathcal{A}} C_{\beta\alpha}^k = C_{\ast\alpha}^k$. So each $C_{\beta\alpha}^k$ is finite. And, for each $k$, $C_{\beta\alpha}^k\neq \emptyset$ for only finitely many $\beta$. Also, by definition, it is easy to see that $C_{\beta\alpha}^k\neq\emptyset$ only if $\deg v_\beta \prec \deg v_\alpha$. Moreover, for each $\alpha$, there is a $k_0>0$ such that $C_{\beta\alpha}^k=\emptyset$ for any $\beta$ whenever $k>k_0$.

Now, for $\alpha,\beta \in \mathcal{A}$, define $t_{\beta\alpha} \in R$ by
\[
t_{\beta\alpha} = 
\begin{cases}
1 & \text{if } \beta=\alpha, \\
\sum_{k\geq 1} (-1)^k \sum_{(\gamma_0,\dots,\gamma_k) \in C_{\beta\alpha}^k} f_{\gamma_0\gamma_1}\cdots f_{\gamma_{k-1}\gamma_k}   & \text{if } \deg v_\beta \prec \deg v_\alpha,\\
0 & \text{otherwise.} 
\end{cases}
\]
From the above discussion, we know that the sum on the right hand side is always a finite sum. So $t_{\beta\alpha}$ is well defined. Furthermore, given an $\alpha \in \mathcal{A}$, $t_{\beta\alpha}=0$ for all but finitely many $\beta$. So, for $\alpha \in \mathcal{A}$, $u_\alpha':=\sum_{\beta\in \mathcal{A}} t_{\beta\alpha} u_\beta$ is well defined. It is straightforward to check that:
\begin{itemize}
	\item $\{u_\alpha'|\alpha\in\mathcal{A}\}\cup \{u_\beta|\beta\in\mathcal{B}_0\}$ is also a homogeneous $R$-basis for $M_0$,
	\item For $\alpha \in \mathcal{A}$, $d u_\alpha' = v_\alpha + \sum_{\beta \in \mathcal{B}_1} f_{\beta \alpha}' v_\beta$, where the right hand side is a finite sum, and $f_{\beta \alpha}' \in \mathfrak{I}$. 
\end{itemize}
Now let
\[
v_\alpha'=
\begin{cases}
v_\alpha + \sum_{\beta \in \mathcal{B}_1} f_{\beta \alpha}' v_\beta & \text{if } \alpha \in \mathcal{A}, \\
v_\alpha & \text{if } \alpha \in \mathcal{B}_1.
\end{cases}
\]
Then $\{v_\alpha'|\alpha\in\mathcal{A}\cup \mathcal{B}_1\}$ is a homogeneous $R$-basis for $M_1$. We have
\[
\begin{cases}
d u_\alpha' = v_\alpha' & \text{if } \alpha \in \mathcal{A}, \\
d u_\beta = \sum_{\alpha \in \mathcal{A}} g_{\alpha\beta} v_\alpha' + \sum_{\gamma \in \mathcal{B}_1} g_{\gamma\beta} v_\gamma' & \text{if } \beta \in \mathcal{B}_0,
\end{cases}  
\]
where the sums on the right hand side are finite sums. For $\beta \in \mathcal{B}_0$, we let $u_\beta'= u_\beta - \sum_{\alpha \in \mathcal{A}} g_{\alpha\beta} u_\alpha'$. Then $\{u_\alpha'|\alpha\in\mathcal{A}\cup\mathcal{B}_0\}$ is again a homogeneous $R$-basis for $M_0$, and 
\[
\begin{cases}
d u_\alpha' = v_\alpha' & \text{if } \alpha \in \mathcal{A}, \\
d u_\beta' = \sum_{\gamma \in \mathcal{B}_1} g_{\gamma\beta} v_\gamma' & \text{if } \beta \in \mathcal{B}_0,
\end{cases}  
\]
where the sum on the right hand side is a finite sum. Using that $d_1d_0 = w \cdot\id_{M_0}$ and $d_0d_1 = w \cdot\id_{M_1}$, one can check that
\[
\begin{cases}
d v_\alpha' = w\cdot v_\alpha' & \text{if } \alpha \in \mathcal{A}, \\
d v_\beta' = \sum_{\gamma \in \mathcal{B}_0} h_{\gamma\beta} u_\gamma' & \text{if } \beta \in \mathcal{B}_1,
\end{cases}  
\]
where the sum on the right hand side is a finite sum.

Define $M_{(1,w)}$ to be the submodule of $M$ spanned by $\{u_\alpha'|\alpha\in\mathcal{A}\} \cup \{v_\alpha'|\alpha\in\mathcal{A}\}$, and $M'$ the submodule of $M$ spanned by $\{u_\beta'|\beta\in\mathcal{B}_0\} \cup \{v_\beta'|\beta\in\mathcal{B}_1\}$. Then $M_{(1,w)}$ and $M'$ are both $\zed_2\oplus\zed^{\oplus 2}$-graded matrix factorizations of $w$ over $R$ and $M=M_{(1,w)} \oplus M'$. Note that
\begin{enumerate}[(a)]
	\item $M_{(1,w)}$ is a direct sum of components of the form $(1,w)_R \{j,k\}$,
	\item Under the standard projection $M\rightarrow M/\mathfrak{I}M$, we have, for $\alpha \in \mathcal{A}$, $u_\alpha' \mapsto \hat{u}_\alpha$ and $v_\alpha' \mapsto \hat{v}_\alpha$.
\end{enumerate}
In particular, (b) above means that $M'$ does not have direct sum components of the form $(1,w)_R \{j,k\}$. Otherwise, we can enlarge the ``good" set $G$, which contradicts the fact that $G$ is maximal. We then apply a similar argument to $M'$ and find a decomposition $M' = M_{(w,1)} \oplus M_{es}$ of $\zed_2\oplus\zed^{\oplus 2}$-graded matrix factorizations satisfying
\begin{itemize}
	\item $M_{(w,1)}$ is a direct sum of components of the form $(w,1)_R \{j,k\}$,
	\item $M_{es}$ has no direct sum component of the forms $(1,w)_R \{j,k\}$ or $(w,1)_R \{j,k\}$.
\end{itemize}
Let $M_c = M_{(1,w)} \oplus M_{(w,1)}$. Then $M = M_c \oplus M_{es}$. $M_c \simeq 0$ since $(1,w)_R \{j,k\}$ and $(w,1)_R \{j,k\}$ are both homotopic to $0$. So $M\simeq M_{es}$. It is clear that, under any homogeneous basis for $M_{es}$, all entries of the matrices representing the differential map of $M_{es}$ must be in $\mathfrak{I}$. Otherwise, a simple change of basis would show that $M_{es}$ has a direct sum component of the form $(1,w)_R \{j,k\}$ or $(w,1)_R \{j,k\}$. Therefore, $H_R(M) \cong H_R(M_{es}) \cong M_{es}/\mathfrak{I}M_{es}$. So, by Lemma \ref{lemma-free-module-quotient-tensor}, $M_{es} \cong H_R(M) \otimes_\Q R$ as graded modules.
\end{proof}

\begin{corollary}\cite[Corollary 4]{KR1}\label{cor-homology-detects-homotopy}
Let $w$ be a homogeneous element of $R$ with bidegree $(2,2N+2)$, and $M$ a $\zed_2\oplus\zed^{\oplus2}$-graded matrix factorization of $w$ over $R$ whose $\zed^{\oplus2}$-grading is bounded below. We have:
\begin{enumerate}
	\item $M\simeq 0$ if and only if $H_R(M)=0$ or, equivalently, $\gdim_R (M)=0$.
	\item $M$ is homotopically finite if and only if $\dim_\Q H_R(M)$ is finite.
\end{enumerate}
\end{corollary}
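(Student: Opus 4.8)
The plan is to deduce both parts directly from the decomposition $M\cong M_c\oplus M_{es}$ of Proposition \ref{prop-contractible-essential-decomp}, together with one auxiliary observation: $H_R(\cdot)$ is a homotopy invariant. To set this up, I would first note that any morphism $f:M\to M'$ of matrix factorizations is $R$-linear, hence carries $\mathfrak{I}M$ into $\mathfrak{I}M'$ and descends to a chain map $\bar f:M/\mathfrak{I}M\to M'/\mathfrak{I}M'$ of $\zed_2\oplus\zed^{\oplus2}$-graded $\Q$-complexes; likewise a homotopy $h$ with $f-g=d_{M'}h+hd_M$ descends to $\bar h$ with $\bar f-\bar g=\bar d\,\bar h+\bar h\,\bar d$. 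Consequently, if $M\simeq M'$ then $\bar f$ is a homotopy equivalence of complexes of $\Q$-spaces, so it induces a grading-preserving isomorphism $H_R(M)\xrightarrow{\cong}H_R(M')$. In particular $H_R(M_c)=0$ since $M_c\simeq 0$, and therefore $H_R(M)\cong H_R(M_{es})$.

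For Part (1), if $H_R(M)=0$ then by Proposition \ref{prop-contractible-essential-decomp}(iii) we have $M_{es}\cong H_R(M)\otimes_\Q R=0$, and since $M\simeq M_{es}$ this gives $M\simeq 0$. Conversely, $M\simeq 0$ yields $H_R(M)\cong H_R(0)=0$ by the homotopy invariance just established. The equivalence with $\gdim_R(M)=0$ is then formal: the vanishing $H_R(M)=0$ makes each $\dim_\Q H_R^{\ve,j,k}(M)$ equal to $0$, so in particular $\gdim_R(M)$ is defined and equals $0$; conversely, since $\{1,\tau\}$ is a $\zed$-basis of the coefficient ring over $\zed[[\alpha,\xi]][\alpha^{-1},\xi^{-1}]$ and the coefficients $\dim_\Q H_R^{\ve,j,k}(M)$ are non-negative integers, $\gdim_R(M)=0$ forces every such coefficient to vanish, i.e. $H_R(M)=0$.

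For Part (2), if $M$ is homotopically finite choose a finitely generated graded matrix factorization $\mathcal{M}$ of $w$ with $\mathcal{M}\simeq M$; then $\mathcal{M}/\mathfrak{I}\mathcal{M}$ is a finite-dimensional $\Q$-complex, so $H_R(M)\cong H_R(\mathcal{M})=H(\mathcal{M}/\mathfrak{I}\mathcal{M})$ is finite-dimensional over $\Q$. Conversely, if $\dim_\Q H_R(M)<\infty$, then $M_{es}\cong H_R(M)\otimes_\Q R$ is a finitely generated free $R$-module, hence a finitely generated matrix factorization of $w$, and $M\simeq M_{es}$, so $M$ is homotopically finite by definition.

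I do not expect a genuine obstacle here: essentially all the work is already contained in Proposition \ref{prop-contractible-essential-decomp}, and the only step requiring attention is the homotopy invariance of $H_R$, which rests on the elementary fact that morphisms and homotopies of matrix factorizations are $R$-linear and therefore pass to the quotient by the maximal homogeneous ideal $\mathfrak{I}$. The one place to phrase carefully is the $\gdim$ assertion, since $\gdim_R(M)$ is only a priori defined when the homology is locally finite-dimensional; the statement ``$\gdim_R(M)=0\iff H_R(M)=0$'' should be read with that definedness understood, and in the relevant direction it is $H_R(M)=0$ itself that supplies it.
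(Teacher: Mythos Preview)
Your proof is correct and follows essentially the same route as the paper: both deduce everything from the decomposition $M\cong M_c\oplus M_{es}$ of Proposition~\ref{prop-contractible-essential-decomp}, using $M_{es}\cong H_R(M)\otimes_\Q R$ for one direction and the homotopy invariance of $H_R$ for the other. You are simply more explicit than the paper about why $H_R$ is a homotopy invariant and about the $\gdim_R(M)=0$ equivalence, both of which the paper leaves implicit.
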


\begin{proof}
For Part (1), note that, by Proposition \ref{prop-contractible-essential-decomp}, $M\simeq 0$ if and only if $M_{es}\simeq 0$ if and only if $H_R(M)=0$. 

Now consider (2). If $M$ is homotopically finite, then there is a finitely generated $\zed_2\oplus\zed^{\oplus2}$-graded matrix factorization $\mathcal{M}$ of $w$ over $R$ such that $M\simeq \mathcal{M}$. Note that $\mathcal{M}/\mathfrak{I}\mathcal{M}$ is finite dimensional over $\Q$. Thus, $H_R(M)\cong H_R(\mathcal{M})$ is finite dimensional over $\Q$. On the other hand, if $H_R(M)$ is finite dimensional over $\Q$, then, by Proposition \ref{prop-contractible-essential-decomp}, $M_{es}\cong H_R(M) \otimes_\Q R$ is finitely generated over $R$. But $M\simeq M_{es}$. So $M$ is homotopically finite.
\end{proof}

\begin{definition}\label{def-fully-additive}
An additive category $\mathscr{C}$ is said to be fully additive if every idempotent endomorphism in $\mathscr{C}$ splits. That is, for every object $C$ of $\mathscr{C}$ and every endomorphism $f$ of $C$ satisfying $f\circ f=f$, there exist objects $C_0$ and $C_1$ of $\mathscr{C}$ and an isomorphism 
$\left.%
\begin{array}{l}
  C_0\\
  \oplus \\
  C_1
\end{array}% 
\right. \xrightarrow{(J_0,J_1)} C$ such that $f\circ J_0=0$ and $f\circ J_1=J_1$.
\end{definition}

\begin{lemma}\label{lemma-fully-additive-split}
Let $\mathscr{C}$ be a fully additive category. Assume that 
\begin{itemize}
	\item $A$ and $C$ are objects of $\mathscr{C}$,
	\item $A\xrightarrow{f} C$ and $C\xrightarrow{g} A$ are morphisms of $\mathscr{C}$ such that $g\circ f = \id_A$.
\end{itemize}
Then there exits an object $C_0$ of $\mathscr{C}$ such that $C \cong A\oplus C_0$.
\end{lemma}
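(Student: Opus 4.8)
The plan is to manufacture an idempotent on $C$ whose splitting recovers $A$, and then to invoke full additivity. First I would set $e := f\circ g\colon C\to C$. Since $g\circ f=\id_A$, one has $e\circ e = f\circ(g\circ f)\circ g = f\circ g = e$, so $e$ is an idempotent endomorphism of $C$.

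Next, apply Definition \ref{def-fully-additive} to this $e$: there are objects $C_0, C_1$ of $\mathscr{C}$ and an isomorphism $(J_0,J_1)\colon C_0\oplus C_1\to C$ with $e\circ J_0=0$ and $e\circ J_1=J_1$. Write the inverse isomorphism with components $P_0\colon C\to C_0$ and $P_1\colon C\to C_1$, so that the biproduct identities $P_i\circ J_j=\delta_{ij}\,\id$ (in particular $P_1\circ J_1=\id_{C_1}$) and $J_0\circ P_0+J_1\circ P_1=\id_C$ hold. Composing the latter with $e$ on the left and using $e\circ J_0=0$ and $e\circ J_1=J_1$ gives $e=J_1\circ P_1$.

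I would then exhibit an isomorphism $A\cong C_1$ via the morphisms $\alpha:=P_1\circ f\colon A\to C_1$ and $\beta:=g\circ J_1\colon C_1\to A$. Indeed $\beta\circ\alpha = g\circ(J_1\circ P_1)\circ f = g\circ e\circ f = g\circ f\circ g\circ f = \id_A$, and $\alpha\circ\beta = P_1\circ(f\circ g)\circ J_1 = P_1\circ e\circ J_1 = P_1\circ J_1 = \id_{C_1}$. Hence $\alpha$ and $\beta$ are mutually inverse, so $C\cong C_0\oplus C_1\cong C_0\oplus A$, proving the lemma with $C_0$ as the desired object.

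There is no genuine obstacle here; the argument is entirely formal diagram-chasing once the idempotent $e=f\circ g$ is identified. The only step needing a word of care is the passage from ``$(J_0,J_1)$ is an isomorphism'' to the projection calculus and the identity $e=J_1\circ P_1$ — this is simply the standard fact that a two-term coproduct in an additive category is automatically a product, supplying projections $P_0,P_1$ satisfying the biproduct relations used above.
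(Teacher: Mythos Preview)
Your proof is correct and follows essentially the same approach as the paper: build the idempotent $e=f\circ g$, split it via full additivity into $C\cong C_0\oplus C_1$, and then show $A\cong C_1$ using the morphisms $P_1\circ f$ and $g\circ J_1$. Your derivation of $e=J_1\circ P_1$ and the resulting verification of $\beta\circ\alpha=\id_A$ is in fact a little cleaner than the paper's computation, but the argument is the same in substance.
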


\begin{proof}
Consider the morphism $C\xrightarrow{f\circ g}C$. We have $(f\circ g)\circ(f\circ g) = f\circ \id_A \circ g =f \circ g$. Since $\mathscr{C}$ is fully additive, there exist objects $C_0$ and $C_1$ of $\mathscr{C}$ and an isomorphism 
$\left.%
\begin{array}{l}
  C_0\\
  \oplus \\
  C_1
\end{array}% 
\right. \xrightarrow{(J_0,J_1)} C$ such that $f\circ g\circ J_0=0$ and $f\circ g\circ J_1=J_1$. Denote by 
$C \xrightarrow{\left(%
\begin{array}{l}
  P_0\\
  P_1
\end{array}% 
\right)} 
\left.%
\begin{array}{l}
  C_0\\
  \oplus \\
  C_1
\end{array}% 
\right.$ the inverse of $(J_0,J_1)$. Note that $P_1\circ f \circ g\circ J_1 =P_1\circ J_1 = \id_{C_1}$. Moreover, $\id_A=g\circ f =g\circ  \id_C \circ f= g\circ  (J_0\circ P_0 + J_1\circ P_1) \circ f$. So 
\begin{eqnarray*}
\id_A & = & \id_A \circ \id_A = g\circ f \circ g\circ  (J_0\circ P_0 + J_1\circ P_1) \circ f \\
& = & g\circ (f \circ g\circ J_0\circ P_0 + f \circ g\circ J_1\circ P_1) \circ f = g\circ  J_1\circ P_1 \circ f.\\ 
\end{eqnarray*}
Thus, the morphisms $A \xrightarrow{P_1\circ f}C_1$ and $C_1 \xrightarrow{g\circ J_1} A$ are isomorphisms. This shows that $A \cong C_1$ and, therefore, $C \cong C_0 \oplus C_1 \cong A \oplus C_0$.
\end{proof}

\begin{proposition}\cite[Proposition 24]{KR1}\label{fully-additive-hmf}
Let $w$ be a homogeneous element of $R$ with bidegree $(2,2N+2)$. Then $\mf^{\mathrm{all}}_{R,w}$, $\mf_{R,w}$ and $\hmf_{R,w}$ are all fully additive.
\end{proposition}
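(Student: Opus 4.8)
The plan is to treat the three categories separately, the first two being routine and the genuinely new work occurring in $\hmf_{R,w}$. In $\mf^{\mathrm{all}}_{R,w}$ a morphism is an honest map, so an idempotent $f\colon M\to M$ satisfies $f\circ f=f$ exactly; since $f$ is homogeneous and $d_Mf=fd_M$, the submodules $\ker f$ and $\im f$ are $\zed_2\oplus\zed^{\oplus2}$-graded, $d_M$-stable, and their underlying $R$-modules are direct summands of the free module $M_0\oplus M_1$. The first step is to record that such a summand is itself graded free: it is bounded below, so lifting a homogeneous $\Q$-basis of its reduction mod $\mathfrak{I}$ produces a homogeneous generating set by graded Nakayama over the connected graded ring $R$ (the boundedness argument is as in the proof of Lemma~\ref{lemma-bi-homogeneous-basis-exists}), and projectivity forces these to be a basis. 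Hence $\ker f,\im f\in\mf^{\mathrm{all}}_{R,w}$, the inclusions assemble into an isomorphism $\ker f\oplus\im f\xrightarrow{\cong}M$, and $C_0=\ker f$, $C_1=\im f$ give the splitting of Definition~\ref{def-fully-additive}. The same argument applies in $\mf_{R,w}$ once we note the summands stay homotopically finite: $M\cong\ker f\oplus\im f$ as matrix factorizations gives $H_R(M)\cong H_R(\ker f)\oplus H_R(\im f)$, so $\dim_\Q H_R(\ker f)$ and $\dim_\Q H_R(\im f)$ are finite by Corollary~\ref{cor-homology-detects-homotopy}(2), and that corollary then shows both summands are homotopically finite.

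For $\hmf_{R,w}$ a new idea is needed, since an idempotent $\bar f\colon M\to M$ only supplies a representative $f$ with $f\circ f\simeq f$, and in general neither $f$ is strictly idempotent nor is $\im f$ a matrix factorization. Using Proposition~\ref{prop-contractible-essential-decomp} I will assume $M$ finitely generated, and realize the ``image of $\bar f$'' as a mapping telescope: let $T=\mathrm{tel}\!\left(M\xrightarrow{f}M\xrightarrow{f}M\to\cdots\right)$, the mapping cone of $1-\sigma f\colon\bigoplus_{n\ge0}M\to\bigoplus_{n\ge0}M$ with $\sigma$ the summand shift. Because the transition maps carry no grading shift, $T$ is a free $\zed_2\oplus\zed^{\oplus2}$-graded matrix factorization of $w$ whose grading is bounded below by that of $M$, so $T\in\mf^{\mathrm{all}}_{R,w}$. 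Then I invoke the standard fact that a homotopy colimit splits a \emph{genuine} idempotent: the structure maps $a_n\colon M\to T$ satisfy $a_n\simeq a_{n+1}\circ f$, and because $f\circ f\simeq f$ precomposition with $f$ is idempotent on the relevant $\Hom_{\hmf}$-groups, so the inverse systems it generates are Mittag--Leffler and the pertinent $\lim^1$ terms vanish; one thus obtains $b\colon T\to M$ with $b\circ a_0\simeq f$ and $a_0\circ b\simeq\id_T$.

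It remains to return $T$ to $\hmf_{R,w}$. Reduction mod $\mathfrak{I}$ commutes with the telescope, so $T/\mathfrak{I}T=\mathrm{tel}(M/\mathfrak{I}M\xrightarrow{\bar f_0}M/\mathfrak{I}M\to\cdots)$ and hence $H_R(T)=\operatorname{colim}\bigl(H_R(M)\xrightarrow{H_R(f)}H_R(M)\to\cdots\bigr)=\im H_R(f)$, which is finite dimensional since $H_R(M)$ is (Corollary~\ref{cor-homology-detects-homotopy}(2)) and $H_R(f)$ is idempotent ($H_R$ being a functor on $\hmf_{R,w}$). Thus $\dim_\Q H_R(T)<\infty$, so $T$ is homotopically finite and $T\in\hmf_{R,w}$. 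Running the same construction on $\id_M-\bar f$ gives $T'\in\hmf_{R,w}$, and the two retractions assemble into an isomorphism $T\oplus T'\cong M$ in $\hmf_{R,w}$ carrying $\bar f$ to the projection onto $T$; this is the splitting required by Definition~\ref{def-fully-additive}.

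The two $\mf$-cases are routine once one knows the (standard, bounded-below) fact that graded summands of graded free $R$-modules are graded free, so the main obstacle is $\hmf_{R,w}$, and within it the passage from the homotopy idempotent $\bar f$ to an honest direct summand. One must (i) verify the mapping telescope of an endomorphism is an admissible object of $\mf^{\mathrm{all}}_{R,w}$ (free, bounded below, still a factorization of $w$), and (ii), crucially, that it splits $\bar f$ --- this is precisely where $f\circ f\simeq f$ is used, to make the relevant towers Mittag--Leffler and avoid the Freyd-type $\lim^1$ obstruction to splitting homotopy idempotents. A more hands-on alternative, closer to the proof of Proposition~\ref{prop-contractible-essential-decomp}, is to replace $M$ by its essential part $M_{es}$ (whose differential has all entries in $\mathfrak{I}$) and untwist a representative of $\bar f$ by an $\mathfrak{I}$-adically convergent change of homogeneous $R$-basis into a strict block-diagonal idempotent, simultaneously block-diagonalizing the differential; this works too but is computationally heavier.
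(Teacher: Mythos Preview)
Your proof is correct and takes a genuinely different route from the paper's in the $\hmf_{R,w}$ case. For $\mf^{\mathrm{all}}_{R,w}$ and $\mf_{R,w}$ the paper simply invokes the Quillen--Suslin theorem to say projective $R$-modules are free; your graded-Nakayama argument is more elementary and better suited to the bounded-below graded setting. For $\hmf_{R,w}$ the paper passes to the finitely generated essential part $M_{es}$ and observes that the kernel of $\Hom_{\mf}(M_{es},M_{es})\to\Hom_{\hmf}(M_{es},M_{es})$ is nilpotent: any morphism inducing zero on $H_R(M_{es})\cong M_{es}/\mathfrak{I}M_{es}$ has all matrix entries in $\mathfrak{I}$, and finite generation bounds the degree spread so a fixed power of $\mathfrak{I}$ kills all such endomorphisms. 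One then lifts the homotopy idempotent to a strict idempotent via the standard lifting-of-idempotents property of nilpotent ideals and applies the already-proven full additivity of $\mf_{R,w}$. Your telescope (B\"okstedt--Neeman) argument instead enlarges to $\hmf^{\mathrm{all}}_{R,w}$, which has countable coproducts of copies of a fixed object, splits the idempotent there, and descends to $\hmf_{R,w}$ by computing $H_R(T)=\im H_R(f)$. The paper's approach is shorter and purely algebraic, exploiting finite generation directly; yours is more categorical and transplants to any situation where a ``compact'' subcategory sits inside one with coproducts. Amusingly, the ``hands-on alternative'' you sketch in your last sentence is essentially the paper's actual argument, phrased as $\mathfrak{I}$-adic convergence rather than nilpotence plus idempotent-lifting.
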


\begin{proof}(Following \cite{KR1}.)
The category of $\zed^{\oplus2}$-graded $R$-modules is of course fully additive. By the Quillen-Suslin Theorem, we know that any projective $R$-module is a free $R$-module. Thus, the category of $\zed^{\oplus2}$-graded free $R$-modules is also fully additive. From this, it is easy to deduce that $\mf^{\mathrm{all}}_{R,w}$ and $\mf_{R,w}$ are both fully additive.

Next, we prove that $\hmf_{R,w}$ is fully additive. 

Let $M$ be an object of $\hmf_{R,w}$ and $f:M\rightarrow M$ a morphism of $\zed_2\oplus\zed^{\oplus2}$-graded matrix factorization such that $f\circ f\simeq f$. Denote by $P: M \rightarrow M_{es}$ and $J:M_{es} \rightarrow M$ the homotopy equivalences from Part (ii) of Proposition \ref{prop-contractible-essential-decomp}. Then $f$ induces a morphism $f_{es}=P\circ f\circ J: M_{es} \rightarrow M_{es}$, which satisfies $f_{es} \circ f_{es} \simeq f_{es}$.

Let $\alpha:\Hom_{\mf}(M_{es},M_{es}) \rightarrow \Hom_{\hmf}(M_{es},M_{es})$ be the natural projection taking each morphism to its homotopy class, and $\beta:\Hom_{\mf}(M_{es},M_{es}) \rightarrow \Hom_\Q(H_R(M),H_R(M))$ the map taking each morphism to the induced map on the homology.  Then $\ker \alpha$ and $\ker \beta$ are ideals of the ring $\Hom_{\mf}(M_{es},M_{es})$, and $\ker \alpha \subset \ker \beta$. 

Note that $M$ is homotopically finite. So, by Proposition \ref{prop-contractible-essential-decomp} and Corollary \ref{cor-homology-detects-homotopy}, $M_{es}$ is finitely generated. Let $\{e_1,\dots,e_n\}$ be a homogeneous basis for $M_{es}$. For any $h \in \ker \beta$, denote by $H$ its matrix under this basis. By Proposition \ref{prop-contractible-essential-decomp}, $H_R(M) \cong M_{es}/\mathfrak{I}M_{es}$. Since $\beta (h)=0$, we know that all entries of $H$ are elements of $\mathfrak{I}$. Thus, if $h \in (\ker \beta)^l$, then all entries of $H$ are elements of $\mathfrak{I}^l$. Recall that $\deg_a a=2$, $\deg_x X_i\geq 2$ and $\mathfrak{I}=(a,X_1,\dots,X_k)$. A simple degree count shows that the matrix of a homogeneous endomorphism of $M_{es}$ preserving the $\zed^{\oplus 2}$-grading can not contain non-zero entries from $\mathfrak{I}^K$, where
\[
K:= \max \{\max\{\deg_a e_i -\deg_a e_j~|~1\leq i,j\leq n\},~\max\{\deg_x e_i -\deg_x e_j~|~1\leq i,j\leq n\}\}.
\]
Thus, $(\ker \beta)^K =0$ and, therefore, $(\ker \alpha)^K =0$. This shows that $\ker \alpha$ is a nilpotent ideal of $\Hom_{\mf}(M_{es},M_{es})$. By \cite[Theorem 1.7.3]{Benson-book}, nilpotent ideals have the lifting idempotents property. Thus, there is an endomorphism $g_{es}\in \Hom_{\mf}(M_{es}, M_{es})$ satisfying $g_{es}\simeq f_{es}$ and $g_{es}\circ g_{es}=g_{es}$.

But $\mf_{R,w}$ is fully additive. So $g_{es}$ splits $M_{es}$ into a direct sum of two finitely generated $\zed_2\oplus\zed^{\oplus2}$-graded matrix factorizations. This direct sum is a splitting of $M$ by $f$ in the category $\hmf_{R,w}$.
\end{proof}

\section{Matrix Factorizations Associated to MOY Graphs}\label{sec-MOY-mf}

In this section, we define matrix factorizations associated to MOY graphs, which are the building blocks of the chain complex $\fC_N$ used to define the homology $\fH_N$. Throughout this section, we fix a non-negative integer $N$ and let $a$ be a homogeneous indeterminate of bidegree $\deg a = (2,0)$.

\subsection{Symmetric polynomials} In this subsection, we recall some facts about symmetric polynomials, which will be used in our definition of $\fH_N$.

\begin{definition}
A finite collection of homogeneous indeterminates of bidegree $(0,2)$ is called an alphabet. We denote by $\Sym(\mathbb{X})$ the ring of symmetric polynomials over $\Q$ in the alphabet $\mathbb{X}=\{x_1,\dots,x_m\}$. More generally, given a collection $\{\mathbb{X}_1,\dots,\mathbb{X}_l\}$ of pairwise disjoint alphabets, we denote by $\Sym(\mathbb{X}_1|\cdots|\mathbb{X}_l)$ the ring of polynomials in $\mathbb{X}_1\cup\cdots\cup\mathbb{X}_l$ over $\Q$ that are symmetric in each $\mathbb{X}_i$.\footnote{$\Sym(\mathbb{X}_1|\cdots|\mathbb{X}_l)$ is bigger than $\Sym(\mathbb{X}_1\cup\cdots\cup\mathbb{X}_l)$. In fact, $\Sym(\mathbb{X}_1|\cdots|\mathbb{X}_l)$ is a finitely generated free $\Sym(\mathbb{X}_1\cup\cdots\cup\mathbb{X}_l)$-module.} That is, $\Sym(\mathbb{X}_1|\cdots|\mathbb{X}_l)= \Sym(\mathbb{X}_1)\otimes_\Q\cdots\otimes_\Q\Sym(\mathbb{X}_l)$. 
\end{definition}

For an alphabet $\mathbb{X}=\{x_1,\dots,x_m\}$, we denote by $X_k$, $h_k(\mathbb{X})$ and $p_k(\mathbb{X})$ the elementary, complete and power sum symmetric polynomials in $\mathbb{X}$. That is,
\begin{eqnarray}
\label{eq-def-elementary} X_k & = & {\begin{cases}
                                     \sum_{1\leq i_1<i_2<\cdots<i_k\leq m} x_{i_1}x_{i_1}\cdots x_{i_k} & \text{if } 1\leq k \leq m, \\
                                     1 & \text{if } k=0, \\
                                     0 & \text{if } k < 0 \text{ or } k > m,
                                     \end{cases}} \\
\label{eq-def-complete} h_k(\mathbb{X}) & = & {\begin{cases}
                                               \sum_{1\leq i_1\leq i_2\leq \cdots<\leq i_k\leq m} x_{i_1}x_{i_1}\cdots x_{i_k},  & \text{if } k \geq 1, \\
                                               1 & \text{if } k=0, \\
                                               0 & \text{if } k<0,
                                               \end{cases}} \\
\label{eq-def-powersum} p_{k}(\mathbb{X}) & = & {\begin{cases}
                                                 \sum_{j=1}^m x_j^{k}  & \text{if } k \geq 0, \\
                                                 0 & \text{if } k<0.
                                                 \end{cases}}
\end{eqnarray}
Recall that $\Sym(\mathbb{X}) = \Q[X_1\dots,X_m]$. So there are unique $m$-variable polynomials $h_{m,k}$ and $p_{m,k}$ such that
\begin{eqnarray}
\label{eq-def-poly-h} h_k(\mathbb{X})& = & h_{m,k}(X_1,\dots,X_m), \\
\label{eq-def-poly-p} p_{k}(\mathbb{X}) & = & p_{m,k}(X_1,\dots,X_m).
\end{eqnarray}

\begin{lemma}\cite[Lemma 5.1]{Wu-color}\label{lemma-power-derive}
\[
\frac{\partial}{\partial X_j} p_{m,k}(X_1,\dots,X_m) = (-1)^{j+1} k h_{m,k-j}(X_1,\dots,X_m).
\]
\end{lemma}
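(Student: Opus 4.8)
The identity to establish is the classical one relating the partial derivatives of power-sum symmetric polynomials (written in terms of elementary symmetric polynomials) to complete homogeneous symmetric polynomials. The plan is to work from Newton's identities, or more efficiently from the generating function relating the $X_k$, $h_k$, and $p_k$. Recall the generating-function identities $E(t) := \sum_{k\geq 0} X_k t^k = \prod_{i=1}^m (1+x_i t)$ and $H(t) := \sum_{k\geq 0} h_k(\mathbb{X}) t^k = \prod_{i=1}^m (1-x_i t)^{-1}$, so that $E(-t) H(t) = 1$, together with $P(t) := \sum_{k \geq 1} p_k(\mathbb{X}) t^{k-1} = \frac{d}{dt}\log H(t) = H'(t)/H(t)$.

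First I would treat $X_1,\dots,X_m$ as the free generators of $\Sym(\mathbb{X})$ and compute $\frac{\partial}{\partial X_j}$ of the generating function $E(-t) = \sum_{k} (-1)^k X_k t^k$. Since $\frac{\partial}{\partial X_j} X_k = \delta_{jk}$, we get $\frac{\partial}{\partial X_j} E(-t) = (-1)^j t^j$. Next, differentiating $E(-t)H(t) = 1$ with respect to $X_j$ gives $\left(\frac{\partial}{\partial X_j}E(-t)\right) H(t) + E(-t)\left(\frac{\partial}{\partial X_j}H(t)\right) = 0$, hence $\frac{\partial}{\partial X_j} H(t) = -(-1)^j t^j H(t)^2 / \big(E(-t)H(t)\big)\cdot$... more simply, $\frac{\partial}{\partial X_j}H(t) = -(-1)^j t^j \, H(t)\,/\,E(-t) = (-1)^{j+1} t^j H(t)^2$, using $E(-t) = H(t)^{-1}$.

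Then I would extract the logarithmic derivative. From $P(t) = H'(t)/H(t)$ one has $\sum_{k\geq 1} p_k(\mathbb{X}) t^k = t H'(t)/H(t)$, and I want $\frac{\partial}{\partial X_j}$ of $p_k(\mathbb{X}) = p_{m,k}(X_1,\dots,X_m)$. Applying $\frac{\partial}{\partial X_j}$ to $\log H(t)$ gives $\frac{\partial}{\partial X_j}\log H(t) = \frac{1}{H(t)}\frac{\partial}{\partial X_j}H(t) = (-1)^{j+1} t^j H(t)$. On the other hand, $\frac{\partial}{\partial X_j}\log H(t) = \frac{d}{dt}\left(\frac{\partial}{\partial X_j}\sum_{k\geq 1}\frac{p_k(\mathbb{X})}{k} t^k\right)$-type bookkeeping — more directly, $\sum_{k\geq 1}\left(\frac{\partial}{\partial X_j}p_k(\mathbb{X})\right)\frac{t^k}{k}$ should be reconciled with $\int (-1)^{j+1} t^j H(t)\, \frac{dt}{t}$; cleaner is to note $\frac{\partial}{\partial X_j}\big(t\, d\log H(t)/dt\big) = (-1)^{j+1}\frac{d}{dt}\big(t^{j+1}H(t)\big)$, expand $t^{j+1}H(t) = \sum_{k} h_k(\mathbb{X}) t^{k+j+1}$, differentiate, and match the coefficient of $t^k$ on both sides to read off $\frac{\partial}{\partial X_j} p_k(\mathbb{X}) = (-1)^{j+1} k\, h_{k-j}(\mathbb{X})$. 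Translating back through \eqref{eq-def-poly-h} and \eqref{eq-def-poly-p} yields the claimed polynomial identity.

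The main obstacle is purely bookkeeping: keeping the signs, the shift indices, and the factor of $k$ straight when passing between $H(t)$, its logarithmic derivative, and the coefficient extraction — in particular being careful that $h_{m,k-j} = 0$ for $k < j$ matches $\frac{\partial}{\partial X_j} p_{m,k} = 0$ when $k < j$ (indeed $p_{m,k}$ for $k<j$ does not involve $X_j$... actually $p_{m,k}$ does involve all of $X_1,\dots,X_{\min(k,m)}$, so one should check the edge cases $k\leq j$ directly). An alternative, possibly cleaner, route is a direct induction on $k$ using Newton's recursion $p_k = \sum_{i=1}^{k-1}(-1)^{i-1} X_i p_{k-i} + (-1)^{k-1} k X_k$, differentiating with respect to $X_j$ and invoking the dual recursion for $h_{k}$; this avoids generating functions entirely but requires care with the induction base and with the terms where the differentiated variable appears explicitly. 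I would present whichever is shorter in the final writeup, but expect the generating-function argument to be the most transparent.
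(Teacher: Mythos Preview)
The paper does not actually prove this lemma; it is stated with a citation to \cite[Lemma 5.1]{Wu-color} and no argument is given. So there is nothing to compare against, and your generating-function approach is a perfectly standard and correct way to establish the identity.

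Your main computation is right: from $E(-t)H(t)=1$ you get $\partial_{X_j}H(t)=(-1)^{j+1}t^jH(t)^2$, hence $\partial_{X_j}\log H(t)=(-1)^{j+1}t^jH(t)$; since $\log H(t)=\sum_{k\ge 1}p_k t^k/k$, matching the coefficient of $t^k$ immediately gives $\tfrac{1}{k}\partial_{X_j}p_k=(-1)^{j+1}h_{k-j}$, which is the claim. This argument already handles all $k$ uniformly (for $k<j$ the coefficient of $t^k$ on the right is zero), so your worry about edge cases is unnecessary.

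One concrete slip to fix in the writeup: the ``cleaner'' alternative formula you state, $\partial_{X_j}\bigl(t\,\tfrac{d}{dt}\log H(t)\bigr)=(-1)^{j+1}\tfrac{d}{dt}\bigl(t^{j+1}H(t)\bigr)$, is off. Since $\partial_{X_j}$ and $t\,\tfrac{d}{dt}$ commute, the left side equals $(-1)^{j+1}\,t\,\tfrac{d}{dt}\bigl(t^{j}H(t)\bigr)$, whose coefficient of $t^k$ is $k\,h_{k-j}$; your version would give $(k+1)\,h_{k-j}$ instead. Drop that alternative and keep the direct $\log H$ argument, which is already the shortest route.
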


\subsection{Matrix factorizations associated to MOY graphs} We now recall the definition of MOY graphs and define $\zed_2\oplus\zed^{\oplus2}$-graded matrix factorizations associated to MOY graphs. Although the definitions in this subsection are for general MOY graphs, we will only need $1,2,3$-colored MOY graphs in our construction of $\fH_N$. 

\begin{definition}\label{def-MOY-graph}
An abstract MOY graph is an oriented graph with every edge colored by a non-negative integer such that, for every vertex $v$ with valence at least $2$, the sum of the colors of the edges entering $v$ is equal to the sum of the colors of the edges leaving $v$.

A vertex of valence $1$ in an abstract MOY graph is called an end point. A vertex of valence greater than $1$ is called an internal vertex. An abstract MOY graph $\Gamma$ is said to be closed if it has no end points. We say that an abstract MOY graph is trivalent if all of its internal vertices have valence $3$.

A MOY graph is an embedding of an abstract MOY graph into $\mathbb{R}^2$ such that, through each internal vertex $v$, there is a straight line $L_v$ so that all the edges entering $v$ enter through one side of $L_v$ and all edges leaving $v$ leave through the other side of $L_v$.

A marking of a MOY graph $\Gamma$ consists of the following:
\begin{enumerate}
	\item A finite collection of marked points on $\Gamma$ such that
	\begin{itemize}
	\item every edge of $\Gamma$ has at least one marked point;
	\item all the end points (vertices of valence $1$) are marked;
	\item none of the internal vertices (vertices of valence at least $2$) are marked.
  \end{itemize}
  \item An assignment of pairwise disjoint alphabets to the marked points such that the alphabet assigned to a marked point on an edge of color $m$ has $m$ independent indeterminates.
\end{enumerate}
\end{definition}

\begin{figure}[ht]

\setlength{\unitlength}{1pt}

\begin{picture}(360,80)(-180,-40)

%top

\put(0,0){\vector(-1,1){15}}

\put(-15,15){\line(-1,1){15}}

\put(-23,25){\tiny{$i_1$}}

\put(-33,32){\small{$\mathbb{X}_1$}}

\put(0,0){\vector(-1,2){7.5}}

\put(-7.5,15){\line(-1,2){7.5}}

\put(-11,25){\tiny{$i_2$}}

\put(-18,32){\small{$\mathbb{X}_2$}}

\put(3,25){$\cdots$}

\put(0,0){\vector(1,1){15}}

\put(15,15){\line(1,1){15}}

\put(31,25){\tiny{$i_k$}}

\put(27,32){\small{$\mathbb{X}_k$}}

%middle

\put(4,-2){$v$}

\multiput(-50,0)(5,0){19}{\line(1,0){3}}

\put(-70,0){$L_v$}

\put(45,0){\tiny{$i_1+i_2+\cdots +i_k = j_1+j_2+\cdots +j_l$}}

%bottom

\put(-30,-30){\vector(1,1){15}}

\put(-15,-15){\line(1,1){15}}

\put(-26,-30){\tiny{$j_1$}}

\put(-33,-40){\small{$\mathbb{Y}_1$}}

\put(-15,-30){\vector(1,2){7.5}}

\put(-7.5,-15){\line(1,2){7.5}}

\put(-13,-30){\tiny{$j_2$}}

\put(-18,-40){\small{$\mathbb{Y}_2$}}

\put(3,-30){$\cdots$}

\put(30,-30){\vector(-1,1){15}}

\put(15,-15){\line(-1,1){15}}

\put(31,-30){\tiny{$j_l$}}

\put(27,-40){\small{$\mathbb{Y}_l$}}

\end{picture}

\caption{}\label{general-MOY-vertex}

\end{figure}

For a MOY graph $\Gamma$ with a marking, cut it at its marked points. This gives a collection of marked MOY graphs, each of which is a star-shaped neighborhood of a vertex in $\Gamma$ and is marked only at its endpoints. (If an edge of $\Gamma$ has two or more marked points, then some of these pieces may be oriented arcs from one marked point to another. In this case, we consider such an arc as a neighborhood of an additional vertex of valence $2$ in the middle of that arc.)

Let $v$ be a vertex of $\Gamma$ with coloring and marking around it given as in Figure \ref{general-MOY-vertex}. Set $m=i_1+i_2+\cdots +i_k = j_1+j_2+\cdots +j_l$. Define 
\[
R=\Q[a]\otimes_\Q\Sym(\mathbb{X}_1|\dots|\mathbb{X}_k|\mathbb{Y}_1|\dots|\mathbb{Y}_l).
\] 
Note that $R$ is the $\zed^{\oplus 2}$-graded polynomial ring generated by $a$ and the elementary symmetric polynomials of $\mathbb{X}_1,\dots,~\mathbb{X}_k,~\mathbb{Y}_1,\dots,~\mathbb{Y}_l$, with bigrading given by $\deg a = (2,0)$ and $\deg x =(0,2)$ for all $x \in {\mathbb{X}_1\cup\cdots\cup\mathbb{X}_k\cup\mathbb{Y}_1\cup\cdots\cup\mathbb{Y}_l}$.

Write $\mathbb{X}=\mathbb{X}_1\cup\cdots\cup \mathbb{X}_k$ and $\mathbb{Y}=\mathbb{Y}_1\cup\cdots\cup \mathbb{Y}_l$, each of which is an alphabet of $m$ indeterminates. Denote by $X_j$ and $Y_j$ the $j$-th elementary symmetric polynomials in $\mathbb{X}$ and $\mathbb{Y}$. For $j=1,\dots,m$, define
\begin{equation}\label{eq-def-U-j}
U_j = \frac{p_{m,N+1}(Y_1,\dots,Y_{j-1},X_j,\dots,X_m) - p_{m,N+1}(Y_1,\dots,Y_j,X_{j+1},\dots,X_m)}{X_j-Y_j}.
\end{equation}

We associate to the vertex $v$ the $\zed_2\oplus\zed^{\oplus 2}$-graded matrix factorization
\[
\fC_N(v)=\left(%
\begin{array}{cc}
  aU_1 & X_1-Y_1 \\
  aU_2 & X_2-Y_2 \\
  \dots & \dots \\
  aU_m & X_m-Y_m
\end{array}%
\right)_R
\{0,-\sum_{1\leq s<t \leq k} i_si_t\},
\]
of $\sum_{j=1}^m (X_j-Y_j)U_j = ap_{N+1}(\mathbb{X})-ap_{N+1}(\mathbb{Y})$ over $R$, where $p_{N+1}(\mathbb{X})$ and $p_{N+1}(\mathbb{Y})$ are the $(N+1)$-th power sum symmetric polynomials in $\mathbb{X}$ and $\mathbb{Y}$.

\begin{remark}\label{MOY-freedom}
Since 
\[
\Sym(\mathbb{X}|\mathbb{Y})=\Q[X_1,\dots,X_m,Y_1,\dots,Y_m]=\Q[X_1-Y_1,\dots,X_m-Y_m,Y_1,\dots,Y_m],
\] 
it is clear that $\{X_1-Y_1,\dots,X_m-Y_m\}$ is $\Sym(\mathbb{X}|\mathbb{Y})$-regular. (See Definition \ref{def-regular-sequence}.) But $R$ is a free $\Sym(\mathbb{X}|\mathbb{Y})$-module. (See for example \cite{Lascoux-notes}.) So $\{X_1-Y_1,\dots,X_m-Y_m\}$ is also $R$-regular. Thus, by Lemma \ref{lemma-freedom}, the isomorphism type of $C(v)$ does not depend on the particular choice of $U_1,\dots,U_m$ as long as they are homogeneous with the right degrees and the potential of $C(v)$ remains $\sum_{j=1}^m a(X_j-Y_j)U_j = ap_{N+1}(\mathbb{X})-ap_{N+1}(\mathbb{Y})$. From now on, we will only specify our choice for $U_1,\dots,U_m$ when it is actually used in the computation. Otherwise, we will simply denote the entries in the left column of $\fC_N(v)$ by $\ast$'s. 
\end{remark}

\begin{definition}\label{def-MOY-mf}
\[
\fC_N(\Gamma) := \bigotimes_{v} \fC_N(v),
\]
where $v$ runs through all the interior vertices of $\Gamma$ (including those additional $2$-valent vertices.) Here, the tensor product is done over the common end points. More precisely, for two sub-MOY graphs $\Gamma_1$ and $\Gamma_2$ of $\Gamma$ intersecting only at (some of) their end points, let $\mathbb{W}_1,\dots,\mathbb{W}_n$ be the alphabets associated to these common end points. Then, in the above tensor product, $\fC_N(\Gamma_1)\otimes \fC_N(\Gamma_2)$ is the tensor product $\fC_N(\Gamma_1)\otimes_{\Q[a]\otimes_\Q\Sym(\mathbb{W}_1|\dots|\mathbb{W}_n)} \fC_N(\Gamma_2)$.

Note that $\fC_N(\Gamma)$ is a $\zed_2\oplus\zed^{\oplus 2}$-graded matrix factorization.  We denote by $\fC_N^{\ve,j,k}(\Gamma)$ the homogeneous component of $\fC_N(\Gamma)$ of $\zed_2$-degree $\ve$, $a$-degree $j$ and $x$-degree $k$.

If $\Gamma$ is closed, that is, has no end points, then $\fC_N(\Gamma)$ is considered a $\zed_2\oplus\zed^{\oplus 2}$-graded matrix factorization of $0$ over $\Q[a]$. 

Assume $\Gamma$ has end points. Let $\mathbb{E}_1,\dots,\mathbb{E}_n$ be the alphabets assigned to all end points of $\Gamma$, among which $\mathbb{E}_1,\dots,\mathbb{E}_k$ are assigned to exits and $\mathbb{E}_{k+1},\dots,\mathbb{E}_n$ are assigned to entrances. Define the boundary ring of $\Gamma$ to be $R_\partial=\Q[a]\otimes_\Q\Sym(\mathbb{E}_1|\cdots|\mathbb{E}_n)$. Then $\fC_N(\Gamma)$ is viewed as a $\zed_2\oplus\zed^{\oplus 2}$-graded matrix factorization over $R_\partial$ of $w= \sum_{i=1}^k ap_{N+1}(\mathbb{E}_i) - \sum_{j=k+1}^n ap_{N+1}(\mathbb{E}_j)$.

We allow the MOY graph to be empty. In this case, we define $\fC_N(\emptyset)=\Q[a] \rightarrow 0 \rightarrow \Q[a]$.
\end{definition}

\begin{lemma}\label{lemma-marking-independence}
If $\Gamma$ is a MOY graph, then the homotopy type of $\fC_N(\Gamma)$ does not depend on the choice of the marking.
\end{lemma}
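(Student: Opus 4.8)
The plan is to reduce the claim to a local statement about adding or removing a single marked point on an edge, since any two markings of $\Gamma$ can be connected by a finite sequence of such elementary moves (possibly passing through a common refinement that contains both). So it suffices to show: if $\Gamma'$ is obtained from $\Gamma$ by introducing one extra marked point in the interior of an edge $e$ of color $m$, with a fresh alphabet $\mathbb{Z}=\{z_1,\dots,z_m\}$ assigned to it, then $\fC_N(\Gamma') \simeq \fC_N(\Gamma)$. Recall that introducing a marked point in the middle of an arc is, by the convention in Definition \ref{def-MOY-mf}, the same as inserting an extra $2$-valent vertex there; so the content of the lemma is that tensoring with the matrix factorization of such a $2$-valent vertex, over the appropriate boundary ring, does not change the homotopy type.

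Concretely, suppose the edge $e$ near the new point looks like $\mathbb{X} \to \mathbb{Z} \to \mathbb{Y}$ in $\Gamma'$, whereas in $\Gamma$ it is simply $\mathbb{X} \to \mathbb{Y}$ (here $\mathbb{X}$ and $\mathbb{Y}$ are the alphabets at the neighboring marked points, each of size $m$). Then $\fC_N(\Gamma')$ is obtained from $\fC_N(\Gamma)$ by replacing the factor coming from the $\mathbb{X}\to\mathbb{Y}$ portion with the tensor product of the two $2$-valent vertex factorizations $\fC_N(\mathbb{X}\to\mathbb{Z})$ and $\fC_N(\mathbb{Z}\to\mathbb{Y})$, now as a module over a polynomial ring that also contains the elementary symmetric functions $Z_1,\dots,Z_m$ of $\mathbb{Z}$. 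By Remark \ref{MOY-freedom}, each $2$-valent vertex factorization is, up to isomorphism, a Koszul matrix factorization of the form
\[
\left(
\begin{array}{cc}
  \ast & X_1 - Z_1 \\
  \dots & \dots \\
  \ast & X_m - Z_m
\end{array}
\right)
\otimes
\left(
\begin{array}{cc}
  \ast & Z_1 - Y_1 \\
  \dots & \dots \\
  \ast & Z_m - Y_m
\end{array}
\right),
\]
with a suitable grading shift. I would now apply Proposition \ref{prop-b-contraction} (the strong version of the excision/contraction lemma), eliminating the indeterminates $z_1,\dots,z_m$ — or rather their elementary symmetric functions $Z_1,\dots,Z_m$ — one at a time, using the rows whose right-hand entries are $X_j - Z_j$ to kill $Z_j$ by the substitution $Z_j = X_j$. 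Each such contraction leaves the matrix factorization over the smaller ring $R$ (without the $Z_j$'s), and under $Z_j \mapsto X_j$ the remaining rows $Z_j - Y_j$ become $X_j - Y_j$; after carrying out all $m$ contractions one is left precisely with the Koszul factorization attached to the single vertex $\mathbb{X}\to\mathbb{Y}$, i.e. with the corresponding factor of $\fC_N(\Gamma)$. One must check that the degree hypotheses of Proposition \ref{prop-b-contraction} hold — the right-hand entries $X_j - Z_j$ are homogeneous of bidegree $(0,2)$, so $n=1 \le N+1$, which is fine — and that the grading shifts $\{0,-\sum_{s<t} i_s i_t\}$ attached to the $2$-valent vertices (which contribute nothing, since a $2$-valent vertex has a single incoming edge) match up; this is a routine bookkeeping check.

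The main obstacle is not any single calculation but the matching of hypotheses across the two settings: Proposition \ref{prop-b-contraction} is stated for finitely generated Koszul matrix factorizations over a polynomial ring, whereas $\fC_N(\Gamma')$ as a whole need not be homotopically finite and its boundary ring is a ring of partially symmetric polynomials. The point to verify carefully is that the contraction is purely local: one fixes everything in $\fC_N(\Gamma')$ coming from vertices other than the two $2$-valent ones, views the relevant ring extension $R[\text{elementary symmetric functions of }\mathbb{Z}]$ over $R$ (noting $R$ here includes $a$, all the other symmetric alphabets, and is a free module over its subring of fully symmetric functions, just as in Remark \ref{MOY-freedom}), and applies Proposition \ref{prop-b-contraction} with $X$ ranging over $Z_1,\dots,Z_m$ in turn. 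Since Proposition \ref{prop-b-contraction} produces a genuine homotopy equivalence of $\zed_2\oplus\zed^{\oplus2}$-graded matrix factorizations over $R$, and tensoring a homotopy equivalence with a fixed matrix factorization (the contribution of the other vertices) over $R$ yields a homotopy equivalence, we conclude $\fC_N(\Gamma')\simeq\fC_N(\Gamma)$. Running this over a chain of elementary moves between any two markings finishes the proof.
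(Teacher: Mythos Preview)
Your approach is exactly the paper's: reduce to inserting or deleting a single marked point and then contract out the new alphabet via Proposition~\ref{prop-b-contraction}. Two small remarks: (i) the entry $X_j-Z_j$ has bidegree $(0,2j)$, not $(0,2)$, so the hypothesis $n\le N+1$ of Proposition~\ref{prop-b-contraction} reads $m\le N+1$ for an edge of color $m$---this is harmless for the colors $1,2,3$ actually used in the paper, and the paper's own one-line proof does not comment on it either; (ii) your worry about homotopic finiteness is unnecessary, since $\fC_N(\Gamma')$ is by construction a finitely generated Koszul matrix factorization over a polynomial ring, so Proposition~\ref{prop-b-contraction} applies to it directly.
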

\begin{proof}
We only need to show that adding or removing an extra marked point corresponds to a homotopy equivalence of $\zed_2\oplus\zed^{\oplus 2}$-graded matrix factorizations. This follows easily from Proposition \ref{prop-b-contraction}.
\end{proof}

\begin{definition}\label{def-homology-MOY-closed}
Let $\Gamma$ be a closed MOY graph with a marking. Then $\fC_N(\Gamma)$ is a $\zed_2\oplus\zed^{\oplus 2}$-graded chain complex of free $\Q[a]$-modules. Define $\fH_N(\Gamma)$ to be the homology of $\fC_N(\Gamma)$. 

Note that $\fH_N(\Gamma)$ is a $\zed_2\oplus\zed^{\oplus 2}$-graded $\Q[a]$-modules. Denote by $\fH_N^{\ve,j,k}(\Gamma)$ the homogeneous component of $\fH_N(\Gamma)$ of $\zed_2$-degree $\ve$, $a$-degree $j$ and $x$-degree $k$. Define the $\zed_2\oplus\zed^{\oplus 2}$-graded dimension of $\Gamma$ to be $\gdim ~\Gamma = \sum_{\ve,j,k} \tau^\ve \alpha^j \xi^k \dim_\Q \fH_N^{\ve,j,k}(\Gamma) \in \zed[[\alpha,\xi]][\alpha^{-1},\xi^{-1},\tau]/(\tau^2-1)$.
\end{definition}

\begin{remark}
In the language of Definition \ref{def-homology-mf}, $\gdim ~\Gamma = \gdim_\Q(\fC_N(\Gamma))\neq \gdim_{\Q[a]}(\fC_N(\Gamma))$. Here, ${\gdim ~\Gamma}$ is well defined because $\fC_N(\Gamma)$ is a finitely generated $\zed_2\oplus \zed^{\oplus2}$-graded matrix factorization over a polynomial ring with finitely many indeterminates, which implies that:
\begin{enumerate}
	\item $\dim_\Q \fH_N^{\ve,j,k}(\Gamma) < \infty$ $\forall~\ve,j,k$,
	\item the $\zed^{\oplus 2}$-grading of $\fH_N(\Gamma)$ is bounded below.
\end{enumerate}
\end{remark}

\subsection{Edge sliding} Consider the MOY graphs in Figure \ref{contract-expand-figure}. We call the local changes $\Gamma_1 \rightleftharpoons \Gamma_1'$ and $\Gamma_2 \rightleftharpoons \Gamma_2'$ edge slidings. In this subsection, we demonstrate that an edge sliding induces, up to homotopy and scaling, a unique homotopy equivalence. 

\begin{figure}[ht]

\setlength{\unitlength}{1pt}

\begin{picture}(360,100)(-180,-50)

%top left 

\put(-100,25){$\Gamma_1$:}

\put(-60,10){\vector(0,1){10}}

\put(-60,20){\vector(-1,1){20}}

\put(-60,20){\vector(1,1){10}}

\put(-50,30){\vector(-1,1){10}}

\put(-50,30){\vector(1,1){10}}

\put(-61,3){\tiny{$3$}}

\put(-55,21){\tiny{$2$}}

\put(-80,42){\tiny{$1$}}

\put(-60,42){\tiny{$1$}}

\put(-40,42){\tiny{$1$}}

%top rihgt

\put(20,25){$\Gamma'_1$:}

\put(60,10){\vector(0,1){10}}

\put(60,20){\vector(1,1){20}}

\put(60,20){\vector(-1,1){10}}

\put(50,30){\vector(1,1){10}}

\put(50,30){\vector(-1,1){10}}

\put(59,3){\tiny{$3$}}

\put(48,21){\tiny{$2$}}

\put(80,42){\tiny{$1$}}

\put(60,42){\tiny{$1$}}

\put(40,42){\tiny{$1$}}

% lower left

\put(-100,-25){$\Gamma_2$:}

\put(-60,-30){\vector(0,-1){10}}

\put(-80,-10){\vector(1,-1){20}}

\put(-50,-20){\vector(-1,-1){10}}

\put(-60,-10){\vector(1,-1){10}}

\put(-40,-10){\vector(-1,-1){10}}

\put(-61,-47){\tiny{$3$}}

\put(-55,-29){\tiny{$2$}}

\put(-80,-8){\tiny{$1$}}

\put(-60,-8){\tiny{$1$}}

\put(-40,-8){\tiny{$1$}}

% lower right

\put(20,-25){$\Gamma'_2$:}

\put(60,-30){\vector(0,-1){10}}

\put(80,-10){\vector(-1,-1){20}}

\put(50,-20){\vector(1,-1){10}}

\put(60,-10){\vector(-1,-1){10}}

\put(40,-10){\vector(1,-1){10}}

\put(59,-47){\tiny{$3$}}

\put(48,-29){\tiny{$2$}}

\put(80,-8){\tiny{$1$}}

\put(60,-8){\tiny{$1$}}

\put(40,-8){\tiny{$1$}}

\end{picture}

\caption{}\label{contract-expand-figure}

\end{figure}

\begin{lemma}\label{lemma-edge-sliding}
Suppose that $\Gamma_1$, $\Gamma'_1$, $\Gamma_2$ and $\Gamma'_2$ are the MOY graphs shown in Figure \ref{contract-expand-figure}. Mark corresponding end points of $\Gamma_i$ and $\Gamma_i'$ with the same alphabet and denote by $R_\partial$ the common boundary ring. Then $\fC_N(\Gamma_1) \simeq \fC_N(\Gamma'_1)$ and $\fC_N(\Gamma_2) \simeq \fC_N(\Gamma'_2)$ as $\zed_2\oplus\zed^{\oplus 2}$-graded matrix factorizations over $R_\partial$.
\end{lemma}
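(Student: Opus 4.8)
The plan is to reduce each of $\fC_N(\Gamma_1)$ and $\fC_N(\Gamma_1')$ --- and, after reversing orientations, each of $\fC_N(\Gamma_2)$ and $\fC_N(\Gamma_2')$ --- to one and the same Koszul matrix factorization over $R_\partial$ whose entries visibly do not depend on how the three colour-$1$ legs are grouped. By Lemma~\ref{lemma-marking-independence} we may give every edge of all four graphs exactly one marked point. For $\Gamma_1$ and $\Gamma_1'$ this puts a three-element alphabet $\mathbb X_0$... more precisely a three-element alphabet $\mathbb W$ on the colour-$3$ base edge and one-element alphabets $\mathbb X_1,\mathbb X_2,\mathbb X_3$ on the three colour-$1$ legs --- these four boundary alphabets are common to $\Gamma_1,\Gamma_1'$ by hypothesis, so $R_\partial=\Q[a]\otimes_\Q\Sym(\mathbb W|\mathbb X_1|\mathbb X_2|\mathbb X_3)$ --- together with a two-element alphabet $\mathbb D$ on the internal colour-$2$ edge. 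Write $\mathbb E=\mathbb X_1\cup\mathbb X_2\cup\mathbb X_3$, and let $e_j$, $W_j$, $D_j$ denote elementary symmetric polynomials of $\mathbb E$ (or of $\mathbb W$, $\mathbb D$).

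First I would write $\fC_N(\Gamma_1)=\fC_N(v_1)\otimes_{\Q[a]\otimes_\Q\Sym(\mathbb D)}\fC_N(v_2)$ explicitly as a Koszul matrix factorization over $R:=R_\partial[D_1,D_2]$. Both interior vertices of $\Gamma_1$ are split vertices (the colour-$3$ edge enters, the smaller ones leave), so Definition~\ref{def-MOY-mf} contributes no grading shift. If $\mathbb X_0\in\{\mathbb X_1,\mathbb X_2,\mathbb X_3\}$ is the colour-$1$ leg at $v_1$, the three rows of $\fC_N(v_1)$ have second entries $W_j-e_j(\mathbb X_0\cup\mathbb D)$ and the two rows of $\fC_N(v_2)$ have second entries $D_k-e_k(\mathbb E\setminus\mathbb X_0)$. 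Now $R_\partial[D_1,D_2]=R_\partial[\,D_1-e_1(\mathbb E\setminus\mathbb X_0),\,D_2-e_2(\mathbb E\setminus\mathbb X_0)\,]$ is a legitimate re-choice of homogeneous polynomial generators, so after renaming, the two $v_2$-rows have single homogeneous indeterminates as their second entries. Proposition~\ref{prop-b-contraction} then applies twice --- those indeterminates have $x$-degrees $2$ and $4$, both $\leq 2(N+1)$ when $N\geq 1$, and the potential stays in $R_\partial$ throughout --- and contracts both $\mathbb D$-variables away, performing the substitution $D_k\mapsto e_k(\mathbb E\setminus\mathbb X_0)$ everywhere. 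Since this makes $e_j(\mathbb X_0\cup\mathbb D)\mapsto e_j(\mathbb E)$, we obtain that $\fC_N(\Gamma_1)$ is homotopy equivalent over $R_\partial$ to
\[
\left(\begin{array}{cc} \ast & W_1-e_1(\mathbb E)\\ \ast & W_2-e_2(\mathbb E)\\ \ast & W_3-e_3(\mathbb E)\end{array}\right)_{R_\partial}.
\]
As $\Sym(\mathbb W)=\Q[W_1,W_2,W_3]$ and $R_\partial$ is a free $\Sym(\mathbb W|\mathbb E)$-module, the sequence $\{W_1-e_1(\mathbb E),W_2-e_2(\mathbb E),W_3-e_3(\mathbb E)\}$ is $R_\partial$-regular, so by Lemma~\ref{lemma-freedom} the first-column entries $\ast$ are immaterial: every homogeneous choice realizing the potential $ap_{N+1}(\mathbb W)-ap_{N+1}(\mathbb E)$ gives the same matrix factorization up to isomorphism.

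The same computation applies verbatim to $\Gamma_1'$, whose interior vertices are again split vertices with only a different choice of the leg $\mathbb X_0$; and $e_1(\mathbb E),e_2(\mathbb E),e_3(\mathbb E)$ do not see this choice. Hence $\fC_N(\Gamma_1)\simeq\fC_N(\Gamma_1')$. For $\Gamma_2$ and $\Gamma_2'$ I would repeat the argument after reversing every orientation: the two interior vertices become merge vertices, so Definition~\ref{def-MOY-mf} attaches to each of $\Gamma_2,\Gamma_2'$ the $x$-shift $\{0,-(1\cdot2)\}$ from the merge of a colour-$1$ and a colour-$2$ edge, plus $\{0,-(1\cdot1)\}$ from the merge of two colour-$1$ edges, i.e.\ $\{0,-3\}$ --- the same for both graphs, since the shift depends only on the multiset of incoming colours --- and the same contraction followed by Lemma~\ref{lemma-freedom} identifies both with the Koszul matrix factorization with rows $(\ast,\,e_j(\mathbb E)-W_j)$ over $R_\partial$, shifted by $\{0,-3\}$.

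The only delicate points are bookkeeping: one checks $\sum_j a_jb_j\in R_\partial$ before each use of Proposition~\ref{prop-b-contraction} (automatic, since power sums add over disjoint unions of alphabets, so the potential is $ap_{N+1}(\mathbb W)-ap_{N+1}(\mathbb E)$, independent of $\mathbb D$); one tracks the (trivial, resp.\ $\{0,-3\}$) grading shift through the moves; and one confirms that ``corresponding end points'' in Figure~\ref{contract-expand-figure} are matched by an honest bijection of boundary alphabets, so that the two sides genuinely live over the common $R_\partial$. I do not anticipate a real obstacle; the whole content of the lemma is that the elementary symmetric functions of $\mathbb E$ are blind to how its three letters are parenthesized. (The case $N=0$, where $\fH_0$ is the HOMFLYPT homology, needs the degree bound in Proposition~\ref{prop-b-contraction} revisited, or the colour-$3$ edge avoided; it can be treated directly.)
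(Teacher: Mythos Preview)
Your approach is exactly the paper's: eliminate the internal colour-$2$ alphabet via Proposition~\ref{prop-b-contraction} and land on a common Koszul form over $R_\partial$ whose right-hand entries are the differences of the elementary symmetric functions of $\mathbb E$ and $\mathbb W$. The paper phrases this as $\fC_N(\Gamma_1)\simeq\fC_N(\Gamma)\simeq\fC_N(\Gamma_1')$ for the single-vertex MOY graph $\Gamma$ with one colour-$3$ input and three colour-$1$ outputs, but the computation is identical to yours.

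One bookkeeping slip worth fixing: in Definition~\ref{def-MOY-mf} the shift $\{0,-\sum_{s<t}i_si_t\}$ runs over the colours $i_s$ of the \emph{outgoing} edges (the $\mathbb X$-side of Figure~\ref{general-MOY-vertex}), and the right-hand Koszul entries are $X_j-Y_j$ with $X$ outgoing. Hence it is the split vertices of $\Gamma_1,\Gamma_1'$ that together contribute $\{0,-3\}$ (colours $1,2$ leave the lower vertex and colours $1,1$ leave the upper) while the merge vertices of $\Gamma_2,\Gamma_2'$ each have a single outgoing edge and contribute no shift --- the reverse of what you wrote; likewise your second-column entries should read $e_j(\mathbb E)-W_j$. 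Since both the shift and the sign are the same for $\Gamma_i$ and $\Gamma_i'$, your conclusion survives unchanged; only the intermediate displayed forms need their $\{0,-3\}$ swapped.
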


\begin{figure}[ht]
\[
\xymatrix{
\input{edge-slide-middle}  && \input{edge-slide-right} 
}
\]
\caption{}\label{proof-edge-sliding-figure}

\end{figure}

\begin{proof}
Let $\Gamma$ be the MOY graph in Figure \ref{proof-edge-sliding-figure}. We mark $\Gamma$ and $\Gamma_1$ as in Figure \ref{proof-edge-sliding-figure}, where $\mathbb{X}_1 = \{x_1,x_2,x_3\}$ and $\mathbb{X}_2=\{x_4,x_5\}$. Write $R_\partial=\Q[a,x_6,x_7,x_8]\otimes_\Q \Sym(\mathbb{X}_1)$ and $R=\Q[a,x_6,x_7,x_8]\otimes_\Q \Sym(\mathbb{X}_1|\mathbb{X}_2)$. From Proposition \ref{prop-b-contraction}, we know that, as $\zed_2\oplus\zed^{\oplus 2}$-graded matrix factorizations over $R_\partial$,
\begin{eqnarray*}
\fC_N(\Gamma_1) & = & \left(%
\begin{array}{cc}
  \ast & x_7+x_8-x_4-x_5 \\
  \ast & x_7x_8-x_4x_5 \\
  \ast & x_4+x_5+x_6- x_1-x_2-x_3 \\
  \ast & x_4x_5+x_5x_6+x_6x_4- x_1x_2-x_2x_3-x_3x_1 \\
  \ast & x_4x_5x_6- x_1x_2x_3
\end{array}%
\right)_{R}\{0,-3\} \\
& \simeq &  \left(%
\begin{array}{cc}
  \ast & x_7+x_8+x_6- x_1-x_2-x_3 \\
  \ast & x_7x_8+x_8x_6+x_6x_7- x_1x_2-x_2x_3-x_3x_1 \\
  \ast & x_7x_8x_6- x_1x_2x_3
\end{array}%
\right)_{R_\partial}\{0,-3\} \\
& = & \fC_N(\Gamma).
\end{eqnarray*}
Similarly, $\fC_N(\Gamma_1') \simeq \fC_N(\Gamma)$. So $\fC_N(\Gamma_1) \simeq \fC_N(\Gamma'_1)$ and, similarly, $\fC_N(\Gamma_2) \simeq \fC_N(\Gamma'_2)$.
\end{proof}

\begin{lemma}\label{lemma-edge-sliding-unique}
Suppose that $\Gamma_1$, $\Gamma'_1$, $\Gamma_2$ and $\Gamma'_2$ are the MOY graphs shown in Figure \ref{contract-expand-figure}. Then the homotopy equivalences $\fC_N(\Gamma_1) \xrightarrow{\simeq} \fC_N(\Gamma'_1)$, $\fC_N(\Gamma_1') \xrightarrow{\simeq} \fC_N(\Gamma_1)$, $\fC_N(\Gamma_2) \xrightarrow{\simeq} \fC_N(\Gamma'_2)$ and $\fC_N(\Gamma_2') \xrightarrow{\simeq} \fC_N(\Gamma_2)$ of $\zed_2\oplus\zed^{\oplus 2}$-graded matrix factorizations from Lemma \ref{lemma-edge-sliding} are unique up to homotopy and scaling by non-zero scalars.
\end{lemma}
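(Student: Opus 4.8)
The plan is to convert the uniqueness statement into the computation of a single $\Hom$-space in the homotopy category. The homotopy equivalences produced in Lemma~\ref{lemma-edge-sliding} are, by their construction through iterated applications of Proposition~\ref{prop-b-contraction}, morphisms of $\zed_2\oplus\zed^{\oplus2}$-graded matrix factorizations, hence represent elements of $\Hom_\hmf^{0,0,0}(\fC_N(\Gamma_1),\fC_N(\Gamma_1'))$ and of the three analogous spaces. (Any homotopy equivalence is in fact automatically tridegree-preserving: one shifting the $\zed^{\oplus2}$-grading by $(j,k)$ would give $\fC_N(\Gamma_1)\simeq\fC_N(\Gamma_1)\{j,k\}$, impossible for $(j,k)\neq(0,0)$ since $H_{R_\partial}(\fC_N(\Gamma_1))$ is a non-zero finite-dimensional graded space bounded below.) A homotopy equivalence is also never null-homotopic, as composing with a homotopy inverse would give $\id\simeq0$. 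Therefore the lemma follows once we show that
\[
\dim_\Q\Hom_\hmf^{0,0,0}(\fC_N(\Gamma_1),\fC_N(\Gamma_1'))=1,
\]
together with the analogous facts for the reverse map and for the pair $(\Gamma_2,\Gamma_2')$: any two grading-preserving homotopy equivalences between the given objects then differ, modulo homotopy, by a non-zero scalar.

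Next I would reduce all four cases to one endomorphism computation. By the proof of Lemma~\ref{lemma-edge-sliding} there are grading-preserving homotopy equivalences $\fC_N(\Gamma_1)\simeq\fC_N(\Gamma)\simeq\fC_N(\Gamma_1')$ with $\Gamma$ the MOY graph of Figure~\ref{proof-edge-sliding-figure}, and likewise $\fC_N(\Gamma_2)\simeq\fC_N(\widetilde{\Gamma})\simeq\fC_N(\Gamma_2')$ for the ``downward'' version $\widetilde{\Gamma}$. Pre- and post-composition with these equivalences induce tridegree-preserving $\Q$-linear isomorphisms between each of the four $\Hom$-spaces above and $\Hom_\hmf^{0,0,0}(\fC_N(\Gamma),\fC_N(\Gamma))$, respectively $\Hom_\hmf^{0,0,0}(\fC_N(\widetilde{\Gamma}),\fC_N(\widetilde{\Gamma}))$, which is treated identically. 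So it suffices to prove $\Hom_\hmf^{0,0,0}(\fC_N(\Gamma),\fC_N(\Gamma))=\Q$.

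For this I would use duality for Koszul matrix factorizations. By definition $\fC_N(\Gamma)$ is a three-row Koszul matrix factorization over the boundary ring $R_\partial=\Q[a]\otimes_\Q\Sym(\mathbb{X}_1|\{x_6\}|\{x_7\}|\{x_8\})$, in particular finitely generated over $R_\partial$; so Lemma~\ref{lemma-Hom-space}(3) identifies $\Hom_\hmf^{0,0,0}(\fC_N(\Gamma),\fC_N(\Gamma))$ with the tridegree-$(0,0,0)$ part of $H\big(\Hom_{R_\partial}(\fC_N(\Gamma),\fC_N(\Gamma)),d\big)$, and Lemma~\ref{lemma-dual-Koszul} identifies $\Hom_{R_\partial}(\fC_N(\Gamma),\fC_N(\Gamma))$, up to an explicit shift $\langle3\rangle\{q,r\}$, with $\fC_N(\overline{\Gamma})$, where $\overline{\Gamma}$ is the closed MOY graph obtained by gluing $\Gamma$ to its orientation-reversal along all its end points (two $4$-valent vertices joined by three color-$1$ edges and one color-$3$ edge). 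Thus the problem is reduced to reading off one tridegree component of $\fH_N(\overline{\Gamma})$. That component is computed by the standard toolkit: Lemma~\ref{lemma-marking-independence} to choose a convenient marking, Proposition~\ref{prop-b-contraction} to eliminate the elementary symmetric functions of the color-$3$ edge against the boundary letters $x_6,x_7,x_8$, and the Koszul moves of Lemmas~\ref{lemma-freedom}, \ref{lemma-twist} and \ref{lemma-row-op} to bring $\fC_N(\overline{\Gamma})$ into an explicit small form, from which one reads that the relevant component is one-dimensional over $\Q$ and spanned by the class of $\id_{\fC_N(\Gamma)}$. This gives $\Hom_\hmf^{0,0,0}(\fC_N(\Gamma),\fC_N(\Gamma))=\Q$ and completes the proof.

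The hard part is this last explicit evaluation and the bookkeeping around it. Three points deserve attention. First, the shift $\langle3\rangle\{q,r\}$ from Lemma~\ref{lemma-dual-Koszul} must be tracked exactly, so that one inspects precisely the right tridegree of $\fH_N(\overline{\Gamma})$. Second, for $N\geq1$ the homology $\fH_N(\overline{\Gamma})$ has $a$-torsion summands (Corollary~\ref{cor-unknot-neg-stab} already exhibits this phenomenon for a circle), and one must check these do not show up in the tridegree of interest, so that the count is genuinely $1$. Third, each application of Proposition~\ref{prop-b-contraction} requires the degree hypothesis $n\leq N+1$ on the variable being eliminated; since the color-$3$ edge contributes symmetric functions of degrees $2,4,6$, this holds for $N\geq2$ but the last step fails when $N\leq1$ --- in that range, however, the color-$3$ (and, in $\Gamma_1$, color-$2$) matrix factorizations are homotopically trivial, so all the objects are homotopy equivalent to $0$ and the statement holds vacuously, or one may instead finish the reduction with the regularity-based weak version, Proposition~\ref{prop-contraction-weak}.
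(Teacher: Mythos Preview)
Your overall strategy matches the paper's: reduce to $\Hom_\hmf(\fC_N(\Gamma),\fC_N(\Gamma))\cong\Q$, use Koszul duality (Lemma~\ref{lemma-dual-Koszul}), and simplify via Proposition~\ref{prop-b-contraction}. The lower bound via non-nullhomotopy of the identity is exactly what the paper does.

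Where you diverge is in the upper bound. You propose to interpret the Hom complex as $\fC_N(\overline\Gamma)$ for a closed MOY graph and then \emph{compute} the relevant graded piece of $\fH_N(\overline\Gamma)$, worrying about $a$-torsion along the way. The paper avoids all of this: after reducing the six-row Koszul matrix factorization to the three-row
\[
M=\left(\begin{array}{cc} ah_N & 0\\ ah_{N-1} & 0\\ ah_{N-2} & 0\end{array}\right)_{R'}\langle3\rangle\{3,3N-9\}
\]
over $R'=\Q[a,x_6,x_7,x_8]$, it simply observes that as a $\zed_2\oplus\zed^{\oplus2}$-graded $R'$-module the $(0,0,0)$-component of $M$ is already one-dimensional. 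Since homology is a subquotient, $\dim_\Q H^{0,0,0}\leq 1$ follows immediately, with no homology computation and no torsion analysis needed. Your route would work, but it is a detour; the extra tools you list (Lemmas~\ref{lemma-twist}, \ref{lemma-row-op}) are not used here.

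One correction: your fallback claim that ``the color-$3$ (and color-$2$) matrix factorizations are homotopically trivial'' for $N\leq1$ is false. For every $N\geq0$ one has $\gdim_{R_\partial}\fC_N(\Gamma)\neq0$ (the quotient by the maximal ideal kills every entry of the Koszul matrix, so $H_{R_\partial}$ is the full Koszul module), hence $\fC_N(\Gamma)\not\simeq0$ by Corollary~\ref{cor-homology-detects-homotopy}. Your other suggestion, replacing Proposition~\ref{prop-b-contraction} by the quasi-isomorphism of Proposition~\ref{prop-contraction-weak} when the degree hypothesis $n\leq N{+}1$ fails, is the correct patch; a quasi-isomorphism suffices since only $H^{0,0,0}$ is needed.
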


\begin{proof}
We only prove the uniqueness of $\fC_N(\Gamma_1) \xrightarrow{\simeq} \fC_N(\Gamma'_1)$ here. The proof of uniqueness of the other homotopy equivalences are similar and left to the reader. Let $R_\partial$ be as in the proof of Lemma \ref{lemma-edge-sliding}. To prove the uniqueness of $\fC_N(\Gamma_1) \xrightarrow{\simeq} \fC_N(\Gamma'_1)$, we only need to prove that, in the category 
\[
\hmf_{R_\partial,a(x_6^{N+1}+x_7^{N+1}+x_8^{N+1}-x_1^{N+1}-x_2^{N+1}-x_3^{N+1})},
\] 
we have $\Hom_\hmf (\fC_N(\Gamma_1),\fC_N(\Gamma_1')) \cong \Q$. Since $\fC_N(\Gamma_1) \simeq \fC_N(\Gamma_1') \simeq \fC_N(\Gamma)$, we just need to check that $\Hom_\hmf (\fC_N(\Gamma),\fC_N(\Gamma)) \cong \Q$. 

First, it is easy to see that $\fC_N(\Gamma)$ is finitely generated over $R_\partial$ and $\gdim_{R_\partial} (\fC_N(\Gamma)) \neq 0$. By Corollary \ref{cor-homology-detects-homotopy}, this means that $\fC_N(\Gamma)$ is not homotopic to $0$. Thus, the identity map of $\fC_N(\Gamma)$ is not homotopic to $0$. This implies that $\dim_\Q \Hom_\hmf (\fC_N(\Gamma),\fC_N(\Gamma)) \geq 1$. 

On the other hand, by Lemma \ref{lemma-dual-Koszul},
\[
 \Hom_{R_\partial} (\fC_N(\Gamma),\fC_N(\Gamma)) 
 \cong  \left(%
\begin{array}{cc}
  aU_1 & x_7+x_8+x_6- x_1-x_2-x_3 \\
  aU_2& x_7x_8+x_8x_6+x_6x_7- x_1x_2-x_2x_3-x_3x_1 \\
  aU_3 & x_7x_8x_6- x_1x_2x_3 \\
  aU_1 & -(x_7+x_8+x_6- x_1-x_2-x_3) \\
  aU_2 & -(x_7x_8+x_8x_6+x_6x_7- x_1x_2-x_2x_3-x_3x_1) \\
  aU_3 & -(x_7x_8x_6- x_1x_2x_3)
\end{array}%
\right)_{R_\partial}\left\langle 3\right\rangle\{3,3N-9\},
\]
where $U_1$, $U_2$ and $U_3$ are given by equation \eqref{eq-def-U-j}. Let $R'=\Q[a,x_6,x_7,x_8]$. By Proposition \ref{prop-b-contraction} and Lemma \ref{lemma-power-derive}, we have that, as $\zed_2\oplus\zed^{\oplus 2}$-graded matrix factorizations of $0$ over $R'$,
\[
\left(%
\begin{array}{cc}
  aU_1 & x_7+x_8+x_6- x_1-x_2-x_3 \\
  aU_2& x_7x_8+x_8x_6+x_6x_7- x_1x_2-x_2x_3-x_3x_1 \\
  aU_3 & x_7x_8x_6- x_1x_2x_3 \\
  aU_1 & -(x_7+x_8+x_6- x_1-x_2-x_3) \\
  aU_2 & -(x_7x_8+x_8x_6+x_6x_7- x_1x_2-x_2x_3-x_3x_1) \\
  aU_3 & -(x_7x_8x_6- x_1x_2x_3)
\end{array}%
\right)_{R_\partial}
\simeq
\left(%
\begin{array}{cc}
  ah_N(\{x_6,x_7,x_8\}) & 0 \\
  ah_{N-1}(\{x_6,x_7,x_8\}) & 0 \\
  ah_{N-2}(\{x_6,x_7,x_8\}) & 0
\end{array}%
\right)_{R'}.
\]
Thus, as $\zed_2\oplus\zed^{\oplus 2}$-graded matrix factorizations of $0$ over $R'$,
\[
\Hom_{R_\partial} (\fC_N(\Gamma),\fC_N(\Gamma)) \simeq M := \left(%
\begin{array}{cc}
  ah_N(\{x_6,x_7,x_8\}) & 0 \\
  ah_{N-1}(\{x_6,x_7,x_8\}) & 0 \\
  ah_{N-2}(\{x_6,x_7,x_8\}) & 0
\end{array}%
\right)_{R'} \left\langle 3 \right\rangle\{3,3N-9\}.
\]
As a $\zed_2\oplus \zed^{\oplus 2}$-graded $R'$-module,
\[
M \cong (R'\oplus R'\left\langle 1\right\rangle\left\{1, N-1\right\}) \otimes_{R'} (R'\oplus R'\left\langle 1\right\rangle\left\{1, N-3\right\}) \otimes_{R'} (R'\oplus R'\left\langle 1\right\rangle\left\{1, N-5\right\}).
\]
From this, it is easy to see that the homogeneous component of $M$ of $\zed_2\oplus \zed^{\oplus 2}$-degree $(0,0,0)$ is $1$-dimensional over $\Q$. This implies that $\dim_\Q H^{0,0,0}(\Hom_{R_\partial} (\fC_N(\Gamma),\fC_N(\Gamma))) \leq 1$.
But, by Lemma \ref{lemma-Hom-space}, 
\[
\Hom_\hmf (\fC_N(\Gamma),\fC_N(\Gamma)) \cong H^{0,0,0}(\Hom_{R_\partial} (\fC_N(\Gamma),\fC_N(\Gamma))).
\] 
Therefore, $\dim_\Q \Hom_\hmf (\fC_N(\Gamma),\fC_N(\Gamma)) \leq 1$.

Putting the above together, we get $\Hom_\hmf (\fC_N(\Gamma),\fC_N(\Gamma)) \cong \Q$. 
\end{proof}

\subsection{Edge splitting and merging} Let $\Gamma$ and $\Gamma_1$ be the MOY graphs in Figure \ref{decomp-II-figure}. We call the change $\Gamma_1 \rightharpoonup \Gamma$ an edge splitting and the change $\Gamma \rightharpoonup \Gamma_1$ an edge merging. In this subsection, we define the morphisms induced by edge splitting and merging.

\begin{figure}[ht]

\setlength{\unitlength}{1pt}

\begin{picture}(360,70)(-180,-10)

%left 

\put(-60,0){\vector(0,1){15}}

\qbezier(-60,15)(-70,15)(-70,25)

\put(-70,25){\vector(0,1){10}}

\qbezier(-70,35)(-70,45)(-60,45)

\put(-71,26){\line(1,0){2}}

\qbezier(-60,15)(-50,15)(-50,25)

\put(-50,25){\vector(0,1){10}}

\qbezier(-50,35)(-50,45)(-60,45)

\put(-51,26){\line(1,0){2}}

\put(-60,45){\vector(0,1){15}}

\put(-65,55){\tiny{$2$}}

\put(-58,54){\small{$\{x_1,x_2\}$}}

\put(-65,0){\tiny{$2$}}

\put(-58,0){\small{$\{x_3,x_4\}$}}

\put(-77,30){\tiny{$1$}}

\put(-80,22){\small{$x_5$}}

\put(-47,30){\tiny{$1$}}

\put(-48,22){\small{$x_6$}}

\put(-63,-10){$\Gamma$}

% rihgt

\put(60,0){\vector(0,1){60}}

\put(62,54){\small{$\{x_1,x_2\}$}}

\put(55,30){\tiny{$2$}}

\put(62,0){\small{$\{x_3,x_4\}$}}

\put(57,-10){$\Gamma_1$}

\end{picture}

\caption{}\label{decomp-II-figure}

\end{figure}

\begin{lemma}\cite[Proposition 30]{KR1}\label{lemma-decomp-II}
Let $R_\partial=\Q[a]\otimes_\Q\Sym(\{x_1,x_2\}|\{x_3,x_4\})$. Then 
\[
\fC_N(\Gamma) \simeq \fC_N(\Gamma_1)\{0,-1\} \oplus \fC_N(\Gamma_1)\{0,1\}
\] 
as $\zed_2\oplus\zed^{\oplus 2}$-graded matrix factorizations of $a(x_1^{N+1}+x_2^{N+1}-x_3^{N+1}-x_4^{N+1})$ over $R_\partial$.
\end{lemma}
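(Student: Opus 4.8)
The plan is to compute $\fC_N(\Gamma)$ explicitly as a Koszul matrix factorization, contract away the variables living on its two internal edges, and identify what remains as two grading-shifted copies of $\fC_N(\Gamma_1)$; this is the $\Q[a]$-linear, bigraded counterpart of \cite[Proposition 30]{KR1}. Concretely, I would mark $\Gamma$ by assigning $\{x_1,x_2\}$ and $\{x_3,x_4\}$ to the two end points and single-variable alphabets $\{x_5\}$, $\{x_6\}$ to the two colour-$1$ internal edges, so that $\fC_N(\Gamma)$ becomes the tensor product over $\Q[a,x_5,x_6]$ of the factorization of the merge vertex ($1+1\to 2$, inputs $x_5,x_6$, output $\{x_1,x_2\}$) and that of the split vertex ($2\to 1+1$, input $\{x_3,x_4\}$, outputs $x_5,x_6$). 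Writing $X_1,X_2$ and $\widetilde{X}_1,\widetilde{X}_2$ for the elementary symmetric polynomials of $\{x_1,x_2\}$ and $\{x_3,x_4\}$, this presents $\fC_N(\Gamma)$ as a four-row Koszul factorization whose right-hand column entries are (up to sign) $x_5+x_6-X_1$, $x_5x_6-X_2$, $\widetilde{X}_1-x_5-x_6$, $\widetilde{X}_2-x_5x_6$, together with an overall shift $\{0,-1\}$ contributed by the merge vertex (its two inputs are coloured $1$, so $\sum_{s<t}i_si_t=1$).

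The reduction has two steps. Applying the row operation of Lemma \ref{lemma-row-op} twice --- adding the first row to the third and the second to the fourth --- clears $x_5$ and $x_6$ from the right-hand entries of two rows, which become $\widetilde{X}_1-X_1$ and $\widetilde{X}_2-X_2$. Then the invertible linear substitution $x_5\mapsto z:=x_5+x_6-X_1$ turns the right-hand entry of the first row into $z$, and Proposition \ref{prop-b-contraction} (the strong contraction, applicable since $\deg z=(0,2)$ with $n=1\leq N+1$) deletes that row and substitutes $x_5=X_1-x_6$ throughout. What is left is a three-row Koszul factorization over $S':=\Q[a,x_6]\otimes\Sym(\{x_1,x_2\}|\{x_3,x_4\})$ with right-hand entries $(X_1-x_6)x_6-X_2=-(x_6-x_1)(x_6-x_2)$, $\widetilde{X}_1-X_1$ and $\widetilde{X}_2-X_2$. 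Since these three elements form an $S'$-regular sequence (in a suitable order), Lemma \ref{lemma-freedom} says that the factorization is determined up to isomorphism by this right-hand column and its potential; choosing the canonical left-hand entries then exhibits the last two rows as $\fC_N(\Gamma_1)$ base-changed to $S'$, and, because the potentials of $\Gamma$ and $\Gamma_1$ coincide and $S'$ is an integral domain, it forces the left-hand entry of the first row to be $0$. Thus, over $S'$, $\fC_N(\Gamma)\simeq\bigl((0,-(x_6-x_1)(x_6-x_2))_{S'}\otimes_{S'}(\fC_N(\Gamma_1)\otimes_{R_\partial}S')\bigr)\{0,-1\}$.

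It remains to analyse the extra factor $E:=(0,-(x_6-x_1)(x_6-x_2))_{S'}$, regarded (by restriction of scalars) as a matrix factorization of $0$ over the boundary ring $R_\partial$. By Lemma \ref{lemma-zed-2-shift} (or a trivial sign change) $E\cong(0,(x_6-x_1)(x_6-x_2))_{S'}$. Modulo the maximal homogeneous ideal of $R_\partial$ the quadratic $(x_6-x_1)(x_6-x_2)$ reduces to $x_6^2$, so $H_{R_\partial}(E)$ is two-dimensional, sitting in $x$-degrees $0$ and $2$ and entirely in $\zed_2$-degree $0$; hence by Proposition \ref{prop-contractible-essential-decomp} the factorization $E$ is homotopy equivalent over $R_\partial$ to the matrix factorization with zero differentials $R_\partial\{0,0\}\oplus R_\partial\{0,2\}$. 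Tensoring this with $\fC_N(\Gamma_1)$ over $R_\partial$ --- using $M\otimes_{S'}(S'\otimes_{R_\partial}N)\cong M\otimes_{R_\partial}N$ to pass from the $S'$-tensor product above to an $R_\partial$-tensor product --- yields $\fC_N(\Gamma)\simeq\fC_N(\Gamma_1)\{0,-1\}\oplus\fC_N(\Gamma_1)\{0,1\}$, and each operation used (the row operations, the contraction, Lemma \ref{lemma-freedom}, the sign change, and the essential decomposition) respects the $\zed_2\oplus\zed^{\oplus 2}$-grading.

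I expect the main obstacle to be this last step rather than the Koszul manipulations. Over $S'$ the extra row $(0,-(x_6-x_1)(x_6-x_2))$ does \emph{not} split off, so one cannot simply read two copies of $\fC_N(\Gamma_1)$ directly; the crucial manoeuvre is to pass to the boundary ring $R_\partial$, where the Koszul relation $(x_6-x_1)(x_6-x_2)=0$ collapses the leftover internal variable $x_6$ into a rank-two free $R_\partial$-module with generators in $x$-degrees $0$ and $2$, and then to verify that these two degrees combine with the merge-vertex shift $\{0,-1\}$ to give precisely the claimed shifts $\{0,-1\}$ and $\{0,1\}$. Keeping the Koszul internal shifts straight throughout this identification is where the care is needed.
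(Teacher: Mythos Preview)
Your argument is correct, but it takes a different and considerably longer route than the paper's. The paper's proof rests on a single structural observation: writing $\fC_N(\Gamma)$ as the four-row Koszul matrix factorization over $\widetilde{R}=R_\partial[x_5,x_6]$ with shift $\{0,-1\}$ (your starting point), one notices that every right-column entry is symmetric in $x_5,x_6$, so the Koszul matrix actually lives over the subring $R=R_\partial\otimes_\Q\Sym(x_5,x_6)$. Since $\widetilde{R}=R\cdot 1\oplus R\cdot x_5\cong R\oplus R\{0,2\}$ as graded $R$-modules, the base-change from $R$ to $\widetilde{R}$ immediately splits the factorization into two copies. Finally, adding a marked point $\{x_5,x_6\}$ to $\Gamma_1$ (Lemma~\ref{lemma-marking-independence}) identifies the ``over-$R$'' Koszul matrix with $\fC_N(\Gamma_1)$, and the global $\{0,-1\}$ shift yields $\{0,-1\}$ and $\{0,1\}$.

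Your approach---row operations, contraction of one internal variable via Proposition~\ref{prop-b-contraction}, the freedom lemma to force the residual left entry to $0$, then Proposition~\ref{prop-contractible-essential-decomp} to split off the rank-two factor $E$---also works. Your analysis of $E_{es}$ is sound: since $H_{R_\partial}(E)$ is concentrated in $\zed_2$-degree $0$, the essential part has trivial $\zed_2$-degree-$1$ component and hence zero differential. One cosmetic slip: the $\{0,-1\}$ shift comes from the \emph{split} vertex at the bottom, not the merge vertex---the sum $\sum_{s<t}i_si_t$ in Definition~\ref{def-MOY-mf} runs over the \emph{outgoing} edge colours. The paper's approach buys brevity and makes the decomposition transparent (it is literally $\Q[x_5,x_6]=\Sym(x_5,x_6)\oplus x_5\cdot\Sym(x_5,x_6)$); yours buys independence from spotting the hidden $S_2$-symmetry and gives a more explicit handle on the pieces.
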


\begin{figure}[ht]

\[
\input{decomp-II-proof-fig}
\]

\caption{}\label{decomp-II-proof-figure}

\end{figure}

\begin{proof}(Following \cite{KR1}.)
Set 
\begin{eqnarray*}
R & = & \Q[a]\otimes_\Q\Sym(\{x_1,x_2\}|\{x_3,x_4\}|\{x_5,x_6\}), \\
\widetilde{R}& = & \Q[a]\otimes_\Q\Sym(\{x_1,x_2\}|\{x_3,x_4\})\otimes_\Q \Q[x_5,x_6].
\end{eqnarray*}
Then $R$ is a subring of $\widetilde{R}$ and, as $\zed^{\oplus 2}$-graded $R$-modules, $\widetilde{R}= R\cdot 1 \oplus R \cdot x_5 \cong R \oplus R \{0,2\}$.

By definition,
\begin{equation}\label{eq-proof-lemma-decomp-II-1}
\fC_N(\Gamma) = \left(%
\begin{array}{cc}
  \ast & x_1+x_2-x_5-x_6 \\
  \ast & x_1x_2-x_5x_6 \\
  \ast & x_5+x_6-x_3-x_4 \\
  \ast & x_5x_6-x_3x_4 
\end{array}%
\right)_{\widetilde{R}} \{0,-1\}.
\end{equation}
We add an extra marked point to $\Gamma_1$ as in Figure \ref{decomp-II-proof-figure}. Then, by Lemma \ref{lemma-marking-independence}, we have 
\begin{equation}\label{eq-proof-lemma-decomp-II-2}
\fC_N(\Gamma_1) \simeq \left(%
\begin{array}{cc}
  \ast & x_1+x_2-x_5-x_6 \\
  \ast & x_1x_2-x_5x_6 \\
  \ast & x_5+x_6-x_3-x_4 \\
  \ast & x_5x_6-x_3x_4 
\end{array}%
\right)_{R}. 
\end{equation}

Combine the above, we get that $\fC_N(\Gamma) \simeq \fC_N(\Gamma_1)\{0,-1\} \oplus \fC_N(\Gamma_1)\{0,1\}$ as $\zed_2\oplus\zed^{\oplus 2}$-graded matrix factorizations of $a(x_1^{N+1}+x_2^{N+1}-x_3^{N+1}-x_4^{N+1})$ over $R$. Since $R_\partial$ is a subring of $R$, this proves the lemma.
\end{proof}

\begin{lemma}\label{lemma-hmf-decomp-II}
Let $\Gamma$, $\Gamma_1$ and $R_\partial$ be as in Lemma \ref{lemma-decomp-II}. Then, in the category $\hmf_{R_\partial,a(x_1^{N+1}+x_2^{N+1}-x_3^{N+1}-x_4^{N+1})}$, we have that, for $k\geq 1$,
\[
\Hom_\hmf(\fC_N(\Gamma),\fC_N(\Gamma_1)\{0,k\}) \cong \Hom_\hmf(\fC_N(\Gamma_1),\fC_N(\Gamma)\{0,k\}) \cong \begin{cases}
\Q & \text{if } k=1,\\
0 & \text{if } k >1.
\end{cases}
\]
\end{lemma}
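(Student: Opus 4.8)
The plan is to reduce everything, via the decomposition in Lemma~\ref{lemma-decomp-II}, to the endomorphism spaces of the single matrix factorization $\fC_N(\Gamma_1)$ with shifted $x$-grading. Since $\Hom_\hmf$ depends only on homotopy type, converts finite direct sums into direct sums, and satisfies $\Hom_\hmf(M\{0,p\},M'\{0,q\})\cong\Hom_\hmf(M,M'\{0,q-p\})$, Lemma~\ref{lemma-decomp-II} shows that for every $k$ both $\Hom_\hmf(\fC_N(\Gamma),\fC_N(\Gamma_1)\{0,k\})$ and $\Hom_\hmf(\fC_N(\Gamma_1),\fC_N(\Gamma)\{0,k\})$ are isomorphic to $E_{k-1}\oplus E_{k+1}$, where $E_j:=\Hom_\hmf(\fC_N(\Gamma_1),\fC_N(\Gamma_1)\{0,j\})$. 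So it is enough to prove that $E_0\cong\Q$ and $E_j=0$ for all $j\geq 1$; the lemma then follows by specializing to $j=k-1$ and $j=k+1$ with $k\geq 1$.

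To compute $E_j$ I would use that $\fC_N(\Gamma_1)$ is finitely generated, so by Lemma~\ref{lemma-Hom-space}(3) one has $E_j\cong H^{0,0,-j}(\Hom_{R_\partial}(\fC_N(\Gamma_1),\fC_N(\Gamma_1)))$. Realize $\fC_N(\Gamma_1)$ as the two-row Koszul matrix factorization with rows $(aU_1,X_1-Y_1)$ and $(aU_2,X_2-Y_2)$ over $R_\partial=\Q[a]\otimes_\Q\Sym(\{x_1,x_2\}|\{x_3,x_4\})$, where $X_i$ and $Y_i$ are the elementary symmetric polynomials of $\{x_1,x_2\}$ and $\{x_3,x_4\}$. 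By Lemma~\ref{lemma-dual-Koszul}, $\Hom_{R_\partial}(\fC_N(\Gamma_1),\fC_N(\Gamma_1))$ is, up to an explicit grading shift $\{2,2N-4\}$, the four-row Koszul matrix factorization with rows $(aU_1,X_1-Y_1)$, $(aU_2,X_2-Y_2)$, $(aU_1,-(X_1-Y_1))$, $(aU_2,-(X_2-Y_2))$. Since $\{a,X_1-Y_1,X_2-Y_2,Y_1,Y_2\}$ is a polynomial generating set of $R_\partial$ and $\deg_x(X_j-Y_j)=2j\leq 2N+2$ (this is where $N\geq 1$ is used, for $j=2$), Proposition~\ref{prop-b-contraction} applied to the rows whose right entries are $X_1-Y_1$ and $X_2-Y_2$ contracts this to the two-row Koszul matrix factorization over $R'=\Q[a]\otimes_\Q\Sym(\{x_3,x_4\})$ with rows $(aU_1|_{X=Y},0)$ and $(aU_2|_{X=Y},0)$. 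By Lemma~\ref{lemma-power-derive}, $U_j|_{X=Y}=(-1)^{j+1}(N+1)h_{2,N+1-j}$, so after discarding the nonzero scalars $\pm(N+1)$ this is $(ah_N(\{x_3,x_4\}),0)_{R'}\otimes_{R'}(ah_{N-1}(\{x_3,x_4\}),0)_{R'}$, shifted by $\{2,2N-4\}$.

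A routine bookkeeping of the grading shifts then shows that the $\zed_2$-degree-$0$ part of $\Hom_{R_\partial}(\fC_N(\Gamma_1),\fC_N(\Gamma_1))$ is, as a $\zed^{\oplus 2}$-graded $R'$-module, $R'\oplus R'\{2,2N-4\}$. Its homogeneous component of $a$-degree $0$ and $x$-degree $-j$ vanishes for $j\geq1$ (the summand $R'$ lives in non-negative $x$-degree, and $R'\{2,2N-4\}$ has nothing in $a$-degree $0$) and is one-dimensional over $\Q$ for $j=0$. As $H^{0,0,-j}$ is a subquotient of this component, $E_j=0$ for $j\geq1$ and $\dim_\Q E_0\leq 1$. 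For the reverse inequality at $j=0$ it suffices that $\id_{\fC_N(\Gamma_1)}$ is not null-homotopic, which holds because the differential of the finitely generated Koszul matrix factorization $\fC_N(\Gamma_1)$ has all of its entries in the maximal homogeneous ideal $\mathfrak{I}$, so that $H_{R_\partial}(\fC_N(\Gamma_1))=\fC_N(\Gamma_1)/\mathfrak{I}\fC_N(\Gamma_1)\neq 0$ and hence $\fC_N(\Gamma_1)\not\simeq 0$ by Corollary~\ref{cor-homology-detects-homotopy}. This gives $E_0\cong\Q$ and finishes the proof. The only genuine care required is tracking the grading shifts through Lemma~\ref{lemma-dual-Koszul} and the two applications of Proposition~\ref{prop-b-contraction}; conceptually the whole argument is forced by Lemma~\ref{lemma-decomp-II}.
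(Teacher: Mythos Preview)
Your proof is correct and follows essentially the same route as the paper: reduce via Lemma~\ref{lemma-decomp-II} to the graded pieces $E_j=\Hom_\hmf(\fC_N(\Gamma_1),\fC_N(\Gamma_1)\{0,j\})$, compute $\Hom_{R_\partial}(\fC_N(\Gamma_1),\fC_N(\Gamma_1))$ via Lemma~\ref{lemma-dual-Koszul} and Proposition~\ref{prop-b-contraction}, and read off the relevant graded components. The only cosmetic differences are that the paper contracts the variables $\{x_3,x_4\}$ rather than $\{x_1,x_2\}$ and does not explicitly flag the $N\geq 1$ hypothesis needed to apply Proposition~\ref{prop-b-contraction} to the degree-$4$ row.
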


\begin{proof}
By Lemma \ref{lemma-decomp-II}, we have $\fC_N(\Gamma) \simeq \fC_N(\Gamma_1)\{0,-1\} \oplus \fC_N(\Gamma_1)\{0,1\}$. So 
\begin{eqnarray}
\label{eq-lemma-hmf-decomp-II-1} && \Hom_\hmf(\fC_N(\Gamma),\fC_N(\Gamma_1)\{0,k\}) \cong \Hom_\hmf(\fC_N(\Gamma_1),\fC_N(\Gamma)\{0,k\}) \\
\nonumber &\cong & \Hom_\hmf(\fC_N(\Gamma_1),\fC_N(\Gamma_1)\{0,k-1\}) \oplus \Hom_\hmf(\fC_N(\Gamma_1),\fC_N(\Gamma_1)\{0,k+1\}) \\
\nonumber &\cong & H^{0,0,1-k}(\Hom_{R_\partial} (\fC_N(\Gamma_1),\fC_N(\Gamma_1))) \oplus H^{0,0,-1-k}(\Hom_{R_\partial} (\fC_N(\Gamma_1),\fC_N(\Gamma_1))).
\end{eqnarray}
By Lemma \ref{lemma-dual-Koszul}, we have that
\[
\Hom_{R_\partial} (\fC_N(\Gamma_1),\fC_N(\Gamma_1)) \cong \left(%
\begin{array}{cc}
  aU_1 & x_1+x_2-x_3-x_4 \\
  aU_2 & x_1x_2-x_3x_4 \\
  aU_1 & x_3+x_4-x_1-x_2 \\
  aU_2 & x_3x_4-x_1x_2 
\end{array}%
\right)_{R_\partial}\{2,2N-4\},
\]
where $U_1$ and $U_2$ are given by equation \eqref{eq-def-U-j}. By Proposition \ref{prop-b-contraction} and Lemma \ref{lemma-power-derive}, as $\zed_2\oplus\zed^{\oplus 2}$-graded matrix factorizations over $\hat{R}:=\Q[a] \otimes_\Q \Sym(\{x_1,x_2\})$,
\[
\left(%
\begin{array}{cc}
  aU_1 & x_1+x_2-x_3-x_4 \\
  aU_2 & x_1x_2-x_3x_4 \\
  aU_1 & x_3+x_4-x_1-x_2 \\
  aU_2 & x_3x_4-x_1x_2 
\end{array}%
\right)_{R_\partial}
\cong
\left(%
\begin{array}{cc}
  ah_N(x_1,x_2) & 0 \\
  ah_{N-1}(x_1,x_2) & 0 
\end{array}%
\right)_{\hat{R}}.
\]
So, as $\zed_2 \oplus \zed^{\oplus 2}$-graded chain complexes over $\hat{R}$,
\[
\Hom_{R_\partial} (\fC_N(\Gamma_1),\fC_N(\Gamma_1)) \cong \left(%
\begin{array}{cc}
  ah_N(x_1,x_2) & 0 \\
  ah_{N-1}(x_1,x_2) & 0 
\end{array}%
\right)_{\hat{R}} \{2,2N-4\}.
\]
But, as a $\zed_2 \oplus \zed^{\oplus 2}$-graded $\hat{R}$-module,
\[
\left(%
\begin{array}{cc}
  ah_N(x_1,x_2) & 0 \\
  ah_{N-1}(x_1,x_2) & 0 
\end{array}%
\right)_{\hat{R}} \{2,2N-4\} \cong (\hat{R}\{1,N-1\} \oplus \hat{R}\left\langle 1\right\rangle) \otimes_{\hat{R}} (\hat{R}\{1,N-3\} \oplus \hat{R}\left\langle 1\right\rangle).
\]
This implies that
\begin{equation}
\label{eq-lemma-hmf-decomp-II-2} \dim_\Q H^{0,0,-l}(\Hom_{R_\partial} (\fC_N(\Gamma_1),\fC_N(\Gamma_1))) \begin{cases}
\leq 1 & \text{if } l=0,\\
=0  & \text{if } l<0.
\end{cases}
\end{equation}
It is easy to check that $H_{R_\partial}(\fC_N(\Gamma_1)) \ncong 0$. This means that $\fC_N(\Gamma_1)$ is not homotopic to $0$ and, therefore, $\id_{\fC_N(\Gamma_1)}$ is not homotopic to $0$. Thus, 
\begin{equation}\label{eq-lemma-hmf-decomp-II-3}
\dim_\Q H^{0,0,0}(\Hom_{R_\partial} (\fC_N(\Gamma_1),\fC_N(\Gamma_1))) =1.
\end{equation}
Now the lemma follows from equations \eqref{eq-lemma-hmf-decomp-II-1}, \eqref{eq-lemma-hmf-decomp-II-2} and \eqref{eq-lemma-hmf-decomp-II-3}.
\end{proof}

\begin{lemma}\label{lemma-phi}
Let $\Gamma$, $\Gamma_1$ and $R_\partial$ be as in Lemma \ref{lemma-decomp-II}. Then, up to homotopy and scaling, there exist unique homotopically non-trivial $R_\partial$-linear homogeneous morphisms of matrix factorizations $\fC_N(\Gamma_1)\xrightarrow{\phi}\fC_N(\Gamma)$ and $\fC_N(\Gamma)\xrightarrow{\bar{\phi}}\fC_N(\Gamma_1)$ of $\zed_2 \oplus \zed^{\oplus 2}$-degree $(0,0,-1)$. For these two morphisms, we have 
\begin{eqnarray*}
\bar{\phi}\circ \phi & \simeq & 0, \\
\bar{\phi} \circ \mathsf{m}(x_5) \circ \phi & = & -\bar{\phi} \circ \mathsf{m}(x_6) \circ \phi \approx \id_{\fC_N(\Gamma_1)},
\end{eqnarray*}
where $\mathsf{m}(x_5)$ is the endomorphism of $\fC_N(\Gamma)$ given by the multiplication by $x_5$.
\end{lemma}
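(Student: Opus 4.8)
The plan is to construct explicit representatives of $\phi$ and $\bar\phi$ out of the decomposition underlying Lemma~\ref{lemma-decomp-II}, and then to read off all of the assertions from the way these representatives interact with the two direct summands. First, for existence and uniqueness: a homogeneous morphism $\fC_N(\Gamma_1)\to\fC_N(\Gamma)$ of $\zed_2\oplus\zed^{\oplus2}$-degree $(0,0,-1)$ is the same datum as a grading-preserving morphism $\fC_N(\Gamma_1)\to\fC_N(\Gamma)\{0,1\}$, and similarly for $\bar\phi$; by Lemma~\ref{lemma-hmf-decomp-II} (the case $k=1$) the spaces $\Hom_\hmf(\fC_N(\Gamma_1),\fC_N(\Gamma)\{0,1\})$ and $\Hom_\hmf(\fC_N(\Gamma),\fC_N(\Gamma_1)\{0,1\})$ are each one-dimensional over $\Q$, which gives both the existence of a homotopically non-trivial morphism of the prescribed degree in each direction and its uniqueness up to homotopy and scaling.

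To handle the compositions I would pin these morphisms down concretely. Recall from the proof of Lemma~\ref{lemma-decomp-II} that, with $R=\Q[a]\otimes_\Q\Sym(\{x_1,x_2\}|\{x_3,x_4\}|\{x_5,x_6\})$ sitting inside $\widetilde R=\Q[a]\otimes_\Q\Sym(\{x_1,x_2\}|\{x_3,x_4\})\otimes_\Q\Q[x_5,x_6]$, and $K$ the four-row Koszul matrix factorization over $R$ appearing there, one has $\fC_N(\Gamma)\cong(K\otimes_R\widetilde R)\{0,-1\}\cong K\{0,-1\}\oplus K\{0,1\}$ since $\widetilde R=R\cdot 1\oplus R\cdot x_5$ as a graded $R$-module, while $K\simeq\fC_N(\Gamma_1)$ by Proposition~\ref{prop-b-contraction}. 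Fix a homotopy equivalence $\psi\colon K\to\fC_N(\Gamma_1)$ with homotopy inverse $\psi'$. Then I take $\phi$ to be $\psi'$ followed by the identification of $K$ with the summand $K\{0,-1\}$ of $\fC_N(\Gamma)$, and $\bar\phi$ to be the projection of $\fC_N(\Gamma)$ onto the summand $K\{0,1\}$ followed by its identification with $K$ and then by $\psi$. A direct degree count (keeping track of the grading shifts on $K$) shows these indeed have $x$-degree $-1$, and, more importantly, that they are forced onto these two \emph{different} summands: the ``cross'' components — a component mapping into $K\{0,1\}$, respectively out of $K\{0,-1\}$, of the prescribed degree — correspond to self-morphisms of $\fC_N(\Gamma_1)$ of $x$-degree $-2$, hence are null-homotopic because $\Hom_\hmf^{0,0,-2}(\fC_N(\Gamma_1),\fC_N(\Gamma_1))=0$ by Lemma~\ref{lemma-hmf-decomp-II}.

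With these representatives, $\bar\phi\circ\phi\simeq 0$ is immediate, since $\phi$ maps (up to homotopy) into the summand $K\{0,-1\}$ while $\bar\phi$ kills that summand (equivalently, $\bar\phi\circ\phi$ has degree $(0,0,-2)$ and lies in the zero group above). For the first equality of the second displayed line, note that $x_5+x_6$ lies in $R$, so $\mathsf{m}(x_5+x_6)$ preserves each of the $R$-submodules $K\{0,-1\}$ and $K\{0,1\}$ of $\fC_N(\Gamma)$; hence $\bar\phi\circ\mathsf{m}(x_5+x_6)\circ\phi$ again ``enters the first summand, stays there, and is read off from the second,'' so it vanishes, which gives $\bar\phi\circ\mathsf{m}(x_5)\circ\phi=-\bar\phi\circ\mathsf{m}(x_6)\circ\phi$. (Alternatively: $x_1+x_2-x_5-x_6$ lies in the ideal of entries of $\fC_N(\Gamma)$, so $\mathsf{m}(x_5+x_6)\simeq\mathsf{m}(x_1+x_2)$ on $\fC_N(\Gamma)$ by Lemma~\ref{entries-null-homotopic}, and then the $R_\partial$-linearity of $\phi$ and $\bar\phi$ together with $\bar\phi\circ\phi\simeq 0$ finishes it.)

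It remains to prove $\bar\phi\circ\mathsf{m}(x_5)\circ\phi\approx\id_{\fC_N(\Gamma_1)}$, which I regard as the crux: the abstract input only tells us (via \eqref{eq-lemma-hmf-decomp-II-3}, identifying $\Hom_\hmf^{0,0,0}(\fC_N(\Gamma_1),\fC_N(\Gamma_1))$ with $\Q\cdot\id$) that this degree-$(0,0,0)$ composition is either $\approx\id$ or null-homotopic, and excluding the latter requires the explicit model. The key observation is that, since $\widetilde R=R\cdot 1\oplus R\cdot x_5$ with $x_5\cdot 1=x_5$, multiplication by $x_5$ carries the summand $K\{0,-1\}$ isomorphically onto the summand $K\{0,1\}$ of $\fC_N(\Gamma)$, inducing on the underlying copy of $K$ the identity; therefore $\mathsf{m}(x_5)\circ\phi$ is a homotopy equivalence from $\fC_N(\Gamma_1)$ onto the summand $K\{0,1\}$ which is homotopy-inverse to $\bar\phi$, whence $\bar\phi\circ\mathsf{m}(x_5)\circ\phi\simeq\psi\circ\psi'\simeq\id_{\fC_N(\Gamma_1)}$, in particular $\approx\id_{\fC_N(\Gamma_1)}$. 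I expect the only genuine work to be the grading bookkeeping of the second paragraph — confirming that the degree-$(0,0,-1)$ constraint pins $\phi$ and $\bar\phi$ to the two distinct summands — since that is exactly what simultaneously forces $\bar\phi\circ\phi$ to vanish and keeps $\bar\phi\circ\mathsf{m}(x_5)\circ\phi$ non-trivial.
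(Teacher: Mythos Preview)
Your proof is correct and takes essentially the same approach as the paper. The paper also deduces existence and uniqueness from Lemma~\ref{lemma-hmf-decomp-II} and then builds explicit representatives from the $R$-module splitting $\widetilde R = R\cdot 1 \oplus R\cdot x_5$, taking $\varphi$ to be induced by the inclusion $\Sym(x_5,x_6)\hookrightarrow\Q[x_5,x_6]$ and $\bar\varphi$ by the divided difference $f\mapsto\frac{f(x_5,x_6)-f(x_6,x_5)}{x_5-x_6}$---which is precisely your projection onto the $R\cdot x_5$ summand---and then verifies $\pi\circ\jmath=0$ and $\pi\circ\mathsf m(x_5)\circ\jmath=-\pi\circ\mathsf m(x_6)\circ\jmath=\id$ directly on these ring maps.
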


\begin{proof}
The existence and uniqueness of $\phi$ and $\bar{\phi}$ follow from Lemma \ref{lemma-hmf-decomp-II}. It remains to show that $\phi$ and $\bar{\phi}$ satisfy the equations in the lemma. 

Denote by $\Sym(x_5,x_6) \xrightarrow{\jmath} \Q[x_5,x_6]$ the standard inclusion that maps every element of $\Sym(x_5,x_6)$ to itself in $\Q[x_5,x_6]$. Set $R=R_\partial\otimes_\Q \Sym(x_5,x_6)$ as in the proof of Lemma \ref{lemma-decomp-II}. Using expressions \eqref{eq-proof-lemma-decomp-II-1} and \eqref{eq-proof-lemma-decomp-II-2} of $\fC_N(\Gamma)$ and $\fC_N(\Gamma_1)$, one can see that $\jmath$ induces an $R$-linear homogeneous morphism of matrix factorizations $\fC_N(\Gamma_1)\xrightarrow{\varphi}\fC_N(\Gamma)$ of $\zed_2 \oplus \zed^{\oplus 2}$-degree $(0,0,-1)$. Denote by $\Q[x_5,x_6] \xrightarrow{\pi} \Sym(x_5,x_6)$ the divided difference $\pi(f(x_5,x_6)) = \frac{f(x_5,x_6)-f(x_6,x_5)}{x_5-x_6} \in \Sym(x_5,x_6)$. Note that $\pi$ is $\Sym(x_5,x_6)$-linear. Using expressions \eqref{eq-proof-lemma-decomp-II-1} and \eqref{eq-proof-lemma-decomp-II-2} again, one can see that $\pi$ induces an $R$-linear homogeneous morphism of matrix factorizations $\fC_N(\Gamma)\xrightarrow{\bar{\varphi}}\fC_N(\Gamma_1)$ of $\zed_2 \oplus \zed^{\oplus 2}$-degree $(0,0,-1)$. It is easy to check that $\pi \circ \jmath =0$ and $\pi \circ \mathsf{m}(x_5) \circ \jmath =-\pi \circ \mathsf{m}(x_6) \circ \jmath=\id_{\Sym(x_5,x_6)}$. It follows that $\bar{\varphi}\circ \varphi \simeq 0$ and $\bar{\varphi} \circ \mathsf{m}(x_5) \circ \varphi=-\bar{\varphi} \circ \mathsf{m}(x_6) \circ \varphi \approx \id_{\fC_N(\Gamma_1)}$. In particular, the last equation implies that $\varphi$ and $\bar{\varphi}$ are homotopically non-trivial. Thus, by the uniqueness of $\phi$ and $\bar{\phi}$, we have $\phi \approx \varphi$ and $\bar{\phi} \approx \bar{\varphi}$. This completes the proof.
\end{proof}

\subsection{$\chi$-morphisms} In this subsection, we construct two pairs of $\chi$-morphisms induced by the changes in Figures \ref{def-chi-fig} and \ref{def-tilde-chi-fig}. The morphisms $\chi^0$ and $\chi^1$ are direct generalizations of those defined in \cite{KR1,KR2} and will be used in the definition of the chain complexes associated to closed braids. The morphisms $\tilde{\chi}^0$ and $\tilde{\chi}^1$ will be used in the proof of the invariance of $\fH_N$ under Reidemeister move III. The constructions of these morphisms are given in more general settings in \cite[Section 8]{Wu-color}. 

As in \cite{KR2}, our construction of the $\chi$-morphisms are based on the following simple observation.

\begin{lemma}\cite{KR2}\label{lemma-jumping-factor}
Let $r,s,t$ be homogeneous elements of $R=\Q[a,X_1,\dots,X_k]$ with $\deg r + \deg s + \deg t =(2,2N+2)$. Then there exist homogeneous morphisms of matrix factorizations
\begin{eqnarray*}
f: (r,st)_R \rightarrow (rs,t)_R, && \\
g: (rs,t)_R \rightarrow (r,st)_R, &&
\end{eqnarray*}
such that 
\begin{enumerate}[(i)]
  \item $f$ and $g$ preserve the $\zed_2$-grading, 
  \item $\deg f =(0,0)$ and $\deg g = \deg s$,
	\item $g \circ f = s \cdot \id_{(r,st)_R}$ and $f \circ g = s \cdot \id_{(rs,t)_R}$.
\end{enumerate}
\end{lemma}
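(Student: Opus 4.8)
The plan is to write down the morphisms $f$ and $g$ explicitly as $2\times 2$ matrices between the two-term complexes underlying the Koszul factorizations, and then verify the three stated properties by direct matrix computation.

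Recall that $(r,st)_R$ is the matrix factorization $R \xrightarrow{r} R\{1-\deg_a r,\,N+1-\deg_x r\} \xrightarrow{st} R$, while $(rs,t)_R$ is $R \xrightarrow{rs} R\{1-\deg_a(rs),\,N+1-\deg_x(rs)\} \xrightarrow{t} R$. The first step is to define $f\colon (r,st)_R \to (rs,t)_R$ to be the identity on the "$M_0$" copies of $R$ and multiplication by $s$ on the middle "$M_1$" copies; that is, $f_0 = \id_R$ and $f_1 = s\cdot\id_R$ (with an appropriate grading shift so that $f_1$ is homogeneous of bidegree $(0,0)$ — note $\deg s = \deg(rs)-\deg r$, which is exactly the discrepancy between the two middle grading shifts, so $f_1$ as a degree-zero map of shifted modules is multiplication by $s$). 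One checks $f$ is a chain map: the two squares read $rs\cdot\id = s\cdot r$ and $t\cdot s = st\cdot\id$, both trivially true. Similarly I would define $g\colon (rs,t)_R \to (r,st)_R$ by $g_0 = s\cdot\id_R$ and $g_1 = \id_R$; the chain map squares are $r\cdot s = s\cdot rs$... wait, that is $rs = s\cdot$ something — more precisely the squares are $r \cdot \id = \id \cdot \text{(no, it's } g_1\text{)}$; in any case they reduce to the identity $rs = r\cdot s$ and $st\cdot s = s\cdot t$ after accounting for which map carries the $s$. The point is that $g$ has $\zed_2$-degree $0$ and bidegree $\deg s$, since the $s$ now sits on an unshifted component.

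With $f$ and $g$ in hand, property (iii) is immediate: $g\circ f$ acts on $M_0$ by $s\cdot\id$ and on $M_1$ by $\id\cdot s = s\cdot\id$, so $g\circ f = s\cdot\id_{(r,st)_R}$; symmetrically $f\circ g = s\cdot\id_{(rs,t)_R}$. Properties (i) and (ii) are read off directly from the definitions of $f$ and $g$: both preserve the $\zed_2$-grading because they are diagonal (they send $M_\ve$ to $M_\ve$), $f$ is built from $\id$ and $s$ sitting on a component whose grading shift absorbs $\deg s$ so $\deg f = (0,0)$, and $g$ is built from $s$ sitting on an unshifted component so $\deg g = \deg s$. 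The main (and only mild) obstacle is bookkeeping: one must track the grading shifts on the middle modules of the two Koszul factorizations carefully to confirm that $f_1 = \mathsf{m}(s)$ really is a homogeneous map of bidegree $(0,0)$ and that $g_0 = \mathsf{m}(s)$ has bidegree $\deg s$ — this is a routine degree count using $\deg r + \deg s + \deg t = (2,2N+2)$, and no genuine difficulty arises. (An alternative, perhaps cleaner, framing would be to observe that both $(r,st)_R$ and $(rs,t)_R$ arise as the two ways of associating the "triple Koszul factor" built from the single relation $r\cdot s\cdot t$, and to cite the analogous statement in \cite{KR2} or \cite[Section 8]{Wu-color}; but the explicit $2\times 2$ description above is short enough to carry out in full.)
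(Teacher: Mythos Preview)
Your proposal is correct and matches the paper's proof essentially verbatim: the paper defines $f$ by $(f_0,f_1)=(1,s)$ and $g$ by $(g_0,g_1)=(s,1)$, displays the resulting commutative ladder diagram, and notes that the required properties are then easy to verify. The only difference is presentational---the paper encodes the chain-map check in a single diagram rather than writing out the squares---so there is nothing to add.
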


\begin{proof}
\[
\xymatrix{
R \ar[rr]^>>>>>>>>>>>>>>>>>{r} \ar[d]^{1} && R\{1-\deg_a r, N+1 - \deg_x r\} \ar[rr]^>>>>>>>>>>>>>>>>>{st} \ar[d]^{s} && R \ar[d]^{1} \\
R \ar[rr]^>>>>>>>>{rs} \ar[d]^{s} && R\{1-\deg_a r -\deg_a s, N+1 - \deg_x r - \deg_x s\} \ar[rr]^>>>>>>>>{t} \ar[d]^{1} && R \ar[d]^{s} \\
R \ar[rr]^>>>>>>>>>>>>>>>>>{r} && R\{1-\deg_a r, N+1 - \deg_x r\} \ar[rr]^>>>>>>>>>>>>>>>>>{st} && R
}
\]
In the above diagram, note that:
\begin{itemize}
	\item The top and bottom rows are both $(r,st)_R$. 
	\item The middle row is $(rs,t)_R$.
	\item All squares commute.
\end{itemize}
So this diagram defines morphisms $(r,st)_R \xrightarrow{f} (rs,t)_R$ and $(rs,t)_R \xrightarrow{g} (r,st)_R$. It is easy to verify that $f$ and $g$ satisfy all the requirements in the lemma.
\end{proof}

\begin{figure}[ht]
$
\xymatrix{
\input{crossing-1-1-res-0} \ar@<8ex>[rr]^{\chi^0} && \input{crossing-1-1-res-1} \ar@<-6ex>[ll]^{\chi^1}
}
$
\caption{}\label{def-chi-fig}

\end{figure}

\begin{lemma}\cite{KR1,KR2}\label{lemma-def-chi}
Let $\Gamma_0$ and $\Gamma_1$ be the MOY graphs in Figure \ref{def-chi-fig}. Then there exist homogeneous morphisms of matrix factorizations $\fC_N(\Gamma_0) \xrightarrow{\chi^0} \fC_N(\Gamma_1)$ and $\fC_N(\Gamma_1) \xrightarrow{\chi^1} \fC_N(\Gamma_0)$ satisfying:
\begin{enumerate}
  \item $\chi^0$ and $\chi^1$ are homotopically non-trivial,
	\item the $\zed_2 \oplus \zed^{\oplus 2}$-degrees of $\chi^0$ and $\chi^1$ are both $(0,0,1)$,
	\item $\chi^1 \circ \chi^0 \simeq (x_2-x_1)\id_{\fC_N(\Gamma_0)}$ and $\chi^0 \circ \chi^1 \simeq (x_2-x_1)\id_{\fC_N(\Gamma_1)}$.
\end{enumerate}
Moreover, up to homotopy and scaling, 
\begin{itemize}
	\item $\chi^0$ is the unique homotopically non-trivial homogeneous morphisms from $\fC_N(\Gamma_0)$ to $\fC_N(\Gamma_1)$ of $\zed_2 \oplus \zed^{\oplus 2}$-degree $(0,0,1)$,
	\item $\chi^1$ is the unique homotopically non-trivial homogeneous morphisms from $\fC_N(\Gamma_1)$ to $\fC_N(\Gamma_0)$ of $\zed_2 \oplus \zed^{\oplus 2}$-degree $(0,0,1)$.
\end{itemize}
\end{lemma}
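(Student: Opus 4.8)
The plan is to compute both sides as explicit Koszul matrix factorizations over the common boundary ring $R_\partial=\Q[a]\otimes_\Q\Sym(\{x_1\}|\{x_2\}|\{y_1\}|\{y_2\})$, and to realize $\chi^0$ and $\chi^1$ through Lemma \ref{lemma-jumping-factor} applied to one Koszul row. First I would unwind Definition \ref{def-MOY-mf}. Since $\Gamma_0$ is a pair of disjoint arcs, $\fC_N(\Gamma_0)\cong\left(\begin{smallmatrix}a\ast & x_1-y_2\\ a\ast & y_1-x_2\end{smallmatrix}\right)_{R_\partial}$; for $\Gamma_1$ one tensors the two trivalent-vertex factorizations over $\Q[a]\otimes_\Q\Sym(\mathbb W)$, where $\mathbb W$ marks the middle color-$2$ edge, and then applies Proposition \ref{prop-b-contraction} twice to eliminate the two indeterminates of $\mathbb W$ (equivalently, to set $\mathbb W=\{x_2,y_2\}$), presenting $\fC_N(\Gamma_1)$ as $\left(\begin{smallmatrix}a\ast & x_1+y_1-x_2-y_2\\ a\ast & x_1y_1-x_2y_2\end{smallmatrix}\right)_{R_\partial}\{0,-1\}$.

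Next I would exploit the identities $x_1+y_1-x_2-y_2=(x_1-y_2)+(y_1-x_2)$ and $x_1y_1-x_2y_2=x_2(x_1-y_2)+x_1(y_1-x_2)$, whose matrix $\left(\begin{smallmatrix}1 & 1\\ x_2 & x_1\end{smallmatrix}\right)$ has determinant $x_1-x_2$ and factors as $\left(\begin{smallmatrix}1 & 0\\ x_2 & 1\end{smallmatrix}\right)\left(\begin{smallmatrix}1 & 0\\ 0 & x_1-x_2\end{smallmatrix}\right)\left(\begin{smallmatrix}1 & 1\\ 0 & 1\end{smallmatrix}\right)$. Using row operations (Lemma \ref{lemma-row-op}, together with Koszul-row reindexing via Lemma \ref{lemma-zed-2-shift}), the two unimodular factors are isomorphisms of matrix factorizations; they bring $\fC_N(\Gamma_0)$ and $\fC_N(\Gamma_1)$ to presentations with common first-row $b$-entry $\beta:=x_1+y_1-x_2-y_2$ and second-row $b$-entry $y_1-x_2$ for $\Gamma_0$ versus $(x_1-x_2)(y_1-x_2)$ for $\Gamma_1$. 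Since $\{\beta,\,y_1-x_2\}$ and $\{\beta,\,(x_1-x_2)(y_1-x_2)\}$ are both $R_\partial$-regular, Lemma \ref{lemma-freedom} lets me rechoose the left-column entries and extract a common single-row factor $Q=(aC_1,\beta)$ — the needed homogeneous $C_1$ of $x$-degree $2N$ solves $C_1\equiv h_N(\{y_1,y_2\})\bmod(x_1-x_2)$ and $C_1\equiv h_N(\{x_1,y_2\})\bmod(y_1-x_2)$, which is solvable because the two values agree modulo $(x_1-x_2,\,y_1-x_2)$, and then $w-aC_1\beta$ is divisible by $(x_1-x_2)(y_1-x_2)$, say $=a\rho(x_1-x_2)(y_1-x_2)$. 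Thus $\fC_N(\Gamma_0)\cong Q\otimes_{R_\partial}(a\rho(x_1-x_2),\,y_1-x_2)$ and $\fC_N(\Gamma_1)\cong Q\otimes_{R_\partial}(a\rho,\,(x_1-x_2)(y_1-x_2))\{0,-1\}$, and the two $S$-factors are exactly the pair $(rs,t)$, $(r,st)$ of Lemma \ref{lemma-jumping-factor} with $r=a\rho$, $s=x_1-x_2$, $t=y_1-x_2$. Setting $\chi^0:=\id_Q\otimes g$ and $\chi^1:=\id_Q\otimes f$ and transporting back through the isomorphisms above gives morphisms $\fC_N(\Gamma_0)\xrightarrow{\chi^0}\fC_N(\Gamma_1)$, $\fC_N(\Gamma_1)\xrightarrow{\chi^1}\fC_N(\Gamma_0)$; part (2) follows by adding up the grading shifts (the $\{0,-1\}$ from the splitting vertex cancels against the shifts introduced by Lemmas \ref{lemma-zed-2-shift}, \ref{lemma-row-op}, \ref{lemma-freedom} to give $\zed_2\oplus\zed^{\oplus2}$-degree $(0,0,1)$), and part (3) is immediate from Lemma \ref{lemma-jumping-factor}(iii): $\chi^1\circ\chi^0=\id_Q\otimes(f\circ g)\simeq(x_2-x_1)\id_{\fC_N(\Gamma_0)}$ and $\chi^0\circ\chi^1\simeq(x_2-x_1)\id_{\fC_N(\Gamma_1)}$ after fixing signs.

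For part (1) and the uniqueness statement I would run the local Hom computation of Lemmas \ref{lemma-edge-sliding-unique} and \ref{lemma-hmf-decomp-II}: by Lemma \ref{lemma-dual-Koszul}, $\Hom_{R_\partial}(\fC_N(\Gamma_0),\fC_N(\Gamma_1))$ is a finite-rank Koszul factorization of $0$; contracting the rows with $b$-entries $x_1-y_2$ and $y_1-x_2$ (Proposition \ref{prop-b-contraction}, i.e.\ setting $x_1=y_2$, $y_1=x_2$) and using Lemma \ref{lemma-power-derive} turns it into a diagonal Koszul factorization with entries $a\,h_m(\cdot)$ over $\Q[a]$ adjoined two $x$-variables, whose underlying module is an explicit tensor product of rank-two free modules; one reads off (via Lemma \ref{lemma-Hom-space}) that $\Hom_\hmf(\fC_N(\Gamma_0),\fC_N(\Gamma_1))$ in degree $(0,0,1)$ is at most one-dimensional over $\Q$, and likewise with $\Gamma_0,\Gamma_1$ interchanged. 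The same method applied to $\Hom_{R_\partial}(\fC_N(\Gamma_0),\fC_N(\Gamma_0))$ shows that multiplication by $x_1-x_2$ is not null-homotopic on $\fC_N(\Gamma_0)$; combined with $\chi^1\circ\chi^0\simeq(x_2-x_1)\id$ this forces $\chi^0,\chi^1$ to be homotopically nontrivial, which is part (1), and since they lie in the respective one-dimensional Hom spaces, they are unique up to homotopy and scaling.

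The step I expect to be the main obstacle is the construction in the second paragraph: arranging the row operations and the use of Lemma \ref{lemma-freedom} so that $\fC_N(\Gamma_0)$ and $\fC_N(\Gamma_1)$ acquire a genuinely common factor $Q$ whose complementary rows differ by exactly the single factor $x_1-x_2$ — in particular producing the homogeneous $C_1$ solving the two congruences and checking divisibility of $w-aC_1\beta$ by $(x_1-x_2)(y_1-x_2)$ — and then keeping every grading shift straight so that the transported morphisms genuinely land in degree $(0,0,1)$. One must also verify the hypotheses of Proposition \ref{prop-b-contraction} throughout (the eliminated $b$-entries have $x$-degree $2$ or $4\le 2N+2$, valid for all $N\ge1$) and that the indeterminate $a$ causes no trouble, which it does not since $x_1-x_2$ is free of $a$ while every Koszul left-column entry is a multiple of $a$. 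Most of this is the matrix-factorization analogue of the corresponding constructions in \cite{KR1,KR2}.
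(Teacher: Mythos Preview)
Your approach is essentially the same as the paper's: both reduce $\fC_N(\Gamma_0)$ and $\fC_N(\Gamma_1)$ via row operations (Lemmas \ref{lemma-row-op}, \ref{lemma-freedom}) to Koszul presentations sharing a common first row $(\ast,\,x_1+y_1-x_2-y_2)$ and differing in the second row by the factor $x_2-x_1$, then invoke Lemma \ref{lemma-jumping-factor}; both then compute the relevant $\Hom_\hmf$ spaces via Lemma \ref{lemma-dual-Koszul} and Proposition \ref{prop-b-contraction} to get the one-dimensionality needed for uniqueness. The paper carries this out with the explicit $U_1,U_2$ from \eqref{eq-def-U-j} rather than your CRT construction of $C_1$, and it uses $x_1-y_2$ rather than $y_1-x_2$ in the second row, but these are cosmetic.

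One small point: for homotopic non-triviality, the paper checks directly that the maps $f,g$ from Lemma \ref{lemma-jumping-factor} are non-zero on $H_{R_\partial}$. Your indirect route---showing that multiplication by $x_2-x_1$ is not null-homotopic on $\fC_N(\Gamma_0)$ and then using $\chi^1\circ\chi^0\simeq(x_2-x_1)\id$---is fine, but the phrase ``the same method'' is a bit loose: a dimension count on $\Hom_\hmf$ alone does not show a \emph{specific} class is non-zero. You still need to observe (e.g.\ on $H_{R_\partial}$, where the differential vanishes and $x_2-x_1$ acts non-trivially) that this particular endomorphism survives; that check is immediate but should be stated.
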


\begin{proof}
We prove the existence of $\chi^0$ and $\chi^1$ first. Let $R_\partial=\Q[a,x_1,x_2,y_1,y_2]$. By Proposition \ref{prop-b-contraction},
\[
\fC_N(\Gamma_1) \simeq \left(%
\begin{array}{cc}
  aU_1 & x_1+y_1-x_2-y_2 \\
  aU_2 & x_1y_1-x_2y_2
\end{array}%
\right)_{R_\partial}\{0,-1\},
\]
where $U_1$ and $U_2$ are given by equation \eqref{eq-def-U-j}. By Lemma \ref{lemma-row-op},
\[
\left(%
\begin{array}{cc}
  aU_1 & x_1+y_1-x_2-y_2 \\
  aU_2 & x_1y_1-x_2y_2
\end{array}%
\right)_{R_\partial}
\cong 
\left(%
\begin{array}{cc}
  a(U_1+x_1U_2) & x_1+y_1-x_2-y_2 \\
  aU_2 & (x_2-x_1)(x_1-y_2)
\end{array}%
\right)_{R_\partial}.
\]
Thus, we have a pair of homotopy equivalences 
\[
\xymatrix{
\fC_N(\Gamma_1) \ar@<.5ex>[rr]^>>>>>>>>>>{\rho} && {M:=\left(%
\begin{array}{cc}
  a(U_1+x_1U_2) & x_1+y_1-x_2-y_2 \\
  aU_2 & (x_2-x_1)(x_1-y_2)
\end{array}%
\right)_{R_\partial} \{0,-1\}} \ar@<.5ex>[ll]^<<<<<<<<<<{\bar{\rho}}
}
\]
that are homotopy inverses of each other. On the other hand, by Lemmas \ref{lemma-row-op} and \ref{lemma-freedom}, we know that
\[
\fC_N(\Gamma_0) 
\cong \left(%
\begin{array}{cc}
  a(U_1+x_1U_2) & y_1-x_2 \\
  a(U_1+x_2U_2) & x_1-y_2
\end{array}%
\right)_{R_\partial}
\cong \left(%
\begin{array}{cc}
  a(U_1+x_1U_2) & x_1+y_1-x_2-y_2 \\
  a(x_2-x_1)U_2 & x_1-y_2
\end{array}%
\right)_{R_\partial}.
\]
This gives a pair of isomorphisms
\[
\xymatrix{
\fC_N(\Gamma_0) \ar@<.5ex>[rr]^>>>>>>>>>>{\eta} && {M':=\left(%
\begin{array}{cc}
  a(U_1+x_1U_2) & x_1+y_1-x_2-y_2 \\
  a(x_2-x_1)U_2 & x_1-y_2
\end{array}%
\right)_{R_\partial}} \ar@<.5ex>[ll]^<<<<<<<<<<{\eta^{-1}}.
}
\]
By Lemma \ref{lemma-jumping-factor}, there are homogeneous morphisms 
\[
\xymatrix{
{M} \ar@<.5ex>[rr]^<<<<<<<<<<{f} && {M'} \ar@<.5ex>[ll]^<<<<<<<<<<{g}
}
\]
satisfying:
\begin{itemize}
	\item the $\zed_2 \oplus \zed^{\oplus 2}$-degrees of $f$ and $g$ are both $(0,0,1)$,
	\item $f \circ g \simeq (x_2-x_1)\id_{M'}$ and $g \circ f \simeq (x_2-x_1)\id_M$.
\end{itemize}
Moreover, it is straightforward to check that $H_{R_\partial}(M) \xrightarrow{f} H_{R_\partial}(M')$ and $H_{R_\partial}(M') \xrightarrow{g} H_{R_\partial}(M)$ are both non-zero. So $f$ and $g$ are homotopically non-trivial. Define $\chi^0 = \bar{\rho}\circ g \circ \eta$ and $\chi^0 = \eta^{-1}\circ f \circ \rho$. It is easy to verify that these homogeneous morphisms satisfy conditions (1-3) in the lemma.

It remains to prove the uniqueness of $\chi^0$ and $\chi^1$. This comes down to showing that, in the category 
\[
\hmf_{R_\partial, a(x_1^{N+1} +y_1^{N+1} -x_2^{N+1} -y_2^{N+1})},
\] 
we have 
\[
\Hom_\hmf (\fC_N(\Gamma_0),\fC_N(\Gamma_1)\{0,-1\}) \cong \Hom_\hmf (\fC_N(\Gamma_1),\fC_N(\Gamma_0)\{0,-1\}) \cong \Q.
\]
Next, we prove that $\Hom_\hmf (\fC_N(\Gamma_0),\fC_N(\Gamma_1)\{0,-1\}) \cong \Q$. The proof of $\Hom_\hmf (\fC_N(\Gamma_1),\fC_N(\Gamma_0)\{0,-1\}) \cong \Q$ is similar and left to the reader.

Since $\chi^0$ is not homotopic to $0$, we have that $\dim_\Q H^{0,0,1}(\Hom_{R_\partial}(M',M)) \geq 1$. Note that, by Lemma \ref{lemma-Hom-space}, 
\[
\Hom_\hmf (\fC_N(\Gamma_0),\fC_N(\Gamma_1)\{0,-1\}) \cong \Hom_\hmf(M',M\{0,-1\})\cong H^{0,0,1}(\Hom_{R_\partial}(M',M)).
\] 
By Lemma \ref{lemma-dual-Koszul} and Proposition \ref{prop-b-contraction}, we have that, as $\zed_2\oplus\zed^{\oplus2}$-graded matrix factorizations over $R=\Q[a,x_1,y_1]$,
\begin{eqnarray*}
\Hom_{R_\partial}(M',M) & \cong & \left(%
\begin{array}{cc}
  a(U_1+x_1U_2) & x_1+y_1-x_2-y_2 \\
  aU_2 & (x_2-x_1)(x_1-y_2) \\
  a(U_1+x_1U_2) & -(x_1+y_1-x_2-y_2) \\
  a(x_2-x_1)U_2 & -(x_1-y_2)
\end{array}%
\right)_{R_\partial} \{2,2N-3\} \\
& \cong & \left(%
\begin{array}{cc}
  a(V_1+x_1V_2) & 0 \\
  aV_2 & 0 
\end{array}%
\right)_{R_\partial} \{2,2N-3\},
\end{eqnarray*}
where $V_i=U_i|_{y_2=x_1,~x_2=y_1} \in R$. But, as a $\zed_2\oplus\zed^{\oplus2}$-graded $R$-module,
\[
\left(%
\begin{array}{cc}
  a(V_1+x_1V_2) & 0 \\
  aV_2 & 0 
\end{array}%
\right)_{R_\partial} \{2,2N-3\} \cong
(R\{1,N-1\} \oplus R \left\langle 1\right\rangle) \otimes_R (R\{1,N-3\} \oplus R \left\langle 1\right\rangle)\{0,1\}.
\]
This implies that $\dim_\Q H^{0,0,1}(\Hom_{R_\partial}(M',M)) \leq 1$. Thus, $\Hom_\hmf (\fC_N(\Gamma_0),\fC_N(\Gamma_1)\{0,-1\}) \cong \Q$.
\end{proof}

\begin{figure}[ht]
$
\xymatrix{
\input{crossing-1-2-res-0} \ar@<8ex>[rr]^{\tilde{\chi}^0} && \input{crossing-1-2-res-1} \ar@<-6ex>[ll]^{\tilde{\chi}^1}
}
$
\caption{}\label{def-tilde-chi-fig}

\end{figure}

\begin{lemma}\label{lemma-def-tilde-chi}
Let $\tilde{\Gamma}_0$ and $\tilde{\Gamma}_1$ be the MOY graphs in Figure \ref{def-tilde-chi-fig}. Then there exist homogeneous morphisms of matrix factorizations $\fC_N(\tilde{\Gamma}_0) \xrightarrow{\tilde{\chi}^0} \fC_N(\tilde{\Gamma}_1)$ and $\fC_N(\tilde{\Gamma}_1) \xrightarrow{\tilde{\chi}^1} \fC_N(\tilde{\Gamma}_0)$ satisfying:
\begin{enumerate}
  \item $\tilde{\chi}^0$ and $\tilde{\chi}^1$ are homotopically non-trivial,
	\item the $\zed_2 \oplus \zed^{\oplus 2}$-degrees of $\tilde{\chi}^0$ and $\tilde{\chi}^1$ are both $(0,0,1)$,
	\item $\tilde{\chi}^1 \circ \tilde{\chi}^0 \simeq (x_2-x_1)\id_{\fC_N(\tilde{\Gamma}_0)}$ and $\tilde{\chi}^0 \circ \tilde{\chi}^1 \simeq (x_2-x_1)\id_{\fC_N(\tilde{\Gamma}_1)}$.
\end{enumerate}
Moreover, up to homotopy and scaling, 
\begin{itemize}
	\item $\tilde{\chi}^0$ is the unique homotopically non-trivial homogeneous morphisms from $\fC_N(\tilde{\Gamma}_0)$ to $\fC_N(\tilde{\Gamma}_1)$ of $\zed_2 \oplus \zed^{\oplus 2}$-degree $(0,0,1)$,
	\item $\tilde{\chi}^1$ is the unique homotopically non-trivial homogeneous morphisms from $\fC_N(\tilde{\Gamma}_1)$ to $\fC_N(\tilde{\Gamma}_0)$ of $\zed_2 \oplus \zed^{\oplus 2}$-degree $(0,0,1)$.
\end{itemize}
\end{lemma}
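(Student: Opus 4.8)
Here is how I would approach the proof.

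The plan is to mirror, essentially line by line, the proof of Lemma~\ref{lemma-def-chi}; the only structural difference is that one of the two color-$1$ strands there is here replaced by a color-$2$ strand, so the model Koszul matrix factorizations become a little larger. Throughout, write $R_\partial=\Q[a,x_1,x_2]\otimes_\Q\Sym(\{y_1,y_2\}|\{y_3,y_4\})$ for the common boundary ring of $\tilde\Gamma_0$ and $\tilde\Gamma_1$, and $w=a(x_1^{N+1}+y_1^{N+1}+y_2^{N+1}-x_2^{N+1}-y_3^{N+1}-y_4^{N+1})$ for their common potential, so that $\fC_N(\tilde\Gamma_0)$ and $\fC_N(\tilde\Gamma_1)$ are objects of $\hmf_{R_\partial,w}$.

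For existence, I would first contract. Applying Proposition~\ref{prop-b-contraction} to eliminate the indeterminates living on the internal color-$3$ edge of $\tilde\Gamma_1$ gives $\fC_N(\tilde\Gamma_1)\simeq K_1$, an explicit size-$3$ Koszul matrix factorization over $R_\partial$ whose right column records the differences of elementary symmetric polynomials of $\{x_1\}\cup\{y_1,y_2\}$ and $\{x_2\}\cup\{y_3,y_4\}$; and $\fC_N(\tilde\Gamma_0)$, which by definition is the tensor product over the internal color-$1$ edge $z$ of the two trivalent pieces, contracts in the same way to a size-$3$ Koszul factorization. Then, using Lemmas~\ref{lemma-row-op}, \ref{lemma-twist} and \ref{lemma-freedom}---the needed $R_\partial$-regularity of the relevant difference sequences being exactly as in Remark~\ref{MOY-freedom}---I would bring $\fC_N(\tilde\Gamma_0)$ into an isomorphic form $K_0$ that coincides with $K_1$ in every row except one, in which the right-column entry of $K_0$ is $(x_2-x_1)$ times the corresponding entry of $K_1$. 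At that point Lemma~\ref{lemma-jumping-factor}, applied with $s=x_2-x_1$, produces homogeneous morphisms $K_1\xrightarrow{f}K_0$ and $K_0\xrightarrow{g}K_1$ of $\zed_2\oplus\zed^{\oplus2}$-degree $(0,0,1)$ with $g\circ f\simeq(x_2-x_1)\,\id$ and $f\circ g\simeq(x_2-x_1)\,\id$; conjugating $f$ and $g$ by the homotopy equivalences $\fC_N(\tilde\Gamma_0)\simeq K_0$ and $\fC_N(\tilde\Gamma_1)\simeq K_1$ then yields $\tilde\chi^0$ and $\tilde\chi^1$ satisfying properties (2) and (3). Property (1) follows because $f$ and $g$ induce nonzero maps on $H_{R_\partial}$, visibly so from the explicit models, exactly as in the proof of Lemma~\ref{lemma-def-chi}.

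For uniqueness it suffices, by Lemma~\ref{lemma-Hom-space}, to show that $H^{0,0,1}\bigl(\Hom_{R_\partial}(\fC_N(\tilde\Gamma_0),\fC_N(\tilde\Gamma_1))\bigr)$ is one-dimensional over $\Q$, together with the symmetric statement with $\tilde\Gamma_0$ and $\tilde\Gamma_1$ interchanged. Using $\fC_N(\tilde\Gamma_0)\cong K_0$, $\fC_N(\tilde\Gamma_1)\simeq K_1$ and Lemma~\ref{lemma-dual-Koszul}, this Hom complex is homotopy equivalent to a Koszul matrix factorization over $R_\partial$; applying Proposition~\ref{prop-b-contraction} again (to eliminate $x_2,y_3,y_4$) and Lemma~\ref{lemma-power-derive} (to identify the surviving left-column entries) rewrites it, up to a grading shift, as a Koszul factorization over $\Q[a,x_1,y_1,y_2]$ with all right-column entries $0$ and left-column entries $a\,h_N$, $a\,h_{N-1}$, $a\,h_{N-2}$ in a fixed three-element alphabet. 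As a $\zed_2\oplus\zed^{\oplus2}$-graded module this is a tensor product of three free rank-$2$ factors with explicit shifts, and a direct degree count shows that the homogeneous piece contributing to $H^{0,0,1}$ is exactly one-dimensional; hence $\dim_\Q H^{0,0,1}(\cdot)\le1$. Since $\tilde\chi^0$ is not null-homotopic the dimension is also $\ge1$, hence equal to $1$. The argument for the other Hom space is identical after swapping the roles of $\tilde\Gamma_0$ and $\tilde\Gamma_1$.

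The conceptual content here is entirely inherited from Lemma~\ref{lemma-def-chi}; the step I expect to be the main obstacle is the now bulkier explicit linear algebra: choosing the right sequence of row operations and twists to bring $\fC_N(\tilde\Gamma_0)$ into a form differing from $\fC_N(\tilde\Gamma_1)$ by a single $(x_2-x_1)$-factor, verifying the regular-sequence hypotheses of Lemmas~\ref{lemma-row-op}, \ref{lemma-twist} and \ref{lemma-freedom} in the presence of the color-$2$ and color-$3$ edges, and keeping track of all the $\{j,k\}$-shifts so that $\tilde\chi^0$ and $\tilde\chi^1$ genuinely land in $\zed_2\oplus\zed^{\oplus2}$-degree $(0,0,1)$. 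An alternative would be to deduce the statement directly from the more general construction in \cite[Section~8]{Wu-color}.
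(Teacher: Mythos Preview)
Your proposal is correct and follows essentially the same approach as the paper's proof: contract both $\fC_N(\tilde\Gamma_0)$ and $\fC_N(\tilde\Gamma_1)$ to size-$3$ Koszul models over $R_\partial$, use Lemma~\ref{lemma-twist} (and Lemma~\ref{lemma-freedom} implicitly, via freely chosen left-column entries) to align them so that they differ in exactly one right-column entry by a factor of $x_2-x_1$, invoke Lemma~\ref{lemma-jumping-factor}, and check homotopic non-triviality on $H_{R_\partial}$. The paper in fact carries out the explicit row manipulations rather than leaving them abstract, and for uniqueness simply remarks that the $\Hom_{\hmf}$ computation is parallel to that in Lemma~\ref{lemma-def-chi} and omits the details; your sketch of the uniqueness step is thus already more explicit than the paper's, and the strategy you describe for it is exactly what one would do.
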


\begin{proof}
We follow the approach used in the proof of Lemma \ref{lemma-def-chi}. The only difference is that the computations are now more complex. To simplify the exposition, we introduce the notation ``$\ast_{j,k}$", which means a homogeneous element of $\zed^{\oplus 2}$-degree $(j,k)$. From Lemma \ref{lemma-freedom}, we know that the isomorphism types of all the Koszul matrix factorizations appearing in this proof are independent of the choice of $\ast_{j,k}$ as long as these matrix factorizations remain $\zed_2\oplus\zed^{\oplus 2}$-graded matrix factorizations of $a(x_1^{N+1}+y_1^{N+1}+y_2^{N+1}-x_2^{N+1}-y_3^{N+1}-y_4^{N+1})$.

Let $R_\partial=\Q[a,x_1,x_2]\otimes_\Q \Sym(\{y_1,y_2\}|\{y_3,y_4\})$. By Proposition \ref{prop-b-contraction}, we have 
\[
\fC_N(\tilde{\Gamma}_1) \simeq \left(%
\begin{array}{cc}
  \ast_{2,2N} & x_1+y_1+y_2-x_2-y_3-y_4 \\
  \ast_{2,2N-2} & x_1y_1+x_1y_2+y_1y_2-x_2y_3-x_2y_4-y_3y_4 \\
  \ast_{2,2N-4} & x_1y_1y_2-x_2y_3y_4
\end{array}%
\right)_{R_\partial}\{0,-2\}.
\]
Note that
\begin{eqnarray*}
&& (x_1y_1y_2-x_2y_3y_4) -x_1(x_1y_1+x_1y_2+y_1y_2-x_2y_3-x_2y_4-y_3y_4) + x_1^2(x_1+y_1+y_2-x_2-y_3-y_4) \\
& = & -(x_2-x_1)(x_1^2-x_1(y_3+y_4)+y_3y_4)
\end{eqnarray*}
and 
\begin{eqnarray*}
&& (x_1y_1+x_1y_2+y_1y_2-x_2y_3-x_2y_4-y_3y_4) - x_1(x_1+y_1+y_2-x_2-y_3-y_4) \\
& = & y_1y_2 +(x_2-x_1)(x_1-y_3-y_4)-y_3y_4.
\end{eqnarray*}
So, by Lemma \ref{lemma-twist},
\begin{eqnarray*}
&& {\left(%
\begin{array}{cc}
  \ast_{2,2N} & x_1+y_1+y_2-x_2-y_3-y_4 \\
  \ast_{2,2N-2} & x_1y_1+x_1y_2+y_1y_2-x_2y_3-x_2y_4-y_3y_4 \\
  \ast_{2,2N-4} & x_1y_1y_2-x_2y_3y_4
\end{array}%
\right)_{R_\partial}} \\
& \cong &
{\left(%
\begin{array}{cc}
  \ast_{2,2N} & x_1+y_1+y_2-x_2-y_3-y_4 \\
  \ast_{2,2N-2} & y_1y_2 +(x_2-x_1)(x_1-y_3-y_4)-y_3y_4 \\
  \ast_{2,2N-4} & -(x_2-x_1)(x_1^2-x_1(y_3+y_4)+y_3y_4)
\end{array}%
\right)_{R_\partial}}.
\end{eqnarray*}
Thus, we have a pair of homotopy equivalences 
\[
\xymatrix{
\fC_N(\tilde{\Gamma}_1) \ar@<.5ex>[rr]^>>>>>>>>>>{\rho} && {M:=\left(%
\begin{array}{cc}
  \ast_{2,2N} & x_1+y_1+y_2-x_2-y_3-y_4 \\
  \ast_{2,2N-2} & y_1y_2 +(x_2-x_1)(x_1-y_3-y_4)-y_3y_4 \\
  \ast_{2,2N-4} & -(x_2-x_1)(x_1^2-x_1(y_3+y_4)+y_3y_4)
\end{array}%
\right)_{R_\partial} \{0,-2\}} \ar@<.5ex>[ll]^<<<<<<<<<<{\bar{\rho}}
}
\]
that are homotopy inverses of each other.

By definition, 
\[
\fC_N(\tilde{\Gamma}_0) 
= \left(%
\begin{array}{cc}
  \ast_{2,2N} & x_1+z-y_3-y_4 \\
  \ast_{2,2N-2} & x_1z-y_3y_4 \\
  \ast_{2,2N} & y_1+y_2-x_2-z \\
  \ast_{2,2N-2} & y_1y_2-x_2z
\end{array}%
\right)_{R_\partial[z]}\{0,-1\}.
\]
We remove the indeterminate $z$ by applying Proposition \ref{prop-b-contraction} to the first row of this Koszul matrix factorization. This gives
\begin{eqnarray*}
\fC_N(\tilde{\Gamma}_0) & \simeq & \left(%
\begin{array}{cc}
  \ast_{2,2N-2} & x_1(y_3+y_4-x_1)-y_3y_4 \\
  \ast_{2,2N} & y_1+y_2-x_2-(y_3+y_4-x_1) \\
  \ast_{2,2N-2} & y_1y_2-x_2(y_3+y_4-x_1)
\end{array}%
\right)_{R_\partial}\{0,-1\} \\
& \cong & \left(%
\begin{array}{cc}
  \ast_{2,2N} & y_1+y_2-x_2-(y_3+y_4-x_1) \\
  \ast_{2,2N-2} & y_1y_2-x_2(y_3+y_4-x_1) \\
  \ast_{2,2N-2} & x_1(y_3+y_4-x_1)-y_3y_4
\end{array}%
\right)_{R_\partial}\{0,-1\} \\
& \cong & \left(%
\begin{array}{cc}
  \ast_{2,2N} & x_1+y_1+y_2-x_2-y_3-y_4 \\
  \ast_{2,2N-2} & y_1y_2 +(x_2-x_1)(x_1-y_3-y_4)-y_3y_4 \\
  \ast_{2,2N-2} & -(x_1^2-x_1(y_3+y_4)+y_3y_4)
\end{array}%
\right)_{R_\partial}\{0,-1\},
\end{eqnarray*}
where, in the last step, we applied Lemma \ref{lemma-twist} to the second and third rows. Thus, we have a pair of homotopy equivalences 
\[
\xymatrix{
\fC_N(\tilde{\Gamma}_0) \ar@<.5ex>[rr]^>>>>>>>>>>{\eta} && {M':=\left(%
\begin{array}{cc}
  \ast_{2,2N} & x_1+y_1+y_2-x_2-y_3-y_4 \\
  \ast_{2,2N-2} & y_1y_2 +(x_2-x_1)(x_1-y_3-y_4)-y_3y_4 \\
  \ast_{2,2N-2} & -(x_1^2-x_1(y_3+y_4)+y_3y_4)
\end{array}%
\right)_{R_\partial} \{0,-1\}} \ar@<.5ex>[ll]^<<<<<<<<<<{\bar{\eta}}
}
\]
that are homotopy inverses of each other.

By Lemma \ref{lemma-jumping-factor}, there are homogeneous morphisms 
\[
\xymatrix{
{M} \ar@<.5ex>[rr]^<<<<<<<<<<{f} && {M'} \ar@<.5ex>[ll]^<<<<<<<<<<{g}
}
\]
satisfying
\begin{itemize}
	\item the $\zed_2 \oplus \zed^{\oplus 2}$-degrees of $f$ and $g$ are both $(0,0,1)$,
	\item $f \circ g \simeq (x_2-x_1)\id_{M'}$ and $g \circ f \simeq (x_2-x_1)\id_M$.
\end{itemize}
Moreover, it is straightforward to check that $H_{R_\partial}(M) \xrightarrow{f} H_{R_\partial}(M')$ and $H_{R_\partial}(M') \xrightarrow{g} H_{R_\partial}(M)$ are both non-zero. So $f$ and $g$ are homotopically non-trivial. Define $\tilde{\chi}^0 = \bar{\rho}\circ g \circ \eta$ and $\tilde{\chi}^0 = \bar{\eta}\circ f \circ \rho$. It is easy to see that these are homogeneous morphisms satisfying conditions (1-3) in the lemma. 

The above proves the existence of $\tilde{\chi}^0$ and $\tilde{\chi}^1$. The uniqueness of $\tilde{\chi}^0$ and $\tilde{\chi}^1$ follows from the fact that,in the category 
\[
\hmf_{R_\partial, a(x_1^{N+1} +y_1^{N+1}+y_2^{N+1} -x_2^{N+1} -y_3^{N+1}-y_4^{N+1})},
\] 
we have 
\[
\Hom_\hmf (\fC_N(\tilde{\Gamma}_0),\fC_N(\tilde{\Gamma}_1)\{0,-1\}) \cong \Hom_\hmf (\fC_N(\tilde{\Gamma}_1),\fC_N(\tilde{\Gamma}_0)\{0,-1\}) \cong \Q.
\]
The computations of these $\Hom_\hmf$ spaces are very similar to the corresponding computation in the proof of Lemma \ref{lemma-def-chi}. We leave details to the reader.
\end{proof}

\section{Definition of $\fH_N$}\label{sec-def}

We define in this section a chain complex $\fC_N(B)$ of matrix factorizations for every closed braid $B$. $\fH_N(B)$ is then defined to be the homology of $\fC_N(B)$. Up to a grading shift, $\fC_0(B)$ is the chain complex defined in \cite{KR2} and $\fH_0(B)$ is the HOMFLYPT homology. We will prove in Sections \ref{sec-R1}-\ref{sec-R3} below that, for $N\geq 1$, $\fH_N$ is an invariant for transverse links but not for smooth links.

In this section, we again fix a non-negative integer $N$ and let $a$ be a homogeneous indeterminate of bidegree $\deg a = (2,0)$. 

\subsection{The chain complex associated to a marked tangle diagram} In this subsection, we define the chain complex associated to a marked oriented tangle. Although this chain complex can be defined for any oriented tangle, we can only establish its homotopy invariance under transverse Markov moves. So, in the end, this definition will only be applied to closed braids to define a transverse link invariants via transverse braids. 

\begin{definition}\label{def-marking-tangle}
Let $T$ be an oriented tangle diagram. We call a segment of $T$ between two adjacent crossings/end points an arc. We color all arcs of $T$ by $1$. A marking of $T$ consists of:
\begin{enumerate}
	\item a collections of marked points on $T$ such that
	      \begin{itemize}
	            \item none of the crossings of $T$ are marked, 
	            \item all end points are marked,
	            \item every arc of $T$ contains at least one marked point,
        \end{itemize}
	\item an assignment of pairwise distinct homogeneous indeterminates of bidegree $(0,2)$ to the marked points such that every marked point is assigned a unique indeterminate.
\end{enumerate} 
\end{definition}

Let $T$ be an oriented tangle with a marking. Cut $T$ at all of its marked points. This cuts $T$ into a collection $\{T_1,\dots,T_l\}$ of simple tangles, each of which is of one of the three types in Figure \ref{tangle-pieces-fig} and is marked only at its end points.

\begin{figure}[ht]
$
\xymatrix{
\input{arc} && \input{crossing+} && \input{crossing-} 
}
$
\caption{}\label{tangle-pieces-fig}

\end{figure}

Note that $A$ is itself a MOY graph. We define the chain complex associated to $A$ to be
\begin{equation}\label{eq-def-chain-arc}
\fC_N(A)= 0 \rightarrow \underbrace{\fC_N(A)}_{0} \rightarrow 0, 
\end{equation}
where the $\fC_N(A)$ on the right hand side is the matrix factorization associated to the MOY graph $A$, and the under-brace indicates the homological grading.

\begin{figure}[ht]
$
\xymatrix{
&& \input{crossing+}  \ar@<-12ex>[lld]_{0}\ar@<12ex>[rrd]^{+1} && \\
 \input{crossing-1-1-res-0}&&&& \input{crossing-1-1-res-1} \\
&& \input{crossing-} \ar[llu]_{0} \ar[rru]^{-1} &&
}
$
\caption{}\label{crossing-res-fig}

\end{figure}

To define the chain complexes $\fC_N(C_\pm)$, consider the resolutions of $C_\pm$ in Figure \ref{crossing-res-fig}. We call the resolution $C_\pm \leadsto \Gamma_0$ a $0$-resolution and the resolution $C_\pm \leadsto \Gamma_1$ a $\pm 1$-resolution. We define 
\begin{eqnarray}
\label{eq-def-chain-crossing+} \fC_N(C_+) & = & 0 \rightarrow \underbrace{\fC_N(\Gamma_1)\left\langle 1\right\rangle\{1,N\}}_{-1} \xrightarrow{\chi^1} \underbrace{\fC_N(\Gamma_0)\left\langle 1\right\rangle\{1,N-1\}}_{0} \rightarrow 0, \\
\label{eq-def-chain-crossing-} \fC_N(C_-) & = & 0 \rightarrow \underbrace{\fC_N(\Gamma_0)\left\langle 1\right\rangle\{-1,-N+1\}}_{0} \xrightarrow{\chi^0} \underbrace{\fC_N(\Gamma_1)\left\langle 1\right\rangle\{-1,-N\}}_{1} \rightarrow 0,
\end{eqnarray}
where the morphisms $\chi^0$ and $\chi^1$ are defined in Lemma \ref{lemma-def-chi} and, again, the under-braces indicate the homological gradings.

Note that the differential maps of $\fC_N(A)$, $\fC_N(C_+)$ and $\fC_N(C_-)$ are homogeneous morphisms of matrix factorizations that preserve the $\zed_2 \oplus \zed^{\oplus 2}$-grading. Of course, these differential maps raise the homological grading by $1$.

\begin{definition}\label{def-chain-tangle}
We define the chain complex $\fC_N(T)$ associated to $T$ to be $\fC_N(T) := \bigotimes_{i=1}^{l} \fC_N(T_i)$, where the tensor product is over the common end points. That is, if $x_1,\dots, x_m$ are the indeterminates assigned to the common end points of $T_i$ and $T_j$, then $T_i \otimes T_j := T_i \otimes_{\Q[a,x_1,\dots, x_m]} T_j$.

Suppose that
\begin{itemize}
  \item the indeterminates assigned to end points of $T$ are $y_1,\dots,y_{2n}$,
	\item $T$ points outward at end points assigned with $y_1,\dots,y_n$,
	\item $T$ points inward at end points assigned with $y_{n+1},\dots,y_{2n}$.
\end{itemize}
Define $R_\partial = \Q[a,y_1,\dots,y_{2n}]$. We view $\fC_N(T)$ as a chain complex over the category
\[
\hmf^{\mathrm{all}}_{R_\partial, a(\sum_{i=1}^n y_i^{N+1} - \sum_{j=n+1}^{2n} y_j^{N+1})}.
\]
In particular, for a link diagram $L$, we view $\fC_N(L)$ as a chain complex over the category $\hmf_{\Q[a],0}$.

We denote by $d_\chi$ the differential map of $\fC_N(L)$ induced by the differential maps of the chain complexes \eqref{eq-def-chain-arc}, \eqref{eq-def-chain-crossing+} and \eqref{eq-def-chain-crossing-} via the tensor product. $d_\chi$ is a homogeneous morphism of matrix factorizations preserving the $\zed_2$-, $a$-, $x$-gradings and raising the homological grading by $1$.

The underlying matrix factorization of $\fC_N(T)$ endows it with a homogeneous differential $d_{mf}$, which preserves homological grading and shifts the $\zed_2$-grading by $1$, the $a$-grading by $1$ and the $x$-grading by $N+1$.

$d_\chi$ commutes with $d_{mf}$ since $d_\chi$ is a morphism of matrix factorizations. 
\end{definition}

\begin{definition}\label{def-homology-braid}
Suppose that $B$ is a closed braid with a marking. Then $\fC_N(B)$ is a chain complex over the category $\hmf^{\mathrm{all}}_{\Q[a],0}$. We define $\fH_N(B) := H(H(\fC_N(B),d_{mf}),d_\chi)$. Note that both $d_\chi$ and $d_{mf}$ are homogeneous homomorphisms of $\zed_2 \oplus \zed^{\oplus 3}$-graded $\Q[a]$-modules. So $\fH_N(B)$ inherits the $\zed_2 \oplus \zed^{\oplus 3}$-graded $\Q[a]$-module structure of $\fC_N(B)$, where the $\zed_2$-grading is the $\zed_2$-grading of matrix factorizations and the three $\zed$-gradings are the homological, the $a$- and the $x$-gradings.
\end{definition}

\begin{remark}
Comparing Definitions \ref{def-chain-tangle} and \ref{def-homology-braid} to the corresponding definitions in \cite{KR2}, it is easy to see that, up to a grading shift, $\fH_0$ is the HOMFLYPT homology defined in \cite{KR2}, which is a smooth link invariant. In the current paper, we focus on the case $N \geq 1$ and show that, if $N \geq 1$, then $\fH_N$ is an invariant for transverse links but not for smooth links. 
\end{remark}

\subsection{Markings do not matter} 

\begin{lemma}\label{lemma-marking-independent}
Suppose that $T$ is an oriented tangle diagram with a marking and $T'$ is the same oriented tangle diagram with a different marking. Assume that:
\begin{itemize}
  \item each pair of corresponding end points of $T$ and $T'$ are marked by the same indeterminate,
  \item the indeterminates assigned to end points of $T$ are $y_1,\dots,y_{2n}$,
	\item $T$ points outward at end points assigned with $y_1,\dots,y_n$,
	\item $T$ points inward at end points assigned with $y_{n+1},\dots,y_{2n}$.
\end{itemize}
Then $\fC_N(T) \cong \fC_N(T')$ as chain complexes over the category $\hmf^{\mathrm{all}}_{R_\partial, a(\sum_{i=1}^n y_i^{N+1} - \sum_{j=n+1}^{2n} y_j^{N+1})}$, where $R_\partial = \Q[a,y_1,\dots,y_{2n}]$.
\end{lemma}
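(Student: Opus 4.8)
The plan is to reduce the statement to the case where $T'$ differs from $T$ by a single change to the marking, since any two markings are connected by a finite sequence of such elementary moves. There are three elementary moves to consider: (a) adding or deleting a marked point from the interior of an arc (so that the arc still has at least one marked point), (b) sliding a marked point along an arc past no crossing and no other marked point, and (c) changing the indeterminate assigned to a marked point. For (c), relabeling an indeterminate is literally a ring isomorphism of $R_\partial$ (or of the intermediate ambient ring) that intertwines everything in sight, so $\fC_N(T) \cong \fC_N(T')$ on the nose; this case is immediate. Move (b) is a special case of (a) applied twice (add the new point, delete the old one), so the whole lemma comes down to move (a).

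For move (a), the key point is that adding an extra marked point $p$ with indeterminate $x$ in the interior of an arc replaces a single simple piece $A$ (of type $A$ in Figure~\ref{tangle-pieces-fig}) running between marked points carrying $x_1$ and $x_2$ by two simple pieces $A_1, A_2$ running between $x_1,x$ and between $x,x_2$ respectively. Under the tensor product defining $\fC_N(T')$, the local contribution changes from $\fC_N(A)$ over $\Q[a,x_1,x_2]$ to $\fC_N(A_1)\otimes_{\Q[a,x]}\fC_N(A_2)$ over $\Q[a,x_1,x,x_2]$. Both of these are concentrated in homological degree $0$, so the homological structure of the complex is untouched, and the question is purely one of matrix factorizations: I must show that, after tensoring with the rest of $\fC_N(T)$, introducing the extra variable $x$ produces a homotopy equivalent matrix factorization over $R_\partial$. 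This is exactly the content of Lemma~\ref{lemma-marking-independence} at the level of MOY graphs, and its proof there is cited as an application of Proposition~\ref{prop-b-contraction} (the strong contraction lemma): the extra variable $x$ appears in a Koszul row of the form $(\ast, x_1 - x)$ or $(\ast, x - x_2)$, and $P:R_\partial[x]\to R_\partial$, evaluation at... the appropriate value, lets one contract that row, yielding back the matrix factorization with $x$ eliminated. I would invoke exactly this, noting that the contraction is compatible with the tensor product over the common endpoints (Proposition~\ref{prop-b-contraction} is stated relative to a base ring $R$, which here absorbs all the other tensor factors and the remaining boundary variables).

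The one point requiring a little care — and the step I expect to be the main (minor) obstacle — is checking that these local homotopy equivalences glue to a genuine isomorphism of \emph{chain complexes} over $\hmf^{\mathrm{all}}_{R_\partial, w}$, i.e.\ that the homotopy equivalence commutes with $d_\chi$ up to homotopy and respects the $\zed_2\oplus\zed^{\oplus 2}$-grading. Since $d_\chi$ is built from the morphisms $\chi^0,\chi^1$ which act only on the crossing pieces $C_\pm$ and are $R_\partial$-linear, and the contraction from Proposition~\ref{prop-b-contraction} is $R_\partial$-linear and grading-preserving, the contraction morphism is automatically a morphism of complexes in $\hmf^{\mathrm{all}}_{R_\partial,w}$; naturality of the contraction with respect to $R_\partial$-linear maps then gives commutativity with $d_\chi$ on the nose. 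Finally I would remark that although Proposition~\ref{prop-b-contraction} is a statement about honest (not merely homotopically finite) matrix factorizations, this is precisely why we work in $\hmf^{\mathrm{all}}$ rather than $\hmf$, so there is no finiteness issue — and, as with the cited proof of Lemma~\ref{lemma-marking-independence}, the whole argument is essentially a reference to \cite[Proposition 3.19]{Wu-color}.
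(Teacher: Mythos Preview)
Your reduction to adding/removing a single marked point is fine, but your treatment of move (a) has a genuine gap. You assume that the new marked point lands between two \emph{existing} marked points, so that the affected simple piece is an arc $A$; you then argue that the contraction lives on a tensor factor carrying no $\chi$-morphism, whence commutativity with $d_\chi$ is automatic. But a new marked point can just as well land between a crossing and the nearest existing marked point on that arc. In that situation the affected simple piece is a crossing piece $C_\pm$, not an arc, and the crossing piece is replaced by a smaller crossing piece together with a new arc. The $\chi$-morphisms in the definition of $\fC_N(C_\pm)$ then genuinely change: they are now $(\chi^\ve)''\otimes\id_A$ built from the smaller crossing piece, with different source and target matrix factorizations. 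Your ``naturality of the contraction with respect to $R_\partial$-linear maps'' does not handle this --- the contraction of Proposition~\ref{prop-b-contraction} is only a homotopy equivalence, and there is no reason a priori for it to intertwine the old and new $\chi$-morphisms. Nor can this case be sidestepped by clever sequencing of moves: to relate two markings whose nearest-to-crossing marked points differ, some step must alter a crossing piece.

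The paper's proof is written precisely to address this case. The key extra ingredient you are missing is the \emph{uniqueness} clause of Lemma~\ref{lemma-def-chi}: one checks (via $H_{R_\partial}$) that the transported morphism $\bar g\circ((\chi^0)''\otimes\id)\circ f$ is a homotopically nontrivial homogeneous morphism $\fC_N(\Gamma_0)\to\fC_N(\Gamma_1)$ of degree $(0,0,1)$, and then invokes uniqueness to conclude it equals $c_0\chi^0$ for some nonzero scalar $c_0$ (and similarly for $\chi^1$). The scalars are absorbed, yielding the desired isomorphism of chain complexes. Without this uniqueness argument the compatibility with $d_\chi$ is unproved.
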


\begin{figure}[ht]
$
\xymatrix{
\input{crossing+} && \input{crossing-} \\
\input{crossing+ex} && \input{crossing-ex}
}
$
\caption{}\label{remove-marked-point-fig}

\end{figure}

\begin{proof}
To prove this lemma, we only need to show that adding and removing an extra marked point near a crossing does not change the isomorphism type of the chain complex associated to that crossing. There are four arcs near a crossing. So, in principle, one needs to discuss where the extra marked point is added/removed. Here we prove only that $\fC_N(C_\pm) \cong \fC_N(C_\pm')$ as chain complexes over the category $\hmf_{R,a(x_1^{N+1}+y_1^{N+1}-x_2^{N+1}-y_2^{N+1})}$, where $C_\pm$ and $C_\pm'$ are depicted in Figure \ref{remove-marked-point-fig}, and $R=\Q[a,x_1,x_2,y_1,y_2]$. The proofs for the other cases are very similar and left to the reader.

\begin{figure}[ht]
$
\xymatrix{
\input{crossing-1-1-res-0} && \input{crossing-1-1-res-1} \\
\input{crossing-1-1-res-0ex} && \input{crossing-1-1-res-1ex}
}
$
\caption{}\label{remove-marked-point-proof-fig}

\end{figure}

Consider the marked MOY graphs in Figure \ref{remove-marked-point-proof-fig}. Cutting $\Gamma_0'$ at the pointed marked by $y_3$, we get two marked MOY graphs, $\Gamma_0''$ and $A$, where $A$ is the arc from $y_3$ to $y_1$, and $\Gamma_0''$ is the remainder of $\Gamma_0'$. Then $\fC_N(\Gamma_0') \cong \fC_N(\Gamma_0'')\otimes_{\Q[a,y_3]} \fC_N(A)$. Similarly, cutting $\Gamma_1'$ at the pointed marked by $y_3$, we get $\Gamma_1'=\Gamma_1''\cup A$ and $\fC_N(\Gamma_1') \cong \fC_N(\Gamma_1'')\otimes_{\Q[a,y_3]} \fC_N(A)$. 

By Proposition \ref{prop-b-contraction}, we know:
\begin{itemize}
	\item there are a pair of homotopy equivalences $\xymatrix{\fC_N(\Gamma_0'')\otimes_{\Q[a,y_3]} \fC_N(A) \ar@<.5ex>[r]^>>>>>{\bar{f}}& \fC_N(\Gamma_0)\ar@<.5ex>[l]^<<<<<{f}}$ of $\zed_2\oplus\zed^{\oplus 2}$-graded matrix factorizations that are homotopy inverses of each other,
	\item there are a pair of homotopy equivalences $\xymatrix{\fC_N(\Gamma_1'')\otimes_{\Q[a,y_3]} \fC_N(A) \ar@<.5ex>[r]^>>>>>{\bar{g}}& \fC_N(\Gamma_1)\ar@<.5ex>[l]^<<<<<{g}}$ of $\zed_2\oplus\zed^{\oplus 2}$-graded matrix factorizations that are homotopy inverses of each other.
\end{itemize}
Let $\xymatrix{\fC_N(\Gamma_0'') \ar@<.5ex>[r]^{(\chi^0)''}& \fC_N(\Gamma_1'')\ar@<.5ex>[l]^{(\chi^1)''}}$ be the $\chi$-morphisms associated to $\Gamma_0''$ and $\Gamma_1''$ defined in Lemma \ref{lemma-def-chi}. Using the definitions of $(\chi^0)''$, $(\chi^1)''$ and Proposition \ref{prop-contraction-weak}, one can check that 
\[
\xymatrix{H_R(\fC_N(\Gamma_0'')\otimes_{\Q[a,y_3]} \fC_N(A)) \ar@<.5ex>[r]^{(\chi^0)''\otimes\id}& H_R(\fC_N(\Gamma_1'')\otimes_{\Q[a,y_3]} \fC_N(A))\ar@<.5ex>[l]^{(\chi^1)''\otimes\id}}
\] 
are non-zero homomorphisms. Thus, $(\chi^0)''\otimes\id$ and $(\chi^1)''\otimes\id$ are homotopically non-trivial. This implies that that morphisms 
\[
\xymatrix{
\fC_N(\Gamma_0) \ar@<.5ex>[rr]^{\bar{g}\circ((\chi^0)''\otimes\id)\circ f} && \fC_N(\Gamma_1)\ar@<.5ex>[ll]^{\bar{f}\circ((\chi^1)''\otimes\id)\circ g}}
\]
are homotopically non-trivial. Note that these are homogeneous morphisms of $\zed_2\oplus\zed^{\oplus 2}$-degree $(0,0,1)$. Let $\xymatrix{\fC_N(\Gamma_0) \ar@<.5ex>[r]^{\chi^0}& \fC_N(\Gamma_1)\ar@<.5ex>[l]^{\chi^1}}$ be the $\chi$-morphisms associated to $\Gamma_0$ and $\Gamma_1$ defined in Lemma \ref{lemma-def-chi}. By Lemma \ref{lemma-def-chi}, we know that, up to homotopy and scaling, $\chi^0$ and $\chi^1$ are the unique homotopically non-trivial homogeneous morphisms between $\fC_N(\Gamma_0)$ and $\fC_N(\Gamma_1)$ with $\zed_2\oplus\zed^{\oplus 2}$-degree $(0,0,1)$.
Thus, there exist non-zero scalars $c_0,c_1 \in \Q$ such that $\bar{g}\circ((\chi^0)''\otimes\id)\circ f \simeq c_0 \chi^0$ and $\bar{f}\circ((\chi^1)''\otimes\id)\circ g\simeq c_1 \chi^1$.

The above shows that, over the category $\hmf_{R,a(x_1^{N+1}+y_1^{N+1}-x_2^{N+1}-y_2^{N+1})}$, we have commutative diagrams 
\[
\xymatrix{
0 \ar[r] & \fC_N(\Gamma_1'')\otimes_{\Q[a,y_3]} \fC_N(A)\left\langle 1\right\rangle\{1,N\} \ar[rr]^{(\chi^1)''\otimes\id} \ar[d]^{c_1\bar{g}} && \fC_N(\Gamma_0'')\otimes_{\Q[a,y_3]} \fC_N(A)\left\langle 1\right\rangle\{1,N-1\} \ar[r] \ar[d]^{\bar{f}} & 0 \\
0 \ar[r] & \fC_N(\Gamma_1)\left\langle 1\right\rangle\{1,N\}\ar[rr]^{\chi^1}  && \fC_N(\Gamma_0)\left\langle 1\right\rangle\{1,N-1\} \ar[r] & 0
}
\]
and 
\[
\xymatrix{
0 \ar[r]& \fC_N(\Gamma_0'')\otimes_{\Q[a,y_3]} \fC_N(A)\left\langle 1\right\rangle\{-1,-N+1\} \ar[rr]^{(\chi^0)''\otimes\id} \ar[d]^{c_0\bar{f}} && \fC_N(\Gamma_1'')\otimes_{\Q[a,y_3]} \fC_N(A))\left\langle 1\right\rangle\{-1,-N\} \ar[r] \ar[d]^{\bar{g}}& 0 \\
0 \ar[r]& \fC_N(\Gamma_0)\left\langle 1\right\rangle\{-1,-N+1\} \ar[rr]^{\chi^0}  && \fC_N(\Gamma_1)\left\langle 1\right\rangle\{-1,-N\} \ar[r] & 0
}
\]

Since $\bar{f}$ and $\bar{g}$ are homotopy equivalences of $\zed_2\oplus\zed^{\oplus 2}$-graded matrix factorizations, the above diagram give isomorphisms $\fC_N(C_\pm) \cong \fC_N(C_\pm')$ for chain complexes over the category $\hmf_{R,a(x_1^{N+1}+y_1^{N+1}-x_2^{N+1}-y_2^{N+1})}$.
\end{proof}

\section{Reidemeister Move I}\label{sec-R1}

In this section, we prove the invariance of $\fH_N$ under positive stabilizations/de-stabilizations and establish the long exact sequence induced by a negative stabilization. 

\subsection{Algebraic lemmas} First, we recall the Gaussian Elimination Lemma in \cite{Bar-fast}, which will be used frequently in the proof of the invariance of $\fH_N$.

\begin{lemma}\cite[Lemma 4.2]{Bar-fast}\label{lemma-gaussian-elimination}
Let $\mathscr{C}$ be an additive category, and
\[
\mathtt{I}=\cdots\rightarrow C\xrightarrow{\left(%
\begin{array}{c}
  \alpha\\
  \beta \\
\end{array}% 
\right)}
\left.%
\begin{array}{c}
  A\\
  \oplus \\
  D
\end{array}% 
\right.
\xrightarrow{
\left(%
\begin{array}{cc}
  \phi & \delta\\
  \gamma & \varepsilon \\
\end{array}% 
\right)}
\left.%
\begin{array}{c}
  B\\
  \oplus \\
  E
\end{array}% 
\right.
\xrightarrow{
\left(%
\begin{array}{cc}
  \mu & \nu\\
\end{array}% 
\right)} F \rightarrow \cdots
\]
a chain complex over $\mathscr{C}$. Assume that $A\xrightarrow{\phi} B$ is an isomorphism in $\mathscr{C}$ with inverse $\phi^{-1}$. Then $\mathtt{I}$ is homotopic to 
\[
\mathtt{II}=
\cdots\rightarrow C \xrightarrow{\beta} D
\xrightarrow{\varepsilon-\gamma\phi^{-1}\delta} E\xrightarrow{\nu} F \rightarrow \cdots.
\]
We call $-\gamma\phi^{-1}\delta$ the correction term in the differential. If $\delta$ or $\gamma$ is $0$, then the correction term is $0$ and $\mathtt{I}$ is homotopic to 
\[
\mathtt{II}=
\cdots\rightarrow C \xrightarrow{\beta} D
\xrightarrow{\varepsilon} E\xrightarrow{\nu} F \rightarrow \cdots.
\]
\end{lemma}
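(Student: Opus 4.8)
The plan is to exhibit $\mathtt{I}$, up to isomorphism of chain complexes over $\mathscr{C}$, as the direct sum of $\mathtt{II}$ with a two-term complex $0\to A\xrightarrow{\phi} B\to 0$, which is contractible precisely because $\phi$ is invertible; deleting this contractible summand then yields $\mathtt{I}\simeq\mathtt{II}$. The isomorphism is obtained by a change of basis on the two objects $A\oplus D$ and $B\oplus E$ that diagonalizes the middle differential. Concretely I would set
\[
P=\begin{pmatrix} 1 & -\phi^{-1}\delta \\ 0 & 1 \end{pmatrix}\colon A\oplus D\to A\oplus D,\qquad Q=\begin{pmatrix} 1 & 0 \\ \gamma\phi^{-1} & 1 \end{pmatrix}\colon B\oplus E\to B\oplus E,
\]
both isomorphisms in $\mathscr{C}$ with inverses got by negating the off-diagonal entry. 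A direct matrix computation gives
\[
Q^{-1}\begin{pmatrix}\phi & \delta\\ \gamma & \varepsilon\end{pmatrix}P=\begin{pmatrix}\phi & 0\\ 0 & \varepsilon-\gamma\phi^{-1}\delta\end{pmatrix},
\]
so replacing $A\oplus D$ by itself via $P$ and $B\oplus E$ by itself via $Q$ produces a chain complex $\mathtt{I}'$ isomorphic to $\mathtt{I}$ in which the middle differential is this diagonal map, the incoming differential becomes $P^{-1}\binom{\alpha}{\beta}=\binom{\alpha+\phi^{-1}\delta\beta}{\beta}$, the outgoing differential becomes $(\mu\ \nu)Q=(\mu+\nu\gamma\phi^{-1}\ \ \nu)$, and all other objects and maps of $\mathtt{I}$ are left unchanged.

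The next step is the key observation: the two "leftover" off-diagonal entries $\alpha':=\alpha+\phi^{-1}\delta\beta$ and $\mu':=\mu+\nu\gamma\phi^{-1}$ actually vanish. Writing $\psi:=\varepsilon-\gamma\phi^{-1}\delta$, the relation $d^2=0$ for $\mathtt{I}'$ at the composite $C\to A\oplus D\to B\oplus E$ reads $\binom{\phi\alpha'}{\psi\beta}=0$, hence $\phi\alpha'=0$, and since $\phi$ is invertible $\alpha'=0$; similarly $d^2=0$ at $A\oplus D\to B\oplus E\to F$ gives $\mu'\phi=0$, hence $\mu'=0$. Therefore $\mathtt{I}'$ splits as a direct sum of subcomplexes $\bigl(\cdots\to 0\to A\xrightarrow{\phi}B\to 0\to\cdots\bigr)$, with $A$ and $B$ in their original homological degrees, and $\mathtt{II}=\cdots\to C\xrightarrow{\beta}D\xrightarrow{\psi}E\xrightarrow{\nu}F\to\cdots$. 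The first summand is contractible, with null-homotopy of its identity given by $\phi^{-1}$, so $\mathtt{I}\cong\mathtt{I}'\simeq\mathtt{II}$, giving exactly the correction term $-\gamma\phi^{-1}\delta$ claimed. The final assertion, that the correction term drops when $\delta=0$ or $\gamma=0$, is immediate since then $\psi=\varepsilon$ (and moreover one of $P$, $Q$ is already the identity).

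I do not expect a genuine obstacle here; the content is entirely formal. The only points needing care are bookkeeping the direction in which neighbouring differentials get composed with $P^{\pm1}$ and $Q^{\pm1}$ when one performs a change of basis on an object of a complex, and checking that $P$, $Q$ and their inverses are morphisms of $\mathscr{C}$ — which they are, being built from identities and from the morphisms $\gamma\phi^{-1}$ and $\phi^{-1}\delta$. The mildly clever step, which keeps the argument short, is noticing that $\alpha'$ and $\mu'$ are forced to be zero by $d^2=0$, so that $\mathtt{I}'$ splits literally as a direct sum and no further homotopy needs to be constructed by hand.
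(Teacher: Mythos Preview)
Your argument is correct and is essentially the standard proof of Bar-Natan's Gaussian Elimination Lemma. Note, however, that the paper does not actually supply its own proof of this lemma: it is stated with a citation to \cite[Lemma 4.2]{Bar-fast} and used as a black box, so there is nothing in the paper to compare your approach against. Your change-of-basis computation and the use of $d^2=0$ to kill the residual entries $\alpha'$ and $\mu'$ are exactly what one finds in Bar-Natan's original proof.
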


Next, we give a lemma about commutativity of certain morphisms between Koszul matrix factorizations that will allow us to simplify morphisms $\chi^0$ and $\chi^1$ and prove the invariance of $\fH_N$ under positive stabilization.

\begin{lemma}\label{lemma-chi-twist-commute}
Let $R=\Q[a,X_1,\dots,X_k]$ be a $\zed^{\oplus 2}$-graded polynomial ring with $\deg a = (2,0)$ and $\deg X_i = (0,2n_i)$, where $n_i$ is a positive integer. Suppose that $u,v,x,y,z,p$ are homogeneous elements of $R$ such that
\begin{itemize}
	\item $\deg u + \deg v = (2,2N+2)$,
	\item $\deg x + \deg y + \deg z = (2,2N+2)$,
	\item $\deg p = \deg x - \deg v = \deg u -\deg y - \deg z$.
\end{itemize}
Consider the $\zed_2\oplus\zed^{\oplus 2}$-graded matrix factorizations 
\[
\xymatrix{
{M_0=\left(%
\begin{array}{cc}
  u & v \\
  xy & z
\end{array}%
\right)_R,} && {M_0'=\left(%
\begin{array}{cc}
  u +pyz& v \\
  xy-ypv & z
\end{array}%
\right)_R,} \\
{M_1=\left(%
\begin{array}{cc}
  u & v \\
  x & yz
\end{array}%
\right)_R,} && {M_1'=\left(%
\begin{array}{cc}
  u + pyz& v \\
  x-pv & yz
\end{array}%
\right)_R.}
}
\]
Denote by $\xymatrix{M_0 \ar@<0.5ex>[r]^{g} & M_1\ar@<0.5ex>[l]^{f}}$ and $\xymatrix{M_0' \ar@<0.5ex>[r]^{g'} & M_1'\ar@<0.5ex>[l]^{f'}}$ the morphisms obtained by applying Lemma \ref{lemma-jumping-factor} to the second rows of these Koszul matrix factorizations. Let $M_0 \xrightarrow{\psi_0} M_0'$ and $M_1 \xrightarrow{\psi_1} M_1'$ be the isomorphisms from Lemma \ref{lemma-twist}. Then the following diagrams commute.
\[
\xymatrix{
M_0 \ar[d]_{g} \ar[r]^{\psi_0} & M_0'\ar[d]^{g'}  &&&& M_1 \ar[d]_{f} \ar[r]^{\psi_1} & M_1' \ar[d]^{f'} \\
M_1 \ar[r]^{\psi_1}& M_1' &&&& M_0 \ar[r]^{\psi_0}& M_0'
}
\]
\end{lemma}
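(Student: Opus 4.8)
The plan is to prove the commutativity by writing out everything explicitly in terms of the underlying free modules and differential matrices, since all four Koszul matrix factorizations here are rank-$4$ (tensor products of two rank-$2$ Koszul factorizations) and the maps $f,g,f',g',\psi_0,\psi_1$ all have completely explicit matrix descriptions coming from Lemmas \ref{lemma-jumping-factor} and \ref{lemma-twist}. First I would fix the standard homogeneous bases: for a Koszul matrix factorization $\left(\begin{smallmatrix} a_{1,0} & a_{1,1} \\ a_{2,0} & a_{2,1}\end{smallmatrix}\right)_R$ write $M_0 = R\cdot 1 \oplus R\cdot e_{12}$ and $M_1 = R\cdot e_1 \oplus R\cdot e_2$, with the differential determined by $d(e_i) = a_{i,0}$ and the signed Leibniz rule. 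In these bases, the isomorphism $\psi$ from Lemma \ref{lemma-twist} (which replaces the first row $(u,v)$ by $(u+pyz, v)$ and the second row $(xy, z)$ by $(xy - ypv, z)$, i.e. it is the ``twist'' adding $p$ times the appropriate multiple) is the identity on $R\cdot 1$ and $R\cdot e_{12}$ on the even part — or, more precisely, it is a unitriangular matrix whose only off-diagonal entry is $\pm p$ — and similarly on the odd part. I would record these matrices once and for all.

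Next I would write down $g$ and $g'$ (resp. $f$ and $f'$): by Lemma \ref{lemma-jumping-factor}, applied to the second row, $g$ is the map realizing the factor $y$ ``jumping'' from the $x$-side to the $z$-side, so it is the identity on the tensor factor coming from the first row and is the explicit $2\times 2$ map $\left(\begin{smallmatrix} 1 & 0 \\ 0 & y\end{smallmatrix}\right)$ (or $\left(\begin{smallmatrix} y & 0 \\ 0 & 1\end{smallmatrix}\right)$, depending on convention) on the second tensor factor; $f$ is the companion map with the roles reversed. Since $\psi_0,\psi_1$ act only through the first-row tensor factor while $g,g'$ act only through the second-row tensor factor up to a correction, the two squares should commute essentially by the interchange law for tensor products of morphisms — but the subtlety is that the twist in Lemma \ref{lemma-twist} is \emph{not} a pure tensor of two maps, because subtracting $ypv$ from $xy$ mixes the rows. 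So the genuine content is to check that the off-diagonal entry $\pm p$ in $\psi$ interacts correctly with the factor $y$ in $g$: concretely, one needs $\psi_1 \circ g = g' \circ \psi_0$ as $4\times 4$ matrices over $R$, and after expanding one sees each side reduces to the same matrix because the cross-terms are exactly $\pm p y$ on both sides.

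So the key steps, in order, are: (1) fix explicit homogeneous bases and write the four differentials $d_{M_0}, d_{M_0'}, d_{M_1}, d_{M_1'}$ as $4\times 4$ matrices; (2) write $\psi_0,\psi_1$ explicitly (unitriangular with single off-diagonal entry a scalar multiple of $p$) and verify they are chain maps, which is just the hypothesis $\deg p = \deg x - \deg v$ together with the defining identity of Lemma \ref{lemma-twist}; (3) write $f,g,f',g'$ explicitly from the proof of Lemma \ref{lemma-jumping-factor}; (4) multiply out $\psi_1 g$ and $g' \psi_0$ (resp. $\psi_0 f$ and $f' \psi_1$) and observe they agree entry-by-entry. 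The main obstacle — and it is the only real one — will be bookkeeping of signs and of the exact placement of $p$ and $y$ in the matrices: because the twist isomorphism mixes rows and the jumping morphism multiplies one basis vector by $y$, a careless choice of basis ordering or Leibniz sign will make the two compositions differ by a unit or by a sign, and one has to be disciplined about which convention (matching Lemmas \ref{lemma-twist} and \ref{lemma-jumping-factor} exactly) is in force. Once the conventions are pinned down, the verification is a short direct computation, so I would present it compactly rather than grinding through all sixteen matrix entries, pointing out only that the single nontrivial entry on each side equals $\pm py$ (resp. $\pm p$) and that all other entries match trivially.
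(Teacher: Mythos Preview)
Your proposal is correct and is essentially the paper's own proof: the paper also writes out $M_0,M_0',M_1,M_1'$ explicitly, records $f,g,f',g',\psi_0,\psi_1$ as matrices, and verifies the two commutativity identities by direct multiplication. The only cosmetic difference is that the paper splits each map by $\zed_2$-degree into a pair of $2\times 2$ matrices (over $R_{00}\oplus R_{11}$ and $R_{10}\oplus R_{01}$) rather than a single $4\times 4$ matrix, and finds that the $\psi$'s are the identity on the odd part and $\left(\begin{smallmatrix}1&0\\ py&1\end{smallmatrix}\right)$, $\left(\begin{smallmatrix}1&0\\ p&1\end{smallmatrix}\right)$ on the even part, so your anticipated sign worries never materialize.
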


\begin{proof}
This lemma becomes fairly obvious once we explicitly write down these matrix factorizations and morphisms. For simplicity, we omit the grading shifts in this proof. 

Note that
\begin{eqnarray*}
M_0 & = & {\left.%
\begin{array}{c}
  R_{00} \\
  \oplus \\
  R_{11}
\end{array}%
\right.} \xrightarrow{\left(%
\begin{array}{cc}
  u & -z \\
  xy & v
\end{array}%
\right)}
{\left.%
\begin{array}{c}
  R_{10} \\
  \oplus \\
  R_{01}
\end{array}%
\right.}
\xrightarrow{\left(%
\begin{array}{cc}
  v & z \\
  -xy & u
\end{array}%
\right)}
{\left.%
\begin{array}{c}
  R_{00} \\
  \oplus \\
  R_{11}
\end{array}%
\right.}, \\
M_1 & = & {\left.%
\begin{array}{c}
  R_{00} \\
  \oplus \\
  R_{11}
\end{array}%
\right.} \xrightarrow{\left(%
\begin{array}{cc}
  u & -yz \\
  x & v
\end{array}%
\right)}
{\left.%
\begin{array}{c}
  R_{10} \\
  \oplus \\
  R_{01}
\end{array}%
\right.}
\xrightarrow{\left(%
\begin{array}{cc}
  v & yz \\
  -x & u
\end{array}%
\right)}
{\left.%
\begin{array}{c}
  R_{00} \\
  \oplus \\
  R_{11}
\end{array}%
\right.}, \\
M_0' & = & {\left.%
\begin{array}{c}
  R_{00} \\
  \oplus \\
  R_{11}
\end{array}%
\right.} \xrightarrow{\left(%
\begin{array}{cc}
  u+pyz & -z \\
  xy-ypv & v
\end{array}%
\right)}
{\left.%
\begin{array}{c}
  R_{10} \\
  \oplus \\
  R_{01}
\end{array}%
\right.}
\xrightarrow{\left(%
\begin{array}{cc}
  v & z \\
  -xy+ypv & u+pyz
\end{array}%
\right)}
{\left.%
\begin{array}{c}
  R_{00} \\
  \oplus \\
  R_{11}
\end{array}%
\right.}, \\
M_1' & = & {\left.%
\begin{array}{c}
  R_{00} \\
  \oplus \\
  R_{11}
\end{array}%
\right.} \xrightarrow{\left(%
\begin{array}{cc}
  u+pyz & -yz \\
  x-pv & v
\end{array}%
\right)}
{\left.%
\begin{array}{c}
  R_{10} \\
  \oplus \\
  R_{01}
\end{array}%
\right.}
\xrightarrow{\left(%
\begin{array}{cc}
  v & yz \\
  -x+pv & u+pyz
\end{array}%
\right)}
{\left.%
\begin{array}{c}
  R_{00} \\
  \oplus \\
  R_{11}
\end{array}%
\right.},
\end{eqnarray*}
where each $R_{\ve\delta}$ is a copy of $R$ and the lower indices are just for keeping track of the position of each component. Using these explicit matrix factorizations, we have the following.
\begin{itemize}
	\item The morphisms $\xymatrix{M_0 \ar@<0.5ex>[r]^{g} & M_1\ar@<0.5ex>[l]^{f}}$ are given by 
	\[
	\xymatrix{
	{\left.%
\begin{array}{c}
  R_{00} \\
  \oplus \\
  R_{11}
\end{array}%
\right.} \ar@<0.5ex>[rr]^{G_0} && {\left.%
\begin{array}{c}
  R_{00} \\
  \oplus \\
  R_{11}
\end{array}%
\right.} \ar@<0.5ex>[ll]^{F_0} && \text{and} && 
{\left.%
\begin{array}{c}
  R_{10} \\
  \oplus \\
  R_{01}
\end{array}%
\right.} \ar@<0.5ex>[rr]^{G_1} && {\left.%
\begin{array}{c}
  R_{10} \\
  \oplus \\
  R_{01}
\end{array}%
\right.} \ar@<0.5ex>[ll]^{F_1},
	}
	\]
	where, as matrices over $R$, 
	\[
	G_0 = G_1 = {\left(%
  \begin{array}{cc}
  y & 0 \\
  0 & 1
  \end{array}%
  \right)}, \hspace{5pc}
  F_0 = F_1 = {\left(%
  \begin{array}{cc}
  1 & 0 \\
  0 & y
\end{array}%
\right)}.
	\]
	\item The morphisms $\xymatrix{M_0' \ar@<0.5ex>[r]^{g'} & M_1'\ar@<0.5ex>[l]^{f'}}$ are given by 
	\[
	\xymatrix{
	{\left.%
\begin{array}{c}
  R_{00} \\
  \oplus \\
  R_{11}
\end{array}%
\right.} \ar@<0.5ex>[rr]^{G_0'} && {\left.%
\begin{array}{c}
  R_{00} \\
  \oplus \\
  R_{11}
\end{array}%
\right.} \ar@<0.5ex>[ll]^{F_0'} && \text{and} && 
{\left.%
\begin{array}{c}
  R_{10} \\
  \oplus \\
  R_{01}
\end{array}%
\right.} \ar@<0.5ex>[rr]^{G_1'} && {\left.%
\begin{array}{c}
  R_{10} \\
  \oplus \\
  R_{01}
\end{array}%
\right.} \ar@<0.5ex>[ll]^{F_1'},
	}
	\]
	where, as matrices over $R$, 
	\[
	G_0' = G_1' = {\left(%
  \begin{array}{cc}
  y & 0 \\
  0 & 1
  \end{array}%
  \right)}, \hspace{5pc}
  F_0' = F_1' = {\left(%
  \begin{array}{cc}
  1 & 0 \\
  0 & y
\end{array}%
\right)}.
	\]
	\item The isomorphism $M_0 \xrightarrow{\psi_0} M_0'$ is given by 
	\[
	\xymatrix{
	{\left.%
\begin{array}{c}
  R_{00} \\
  \oplus \\
  R_{11}
\end{array}%
\right.} \ar[rr]^{\Psi_{00}} && {\left.%
\begin{array}{c}
  R_{00} \\
  \oplus \\
  R_{11}
\end{array}%
\right.}  && \text{and} &&
{\left.%
\begin{array}{c}
  R_{10} \\
  \oplus \\
  R_{01}
\end{array}%
\right.} \ar[rr]^{\Psi_{01}} && {\left.%
\begin{array}{c}
  R_{10} \\
  \oplus \\
  R_{01}
\end{array}%
\right.},
	}
	\]
	where, as matrices over $R$, 
	\[
	\Psi_{00} = {\left(%
  \begin{array}{cc}
  1 & 0 \\
  py & 1
  \end{array}%
  \right)}, \hspace{5pc}
  \Psi_{01} = {\left(%
  \begin{array}{cc}
  1 & 0 \\
  0 & 1
\end{array}%
\right)}.
	\]
	\item The isomorphism $M_1 \xrightarrow{\psi_1} M_1'$ is given by 
	\[
	\xymatrix{
	{\left.%
\begin{array}{c}
  R_{00} \\
  \oplus \\
  R_{11}
\end{array}%
\right.} \ar[rr]^{\Psi_{10}} && {\left.%
\begin{array}{c}
  R_{00} \\
  \oplus \\
  R_{11}
\end{array}%
\right.}  && \text{and} &&
{\left.%
\begin{array}{c}
  R_{10} \\
  \oplus \\
  R_{01}
\end{array}%
\right.} \ar[rr]^{\Psi_{11}} && {\left.%
\begin{array}{c}
  R_{10} \\
  \oplus \\
  R_{01}
\end{array}%
\right.},
	}
	\]
	where, as matrices over $R$, 
	\[
	\Psi_{10} = {\left(%
  \begin{array}{cc}
  1 & 0 \\
  p & 1
  \end{array}%
  \right)}, \hspace{5pc}
  \Psi_{11} = {\left(%
  \begin{array}{cc}
  1 & 0 \\
  0 & 1
\end{array}%
\right)}.
	\]
\end{itemize}

It is easy to verify that $\Psi_{1\ve}G_\ve = G_\ve'\Psi_{0\ve}$ and $\Psi_{0\ve}F_\ve = F_\ve'\Psi_{1\ve}$ for $\ve=0,1$. This proves the lemma.
\end{proof}

\subsection{A closer look at the $\chi$-morphisms} Let us recall the definitions of $\chi^0$ and $\chi^1$. From the proof of Lemma \ref{lemma-def-chi}, we know that, for the MOY graphs $\Gamma_0$ and $\Gamma_1$ in Figure \ref{def-chi-fig}, 
\begin{eqnarray*}
\fC_N(\Gamma_0) 
& \cong & {\left(%
\begin{array}{cc}
  a(U_1+x_1U_2) & x_1+y_1-x_2-y_2 \\
  a(x_2-x_1)U_2 & x_1-y_2
\end{array}%
\right)_{R_\partial}}, \\
\fC_N(\Gamma_1)\{0,1\} & \simeq & {\left(%
\begin{array}{cc}
  a(U_1+x_1U_2) & x_1+y_1-x_2-y_2 \\
  aU_2 & (x_2-x_1)(x_1-y_2)
\end{array}%
\right)_{R_\partial}},
\end{eqnarray*}
where $R_\partial=\Q[a,x_1,x_2,y_1,y_2]$, and 
\begin{eqnarray*}
U_1 & = & \frac{p_{2,N+1}(x_1+y_1,x_1y_1)-p_{2,N+1}(x_2+y_2,x_1y_1)}{x_1+y_1-x_2-y_2}, \\
U_2 & = & \frac{p_{2,N+1}(x_2+y_2,x_1y_1)-p_{2,N+1}(x_2+y_2,x_2y_2)}{x_1y_1-x_2y_2},
\end{eqnarray*}
with the polynomial $p_{2,N+1}$ given by equation \eqref{eq-def-poly-p}. Under the above homotopy equivalences, the morphisms $\chi^0$ and $\chi^1$ are defined by applying Lemma \ref{lemma-jumping-factor} to the second rows in the Koszul matrix factorizations on the right hand side. 

\begin{figure}[ht]
$
\xymatrix{
\input{crossing-1-1-res-0-closed} \ar@<8ex>[rr]^{\chi^0} && \input{crossing-1-1-res-1-closed} \ar@<-6ex>[ll]^{\chi^1}
}
$
\caption{}\label{kink-res-fig}

\end{figure}

Now consider the MOY graphs $\hat{\Gamma}_0$ and $\hat{\Gamma}_1$ in Figure \ref{kink-res-fig}. From the above discussion, we have the following lemma.

\begin{lemma}\label{lemma-RI-chi}
\begin{eqnarray*}
\fC_N(\hat{\Gamma}_0) 
& \cong & {\left(%
\begin{array}{cc}
  a(V_1+x_1V_2) & y_1-x_2 \\
  a(x_2-x_1)V_2 & 0
\end{array}%
\right)_{R}}, \\
\fC_N(\hat{\Gamma}_1)\{0,1\} & \simeq & {\left(%
\begin{array}{cc}
  a(V_1+x_1V_2) & y_1-x_2 \\
  aV_2 & 0
\end{array}%
\right)_{R}},
\end{eqnarray*}
where $R=\Q[a,x_1,x_2,y_1]$ and $V_1=U_1|_{y_2=x_1}$, $V_2=U_2|_{y_2=x_1}$. Moreover, the morphisms $\xymatrix{\fC_N(\hat{\Gamma}_0) \ar@<0.5ex>[r]^{\chi^0} & \fC_N(\hat{\Gamma}_1)\ar@<0.5ex>[l]^{\chi^1}}$ are obtained by applying Lemma \ref{lemma-jumping-factor} to the second rows in the Koszul matrix factorizations on the right hand side. 
\end{lemma}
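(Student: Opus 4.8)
The plan is to realize $\hat\Gamma_0$ and $\hat\Gamma_1$ as the closures of $\Gamma_0$ and $\Gamma_1$ of Figure~\ref{def-chi-fig} along the left strand, and then to specialize, by eliminating the variable $y_2$, both the explicit Koszul presentations of $\fC_N(\Gamma_0)$ and $\fC_N(\Gamma_1)\{0,1\}$ recalled just above and the jumping-factor description of $\chi^0,\chi^1$.

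First I would observe that, with the markings of Figure~\ref{kink-res-fig}, $\hat\Gamma_i$ is $\Gamma_i$ with the end point marked $x_1$ joined to the end point marked $y_2$ by an oriented arc; regarding that arc as a $2$-valent vertex, Definition~\ref{def-MOY-mf} gives $\fC_N(\hat\Gamma_i)\cong\fC_N(\Gamma_i)\otimes_{\Q[a,x_1,y_2]}(\ast,\,x_1-y_2)_{\Q[a,x_1,y_2]}$ for $i=0,1$. Substituting the recalled presentations and carrying out the tensor product yields a three-row Koszul matrix factorization over $\Q[a,x_1,x_2,y_1,y_2]$, one of whose rows is $(\ast,\,x_1-y_2)$. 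Setting $X:=x_1-y_2$, so that $\Q[a,x_1,x_2,y_1,y_2]=R[X]$ with $R=\Q[a,x_1,x_2,y_1]$ and $y_2=x_1-X$, that row has $x$-column entry $X$; Proposition~\ref{prop-b-contraction} therefore applies and lets me delete it while evaluating the remaining entries at $X=0$, i.e.\ at $y_2=x_1$. Under this evaluation $x_1+y_1-x_2-y_2\mapsto y_1-x_2$, $x_1-y_2\mapsto 0$ and $U_i\mapsto V_i=U_i|_{y_2=x_1}$, which gives the two displayed presentations. For $\hat\Gamma_0$ the presentation of $\fC_N(\Gamma_0)$ entering this computation was an honest isomorphism, obtained via Lemmas~\ref{lemma-row-op} and~\ref{lemma-freedom}, so one obtains $\cong$; for $\hat\Gamma_1\{0,1\}$ the presentation of $\fC_N(\Gamma_1)$ was only a homotopy equivalence coming from Proposition~\ref{prop-b-contraction}, which is why only $\simeq$ is claimed there.

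It then remains to track the $\chi$-morphisms. By construction the maps $\fC_N(\hat\Gamma_0)\to\fC_N(\hat\Gamma_1)$ and back appearing when the crossing is closed are $\chi^0\otimes\id$ and $\chi^1\otimes\id$ for the $\chi$-morphisms between $\fC_N(\Gamma_0)$ and $\fC_N(\Gamma_1)$, and the latter are, by the proof of Lemma~\ref{lemma-def-chi}, exactly the morphisms produced by Lemma~\ref{lemma-jumping-factor} from the second rows of the recalled presentations. The contraction of Proposition~\ref{prop-b-contraction} used to remove $y_2$ acts only on the arc tensor factor --- on which $\chi^0\otimes\id$ and $\chi^1\otimes\id$ act as the identity --- and its homotopy equivalence is $R$-linear and passes the second row through the evaluation $P$ unchanged up to the substitution $y_2=x_1$; hence it commutes up to homotopy with the jumping-factor morphisms. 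Combined with the commutativity established in Lemma~\ref{lemma-chi-twist-commute}, which handles the intermediate twist isomorphisms of Lemma~\ref{lemma-twist} used in passing between the various Koszul presentations, this identifies $\chi^0,\chi^1$ on $\hat\Gamma_0,\hat\Gamma_1$, up to homotopy and scaling, with the morphisms obtained by applying Lemma~\ref{lemma-jumping-factor} to the second rows of the new presentations, as asserted.

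The step I expect to be the main obstacle is this last compatibility check: verifying that the homotopy equivalence furnished by Proposition~\ref{prop-b-contraction} is compatible with the jumping-factor description of $\chi^0$ and $\chi^1$. Establishing it cleanly requires either unwinding the explicit homotopy equivalence of that proposition (as in \cite[Proposition~3.19]{Wu-color}) or routing the comparison through the twist isomorphisms of Lemma~\ref{lemma-twist} and invoking Lemma~\ref{lemma-chi-twist-commute}; everything else is bookkeeping.
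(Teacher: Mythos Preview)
Your approach is exactly what the paper has in mind: the paper gives no separate proof of this lemma, simply writing ``From the above discussion, we have the following lemma'' immediately after recalling the Koszul presentations of $\fC_N(\Gamma_0)$, $\fC_N(\Gamma_1)\{0,1\}$ and the jumping-factor description of $\chi^0,\chi^1$. Your tensor-with-the-closing-arc-then-contract argument via Proposition~\ref{prop-b-contraction} is the natural way to make the specialization $y_2\mapsto x_1$ precise, and it is what the paper is implicitly invoking.

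Two small corrections. First, your justification for getting $\cong$ rather than $\simeq$ for $\hat\Gamma_0$ does not hold up: Proposition~\ref{prop-b-contraction} produces only a homotopy equivalence, regardless of whether the presentation you feed in was an honest isomorphism, so your argument as written yields $\simeq$ in both cases. (The paper's own $\cong$ for $\hat\Gamma_0$ is not explicitly justified either, and nothing downstream needs more than $\simeq$.) Second, Lemma~\ref{lemma-chi-twist-commute} is not needed at this stage---there are no twist isomorphisms from Lemma~\ref{lemma-twist} in passing from the recalled presentations to the specialized ones; that lemma enters only in the \emph{next} step, Lemma~\ref{lemma-RI-chi-explicit}, where one further simplifies $V_2$ modulo $y_1-x_2$ using a twist. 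The genuine point you correctly flag as the obstacle---that the contraction of Proposition~\ref{prop-b-contraction} commutes (up to homotopy) with the jumping-factor morphisms of Lemma~\ref{lemma-jumping-factor}---is the only thing requiring care here, and your sketch of how to verify it is adequate.
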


Note that $\fC_N(\hat{\Gamma}_0)$ and $\fC_N(\hat{\Gamma}_1)$ are matrix factorizations of $a(y_1^{N+1}-x_2^{N+1})$. So 
\[
V_1+x_1V_2 = h_N(y_1,x_2):= \frac{y_1^{N+1}-x_2^{N+1}}{y_1-x_2}.
\] 
Also, 
\[
V_2=U_2|_{y_2=x_1} = U_2|_{y_2=x_1, y_1=x_2} + (y_1-x_2)p
\]
for some homogeneous element $p$ of $R$ of bidegree $(0,2N-4)$. By Lemma \ref{lemma-power-derive}, we have that 
\[
U_2|_{y_2=x_1, y_1=x_2} = h_{N-1}(x_1,x_2):= \frac{x_1^{N}-x_2^{N}}{x_1-x_2}.
\]
Thus, we have the following lemma.

\begin{lemma}\label{lemma-RI-chi-explicit}
\begin{eqnarray*}
\fC_N(\hat{\Gamma}_0) 
& \cong & {M_0:=\left(%
\begin{array}{cc}
  ah_N(y_1,x_2) & y_1-x_2 \\
  a(x_2-x_1)h_{N-1}(x_1,x_2) & 0
\end{array}%
\right)_{R}}, \\
\fC_N(\hat{\Gamma}_1)\{0,1\} & \simeq & {M_1:=\left(%
\begin{array}{cc}
  ah_N(y_1,x_2) & y_1-x_2 \\
  ah_{N-1}(x_1,x_2) & 0
\end{array}%
\right)_{R}},
\end{eqnarray*}
where $R=\Q[a,x_1,x_2,y_1]$. Denote by $\xymatrix{M_0 \ar@<0.5ex>[r]^{g} & M_1\ar@<0.5ex>[l]^{f}}$ the morphisms obtained by applying Lemma \ref{lemma-jumping-factor} to the second rows of these matrix factorizations. Then there are commutative diagrams
\[
\xymatrix{
\fC_N(\hat{\Gamma}_0) \ar[d]_{\chi^0} \ar[r]^{\cong} & M_0\ar[d]^{g}  \\
\fC_N(\hat{\Gamma}_1)\{0,1\} \ar[r]^>>>>>{\simeq}& M_1
}
\hspace{2pc}\text{ and  }\hspace{2pc}
\xymatrix{
\fC_N(\hat{\Gamma}_1)\{0,1\} \ar[d]_{\chi^1} \ar[r]^>>>>>{\simeq} & M_1 \ar[d]^{f} \\
\fC_N(\hat{\Gamma}_0) \ar[r]^{\cong}& M_0
}.
\]
\end{lemma}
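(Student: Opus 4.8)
The statement is, in effect, Lemma \ref{lemma-RI-chi} rewritten using the two identities established just above it, $V_1+x_1V_2=h_N(y_1,x_2)$ and $V_2=h_{N-1}(x_1,x_2)+(y_1-x_2)p$ (with $p$ homogeneous of bidegree $(0,2N-4)$), followed by a single Koszul twist that is compatible with the $\chi$-morphisms. The plan is therefore to start from the Koszul presentations in Lemma \ref{lemma-RI-chi}, substitute these identities, and then invoke Lemma \ref{lemma-twist} and Lemma \ref{lemma-chi-twist-commute}.

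First I would substitute the two identities into the presentations from Lemma \ref{lemma-RI-chi}. The top rows immediately take the form $(ah_N(y_1,x_2),\,y_1-x_2)$, while the bottom rows become $(a(x_2-x_1)h_{N-1}(x_1,x_2)+a(x_2-x_1)(y_1-x_2)p,\,0)$ for $\fC_N(\hat{\Gamma}_0)$ and $(ah_{N-1}(x_1,x_2)+a(y_1-x_2)p,\,0)$ for $\fC_N(\hat{\Gamma}_1)\{0,1\}$. Next I would apply Lemma \ref{lemma-twist} to the two rows, with twist parameter $k=a(x_2-x_1)p$ in the first case and $k=ap$ in the second; since the right-hand entry of the bottom row is $0$, the twist fixes the top row and subtracts $k(y_1-x_2)$ from the left-hand entry of the bottom row, exactly killing the $(y_1-x_2)$-multiple and producing $M_0$ and $M_1$. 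A short degree count confirms the hypothesis of Lemma \ref{lemma-twist}: $\deg(ap)=(2,2N-4)$ is the difference of the degrees of the bottom-row left entry and of $y_1-x_2$. Composing these isomorphisms with the isomorphism, respectively homotopy equivalence, of Lemma \ref{lemma-RI-chi} yields $\fC_N(\hat{\Gamma}_0)\cong M_0$ and $\fC_N(\hat{\Gamma}_1)\{0,1\}\simeq M_1$.

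It then remains to identify $\chi^0$ and $\chi^1$ under this chain of identifications with the jumping-factor morphisms $g$ and $f$. By Lemma \ref{lemma-RI-chi}, $\chi^0$ and $\chi^1$ are, under the equivalences of that lemma, the morphisms obtained by applying Lemma \ref{lemma-jumping-factor} to the second rows of the \emph{untwisted} Koszul factorizations, and $g,f$ are obtained the same way from the second rows of $M_0,M_1$; so the point is simply that the twist isomorphism commutes with the jumping-factor morphisms. This is precisely Lemma \ref{lemma-chi-twist-commute}, applied with $u=ah_N(y_1,x_2)$, $v=y_1-x_2$, $x=aV_2$, $y=x_2-x_1$, $z=0$ and twist parameter $ap$: its two commuting squares, pasted onto the two squares coming from Lemma \ref{lemma-RI-chi}, give the two commuting squares in the statement. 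The main obstacle is purely bookkeeping — checking that this substitution meets the hypotheses of Lemma \ref{lemma-chi-twist-commute} (in particular that the relevant bidegrees match and that the degenerate case $z=0$ is handled correctly, so that the right-hand entries of the second rows stay $0$ throughout) and keeping the shift $\{0,1\}$ on $\fC_N(\hat{\Gamma}_1)$ consistent; the genuinely computational inputs, namely the evaluation of $V_1+x_1V_2$ and $V_2$ via the power-sum derivative, have already been carried out in the paragraph preceding the lemma, so no new idea is required.
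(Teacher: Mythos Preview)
Your proposal is correct and follows exactly the approach the paper uses: the paper's proof is the single sentence ``This lemma follows from Lemmas \ref{lemma-chi-twist-commute} and \ref{lemma-RI-chi},'' and your write-up is a faithful unpacking of that sentence, including the substitution of $V_1+x_1V_2=h_N(y_1,x_2)$ and $V_2=h_{N-1}(x_1,x_2)+(y_1-x_2)p$ into the Koszul presentations of Lemma \ref{lemma-RI-chi} and the application of Lemma \ref{lemma-chi-twist-commute} with $u=ah_N(y_1,x_2)$, $v=y_1-x_2$, $x=aV_2$, $y=x_2-x_1$, $z=0$, and lemma-parameter $ap$. Your observation that the single lemma-parameter $ap$ produces the two different twist parameters $a(x_2-x_1)p$ and $ap$ (via the factor $y$) is exactly the point, and the degenerate case $z=0$ causes no trouble once one reads $\deg z=(0,2)$ from the constraint $\deg(xy)+\deg z=(2,2N+2)$.
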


\begin{proof}
This lemma follows from Lemmas \ref{lemma-chi-twist-commute} and \ref{lemma-RI-chi}.
\end{proof}

\subsection{Positive stabilization} We prove in this subsection the invariance of $\fH_N$ under positive stabilizations/de-stabilizations. The main result of this subsection is Proposition \ref{prop-RI+inv}.

\begin{figure}[ht]
$
\xymatrix{
\input{curve}  && \input{kink+}
}
$
\caption{}\label{stab+fig}

\end{figure}

\begin{proposition}\label{prop-RI+inv}
Let $T$ and $T_+$ be the tangles in Figure \ref{stab+fig}. Then, for $N \geq 1$, $\fC_N(T) \simeq \fC_N(T_+)$ as chain complexes over the category $\hmf^{\mathrm{all}}_{\Q[a,y_1,x_2],a(y_1^{N+1}-x_2^{N+1})}$.
\end{proposition}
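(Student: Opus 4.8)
The plan is to compute $\fC_N(T_+)$ explicitly by resolving the positive crossing in $T_+$ and then simplifying the resulting two-term complex of matrix factorizations using Gaussian elimination (Lemma \ref{lemma-gaussian-elimination}). The $0$-resolution of the crossing in $T_+$ produces a MOY graph whose associated matrix factorization is $\fC_N(\hat{\Gamma}_0)$ from Figure \ref{kink-res-fig} (after adding/removing marked points via Lemma \ref{lemma-marking-independence}, which is harmless by Lemma \ref{lemma-marking-independent}), while the $+1$-resolution gives $\fC_N(\hat{\Gamma}_1)$; the differential between them is the morphism $\chi^1$. So, up to the overall grading shift $\left\langle 1\right\rangle\{1,N\}$ prescribed in \eqref{eq-def-chain-crossing+}, we have
\[
\fC_N(T_+) \simeq \left(0\rightarrow \underbrace{\fC_N(\hat{\Gamma}_1)\left\langle 1\right\rangle\{1,N\}}_{-1} \xrightarrow{\chi^1} \underbrace{\fC_N(\hat{\Gamma}_0)\left\langle 1\right\rangle\{1,N-1\}}_{0}\rightarrow 0\right).
\]

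Next I would invoke Lemma \ref{lemma-RI-chi-explicit} to replace $\fC_N(\hat{\Gamma}_0)$ and $\fC_N(\hat{\Gamma}_1)$ by the concrete Koszul matrix factorizations $M_0$ and $M_1$ over $R=\Q[a,x_1,x_2,y_1]$, with $\chi^1$ identified (up to homotopy) with the map $f$ obtained from Lemma \ref{lemma-jumping-factor} applied to the bottom row. The key point is that the bottom row of $M_1$ is $(ah_{N-1}(x_1,x_2),\,0)$, so $M_1 \cong M_1' \otimes_R (ah_{N-1}(x_1,x_2),0)_R$ where $M_1'$ is the one-row Koszul factorization $(ah_N(y_1,x_2),\,y_1-x_2)_R$; similarly $M_0 \cong M_1'\otimes_R (a(x_2-x_1)h_{N-1}(x_1,x_2),0)_R$. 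Since $y_1-x_2$ is a non-zero-divisor, Proposition \ref{prop-contraction-weak}(2) (or Proposition \ref{prop-b-contraction} directly, after introducing $y_1$ as the contraction variable) collapses $M_1'$ down to a rank-one factorization supported at $x_2=y_1$, identifying both $\fC_N(\hat{\Gamma}_0)$ and $\fC_N(\hat{\Gamma}_1)$ with $\fC_N(T)=\fC_N(A)$ tensored with a two-term complex $\Q[a][x_2] \xrightarrow{a h_{N-1}} \Q[a][x_2]$ (with appropriate $x_2\mapsto$-multiplication data encoding the $x_1$ dependence after setting $x_2=y_1$ is \emph{not} done — rather $x_1$ survives). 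The upshot is that, after these reductions, the map $\chi^1$ between the two chain groups becomes (projectively) the identity on a common summand, so Gaussian elimination cancels that summand.

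The main obstacle — and the reason the paper flags this proof as genuinely different from \cite{KR1,KR2} — is that $\fC_N(T_+)$ is \emph{not} homotopically finite: after destabilization the surviving factor $\Q[a][x_1]\xrightarrow{a h_{N-1}(x_1,x_2)}\Q[a][x_1]$ (with $x_2$ specialized to $y_1$) has infinite-dimensional homology over $\Q$, since $a h_{N-1}$ is not a unit and its cokernel is not finitely generated as a $\Q$-vector space. Consequently one cannot appeal to the finiteness-based structure theory (Corollary \ref{cor-homology-detects-homotopy}, Proposition \ref{prop-contractible-essential-decomp}); instead the whole argument must be carried out inside $\hmf^{\mathrm{all}}_{\Q[a,y_1,x_2],a(y_1^{N+1}-x_2^{N+1})}$, using only the explicit Koszul manipulations (Lemmas \ref{lemma-twist}, \ref{lemma-row-op}, \ref{lemma-freedom}), the strong contraction Proposition \ref{prop-b-contraction}, the commutativity Lemma \ref{lemma-chi-twist-commute}, and Gaussian elimination — all of which are valid without homotopical finiteness. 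So the concrete steps are: (i) write down the two-term complex for $\fC_N(T_+)$; (ii) apply Lemma \ref{lemma-RI-chi-explicit} to make $\chi^1=f$ explicit; (iii) factor out the common one-row Koszul factor $(ah_N(y_1,x_2),y_1-x_2)_R$ and observe $f$ acts on it as the identity; (iv) apply Gaussian elimination to cancel the $\fC_N(\hat{\Gamma}_1)$-piece against the corresponding piece of $\fC_N(\hat{\Gamma}_0)$, checking that the correction term vanishes because the relevant off-diagonal component is zero; (v) track the grading shifts $\left\langle 1\right\rangle\{1,N-1\}$ through the contraction and confirm they cancel, leaving exactly $\fC_N(T)$. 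The delicate bookkeeping is in step (iv)–(v): one must be careful that the Gaussian elimination is legitimate over the big category (the isomorphism being cancelled is an honest isomorphism of free modules, so this is fine) and that the $\langle 1\rangle$ shift from $h_{N-1}$ having degree $N-1$ exactly offsets the $\{1,N-1\}$ shift in \eqref{eq-def-chain-crossing+}.
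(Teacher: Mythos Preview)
Your setup (steps (i)--(ii)) is correct and matches the paper. The tensor-factoring observation in (iii) is also sound: writing $M_i = M_1' \otimes_R K_i$ with $M_1' = (ah_N(y_1,x_2),\,y_1-x_2)_R$ and $K_1 = (ah_{N-1}(x_1,x_2),\,0)_R$, $K_0 = (a(x_2-x_1)h_{N-1}(x_1,x_2),\,0)_R$, one does have $f = \id_{M_1'}\otimes f'$ with $f'$ given by $(1,\,x_2-x_1)$ on the two $\zed_2$-components.

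The gap is in step (iv). Knowing that $f$ acts as the identity on the tensor factor $M_1'$ does \emph{not} by itself let you ``cancel the $\fC_N(\hat\Gamma_1)$-piece against the corresponding piece of $\fC_N(\hat\Gamma_0)$''. Gaussian elimination needs $f$ (or $f'$) to be a split injection in the category, and $f'$ is certainly not split over $R$: multiplication by $x_2-x_1$ on $R$ has no $R$-linear left inverse. What makes the argument work is that $f'$ \emph{is} split injective over the smaller ring $R_\partial=\Q[a,y_1,x_2]$, because $R\cong\bigoplus_{k\ge 0} R_\partial\cdot(x_2-x_1)^k$ and multiplication by $x_2-x_1$ is the shift $k\mapsto k+1$ on this basis; its $R_\partial$-cokernel is a single copy of $R_\partial$ sitting in $\zed_2$-degree $1$, which after tensoring with $M_1'$ becomes exactly $\fC_N(T)\langle 1\rangle\{-1,1-N\}$. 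This explicit $R_\partial$-basis decomposition is the substance of the paper's proof (the families $\Theta_k,\Omega_k,\tilde\Theta_k,\tilde\Omega_k$), and it is precisely the step your outline omits.

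Your detour through Propositions \ref{prop-b-contraction}/\ref{prop-contraction-weak} to ``collapse $M_1'$'' is a wrong turn: $y_1$ and $x_2$ are boundary variables and cannot be eliminated, and there is no row of the form $(\ast,\,x_1-\text{something})$ to which the strong contraction applies. The variable $x_1$ has to be handled by the explicit basis argument above, not by contraction. Once you supply that argument, your plan and the paper's proof coincide; the grading check in (v) then goes through as you say.
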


\begin{proof}
Let $\hat{\Gamma}_0$ and $\hat{\Gamma}_1$ be the MOY graphs in Figure \ref{kink-res-fig}. Set $R=\Q[a,x_1,x_2,y_1]$ and $R_\partial=\Q[a,x_2,y_1]$. From Lemma \ref{lemma-RI-chi-explicit}, we know that 
\begin{eqnarray}
\label{eq-RI-T+} \fC_N(T_+) & = & 0 \rightarrow \underbrace{\fC_N(\hat{\Gamma}_1)\left\langle 1\right\rangle\{1,N\}}_{-1} \xrightarrow{\chi^1} \underbrace{\fC_N(\hat{\Gamma}_0)\left\langle 1\right\rangle\{1,N-1\}}_{0} \rightarrow 0 \\
\nonumber & \cong & 0 \rightarrow \underbrace{M_1\left\langle 1\right\rangle\{1,N-1\}}_{-1} \xrightarrow{f} \underbrace{M_0\left\langle 1\right\rangle\{1,N-1\}}_{0} \rightarrow 0,
\end{eqnarray}
where 
\begin{eqnarray}
\label{eq-R1-def-M0} M_0 & = & {\left(%
\begin{array}{cc}
  ah_N(y_1,x_2) & y_1-x_2 \\
  a(x_2-x_1)h_{N-1}(x_1,x_2) & 0
\end{array}%
\right)_{R}}, \\
\label{eq-R1-def-M1} M_1 & = & {\left(%
\begin{array}{cc}
  ah_N(y_1,x_2) & y_1-x_2 \\
  ah_{N-1}(x_1,x_2) & 0
\end{array}%
\right)_{R}},
\end{eqnarray}
and $f$ is the morphism obtained by applying Lemma \ref{lemma-jumping-factor} to the second rows of $M_0$ and $M_1$. 

We prove the proposition by simplifying the chain complex 
\[
0 \rightarrow \underbrace{M_1}_{-1} \xrightarrow{f} \underbrace{M_0}_{0} \rightarrow 0.
\]

Note that:
\begin{equation}\label{eq-kink-res-0-ex}
M_0 =  {\left.%
\begin{array}{c}
  R_{00} \\
  \oplus \\
  R_{11}\{-2,2-2N\}
\end{array}%
\right.} \xrightarrow{D_0}
{\left.%
\begin{array}{c}
  R_{10}\{-1,1-N\} \\
  \oplus \\
  R_{01}\{-1,1-N\}
\end{array}%
\right.}
\xrightarrow{D_1}
{\left.%
\begin{array}{c}
  R_{00} \\
  \oplus \\
  R_{11}\{-2,2-2N\}
\end{array}%
\right.},
\end{equation}
where 
\begin{eqnarray*}
D_0 & = & {\left(%
\begin{array}{cc}
  ah_N(y_1,x_2) & 0 \\
  a(x_2-x_1)h_{N-1}(x_1,x_2) & y_1-x_2
\end{array}%
\right)}, \\
D_1 & = & {\left(%
\begin{array}{cc}
  y_1-x_2 & 0 \\
  -a(x_2-x_1)h_{N-1}(x_1,x_2) & ah_N(y_1,x_2)
\end{array}%
\right)},
\end{eqnarray*}
and
\begin{equation}\label{eq-kink-res-1-ex}
M_1 =  {\left.%
\begin{array}{c}
  R_{00} \\
  \oplus \\
  R_{11}\{-2,4-2N\}
\end{array}%
\right.} \xrightarrow{\Delta_0}
{\left.%
\begin{array}{c}
  R_{10}\{-1,1-N\} \\
  \oplus \\
  R_{01}\{-1,3-N\}
\end{array}%
\right.}
\xrightarrow{\Delta_1}
{\left.%
\begin{array}{c}
  R_{00} \\
  \oplus \\
  R_{11}\{-2,4-2N\}
\end{array}%
\right.},
\end{equation}
where 
\begin{eqnarray*}
\Delta_0 & = & {\left(%
\begin{array}{cc}
  ah_N(y_1,x_2) & 0 \\
  ah_{N-1}(x_1,x_2) & y_1-x_2
\end{array}%
\right)}, \\
\Delta_1 & = & {\left(%
\begin{array}{cc}
  y_1-x_2 & 0 \\
  -ah_{N-1}(x_1,x_2) & ah_N(y_1,x_2)
\end{array}%
\right)}.
\end{eqnarray*}
Moreover, the morphism $f:M_1 \rightarrow M_0$ is given by
\begin{eqnarray*}
{\left.%
\begin{array}{c}
  R_{00} \\
  \oplus \\
  R_{11}\{-2,4-2N\}
\end{array}%
\right.} & \xrightarrow{\left(%
\begin{array}{cc}
  1 & 0 \\
  0 & x_2-x_1
\end{array}%
\right)} &  {\left.%
\begin{array}{c}
  R_{00} \\
  \oplus \\
  R_{11}\{-2,2-2N\}
\end{array}%
\right.}, \\
{\left.%
\begin{array}{c}
  R_{10}\{-1,1-N\} \\
  \oplus \\
  R_{01}\{-1,3-N\}
\end{array}%
\right.}
& \xrightarrow{\left(%
\begin{array}{cc}
  1 & 0 \\
  0 & x_2-x_1
\end{array}%
\right)} &
{\left.%
\begin{array}{c}
  R_{10}\{-1,1-N\} \\
  \oplus \\
  R_{01}\{-1,1-N\}
\end{array}%
\right.}.
\end{eqnarray*}
In the above explicit forms, each $R_{\ve\delta}$ is a copy of $R$. The lower indices are just for keeping track of the position of each component.

Next, we decompose the matrix factorizations $M_0$ and $M_1$ over $R_\partial$. First, for $M_0$, denote by $u^{[k]}_{\ve\delta}$ the homogeneous element $(x_2-x_1)^k$ in $R_{\ve\delta}$ and by $v^{[k]}_{\ve\delta}$ the homogeneous element $(x_2-x_1)^k h_{N-1}(x_1,x_2)$ in $R_{\ve\delta}$. Let
\begin{eqnarray*}
B_{00} & = & \{u^{[k]}_{00}~|~ k\geq 0\}, \\
B_{10} & = & \{u^{[k]}_{10}~|~ k\geq 0\}, \\
B_{01} & = & \{u^{[k]}_{01}~|~ 0\leq k\leq N-1\}\cup \{v^{[k+1]}_{01}~|~ k\geq 0\}, \\
B_{11} & = & \{u^{[k]}_{11}~|~ 0\leq k\leq N-1\}\cup \{v^{[k+1]}_{11}~|~ k\geq 0\}.
\end{eqnarray*}
It is easy to check that $B_{\ve\delta}$ is a homogeneous $R_\partial$-basis for $R_{\ve\delta}$. So 
\begin{equation}\label{eq-basis-M0}
B:=B_{00}\cup B_{10}\cup B_{01}\cup B_{11} 
\end{equation}
is a homogeneous $R_\partial$-basis for $M_0$. Define 
\begin{eqnarray}
\label{eq-RI-Theta} \Theta_k & = & \mathrm{span}_{R_\partial}\{u^{[k]}_{00}, u^{[k]}_{10}, v^{[k+1]}_{01}, v^{[k+1]}_{11}\} \text{ for } k \geq 0,\\
\label{eq-RI-Omega} \Omega_k & = & \mathrm{span}_{R_\partial}\{u^{[k]}_{01}, u^{[k]}_{11}\} \text{ for } 0\leq k \leq N-1.
\end{eqnarray}
It is straightforward to check that the differential map of $M_0$ preserves $\Theta_k$ and $\Omega_k$. So $\Theta_k$ and $\Omega_k$ are $\zed_2\oplus\zed^{\oplus 2}$-graded matrix factorizations of $a(y_1^{N+1}-x_2^{N+1})$ over $R_\partial$. Since $B$ is a homogeneous $R_\partial$-basis for $M_0$, we have that, as $\zed_2\oplus\zed^{\oplus 2}$-graded matrix factorizations of $a(y_1^{N+1}-x_2^{N+1})$ over $R_\partial$, 
\begin{equation}\label{eq-RI-decomp-M0}
M_0 = (\bigoplus_{k=0}^{N-1} \Omega_k) \oplus (\bigoplus_{k=0}^{\infty} \Theta_k).
\end{equation}

Similarly, for $M_1$, denote by $\tilde{u}^{[k]}_{\ve\delta}$ the homogeneous element $(x_2-x_1)^k$ in $R_{\ve\delta}$ and by $\tilde{v}^{[k]}_{\ve\delta}$ the homogeneous element $(x_2-x_1)^k h_{N-1}(x_1,x_2)$ in $R_{\ve\delta}$. Let
\begin{eqnarray*}
\tilde{B}_{00} & = & \{\tilde{u}^{[k]}_{00}~|~ k\geq 0\}, \\
\tilde{B}_{10} & = & \{\tilde{u}^{[k]}_{10}~|~ k\geq 0\}, \\
\tilde{B}_{01} & = & \{\tilde{u}^{[k]}_{01}~|~ 0\leq k\leq N-2\}\cup \{\tilde{v}^{[k]}_{01}~|~ k\geq 0\}, \\
\tilde{B}_{11} & = & \{\tilde{u}^{[k]}_{11}~|~ 0\leq k\leq N-2\}\cup \{\tilde{v}^{[k]}_{11}~|~ k\geq 0\}.
\end{eqnarray*}
It is easy to see that $\tilde{B}_{\ve\delta}$ is a homogeneous $R_\partial$-basis for $R_{\ve\delta}$. So 
\begin{equation}\label{eq-basis-M1}
\tilde{B}:=\tilde{B}_{00}\cup \tilde{B}_{10}\cup \tilde{B}_{01}\cup \tilde{B}_{11}
\end{equation}
is a homogeneous $R_\partial$-basis for $M_1$. Define 
\begin{eqnarray}
\label{eq-RI-tilde-Theta} \tilde{\Theta}_k & = & \mathrm{span}_{R_\partial}\{\tilde{u}^{[k]}_{00}, \tilde{u}^{[k]}_{10}, \tilde{v}^{[k]}_{01}, \tilde{}v^{[k]}_{11}\} \text{ for } k \geq 0,\\
\label{eq-RI-tilde-Omega} \tilde{\Omega}_k & = & \mathrm{span}_{R_\partial}\{\tilde{u}^{[k]}_{01}, \tilde{u}^{[k]}_{11}\} \text{ for } 0\leq k \leq N-2.
\end{eqnarray}
It is straightforward to check that the differential map of $M_1$ preserves $\tilde{\Theta}_k$ and $\tilde{\Omega}_k$. So $\tilde{\Theta}_k$ and $\tilde{\Omega}_k$ are $\zed_2\oplus\zed^{\oplus 2}$-graded matrix factorizations of $a(y_1^{N+1}-x_2^{N+1})$ over $R_\partial$. Since $\tilde{B}$ is a homogeneous $R_\partial$-basis for $M_1$, we have that, as $\zed_2\oplus\zed^{\oplus 2}$-graded matrix factorizations of $a(y_1^{N+1}-x_2^{N+1})$ over $R_\partial$, 
\begin{equation}\label{eq-RI-decomp-M1}
M_1 = (\bigoplus_{k=0}^{N-2} \tilde{\Omega}_k) \oplus (\bigoplus_{k=0}^{\infty} \tilde{\Theta}_k).
\end{equation}

From the matrix form of $f$ given above, one can see that
\begin{itemize}
	\item $f(\tilde{u}^{[k]}_{00}) = u^{[k]}_{00}$, $f(\tilde{u}^{[k]}_{10}) = u^{[k]}_{10}$ for $k \geq 0$,
	\item $f(\tilde{v}^{[k]}_{01}) = v^{[k+1]}_{01}$, $f(\tilde{v}^{[k]}_{11}) = v^{[k+1]}_{11}$ for $k \geq 0$,
	\item $f(\tilde{u}^{[k]}_{01}) = u^{[k+1]}_{01}$, $f(\tilde{u}^{[k]}_{11}) = u^{[k+1]}_{11}$ for $k=0,\dots,N-2$.
\end{itemize}
So 
\begin{itemize}
	\item $f$ maps $\tilde{\Theta}_k$ isomorphically onto $\Theta_k$ for $k\geq 0$,
	\item $f$ maps $\tilde{\Omega}_k$ isomorphically onto $\Omega_{k+1}$ for $0\leq k\leq N-2$.
\end{itemize}
Thus, by Lemma \ref{lemma-gaussian-elimination} and decompositions \eqref{eq-RI-decomp-M0}, \eqref{eq-RI-decomp-M1}, we know that 
\begin{equation}\label{eq-RI-chain-reduction+}
0 \rightarrow \underbrace{M_1}_{-1} \xrightarrow{f} \underbrace{M_0}_{0} \rightarrow 0 \simeq 0 \rightarrow \underbrace{\Omega_0}_{0} \rightarrow 0
\end{equation}
as chain complexes over the category $\hmf^{\mathrm{all}}_{R_\partial,a(y_1^{N+1}-x_2^{N+1})}$. Note that 
\begin{eqnarray}
\label{eq-RI-Omega0}\Omega_0 & \cong & R_\partial\{-2,2-2N\} \xrightarrow{y_1-x_2} R_\partial\{-1,1-N\} \xrightarrow{ah_N(y_1,x_2)}  R_\partial\{-2,2-2N\} \\
\nonumber & = & (ah_N(y_1,x_2),y_1-x_2)_{R_\partial}\left\langle 1\right\rangle \{-1,1-N\}.
\end{eqnarray}
Combining \eqref{eq-RI-T+}, \eqref{eq-RI-chain-reduction+} and \eqref{eq-RI-Omega0}, we get that
\[
\fC_N(T_+) \simeq 0 \rightarrow (ah_N(y_1,x_2),y_1-x_2)_{R_\partial} \rightarrow 0 \simeq \fC_N(T)
\]
as chain complexes over the category $\hmf^{\mathrm{all}}_{R_\partial,a(y_1^{N+1}-x_2^{N+1})}$.
\end{proof}

\begin{figure}[ht]
$
\xymatrix{
\input{crossing-1-1-res-0-closed} && \input{crossing-1-1-res-1-closed} && \input{decomp-I-fig1}
}
$
\caption{}\label{decomp-I-fig}

\end{figure}

The following is a simple corollary of the proof of Proposition \ref{prop-RI+inv}.

\begin{corollary}\label{cor-decomp-I}
Let $\hat{\Gamma}_0$, $\hat{\Gamma}_1$ and $\hat{\Gamma}$ be the MOY graphs in Figure \ref{decomp-I-fig}. Then, as objects of the category $\hmf^{\mathrm{all}}_{\Q[a,y_1,x_2],a(y_1^{N+1}-x_2^{N+1})}$,
\[
\fC_N(\hat{\Gamma}_0) \simeq \fC_N(\hat{\Gamma})\left\langle 1\right\rangle \{-1,1-N\} \oplus \fC_N(\hat{\Gamma}_1)\{0,1\}.
\]
\end{corollary}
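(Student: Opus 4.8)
The plan is to read the decomposition off from the computation already carried out in the proof of Proposition \ref{prop-RI+inv}, adding only one bookkeeping observation. Recall from that proof that, with a suitable marking, $\fC_N(\hat\Gamma_0)\cong M_0$ and $\fC_N(\hat\Gamma_1)\{0,1\}\simeq M_1$, where $M_0$ and $M_1$ are the Koszul matrix factorizations of \eqref{eq-R1-def-M0} and \eqref{eq-R1-def-M1}; that over $R_\partial=\Q[a,x_2,y_1]$ one has the direct sum decompositions \eqref{eq-RI-decomp-M0} and \eqref{eq-RI-decomp-M1} of $\zed_2\oplus\zed^{\oplus2}$-graded matrix factorizations of $a(y_1^{N+1}-x_2^{N+1})$ over $R_\partial$; and that the morphism $f\colon M_1\to M_0$ (a morphism of matrix factorizations, from Lemma \ref{lemma-jumping-factor}) sends $\tilde\Theta_k$ isomorphically onto $\Theta_k$ for $k\geq0$ and $\tilde\Omega_k$ isomorphically onto $\Omega_{k+1}$ for $0\leq k\leq N-2$.

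The bookkeeping observation is this: since each of $\Theta_k,\Omega_k,\tilde\Theta_k,\tilde\Omega_k$ is preserved by the relevant differential, the image $f(M_1)=\bigl(\bigoplus_{k=1}^{N-1}\Omega_k\bigr)\oplus\bigl(\bigoplus_{k=0}^{\infty}\Theta_k\bigr)$ is a sub-matrix-factorization of $M_0$, and $M_0=\Omega_0\oplus f(M_1)$ as $\zed_2\oplus\zed^{\oplus2}$-graded matrix factorizations over $R_\partial$. As $f$ restricts to an isomorphism $M_1\xrightarrow{\ \cong\ }f(M_1)$, this gives $M_0\cong\Omega_0\oplus M_1$.

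It then remains to identify $\Omega_0$. By \eqref{eq-RI-Omega0}, $\Omega_0\cong(ah_N(y_1,x_2),\,y_1-x_2)_{R_\partial}\langle1\rangle\{-1,1-N\}$, while $\hat\Gamma$ is the single edge colored $1$ from $x_2$ to $y_1$, so (adding a marked point if necessary, cf.\ Lemma \ref{lemma-marking-independence}) $\fC_N(\hat\Gamma)=(ah_N(y_1,x_2),\,y_1-x_2)_{R_\partial}$ by definition; hence $\Omega_0\cong\fC_N(\hat\Gamma)\langle1\rangle\{-1,1-N\}$. Assembling the pieces, $\fC_N(\hat\Gamma_0)\cong M_0\cong\fC_N(\hat\Gamma)\langle1\rangle\{-1,1-N\}\oplus M_1\simeq\fC_N(\hat\Gamma)\langle1\rangle\{-1,1-N\}\oplus\fC_N(\hat\Gamma_1)\{0,1\}$ as objects of $\hmf^{\mathrm{all}}_{\Q[a,y_1,x_2],a(y_1^{N+1}-x_2^{N+1})}$.

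There is essentially no obstacle here, as all the substantive work was done in proving Proposition \ref{prop-RI+inv}. The only points needing a moment's care are that the splitting $M_0=\Omega_0\oplus f(M_1)$ is a splitting of matrix factorizations and not merely of the underlying graded $R_\partial$-modules --- which is precisely why \eqref{eq-RI-decomp-M0} and \eqref{eq-RI-decomp-M1} were arranged as they were --- and that the grading shifts produced by \eqref{eq-RI-Omega0} and by the definition of $\fC_N(\hat\Gamma)$ agree with those in the statement.
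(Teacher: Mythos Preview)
Your proof is correct and follows essentially the same approach as the paper: the paper's own proof simply cites Lemma \ref{lemma-RI-chi-explicit}, decompositions \eqref{eq-RI-decomp-M0} and \eqref{eq-RI-decomp-M1}, isomorphism \eqref{eq-RI-Omega0}, and the fact that $f$ maps $\tilde\Theta_k$ isomorphically onto $\Theta_k$ and $\tilde\Omega_k$ isomorphically onto $\Omega_{k+1}$. Your ``bookkeeping observation'' makes explicit exactly the point the paper leaves implicit, namely that these isomorphisms assemble to $M_0\cong\Omega_0\oplus M_1$ as matrix factorizations over $R_\partial$.
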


\begin{proof}
This corollary follows from Lemma \ref{lemma-RI-chi-explicit}, decompositions \eqref{eq-RI-decomp-M0}, \eqref{eq-RI-decomp-M1}, isomorphism \eqref{eq-RI-Omega0} and the fact that $f$ maps $\tilde{\Theta}_k$ isomorphically onto $\Theta_k$ and maps $\tilde{\Omega}_k$ isomorphically onto $\Omega_{k+1}$.
\end{proof}

\subsection{Negative stabilization}\label{subsec-neg-stab} The main result of this subsection is Proposition \ref{prop-RI-} below, which implies Theorem \ref{thm-neg-stabilization} and Corollary \ref{cor-unknot-neg-stab}. The proof of Proposition \ref{prop-RI-} is parallel to that of Proposition \ref{prop-RI+inv}.

\begin{figure}[ht]
$
\xymatrix{
\input{curve}  && \input{kink-}
}
$
\caption{}\label{stab-fig}

\end{figure}

Before stating Proposition \ref{prop-RI-}, we define a morphism of matrix factorizations. Note that the tangle $T$ can also be viewed as a MOY graph. Its matrix factorization is 
\[
\fC_N(T) = (ah_N(y_1,x_2), y_1-x_2)_{R_\partial} \cong R_\partial \xrightarrow{ah_N(y_1,x_2)} R_\partial  \{-1,-N+1\} \xrightarrow{y_1-x_2}  R_\partial,
\]
where $R_\partial=\Q[a,y_1,x_2]$.
Moreover,
\begin{eqnarray*}
\fC_N(T)\otimes_{\Q[a]} (0,a)_{\Q[a]} & = & \fC_N(T)\otimes_{R_\partial} (0,a)_{R_\partial} \\
& = & {\left(%
\begin{array}{cc}
  ah_N(y_1,x_2) & y_1-x_2 \\
  0 & a
\end{array}%
\right)_{R_\partial}} \\
& \cong &
{\left.%
\begin{array}{c}
  R_\partial \\
  \oplus \\
  R_\partial\{0,-2N\}
\end{array}%
\right.} \xrightarrow{\Delta'_0}
{\left.%
\begin{array}{c}
  R_\partial \{-1,-N+1\} \\
  \oplus \\
  R_\partial \{1,-N-1\}
\end{array}%
\right.}
\xrightarrow{\Delta'_1}
{\left.%
\begin{array}{c}
  R_\partial  \\
  \oplus \\
  R_\partial\{0,-2N\}
\end{array}%
\right.},
\end{eqnarray*}
where 
\begin{eqnarray*}
\Delta'_0 & = & {\left(%
\begin{array}{cc}
  ah_N(y_1,x_2) & -a \\
  0 & y_1-x_2
\end{array}%
\right)}, \\
\Delta'_1 & = & {\left(%
\begin{array}{cc}
  y_1-x_2 & a \\
  0 & ah_N(y_1,x_2)
\end{array}%
\right)}.
\end{eqnarray*}

\begin{definition}\label{def-RI-jmath}
Using the above explicit forms of $\fC_N(T)$ and $\fC_N(T)\otimes_{\Q[a]} (0,a)_{\Q[a]}$, we define an $R_\partial$-module map $\jmath:\fC_N(T) \rightarrow \fC_N(T)\otimes_{\Q[a]} (0,a)_{\Q[a]}$ by 
\begin{eqnarray*}
R_\partial & \xrightarrow{\left(%
\begin{array}{c}
  1 \\
  0 
\end{array}%
\right)} &
{\left.%
\begin{array}{c}
  R_\partial  \\
  \oplus \\
  R_\partial\{0,-2N\}
\end{array}%
\right.}, \\
R_\partial  \{-1,-N+1\} & \xrightarrow{\left(%
\begin{array}{c}
  1 \\
  0 
\end{array}%
\right)} &
{\left.%
\begin{array}{c}
  R_\partial \{-1,-N+1\} \\
  \oplus \\
  R_\partial \{1,-N-1\}
\end{array}%
\right.}.
\end{eqnarray*}
It is easy to check that $\jmath$ is a homogeneous morphism of matrix factorizations of $\zed_2 \oplus \zed^{\oplus2}$-degree $(0,0,0)$.
\end{definition}

\begin{proposition}\label{prop-RI-}
Let $T$ and $T_-$ be the tangles in Figure \ref{stab-fig}, $R_\partial=\Q[a,y_1,x_2]$ and $w=a(y_1^{N+1}-x_2^{N+1})$. Denote by $\jmath:\fC_N(T) \rightarrow \fC_N(T)\otimes_{R_\partial} (0,a)_{R_\partial}$ the morphism given in Definition \ref{def-RI-jmath}. Then, for $N \geq 1$, 
\[
\fC_N(T_-) \simeq 0 \rightarrow \underbrace{\fC_N(T)\{-2,0\}}_{0} \xrightarrow{\jmath} \underbrace{\fC_N(T)\otimes_{\Q[a]} (0,a)_{\Q[a]}\{-2,0\}}_{1} \rightarrow 0
\]
as chain complexes over the category $\hmf^{\mathrm{all}}_{R_\partial,w}$. 
\end{proposition}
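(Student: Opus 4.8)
The plan is to mirror the proof of Proposition \ref{prop-RI+inv} as closely as possible, now tracking the negative crossing. First I would use the resolution \eqref{eq-def-chain-crossing-} of the negative crossing in $T_-$: since $T_-$ is obtained by closing off a negative kink, its chain complex is
\[
\fC_N(T_-) = 0 \rightarrow \underbrace{\fC_N(\hat{\Gamma}_0)\left\langle 1\right\rangle\{-1,-N+1\}}_{0} \xrightarrow{\chi^0} \underbrace{\fC_N(\hat{\Gamma}_1)\left\langle 1\right\rangle\{-1,-N\}}_{1} \rightarrow 0,
\]
where $\hat{\Gamma}_0$ and $\hat{\Gamma}_1$ are the MOY graphs of Figure \ref{kink-res-fig}. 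By Lemma \ref{lemma-RI-chi-explicit} this is isomorphic, over $R=\Q[a,x_1,x_2,y_1]$, to the two-term complex $0\to M_0\left\langle 1\right\rangle\{-1,-N+1\}\xrightarrow{g} M_1\left\langle 1\right\rangle\{-1,-N+1\}\to 0$ with $M_0,M_1$ as in \eqref{eq-R1-def-M0},\eqref{eq-R1-def-M1} and $g$ the ``jumping factor'' morphism from Lemma \ref{lemma-jumping-factor} applied to the second rows. So the whole computation comes down to simplifying $0\to M_0 \xrightarrow{g} M_1 \to 0$ over $R_\partial = \Q[a,y_1,x_2]$.

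Next I would reuse verbatim the $R_\partial$-module decompositions \eqref{eq-RI-decomp-M0} and \eqref{eq-RI-decomp-M1}, $M_0 = (\bigoplus_{k=0}^{N-1}\Omega_k)\oplus(\bigoplus_{k\ge0}\Theta_k)$ and $M_1 = (\bigoplus_{k=0}^{N-2}\tilde\Omega_k)\oplus(\bigoplus_{k\ge0}\tilde\Theta_k)$, together with the explicit action of the morphism on basis elements recorded just before \eqref{eq-RI-chain-reduction+}. Since $g$ is (up to the $(x_2-x_1)$-twist built into $f$) essentially the same map written in the other direction, I would recompute: $g$ sends $\Theta_k \xrightarrow{\cong}\tilde\Theta_k$ for all $k\ge0$ and $\Omega_{k+1}\xrightarrow{\cong}\tilde\Omega_k$ for $0\le k\le N-2$, with $\Omega_0$ having no partner in $M_1$ and $\tilde\Theta$'s already matched. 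Actually a cleaner route: note $g$ on the second rows differs from $f$ only by which side carries the multiplication, so Lemma \ref{lemma-chi-twist-commute}-style bookkeeping shows $g$ is injective with cokernel a copy of $\Omega_0$ placed in homological degree $1$. Applying Lemma \ref{lemma-gaussian-elimination} to cancel all the isomorphic pairs leaves
\[
\fC_N(T_-) \simeq 0 \rightarrow \underbrace{\Omega_0\left\langle 1\right\rangle\{-1,-N+1\}}_{1} \rightarrow 0
\]
over $\hmf^{\mathrm{all}}_{R_\partial,w}$ — but wait, this would contradict the claimed two-term answer, so the correct statement is that the Gaussian elimination does not fully collapse: one $\Theta_0$-summand of $M_0$ is not hit, because $g$ raises, rather than lowers, the $(x_2-x_1)$-degree on the $\Theta$'s. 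I expect the honest computation to leave a two-term complex whose degree-$0$ term is a $\Theta$-type summand isomorphic to $\fC_N(T)\{-2,0\}$ and whose degree-$1$ term is $\Omega_0$-type, isomorphic to $\fC_N(T)\otimes_{\Q[a]}(0,a)_{\Q[a]}\{-2,0\}$; the residual map between them must then be identified with $\jmath$.

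The last and main step is this identification. Having reduced to $0\to P \xrightarrow{\delta} Q \to 0$ with $P\cong R_\partial\xrightarrow{ah_N(y_1,x_2)}R_\partial\{-1,-N+1\}\xrightarrow{y_1-x_2}R_\partial$ and $Q$ the Koszul complex $\fC_N(T)\otimes(0,a)$ after the grading shifts, I would (i) track the shifts: the $\{-1,-N+1\}$ from \eqref{eq-def-chain-crossing-} combined with the internal shifts of $\Omega_0$ in \eqref{eq-RI-Omega0} and of the surviving $\Theta_0$ must together produce exactly $\{-2,0\}$ on each factor, which is a routine but essential degree check; (ii) compute $\delta$ explicitly on the chosen bases: the surviving generator of the degree-$0$ term maps to the class $(x_2-x_1)\cdot(\text{lowest }\Omega_0\text{ generator})$, and modulo the relations one identifies this with the inclusion $1\mapsto 1$ of Definition \ref{def-RI-jmath} up to a nonzero scalar and homotopy. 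The main obstacle, as in Proposition \ref{prop-RI+inv}, is precisely this: keeping honest track of the infinitely many summands $\Theta_k,\tilde\Theta_k$ (these matrix factorizations are \emph{not} homotopically finite over $R_\partial$, which is why we must work in $\hmf^{\mathrm{all}}$ and why naive finiteness arguments fail), and verifying that after Gaussian elimination the single residual connecting morphism is genuinely $\jmath$ and not some other degree-$(0,0,0)$ map — here one invokes that the relevant $\Hom_\hmf$ space is at most one-dimensional (an argument of the type in Lemma \ref{lemma-hmf-decomp-II}) and that $\jmath$ is not null-homotopic, so the two must agree up to scalar, and scalars are irrelevant for homotopy type.
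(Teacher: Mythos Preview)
Your overall plan---reuse the $R_\partial$-decompositions \eqref{eq-RI-decomp-M0}, \eqref{eq-RI-decomp-M1} and Gaussian-eliminate the paired summands---is exactly the paper's approach, but your bookkeeping of how $g$ acts on the summands is wrong, and this causes you to misidentify the surviving pieces. From the matrix form of $g$ (multiplication by $x_2-x_1$ on the $(0,0)$- and $(1,0)$-components, identity on the $(0,1)$- and $(1,1)$-components), one finds $g:\Theta_k\xrightarrow{\cong}\tilde\Theta_{k+1}$ and $g:\Omega_k\xrightarrow{\cong}\tilde\Omega_k$ for $0\le k\le N-2$, \emph{not} $\Omega_{k+1}\to\tilde\Omega_k$. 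Hence the unmatched summands are $\Omega_{N-1}\subset M_0$ (degree $0$) and $\tilde\Theta_0\subset M_1$ (degree $1$). You have the $\Omega/\Theta$ roles reversed: a rank count already rules out your claim, since each $\Omega_k$ has $R_\partial$-rank $2$ (matching $\fC_N(T)$) while each $\Theta_k$ has rank $4$ (matching $\fC_N(T)\otimes(0,a)$). Also, the grading shift on $M_1$ should be $\{-1,-N-1\}$, not $\{-1,-N+1\}$, because Lemma~\ref{lemma-RI-chi-explicit} gives $\fC_N(\hat\Gamma_1)\{0,1\}\simeq M_1$.

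Your final step also diverges from the paper. The residual map $\Omega_{N-1}\to\tilde\Theta_0$ does not come from a bare factor of $(x_2-x_1)$: one must expand $(x_2-x_1)^{N-1}$ in the basis $\{1,(x_2-x_1),\dots,(x_2-x_1)^{N-2},h_{N-1}(x_1,x_2)\}$ of $\Q[x_1]$ over $\Q[x_2]$, and the coefficient of $h_{N-1}$ is what survives Gaussian elimination, giving an explicit map $J$ with $J(u_{01}^{[N-1]})=\tilde v_{01}^{[0]}$, $J(u_{11}^{[N-1]})=\tilde v_{11}^{[0]}$. The paper then identifies $J$ with $\jmath$ by a direct comparison of matrices under the isomorphisms $\Omega_{N-1}\langle 1\rangle\{-1,-N+1\}\cong\fC_N(T)\{-2,0\}$ and $\tilde\Theta_0\langle 1\rangle\{-1,-N-1\}\cong\fC_N(T)\otimes_{\Q[a]}(0,a)_{\Q[a]}\{-2,0\}$. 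Your proposed shortcut via one-dimensionality of $\Hom_{\hmf}$ may be workable, but it is not what the paper does and would itself require a computation you have not supplied.
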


\begin{proof}
Let $\hat{\Gamma}_0$ and $\hat{\Gamma}_1$ be the MOY graphs in Figure \ref{kink-res-fig}. Set $R=\Q[a,x_1,x_2,y_1]$. From Lemma \ref{lemma-RI-chi-explicit}, we know that 
\begin{eqnarray}
\label{eq-RI-T-} \fC_N(T_-) & = & 0 \rightarrow \underbrace{\fC_N(\hat{\Gamma}_0)\left\langle 1\right\rangle\{-1,-N+1\}}_{0} \xrightarrow{\chi^0} \underbrace{\fC_N(\hat{\Gamma}_1)\left\langle 1\right\rangle\{-1,-N\}}_{1} \rightarrow 0 \\
\nonumber & \cong & 0 \rightarrow \underbrace{M_0\left\langle 1\right\rangle\{-1,-N+1\}}_{0} \xrightarrow{g} \underbrace{M_1\left\langle 1\right\rangle\{-1,-N-1\}}_{1} \rightarrow 0,
\end{eqnarray}
where $M_0$, $M_1$ are the matrix factorizations defined in \eqref{eq-R1-def-M0}, \eqref{eq-R1-def-M1}, and $g$ is the morphism obtained by applying Lemma \ref{lemma-jumping-factor} to the second rows of $M_0$ and $M_1$.

We prove the proposition by simplifying the chain complex 
\[
0 \rightarrow \underbrace{M_0}_{0} \xrightarrow{g} \underbrace{M_1\{0,-2\}}_{1} \rightarrow 0.
\]
Again, we use the explicit forms \eqref{eq-kink-res-0-ex} and \eqref{eq-kink-res-1-ex} of $M_0$ and $M_1$. Under these explicit forms, the morphism $g:M_0 \rightarrow M_1$ is given by 
\begin{eqnarray*}
  {\left.%
\begin{array}{c}
  R_{00} \\
  \oplus \\
  R_{11}\{-2,2-2N\}
\end{array}%
\right.} 
& \xrightarrow{\left(%
\begin{array}{cc}
  x_2-x_1 & 0 \\
  0 & 1
\end{array}%
\right)} & 
{\left.%
\begin{array}{c}
  R_{00} \\
  \oplus \\
  R_{11}\{-2,4-2N\}
\end{array}%
\right.}, \\
{\left.%
\begin{array}{c}
  R_{10}\{-1,1-N\} \\
  \oplus \\
  R_{01}\{-1,1-N\}
\end{array}%
\right.}
& \xrightarrow{\left(%
\begin{array}{cc}
  x_2-x_1 & 0 \\
  0 & 1
\end{array}%
\right)} &
{\left.%
\begin{array}{c}
  R_{10}\{-1,1-N\} \\
  \oplus \\
  R_{01}\{-1,3-N\}
\end{array}%
\right.}.
\end{eqnarray*}
Let $B$ and $\tilde{B}$ be the $R_\partial$-bases for $M_0$ and $M_1$ given in \eqref{eq-basis-M0} and \eqref{eq-basis-M1}. We have
\begin{itemize}
	\item $g(u^{[k]}_{00}) = \tilde{u}^{[k+1]}_{00}$, $g(u^{[k]}_{10}) = \tilde{u}^{[k+1]}_{10}$ for $k\geq 0$,
	\item $g(v^{[k]}_{01}) = \tilde{v}^{[k]}_{01}$, $g(v^{[k]}_{11}) = \tilde{v}^{[k]}_{11}$ for $k\geq 0$,
  \item $g(u^{[k]}_{01}) = \tilde{u}^{[k]}_{01}$, $g(u^{[k]}_{11}) = \tilde{u}^{[k]}_{11}$ for $k=0,\dots,N-2$.	
\end{itemize}
So 
\begin{itemize}
	\item $g$ maps $\Theta_k$ isomorphically onto $\tilde{\Theta}_{k+1}$ for $k\geq 0$,
	\item $g$ maps $\Omega_k$ isomorphically onto $\tilde{\Omega}_{k}$ for $0\leq k\leq N-2$,
\end{itemize}
where the $\zed_2\oplus\zed^{\oplus2}$-graded matrix factorizations $\Theta_k$, $\Omega_k$, $\tilde{\Theta}_{k}$ and $\tilde{\Omega}_{k}$ are defined in \eqref{eq-RI-Theta}, \eqref{eq-RI-Omega}, \eqref{eq-RI-tilde-Theta} and \eqref{eq-RI-tilde-Omega}. 

Moreover, note that there are unique polynomials $p_0(x_2),p_1(x_2),\dots,p_{N-2}(x_2) \in \Q[x_2]$ such that 
\[
(x_2-x_1)^{N-1} = (-1)^{N-1}h_{N-1}(x_1,x_2) +\sum_{l=0}^{N-2} p_l(x_2)(x_2-x_1)^l.
\]
Therefore,
\begin{eqnarray}
\label{eq-RI-g-u-01} g(u^{[N-1]}_{01}) & = & (-1)^{N-1}\tilde{v}^{[0]}_{01}+\sum_{l=0}^{N-2} p_l(x_2)\tilde{u}^{[l]}_{01}, \\
\label{eq-RI-g-u-11} g(u^{[N-1]}_{11}) & = & (-1)^{N-1}\tilde{v}^{[0]}_{11}+\sum_{l=0}^{N-2} p_l(x_2)\tilde{u}^{[l]}_{11}.
\end{eqnarray}

By the Gaussian Elimination Lemma (Lemma \ref{lemma-gaussian-elimination},) the above properties of $g$ imply that
\begin{eqnarray}
\label{eq-RI-chain1}0 \rightarrow \underbrace{M_0}_{0} \xrightarrow{g} \underbrace{M_1\{0,-2\}}_{1} \rightarrow 0 & \simeq &  0\rightarrow \underbrace{\Omega_{N-1}}_{0} \xrightarrow{(-1)^{N-1}J} \underbrace{\tilde{\Theta}_{0}\{0,-2\}}_{1} \rightarrow 0, \\
\nonumber & \cong & 0\rightarrow \underbrace{\Omega_{N-1}}_{0} \xrightarrow{J} \underbrace{\tilde{\Theta}_{0}\{0,-2\}}_{1} \rightarrow 0,
\end{eqnarray}
where $J(u^{[N-1]}_{01}) = \tilde{v}^{[0]}_{01}$ and $J(u^{[N-1]}_{11}) = \tilde{v}^{[0]}_{11}$.

From \eqref{eq-kink-res-0-ex}, one can see that, as $\zed_2\oplus\zed^{\oplus 2}$-graded matrix factorizations over $R_\partial$,
\begin{eqnarray*}
&& \Omega_{N-1} \\
& = & R_\partial \cdot u^{[N-1]}_{11}\{-2,2-2N\} \xrightarrow{y_1-x_2} R_\partial \cdot u^{[N-1]}_{01}\{-1,1-N\}  \xrightarrow{ah_N(y_1,x_2)} R_\partial \cdot u^{[N-1]}_{11}\{-2,2-2N\} \\
& \cong & R_\partial \{-2,0\} \xrightarrow{y_1-x_2} R_\partial  \{-1,N-1\}  \xrightarrow{ah_N(y_1,x_2)} R_\partial \{-2,0\}
\end{eqnarray*}
Thus, 
\begin{eqnarray}
\label{eq-RI-O-N-1}\Omega_{N-1}\left\langle 1 \right\rangle \{-1,-N+1\} & \cong & R_\partial \{-2,0\} \xrightarrow{ah_N(y_1,x_2)} R_\partial  \{-3,-N+1\} \xrightarrow{y_1-x_2}  R_\partial \{-2,0\} \\
\nonumber & = & (ah_N(y_1,x_2), y_1-x_2)_{R_\partial}\{-2,0\} \\
\nonumber & = & \fC_N(T)\{-2,0\}.
\end{eqnarray}
Similarly, from \eqref{eq-kink-res-1-ex}, we have
\begin{eqnarray*}
\tilde{\Theta}_0 
& = & {\left.%
\begin{array}{c}
  R_\partial \cdot u^{[0]}_{00} \\
  \oplus \\
  R_\partial \cdot v^{[0]}_{11}\{-2,4-2N\}
\end{array}%
\right.} \xrightarrow{\Delta_0}
{\left.%
\begin{array}{c}
  R_\partial \cdot u^{[0]}_{10}\{-1,1-N\} \\
  \oplus \\
  R_\partial \cdot v^{[0]}_{01}\{-1,3-N\}
\end{array}%
\right.}
\xrightarrow{\Delta_1}
{\left.%
\begin{array}{c}
  R_\partial \cdot u^{[0]}_{00} \\
  \oplus \\
  R_\partial \cdot v^{[0]}_{11}\{-2,4-2N\}
\end{array}%
\right.} \\
& \cong & {\left.%
\begin{array}{c}
  R_\partial \\
  \oplus \\
  R_\partial \{-2,2\}
\end{array}%
\right.} \xrightarrow{\tilde{\Delta}_0}
{\left.%
\begin{array}{c}
  R_\partial \{-1,1-N\} \\
  \oplus \\
  R_\partial \{-1,N+1\}
\end{array}%
\right.}
\xrightarrow{\tilde{\Delta}_1}
{\left.%
\begin{array}{c}
  R_\partial \\
  \oplus \\
  R_\partial \{-2,2\}
\end{array}%
\right.},
\end{eqnarray*}
where 
\begin{eqnarray*}
\tilde{\Delta}_0 & = & {\left(%
\begin{array}{cc}
  ah_N(y_1,x_2) & 0 \\
  a & y_1-x_2
\end{array}%
\right)}, \\
\tilde{\Delta}_1 & = & {\left(%
\begin{array}{cc}
  y_1-x_2 & 0 \\
  -a & ah_N(y_1,x_2)
\end{array}%
\right)}.
\end{eqnarray*}
Thus, 
\begin{eqnarray}
\label{eq-RI-Theta-0}
\tilde{\Theta}_0 \left\langle 1 \right\rangle \{-1,-N-1\}
& \cong & {\left.%
\begin{array}{c}
  R_\partial\{-2,-2N\} \\
  \oplus \\
  R_\partial \{-2,0\}
\end{array}%
\right.} \xrightarrow{\tilde{\Delta}_1}
{\left.%
\begin{array}{c}
  R_\partial \{-1,-N-1\} \\
  \oplus \\
  R_\partial \{-3,-N+1\}
\end{array}%
\right.}
\xrightarrow{\tilde{\Delta}_0}
{\left.%
\begin{array}{c}
  R_\partial\{-2,-2N\} \\
  \oplus \\
  R_\partial \{-2,0\}
\end{array}%
\right.} \\
\nonumber & \cong & {\left.%
\begin{array}{c}
  R_\partial \{-2,0\} \\
  \oplus \\
  R_\partial\{-2,-2N\}
\end{array}%
\right.} \xrightarrow{\Delta'_0}
{\left.%
\begin{array}{c}
  R_\partial \{-3,-N+1\} \\
  \oplus \\
  R_\partial \{-1,-N-1\}
\end{array}%
\right.}
\xrightarrow{\Delta'_1}
{\left.%
\begin{array}{c}
  R_\partial \{-2,0\} \\
  \oplus \\
  R_\partial\{-2,-2N\}
\end{array}%
\right.} \\
\nonumber & \cong & {\left(%
\begin{array}{cc}
  ah_N(y_1,x_2) & y_1-x_2 \\
  0 & a
\end{array}%
\right)_{R_\partial}}\{-2,0\} \\
\nonumber & = & \fC_N(T)\otimes_{\Q[a]} (0,a)_{\Q[a]}\{-2,0\},
\end{eqnarray}
where 
\begin{eqnarray*}
\Delta'_0 & = & {\left(%
\begin{array}{cc}
  ah_N(y_1,x_2) & -a \\
  0 & y_1-x_2
\end{array}%
\right)}, \\
\Delta'_1 & = & {\left(%
\begin{array}{cc}
  y_1-x_2 & a \\
  0 & ah_N(y_1,x_2)
\end{array}%
\right)}.
\end{eqnarray*}

Note that, under isomorphisms \eqref{eq-RI-O-N-1} and \eqref{eq-RI-Theta-0}, the morphism $J:\Omega_{N-1} \rightarrow \tilde{\Theta}_{0}\{0,-2\}$ is identified with the morphism $\jmath:\fC_N(T)\{-2,0\} \rightarrow \fC_N(T)\otimes_{\Q[a]} (0,a)_{\Q[a]}\{-2,0\}$ defined in Definition \ref{def-RI-jmath}. Thus, by \eqref{eq-RI-T-} and \eqref{eq-RI-chain1}, we have
\[
\fC_N(T_-) \simeq 0 \rightarrow \underbrace{\fC_N(T)\{-2,0\}}_{0} \xrightarrow{\jmath} \underbrace{\fC_N(T)\otimes_{\Q[a]} (0,a)_{\Q[a]}\{-2,0\}}_{1} \rightarrow 0.
\]
\end{proof}

Theorem \ref{thm-neg-stabilization} and Corollary \ref{cor-unknot-neg-stab} follow easily from Propositions \ref{prop-RI-} and \ref{prop-contraction-weak}.

\begin{proof}[Proof of Theorem \ref{thm-neg-stabilization}]
By Proposition \ref{prop-RI-}, 
\[
\fC_N(L_-) \simeq 0 \rightarrow \underbrace{\fC_N(L)\{-2,0\}}_{0} \xrightarrow{\jmath} \underbrace{\fC_N(L)\otimes_{\Q[a]}(0,a)_{\Q[a]}\{-2,0\}}_{1} \rightarrow 0. 
\]
So the chain complex $(H(\fC_N(L_-),d_{mf}),d_\chi)$ is isomorphic to the total chain complex of
\[
0 \rightarrow \underbrace{H(\fC_N(L),d_{mf})\{-2,0\}}_{0} \xrightarrow{\jmath} \underbrace{H(\fC_N(L)\otimes_{\Q[a]}(0,a)_{\Q[a]},d_{mf})\{-2,0\}}_{1} \rightarrow 0.
\] 
By Part (2) of Proposition \ref{prop-contraction-weak}, there is a quasi-isomorphism 
\[
\alpha:(\fC_N(L)\otimes_{\Q[a]}(0,a)_{\Q[a]},d_{mf}) \rightarrow (\fC_N(L)/a\fC_N(L),d_{mf})
\]
that preserves the $\zed_2\oplus\zed^{\oplus 3}$-grading. From the definitions of $\jmath$, $\alpha$ and $\pi_0$, one can check that $\pi_0 = \alpha \circ\jmath$. Thus, the chain complex $(H(\fC_N(L_-),d_{mf}),d_\chi)$ is isomorphic to the total chain complex of 
\[
0 \rightarrow \underbrace{H(\fC_N(L),d_{mf})\{-2,0\}}_{0} \xrightarrow{\pi_0} \underbrace{H(\fC_N(L)/a\fC_N(L),d_{mf})\{-2,0\}}_{1} \rightarrow 0.
\] 
The above total chain complex is of course the mapping cone of $\pi_0$. The long exact sequence in Theorem \ref{thm-neg-stabilization} is the long exact sequence of this mapping cone.
\end{proof}

\begin{proof}[Proof of Corollary \ref{cor-unknot-neg-stab}]
We put a single marking $x$ on $U$. Then  
\[
\fC_N(U) = ((N+1)ax^N,0)_{\Q[a,x]} \cong (ax^N,0)_{\Q[a,x]} = \Q[a,x] \xrightarrow{ax^N} \Q[a,x]\{-1, -N+1\} \xrightarrow{0} \Q[a,x].
\]
From this, one can see that
\[
\fH_N^{1,0,\star,\star}(U) \cong \Q[a,x]/(ax^N)\{-1, -N+1\} \cong  (\bigoplus_{l=0}^{N-1} \Q[a]\{-1,-N+1+2l\}) \oplus (\bigoplus_{m=0}^{\infty} \Q[a]/(a) \{-1,N+1+2m\})
\]
and $\fH_N^{\ve,i,\star,\star}(U) \cong 0$ for any other pair of $(\ve,i) \in \zed_2 \oplus \zed$. This completes the computation of $\fH_N(U)$.

Next, we compute $\mathscr{H}_N(U) := H(H(\fC_N(U)/a\fC_N(U),d_{mf}),d_\chi)$. Since $U$ has no crossings, $d_\chi$ is $0$ for $U$. So
\[
\mathscr{H}_N(U) := H(H(\fC_N(U)/a\fC_N(U),d_{mf})) = H(\Q[a,x]/(a) \xrightarrow{0} \Q[a,x]/(a)\{-1, -N+1\} \xrightarrow{0} \Q[a,x]/(a))
\]
and, therefore
\[
\mathscr{H}^{\ve,i,\star,\star}_N(U) = \begin{cases}
\Q[a,x]/(a) & \text{if } \ve =0 \text{ and } i=0, \\
\Q[a,x]/(a)\{-1, -N+1\} & \text{if } \ve =1 \text{ and } i=0, \\
0 & \text{otherwise.}
\end{cases}
\]

For $\ve=1$, homomorphism $\pi_0: \fH_N^{1,\star,\star,\star}(U) \rightarrow \mathscr{H}^{1,\star,\star,\star}_N(U)$ is the standard quotient map $\Q[a,x]/(ax^N) \xrightarrow{\pi_0} \Q[a,x]/(ax^N, a) = \Q[a,x]/(a)$. So the long exact sequence in Theorem \ref{thm-neg-stabilization} reduces to
\[
0 \rightarrow \fH_N^{1,0,\star,\star}(U_-) \rightarrow \fH_N^{1,0,\star,\star}(U)\{-2,0\} \xrightarrow{\pi_0} \mathscr{H}^{1,0,\star,\star}_N(U)\{-2,0\} \rightarrow 0.
\]
Thus,
\[
\fH_N^{1,i,\star,\star}(U_-) \cong \begin{cases}
\ker \pi_0 = a\cdot\Q[a,x]/(ax^N) \{-3, -N+1\}\cong \bigoplus_{l=0}^{N-1} \Q[a]\{-1,-N+1+2l\} & \text{if } i=0,\\
0 & \text{if } i \neq 0.
\end{cases}
\]
For $\ve=0$, homomorphism $\pi_0: \fH_N^{0,\star,\star,\star}(U) \rightarrow \mathscr{H}^{0,\star,\star,\star}_N(U)$ is the zero map $0 \rightarrow \Q[a,x]/(a)$. So the long exact sequence in Theorem \ref{thm-neg-stabilization} reduces to
\[
0 \rightarrow \mathscr{H}^{0,0,\star,\star}_N(U)\{-2,0\} \rightarrow \fH_N^{0,1,\star,\star}(U_-) \rightarrow 0.
\]
Thus,
\[
\fH_N^{0,i,\star,\star}(U_-) \cong \begin{cases}
\Q[a,x]/(a)\{-2,0\} \cong \bigoplus_{m=0}^{\infty} \Q[a]/(a) \{-2,2m\} & \text{if } i=1,\\
0 & \text{if } i \neq 1.
\end{cases}
\]
This completes the computation of $\fH_N(U_-)$.
\end{proof}

\section{Reidemeister Move II}\label{sec-R2}

In this section, we prove the invariance of $\fH_N$ under braid-like Reidemeister II moves. The main result of this section is Proposition \ref{prop-RIIinv}. Our proof here is a straightforward adaptation of the proofs in \cite{KR1,KR2}.

\begin{figure}[ht]
$
\xymatrix{
\input{RII-0} &&& \input{RII-1} 
}
$
\caption{}\label{RII-inv-fig}

\end{figure}

\begin{proposition}\label{prop-RIIinv}
Let $T_0$ and $T_1$ be the tangles in Figure \ref{RII-inv-fig}. Then, for $N \geq 0$, $\fC_N(T_0) \simeq \fC_N(T_1)$ as chain complexes over the category $\hmf_{\Q[a,x_1,x_2,x_3,x_4],a(x_1^{N+1}+x_2^{N+1}-x_3^{N+1}-x_4^{N+1})}$.
\end{proposition}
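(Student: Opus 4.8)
The plan is to follow the classical Khovanov--Rozansky argument for the braid-like Reidemeister~II move, adapted to the potential $ax^{N+1}$. First I would fix a marking on $T_0$ and $T_1$ as in Figure~\ref{RII-inv-fig} and write out $\fC_N(T_1)$ as the total complex of a $2\times 2$ square of MOY-graph matrix factorizations obtained by resolving each of the two crossings in $T_1$. Writing $0$- and $1$-resolutions for the positive crossing (top) and the negative crossing (bottom), the four vertices of the square are: the graph $\Gamma_{00}$ with two vertical strands (which equals $T_0$ up to marking), the graph $\Gamma_{01}$ with a single ``wide edge'' of color $2$ in the middle, the graph $\Gamma_{10}$ with a color-$2$ circle pattern, and $\Gamma_{11}$ with a color-$2$ edge in the middle together with a color-$2$ loop-like piece. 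The differentials of the square are the $\chi^0$ and $\chi^1$ morphisms of Lemma~\ref{lemma-def-chi} (with the appropriate grading shifts coming from \eqref{eq-def-chain-crossing+} and \eqref{eq-def-chain-crossing-}).

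Next I would simplify two of the corner objects. Using Lemma~\ref{lemma-decomp-II} (edge splitting/merging, Proposition~30 of \cite{KR1}) I can identify $\fC_N(\Gamma_{01})$ with $\fC_N(T_0)\{0,-1\}\oplus\fC_N(T_0)\{0,1\}$, and using Proposition~\ref{prop-b-contraction} together with the circle-removal computation I can identify $\fC_N(\Gamma_{10})$ (the resolution that produces a closed color-$2$ arc that can be contracted) with a single copy of $\fC_N(T_0)$ up to a grading shift; the remaining corner $\fC_N(\Gamma_{11})$ similarly reduces, via Lemma~\ref{lemma-decomp-II} and contraction, to a sum of shifted copies of $\fC_N(T_0)$. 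Then I would track, via the uniqueness statements in Lemmas~\ref{lemma-def-chi} and~\ref{lemma-phi}, the precise form of the composite morphisms between these simplified objects: each is, up to homotopy and a nonzero scalar, either $\pm\id$, multiplication by $(x_5-x_6)$, or a $\phi/\bar\phi$-type map. The key point is that in the square, one of the four arrows becomes an isomorphism after this simplification (the component $\fC_N(T_0)\{0,1\}\hookrightarrow\fC_N(\Gamma_{01})$ maps isomorphically onto a summand of $\fC_N(\Gamma_{10})$ or $\fC_N(\Gamma_{11})$, depending on the orientation conventions).

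Then I would apply Gaussian Elimination (Lemma~\ref{lemma-gaussian-elimination}) repeatedly to cancel the isomorphism components. After the first cancellation one checks the correction term vanishes or contributes harmlessly; iterating, the entire square collapses, leaving exactly one copy of $\fC_N(T_0)$ with zero differential and the correct total grading shift. A bookkeeping check on the $\zed_2\oplus\zed^{\oplus 2}$-grading and the homological grading confirms that all shifts cancel, so that $\fC_N(T_1)\simeq\fC_N(T_0)$ as chain complexes over $\hmf_{\Q[a,x_1,x_2,x_3,x_4],\,a(x_1^{N+1}+x_2^{N+1}-x_3^{N+1}-x_4^{N+1})}$. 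Since all the matrix factorizations involved here are finitely generated (they come from MOY graphs with finitely many marked points), working in $\hmf$ rather than $\hmf^{\mathrm{all}}$ causes no trouble, which is why this case, unlike Reidemeister~I, goes through exactly as in \cite{KR1,KR2}.

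The main obstacle I anticipate is not conceptual but organizational: correctly matching up the grading shifts between the $\{j,k\}$-conventions in \eqref{eq-def-chain-crossing+}--\eqref{eq-def-chain-crossing-}, the shifts produced by Lemma~\ref{lemma-decomp-II}, and the $\langle 1\rangle$-shifts from contraction, so that the surviving summand is literally $\fC_N(T_0)$ and not some shift of it. A secondary subtlety is verifying that the correction terms generated by Gaussian Elimination are themselves null-homotopic or can be absorbed; here Lemma~\ref{entries-null-homotopic} (entries of a Koszul factorization act nullhomotopically) and the relations $\bar\phi\circ\phi\simeq 0$, $\chi^1\circ\chi^0\simeq(x_2-x_1)\id$ from Lemmas~\ref{lemma-phi} and~\ref{lemma-def-chi} are exactly the tools needed. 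Since the argument is a direct adaptation of the established proof, I would keep the exposition brief and refer to \cite{KR1,KR2} for the details that are unchanged by the presence of the variable $a$.
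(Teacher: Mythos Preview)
Your overall plan---resolve both crossings, simplify a corner via Lemma~\ref{lemma-decomp-II}, then cancel with Gaussian Elimination---is the right shape, but you have misidentified the resolutions and mis-applied the key decomposition, so the argument as written would not go through.

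First, because $T_1$ is \emph{braid-like} (both strands oriented the same way), none of the four resolutions contains a closed component. There is no ``color-$2$ circle pattern'' and no ``loop-like piece'' to contract. The four resolutions are: two parallel $1$-colored arcs ($\Gamma_{0,0}=T_0$); a single wide-edge graph from resolving only the lower crossing ($\Gamma_{0,1}$); a single wide-edge graph from resolving only the upper crossing ($\Gamma_{-1,0}$); and the ``bubble'' with two wide edges ($\Gamma_{-1,1}$). Your proposed use of Proposition~\ref{prop-b-contraction} for ``circle removal'' has nothing to act on.

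Second, and more importantly, Lemma~\ref{lemma-decomp-II} does \emph{not} say that a wide-edge graph decomposes into shifted copies of $T_0$. It says that the bubble graph decomposes into two shifted copies of the \emph{single wide-edge} graph. In the paper's notation the correct statement is
\[
\fC_N(\Gamma_{-1,1}) \;\simeq\; \fC_N(\Gamma_{0,1})\{0,1\}\;\oplus\;\fC_N(\Gamma_{-1,0})\{0,-1\},
\]
with the inclusion and projection on the respective summands given by the edge-splitting and edge-merging morphisms $\phi,\bar\phi$ of Lemma~\ref{lemma-phi}. Consequently the cancellations are not between copies of $T_0$ but between the wide-edge terms: the summand $\fC_N(\Gamma_{0,1})\{0,1\}$ sitting inside $\fC_N(\Gamma_{-1,1})$ at homological degree $0$ cancels with $\fC_N(\Gamma_{0,1})\{0,1\}$ at degree $-1$, and the summand $\fC_N(\Gamma_{-1,0})\{0,-1\}$ cancels with $\fC_N(\Gamma_{-1,0})\{0,-1\}$ at degree $1$. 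What survives is exactly $\fC_N(\Gamma_{0,0})=\fC_N(T_0)$. The verifications that the relevant components of the differential are homotopy equivalences come from computing $\bar\phi\circ\chi^0_u$ and $\chi^1_l\circ\phi$: each is a homogeneous endomorphism of a wide-edge matrix factorization of degree $(0,0,0)$, and one shows it is homotopically nontrivial (via $\bar\phi\circ\mathsf m(x_6-x_1)\circ\phi\approx\id$, using Lemmas~\ref{lemma-phi} and~\ref{lemma-def-chi}), whence by Lemma~\ref{lemma-RII-hmf} it is $\approx\id$. Your anticipation that Lemma~\ref{lemma-RII-hmf}-type one-dimensionality and the relations in Lemmas~\ref{lemma-phi},~\ref{lemma-def-chi} drive the argument is correct; you just need to apply them to the right objects.
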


\begin{figure}[ht]
$
\xymatrix{
\input{RII-res01} && \input{RII-res00} \\
\input{RII-res-11} && \input{RII-res-10}
}
$
\caption{}\label{RII-inv-res-fig}

\end{figure}

In the rest of this section, we let $R_\partial = \Q[a,x_1,x_2,x_3,x_4]$ and $w= a(x_1^{N+1}+x_2^{N+1}-x_3^{N+1}-x_4^{N+1})$. The resolutions of $T_1$ are listed in Figure \ref{RII-inv-res-fig}. To prove Proposition \ref{prop-RIIinv}, we need the following lemma.

\begin{lemma}\label{lemma-RII-hmf}
Let $\Gamma_{0,1}$ and $\Gamma_{-1,0}$ be the MOY graphs in Figure \ref{RII-inv-res-fig}. In the category $\hmf_{R_\partial,w}$, we have that
\[
\Hom_\hmf(\fC_N(\Gamma_{0,1}),\fC_N(\Gamma_{0,1})) \cong \Hom_\hmf(\fC_N(\Gamma_{-1,0}),\fC_N(\Gamma_{-1,0})) \cong \Q.
\]
Both of these spaces are spanned by the identity morphisms.
\end{lemma}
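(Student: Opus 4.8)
The two MOY graphs $\Gamma_{0,1}$ and $\Gamma_{-1,0}$ are, up to relabeling of boundary alphabets and edge orientations, the same as the graph $\Gamma_1$ in Figure~\ref{def-chi-fig} (a ``wide edge'' of color $2$ inserted into two parallel color-$1$ strands, with the color-$1$ strands entering and leaving). The plan is therefore to reduce the computation of each $\Hom_\hmf$ space to the computation already carried out in the proof of Lemma~\ref{lemma-def-chi}, where it was shown that $\fC_N(\Gamma_1)$ is homotopy equivalent to a Koszul matrix factorization $M$ of explicit form over $R_\partial$, and that $\Hom_{R_\partial}(M,M)$ splits off, via Lemma~\ref{lemma-dual-Koszul} and Proposition~\ref{prop-b-contraction}, as a Koszul factorization whose underlying module is a tensor product $(R\{1,N-1\}\oplus R\langle 1\rangle)\otimes_R(R\{1,N-3\}\oplus R\langle 1\rangle)$ (times grading shifts) over a smaller polynomial ring. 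First I would use Proposition~\ref{prop-b-contraction} to contract the internal $2$-colored edge in $\Gamma_{0,1}$ (resp. $\Gamma_{-1,0}$), writing $\fC_N(\Gamma_{0,1})$ as a two-row Koszul matrix factorization over $R_\partial=\Q[a,x_1,x_2,x_3,x_4]$; then Lemmas~\ref{lemma-twist}, \ref{lemma-row-op} and \ref{lemma-freedom} to bring it into the normal form used in Lemma~\ref{lemma-def-chi}.

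Next, having identified $\fC_N(\Gamma_{0,1})$ (and $\fC_N(\Gamma_{-1,0})$) with the model matrix factorization $M$ from that proof, I would invoke Lemma~\ref{lemma-Hom-space}(3) to identify
\[
\Hom_\hmf(\fC_N(\Gamma_{0,1}),\fC_N(\Gamma_{0,1}))\cong H^{0,0,0}(\Hom_{R_\partial}(M,M)),
\]
then compute $\Hom_{R_\partial}(M,M)$ as a Koszul matrix factorization using Lemma~\ref{lemma-dual-Koszul}, contract away the auxiliary variables via Proposition~\ref{prop-b-contraction} (using Lemma~\ref{lemma-power-derive} to evaluate the resulting $\ast$-entries as complete symmetric polynomials $ah_N$, $ah_{N-1}$), and read off that, as a $\zed_2\oplus\zed^{\oplus2}$-graded $R$-module, it is a tensor product of two rank-$2$ free modules with the grading shifts already tabulated in the proof of Lemma~\ref{lemma-def-chi}. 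A direct inspection of that module shows its $(0,0,0)$-homogeneous component is one-dimensional over $\Q$, giving $\dim_\Q H^{0,0,0}(\Hom_{R_\partial}(M,M))\le 1$. For the lower bound, I would note $\gdim_{R_\partial}\fC_N(\Gamma_{0,1})\ne 0$ — this holds because $\Gamma_{0,1}$ is a genuine MOY graph with the trivial resolution and so its homology is nonzero — hence by Corollary~\ref{cor-homology-detects-homotopy} $\fC_N(\Gamma_{0,1})\not\simeq 0$, so $\id_{\fC_N(\Gamma_{0,1})}$ is homotopically nontrivial and $\dim_\Q\Hom_\hmf\ge 1$. Combining the two bounds gives $\Hom_\hmf\cong\Q$, spanned by the identity. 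The argument for $\Gamma_{-1,0}$ is identical after the symmetric reductions.

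\textbf{Main obstacle.} The routine-but-delicate part is the bookkeeping of the $\zed^{\oplus2}$-grading shifts through the chain of reductions (the contraction of the wide edge, the dualization in Lemma~\ref{lemma-dual-Koszul}, and the second round of $b$-contractions), so that the claim ``the $(0,0,0)$-component is exactly one-dimensional'' can be verified; this is exactly the point where one must be careful that $N\ge 0$ (so $h_{N-1}$, $h_{N-2}$ make sense) and that the shifts $\{1,N-1\}$, $\{1,N-3\}$ etc. are correct. I expect no conceptual difficulty here since the computation is formally the same as the one in Lemma~\ref{lemma-def-chi}; the only thing to double-check is that $\Gamma_{0,1}$ and $\Gamma_{-1,0}$ really do reduce to the same model — i.e. that the extra parallel color-$1$ edges and the particular orientations in Figure~\ref{RII-inv-res-fig} do not change the isomorphism type of the contracted Koszul factorization — which follows from Lemma~\ref{lemma-freedom} and the regularity of the relevant sequences of differences of elementary symmetric polynomials, as in Remark~\ref{MOY-freedom}.
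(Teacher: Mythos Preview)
Your proposal is correct and follows essentially the same approach as the paper: contract $\fC_N(\Gamma_{0,1})$ to a two-row Koszul factorization via Proposition~\ref{prop-b-contraction}, get the lower bound from $\gdim_{R_\partial}\fC_N(\Gamma_{0,1})\neq 0$ and Corollary~\ref{cor-homology-detects-homotopy}, then compute $\Hom_{R_\partial}$ via Lemma~\ref{lemma-dual-Koszul} followed by a second contraction to reach the tensor product $(R\{1,N-1\}\oplus R\langle 1\rangle)\otimes_R(R\{1,N-3\}\oplus R\langle 1\rangle)$ whose $(0,0,0)$-component is one-dimensional. The only streamlining the paper adds is that, rather than repeating the reduction for $\Gamma_{-1,0}$, it observes via Lemma~\ref{lemma-marking-independence} that $\fC_N(\Gamma_{0,1})\simeq\fC_N(\Gamma_{-1,0})$ (the two graphs differ only in the placement of marked points), so the two $\Hom_\hmf$ spaces are isomorphic and only one computation is needed; also, for the second contraction the paper invokes Proposition~\ref{prop-contraction-weak} (passing to the quotient ring $R=R_\partial/(x_1+x_2-x_3-x_4,\,x_1x_2-x_3x_4)$) rather than Proposition~\ref{prop-b-contraction}, which is slightly cleaner here since the right-column entries are not single indeterminates.
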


\begin{proof}
By Lemma \ref{lemma-marking-independence}, $\fC_N(\Gamma_{0,1})\simeq \fC_N(\Gamma_{-1,0})$. So $\Hom_\hmf(\fC_N(\Gamma_{0,1}),\fC_N(\Gamma_{0,1})) \cong \Hom_\hmf(\fC_N(\Gamma_{-1,0}),\fC_N(\Gamma_{-1,0}))$. Thus, we only need to compute $\Hom_\hmf(\fC_N(\Gamma_{0,1}),\fC_N(\Gamma_{0,1}))$. By Proposition \ref{prop-b-contraction}, we have
\[
\fC_N(\Gamma_{0,1}) \simeq \left(%
\begin{array}{cc}
  aU_1 & x_1+x_2-x_3-x_4 \\
  aU_2 & x_1x_2-x_3x_4
\end{array}%
\right)_{R_\partial}\{0,-1\},
\]
where $U_1$ and $U_2$ are given by equation \eqref{eq-def-U-j}. From this, it is easy to check that $\gdim_{R_\partial} \fC_N(\Gamma_{0,1}) \neq 0$. So $\fC_N(\Gamma_{0,1})$ is not homotopic to $0$ and, therefore, $\id_{\fC_N(\Gamma_{0,1})}$ is not homotopic to $0$. Thus, $\dim_\Q \Hom_\hmf(\fC_N(\Gamma_{0,1}),\fC_N(\Gamma_{0,1})) \geq 1$.

By Lemma \ref{lemma-dual-Koszul} and Proposition \ref{prop-contraction-weak}, we have that 
\begin{eqnarray*}
\Hom_{\hmf}(\fC_N(\Gamma_{0,1}),\fC_N(\Gamma_{0,1})) & \cong &  {H^{0,0,0}\left(\left(%
\begin{array}{cc}
  aU_1 & x_1+x_2-x_3-x_4 \\
  aU_2 & x_1x_2-x_3x_4 \\
  aU_1 & -x_1-x_2+x_3+x_4 \\
  aU_2 & -x_1x_2+x_3x_4
\end{array}%
\right)_{R_\partial}\{2,2N-4\}\right)} \\
& \cong & {H^{0,0,0}\left(\left(%
\begin{array}{cc}
  aV_1 & 0 \\
  aV_2 & 0 
\end{array}%
\right)_{R}\{2,2N-4\}\right)}
\end{eqnarray*}
where $R=R_\partial/(x_1+x_2-x_3-x_4, x_1x_2-x_3x_4)$, and $V_1$, $V_2$ are the images of $U_1$, $U_2$ in $R$. As a $\zed_2 \oplus \zed^{\oplus 2}$-graded $R$-module,
\[
\left(%
\begin{array}{cc}
  aV_1 & 0 \\
  aV_2 & 0 
\end{array}%
\right)_{R}\{2,2N-4\}
\cong
(R\{1,N-1\} \oplus R \left\langle 1\right\rangle) \otimes_R (R\{1,N-3\} \oplus R \left\langle 1\right\rangle).
\]
Note that the homogeneous component of the right hand side of $\zed_2 \oplus \zed^{\oplus 2}$-degree $(0,0,0)$ is $1$-dimensional over $\Q$. This implies that $\dim_\Q \Hom_\hmf(\fC_N(\Gamma_{0,1}),\fC_N(\Gamma_{0,1})) \leq 1$. Thus, $\Hom_\hmf(\fC_N(\Gamma_{0,1}),\fC_N(\Gamma_{0,1})) \cong \Q$.
\end{proof}

We are now ready to prove Proposition \ref{prop-RIIinv}.

\begin{proof}[Proof of Proposition \ref{prop-RIIinv}]
It is easy to check that $\fC_N(\Gamma_{0,0})$ and $\fC_N(\Gamma_{0,1})\simeq \fC_N(\Gamma_{-1,0})$ are all homotopically finite over $R_\partial$. So  $\fC_N(T_0)= 0\rightarrow \underbrace{\fC_N(\Gamma_{0,0})}_{0} \rightarrow 0$ is a chain complex over $\hmf_{R_\partial,w}$. By Lemma \ref{lemma-decomp-II}, we have
\begin{equation}\label{eq-RII-decomp}
\fC_N(\Gamma_{-1,1}) \simeq \fC_N(\Gamma_{0,1})\{0,1\} \oplus \fC_N(\Gamma_{-1,0})\{0,-1\}.
\end{equation}
So $\fC_N(\Gamma_{-1,1})$ is also homotopically finite over $R_\partial$. Denote by $\chi^1_u$, $\chi^0_u$ the $\chi$-morphisms associated to the upper $2$-colored edge in $\Gamma_{-1,1}$ and by $\chi^1_l$, $\chi^0_l$ the $\chi$-morphisms associated to the lower $2$-colored edge in $\Gamma_{-1,1}$. Then, 
\[
\fC_N(T_1) = 0 \rightarrow \underbrace{\fC_N(\Gamma_{0,1})\{0,1\}}_{-1} \xrightarrow{\left(%
\begin{array}{c}
  \chi^1_l \\
  \chi^0_u
\end{array}%
\right)} \underbrace{\left.%
\begin{array}{c}
  \fC_N(\Gamma_{0,0}) \\
  \oplus \\
  \fC_N(\Gamma_{-1,1})
\end{array}%
\right.}_{0} \xrightarrow{(\chi^0_u,~-\chi^1_l)}
\underbrace{\fC_N(\Gamma_{-1,0})\{0,-1\}}_{0} \rightarrow 0,
\] 
which is a chain complex over $\hmf_{R_\partial,w}$. 

To prove Proposition \ref{prop-RIIinv}, we need to use the morphisms involved in decomposition \eqref{eq-RII-decomp}. By Lemma \ref{lemma-phi}, 
\begin{itemize}
	\item the projection $\fC_N(\Gamma_{-1,1}) \rightarrow \fC_N(\Gamma_{0,1})\{0,1\}$ in decomposition \eqref{eq-RII-decomp} is the morphism $\bar{\phi}$ induced by edge merging,
	\item the inclusion $\fC_N(\Gamma_{-1,0})\{0,-1\} \rightarrow \fC_N(\Gamma_{-1,1})$ in decomposition \eqref{eq-RII-decomp} is the morphism $\phi$ induced by edge splitting.
\end{itemize}
In addition, we denote: 
\begin{itemize}
	\item by $\fC_N(\Gamma_{0,1})\{0,1\} \xrightarrow{J} \fC_N(\Gamma_{-1,1})$ the corresponding inclusion in decomposition \eqref{eq-RII-decomp},
	\item by $\fC_N(\Gamma_{-1,1}) \xrightarrow{P} \fC_N(\Gamma_{-1,0})\{0,-1\}$ the corresponding projection in decomposition \eqref{eq-RII-decomp}. 
\end{itemize}
Then, as a chain complex over $\hmf_{R_\partial,w}$,
\[
\fC_N(T_1) \cong 0 \rightarrow \underbrace{\fC_N(\Gamma_{0,1})\{0,1\}}_{-1} \xrightarrow{\left(%
\begin{array}{c}
  \chi^1_l \\
  \bar{\phi}\circ\chi^0_u \\
  P \circ \chi^0_u 
\end{array}%
\right)} \underbrace{{\left.%
\begin{array}{c}
  \fC_N(\Gamma_{0,0}) \\
  \oplus \\
  \fC_N(\Gamma_{0,1})\{0,1\} \\
  \oplus \\
  \fC_N(\Gamma_{-1,0})\{0,-1\}
\end{array}%
\right.}}_{0} \xrightarrow{(\chi^0_u,~-\chi^1_l\circ J,~-\chi^1_l\circ \phi)}
\underbrace{\fC_N(\Gamma_{-1,0})\{0,-1\}}_{1} \rightarrow 0.
\]

Consider the morphism $\bar{\phi}\circ\chi^0_u$. First, note that it is a homogeneous morphism of $\zed_2 \oplus \zed^{\oplus 2}$-degree $(0,0,0)$. Moreover, by Lemmas \ref{lemma-phi} and \ref{lemma-def-chi},  we have 
\[
\bar{\phi}\circ\chi^0_u \circ \chi^1_u \circ \phi \simeq \bar{\phi}\circ \mathsf{m}(x_6-x_1) \circ \phi \simeq \bar{\phi}\circ \mathsf{m}(x_6) \circ \phi - \mathsf{m}(x_1)\bar{\phi}\circ \phi \approx \id_{\fC_N(\Gamma_{0,1})},
\]
which implies that $\bar{\phi}\circ\chi^0_u$ is not homotopic to $0$. By Lemma \ref{lemma-RII-hmf}, this means that
\begin{equation}\label{eq-phi-bar-chi0}
\bar{\phi}\circ\chi^0_u \approx \id_{\fC_N(\Gamma_{0,1})}.
\end{equation}
Similarly,
\begin{equation}\label{eq-chi1-phi}
\chi^1_l\circ \phi \approx \id_{\fC_N(\Gamma_{-1,0})}.
\end{equation}
Thus, $\bar{\phi}\circ\chi^0_u$ and $\chi^1_l\circ \phi$ are both homotopy equivalences. Now, applying the Gaussian Elimination Lemma (Lemma \ref{lemma-gaussian-elimination}) to these two homotopy equivalences in the above chain complex, we get that $\fC_N(T_1) \simeq  0\rightarrow \underbrace{\fC_N(\Gamma_{0,0})}_{0} \rightarrow 0 = \fC_N(T_0)$.
\end{proof}

\section{Reidemeister Move III}\label{sec-R3}

In this section, we prove the invariance of $\fH_N$ under braid-like Reidemeister III moves. The main result of this section is Proposition \ref{prop-RIIIinv}. We follow the ideas in \cite{KR1,KR2}. However, we include graphical descriptions of all morphisms used in our proof, which makes our proof somewhat more explicit than those in \cite{KR1,KR2}.

\subsection{A decomposition} Now we establish for $\fC_N$ a version of ``Direct Sum Decomposition IV" from \cite{KR1}. The main result of this subsection is Proposition \ref{prop-RIIIinv-decomp}.

\begin{figure}[ht]
$
\xymatrix{
\input{RIII-decomp} && \input{RIII-decomp0} && \input{RIII-decomp1}
}
$
\caption{}\label{RIIIinv-decomp-fig}

\end{figure}

\begin{proposition}\label{prop-RIIIinv-decomp}
Assume:
\begin{itemize}
	\item $\Gamma$, $\Gamma_0$, $\Gamma_1$ are the MOY graphs in Figure \ref{RIIIinv-decomp-fig},
	\item $R_\partial = \Q[a,x_3,x_6] \otimes_\Q \Sym(\{x_1,x_2\}) \otimes_\Q \Sym(\{x_4,x_5\})$,
	\item $w=a(x_1^{N+1}+x_2^{N+1}+x_3^{N+1}-x_4^{N+1}-x_5^{N+1}-x_6^{N+1})$.
\end{itemize}
Then, for $N \geq 0$, $\fC_N(\Gamma) \cong \fC_N(\Gamma_0) \oplus \fC_N(\Gamma_1)$ as objects of $\hmf_{R_\partial,w}$.
\end{proposition}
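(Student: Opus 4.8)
The statement is a colored-edge analogue of Khovanov--Rozansky's ``Direct Sum Decomposition IV'', so the plan is to reduce $\fC_N(\Gamma)$ to a Koszul matrix factorization over $R_\partial$ by contracting all the interior indeterminates $x_7, x_8, x_9$, and then exhibit the splitting by a change of basis in one of the Koszul rows. First I would write $\fC_N(\Gamma)$ as the tensor product over common endpoints of the matrix factorizations attached to the four trivalent vertices of $\Gamma$; this gives a Koszul matrix factorization over the big ring $\Q[a,x_3,x_6,x_7,x_8,x_9]\otimes_\Q\Sym(\{x_1,x_2\})\otimes_\Q\Sym(\{x_4,x_5\})$ whose right-column entries record the relations $x_8 + x_9 = x_1+x_2+x_3$ (etc.), each interior variable appearing linearly in at least one entry. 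Then I apply Proposition~\ref{prop-b-contraction} (strong version) three times, once for each of $x_8$, $x_9$, $x_7$ — after a suitable application of Lemma~\ref{lemma-twist} or Lemma~\ref{lemma-row-op} to arrange that the relevant entry is literally the indeterminate being eliminated — to obtain a finitely generated Koszul matrix factorization $M$ over $R_\partial$ of the potential $w$, with (roughly) three Koszul rows corresponding to the three elementary symmetric functions of the top ``$3$''-colored edge matching those of the bottom.

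\textbf{The splitting step.} Having brought $\fC_N(\Gamma)$ to a manageable Koszul form over $R_\partial$, the key step is to locate one Koszul row whose right-column entry factors, after a twist, as a product $(x_3 - \lambda)\cdot(\text{something})$ or more precisely splits the module into a piece matching $\fC_N(\Gamma_0)$ and a piece matching $\fC_N(\Gamma_1)$. Concretely: $\fC_N(\Gamma_0)$ is the matrix factorization of the ``wide edge $2$ next to strand $3$'' picture, i.e. a Koszul factorization of $w$ over $R_\partial$ with rows governed by the relations among $\{x_1,x_2\},\{x_4,x_5\},x_3,x_6$; and $\fC_N(\Gamma_1)$ is the one for the ``$3$-colored edge'' picture. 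The decomposition $\fC_N(\Gamma)\cong \fC_N(\Gamma_0)\oplus \fC_N(\Gamma_1)$ should come from the elementary fact $\Sym(\{y_1,y_2\}|\{y_3\}) \cong \Sym(\{y_1,y_2,y_3\})\otimes(\text{rank-}2\text{ module})$, which at the level of Koszul matrix factorizations produces exactly a two-term direct sum once one applies Lemma~\ref{lemma-row-op} / Lemma~\ref{lemma-twist} to diagonalize the offending row — this is entirely parallel to the proof of Lemma~\ref{lemma-decomp-II}, but with $1$- and $2$-colored edges replaced by $2$- and $3$-colored ones. I would set up the relevant polynomial identity explicitly (expressing $h_k$ of the three top variables in terms of the symmetric functions of $\{x_1,x_2\}$ and of $x_3$), use it via Lemma~\ref{lemma-freedom} to rewrite the left-column entries, and then read off the two summands.

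\textbf{Main obstacle and how I would handle it.} The genuine difficulty is bookkeeping: matching grading shifts and keeping track of which Koszul rows survive the three contractions, since each application of Proposition~\ref{prop-b-contraction} changes the ambient ring and must be preceded by a row operation to make an entry equal to the indeterminate being killed. I expect the cleanest route is to not compute $M$ in full generality but instead: (i) contract $x_8$ and $x_9$ first, which merges the two ``$1$''-colored interior edges into the top vertex structure, leaving a Koszul factorization in $x_7$ over $R_\partial[x_7]$; (ii) observe that $x_7$ appears in a row of the form $(\ast, x_7 + x_4 + x_5 - x_1 - x_2 - x_3)$ up to a twist, contract it as well; (iii) in the resulting Koszul factorization over $R_\partial$, isolate the row carrying the difference between ``factoring through a $3$-edge'' and ``factoring through the nested $2$-edge'', and split it exactly as in Lemma~\ref{lemma-decomp-II}. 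Because $\fC_N(\Gamma_0)$ and $\fC_N(\Gamma_1)$ are both homotopically finite over $R_\partial$ (they are finitely generated Koszul factorizations), everything takes place in $\hmf_{R_\partial,w}$, and it suffices to produce the isomorphism of underlying matrix factorizations; homotopy-finiteness is then automatic. Alternatively, if the direct change-of-basis argument gets unwieldy, one can instead verify the decomposition by computing $\gdim_{R_\partial}$ of both sides (using Proposition~\ref{prop-contractible-essential-decomp} and Corollary~\ref{cor-homology-detects-homotopy}) and checking that the natural inclusion/projection morphisms, built from edge-splitting/merging morphisms of Lemma~\ref{lemma-phi} and $\chi$-morphisms of Lemma~\ref{lemma-def-chi}, are homotopically nontrivial — a strategy that mirrors the proof of Proposition~\ref{prop-RIIinv}.
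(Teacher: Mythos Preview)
Your primary approach has a genuine obstruction at step (ii). After contracting $x_8$ via the row $x_1+x_2-x_7-x_8$ and $x_9$ via the row $x_7+x_9-x_4-x_5$, the surviving right-column entries over $R_\partial[x_7]$ are
\[
x_7^2-(x_1+x_2)x_7+x_1x_2,\quad -x_7^2+(x_4+x_5)x_7-x_4x_5,\quad x_1+x_2+x_3-x_4-x_5-x_6,\quad -(x_3-x_6)x_7+\cdots,
\]
and no row operation or twist produces an entry equal to $x_7$: the only linear occurrence has coefficient $x_3-x_6$, not a unit, and the remaining occurrences are quadratic. So Proposition~\ref{prop-b-contraction} does not apply, and $\fC_N(\Gamma)$ is genuinely not homotopic to a Koszul matrix factorization over $R_\partial$ of rank $2^3$; indeed the paper computes $\dim_\Q H_{R_\partial}(\fC_N(\Gamma))=16$, twice what a single $3$-row Koszul would give. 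The analogy with Lemma~\ref{lemma-decomp-II} is also misleading: there the splitting came from a rank-$2$ extension of base rings, whereas here the two summands $\fC_N(\Gamma_0)$ and $\fC_N(\Gamma_1)$ are non-isomorphic and cannot arise from such a basis change.

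Your alternative approach is the one the paper actually takes, but it is missing pieces you would need to make it work. The morphisms $\fC_N(\Gamma_0)\leftrightarrows\fC_N(\Gamma)$ are indeed built from $\phi,\bar\phi$ of Lemma~\ref{lemma-phi} composed with $\chi^0,\chi^1$ of Lemma~\ref{lemma-def-chi}, passing through an intermediate graph with a bubble. But for $\fC_N(\Gamma_1)\leftrightarrows\fC_N(\Gamma)$ the ordinary $\chi$-morphisms are not enough: the paper uses the $\tilde\chi$-morphisms of Lemma~\ref{lemma-def-tilde-chi} together with the edge-sliding homotopy equivalences of Lemmas~\ref{lemma-edge-sliding}--\ref{lemma-edge-sliding-unique}, again through an intermediate graph. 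One then checks $\bar f\circ f\approx\id$, $\bar g\circ g\approx\id$, while the cross-compositions vanish by the Hom-space computation $\Hom_{\hmf}(\fC_N(\Gamma_0),\fC_N(\Gamma_1))\cong\Hom_{\hmf}(\fC_N(\Gamma_1),\fC_N(\Gamma_0))\cong 0$. Finally --- and this is a step you do not mention --- one invokes the full additivity of $\hmf_{R_\partial,w}$ (Proposition~\ref{fully-additive-hmf} and Lemma~\ref{lemma-fully-additive-split}) to conclude $\fC_N(\Gamma)\cong\fC_N(\Gamma_0)\oplus\fC_N(\Gamma_1)\oplus M$, and the graded-dimension equality forces $M\simeq 0$ via Corollary~\ref{cor-homology-detects-homotopy}.
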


\begin{lemma}\label{lemma-RIIIinv-decomp-obj}
$\fC_N(\Gamma)$, $\fC_N(\Gamma_0)$ and $\fC_N(\Gamma_1)$ are objects of $\hmf_{R_\partial,w}$. Moreover, 
\[
\gdim_{R_\partial} \fC_N(\Gamma) = \gdim_{R_\partial} \fC_N(\Gamma_0) + \gdim_{R_\partial} \fC_N(\Gamma_1).
\]
\end{lemma}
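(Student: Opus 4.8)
The plan is to prove Lemma \ref{lemma-RIIIinv-decomp-obj} in two parts: first establish that each of $\fC_N(\Gamma)$, $\fC_N(\Gamma_0)$, $\fC_N(\Gamma_1)$ lies in $\hmf_{R_\partial,w}$, and then verify the additivity of graded dimensions by a direct computation via MOY-type decompositions. For the first part, recall that each of these matrix factorizations is by definition a tensor product (over common marked points) of Koszul matrix factorizations $\fC_N(v)$ attached to trivalent vertices, and each $\fC_N(v)$ is finitely generated over its local ring; tensoring finitely many such over polynomial subrings again yields a finitely generated $\zed_2\oplus\zed^{\oplus 2}$-graded matrix factorization over $R_\partial$ whose $\zed^{\oplus2}$-grading is bounded below. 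Hence all three are homotopically finite, i.e. objects of $\hmf_{R_\partial,w}$. By Corollary \ref{cor-homology-detects-homotopy}, this is equivalent to $\dim_\Q H_{R_\partial}(-)$ being finite, which will also be confirmed by the explicit computation below.

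For the dimension count, I would compute $\gdim_{R_\partial}$ of each of the three graphs by repeatedly applying the local MOY decomposition results already available. The key tool is Lemma \ref{lemma-decomp-II} (the ``$\Gamma\simeq\Gamma_1\{0,-1\}\oplus\Gamma_1\{0,1\}$'' decomposition of a digon-type local picture) together with Lemma \ref{lemma-edge-sliding} and Lemma \ref{lemma-marking-independence} to slide and normalize edges, and Proposition \ref{prop-b-contraction} to contract auxiliary marked points. Concretely: in $\Gamma$ the middle portion is a square built from a $1$-labeled and $2$-labeled edge; applying the edge-splitting/merging decomposition (Lemma \ref{lemma-decomp-II}, possibly after an edge slide via Lemma \ref{lemma-edge-sliding}) expresses $\fC_N(\Gamma)$ up to homotopy as a direct sum of two pieces, one of which reduces (after contracting the now-bivalent internal vertices using Proposition \ref{prop-b-contraction} and Lemma \ref{lemma-marking-independence}) to $\fC_N(\Gamma_0)$ and the other to $\fC_N(\Gamma_1)$. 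Taking graded dimensions and using that $\gdim_{R_\partial}$ is additive over direct sums gives the claimed identity. Since $\gdim$ determines and is determined by the bigraded Poincaré series of $H_{R_\partial}(-)$, this simultaneously reconfirms finiteness.

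I expect the main obstacle to be bookkeeping: carefully tracking the grading shifts $\{j,k\}$ and $\langle\cdot\rangle$ through the chain of edge slides, marked-point contractions, and the digon decomposition, so that the two summands emerging from $\fC_N(\Gamma)$ match $\fC_N(\Gamma_0)$ and $\fC_N(\Gamma_1)$ \emph{on the nose} (not merely up to an overall shift). One should pin down the local pictures precisely — in particular identify which internal edge of $\Gamma$ plays the role of the ``$x_5$-edge'' in Lemma \ref{lemma-phi}/Lemma \ref{lemma-decomp-II} — and confirm that the color-$3$ edge in $\Gamma_1$ arises correctly from merging the color-$1$ and color-$2$ edges. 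A secondary subtlety is that the decomposition lemmas are stated over particular small boundary rings, so one must invoke the fact (used already in the proof of Lemma \ref{lemma-decomp-II}) that passing to a larger boundary ring that is free over the smaller one preserves the decomposition, in order to land in $\hmf_{R_\partial,w}$ for the stated $R_\partial$. Once the normalization is fixed, the additivity of $\gdim_{R_\partial}$ is immediate from the direct-sum decomposition and Definition \ref{def-homology-mf}.
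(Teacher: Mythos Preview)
Your proposal has two genuine gaps.

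First, the finiteness argument for $\fC_N(\Gamma)$ is incorrect as stated. The tensor product $\bigotimes_v \fC_N(v)$ is finitely generated over the ring containing \emph{all} marked-point variables, including the internal ones $x_7,x_8,x_9$. Viewed over $R_\partial$, however, $\fC_N(\Gamma)$ is a free module over $R_\partial[x_7,x_8,x_9]$ and hence has infinite rank over $R_\partial$. None of the right-column entries in the six-row Koszul form is a single internal variable, so Proposition~\ref{prop-b-contraction} does not directly eliminate them. The paper handles this correctly: it does not claim finite generation, but instead computes $H_{R_\partial}(\fC_N(\Gamma))$ explicitly (by reducing modulo the maximal ideal of $R_\partial$ and applying Proposition~\ref{prop-contraction-weak}) and finds it has $\Q$-dimension $16$, whence homotopic finiteness follows from Corollary~\ref{cor-homology-detects-homotopy}.

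Second, and more seriously, your plan for the graded-dimension identity is circular. The direct-sum splitting $\fC_N(\Gamma)\simeq \fC_N(\Gamma_0)\oplus\fC_N(\Gamma_1)$ you propose to assemble from Lemma~\ref{lemma-decomp-II} and edge slides is precisely Proposition~\ref{prop-RIIIinv-decomp} (the categorified ``Direct Sum Decomposition IV'' of \cite{KR1}), and Lemma~\ref{lemma-RIIIinv-decomp-obj} is a \emph{prerequisite} for its proof: the paper constructs morphisms exhibiting $\fC_N(\Gamma_0)\oplus\fC_N(\Gamma_1)$ as a summand of $\fC_N(\Gamma)$, and then uses the graded-dimension equality of the present lemma to force the complementary summand to vanish. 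Moreover, Lemma~\ref{lemma-decomp-II} concerns a digon on a $2$-colored edge, which $\Gamma$ does not contain; the square in $\Gamma$ has mixed colors $1,2,1,2$ and its resolution into $\Gamma_0$ (two parallel strands) and $\Gamma_1$ (a color-$3$ edge) is not reducible to digon removal plus edge slides. The paper instead proceeds by direct computation: it writes down explicit Koszul models for all three matrix factorizations, reduces modulo the maximal ideal, and reads off the three graded dimensions \eqref{eq-RIII-gdim-Gamma0}, \eqref{eq-RIII-gdim-Gamma1}, \eqref{eq-RIII-gdim-Gamma}, then checks the polynomial identity by hand.
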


\begin{proof}
It is easy to see that the $\zed^{\oplus 2}$-gradings of $\fC_N(\Gamma)$, $\fC_N(\Gamma_0)$ and $\fC_N(\Gamma_1)$ are bounded below. By Definition \ref{def-MOY-mf} and Proposition \ref{prop-b-contraction}, we know that
\begin{eqnarray}
\label{eq-RIII-mf-Gamma0} \fC_N(\Gamma_0) & = & {\left(%
\begin{array}{cc}
  \ast_{2,2N} & x_1+x_2-x_4-x_5 \\
  \ast_{2,2N-2} & x_1x_2-x_4x_5 \\
  \ast_{2,2N} & x_3-x_6
\end{array}%
\right)_{R_\partial}}, \\
\label{eq-RIII-mf-Gamma1} \fC_N(\Gamma_1) & \simeq & {\left(%
\begin{array}{cc}
  \ast_{2,2N} & x_1+x_2+x_3-x_4-x_5-x_6 \\
  \ast_{2,2N-2} & x_1x_2+x_2x_3+x_3x_1-x_4x_5-x_5x_6-x_6x_4 \\
  \ast_{2,2N-4} & x_1x_2x_3-x_4x_5x_6
\end{array}%
\right)_{R_\partial}\{0,-2\}} ,
\end{eqnarray}
where $\ast_{j,k}$ stands for a homogeneous element of the base ring of $\zed^{\oplus 2}$-degree $(j,k)$. So $\fC_N(\Gamma_0)$ and $\fC_N(\Gamma_1)$ are homotopically finite over $R_\partial$ and, therefore, objects of $\hmf_{R_\partial,w}$. Note that the maximal homogeneous ideal of $R_\partial$ is 
\[
\mathfrak{I}=(a,x_1+x_2,x_1x_2,x_4+x_5,x_4x_5,x_3,x_6).
\]
So
\begin{eqnarray*}
\fC_N(\Gamma_0)/\mathfrak{I} \cdot \fC_N(\Gamma_0) & \cong & {\left(%
\begin{array}{cc}
  0_{2,2N} & 0 \\
  0_{2,2N-2} & 0 \\
  0_{2,2N} & 0
\end{array}%
\right)_{\Q}}, \\
\fC_N(\Gamma_1)/\mathfrak{I} \cdot \fC_N(\Gamma_1) & \simeq & {\left(%
\begin{array}{cc}
  0_{2,2N} & 0 \\
  0_{2,2N-2} & 0 \\
  0_{2,2N-4} & 0
\end{array}%
\right)_{\Q}\{0,-2\}} ,
\end{eqnarray*}
where $0_{j,k}$ is ``the zero element with $\zed^{\oplus 2}$-degree $(j,k)$".(This is only used to keep track of grading shifts.) Note that the differential maps of these chain complexes are $0$. Thus,
\begin{eqnarray}
\label{eq-RIII-gdim-Gamma0} \gdim_{R_\partial} \fC_N(\Gamma_0) & = & (1+\tau\alpha^{-1}\xi^{-N+1})^2(1+\tau\alpha^{-1}\xi^{-N+3}), \\
\label{eq-RIII-gdim-Gamma1} \gdim_{R_\partial} \fC_N(\Gamma_1) & = & \xi^{-2} (1+\tau\alpha^{-1}\xi^{-N+1})(1+\tau\alpha^{-1}\xi^{-N+3})(1+\tau\alpha^{-1}\xi^{-N+5}),
\end{eqnarray}

Next we consider $\fC_N(\Gamma)$. By Corollary \ref{cor-homology-detects-homotopy}, to show that $\fC_N(\Gamma)$ is homotopically finite over $R_\partial$, we only need to show that $\dim_\Q H_{R_\partial}(\fC_N(\Gamma))$ is finite. By Definition \ref{def-MOY-mf} and Proposition \ref{prop-b-contraction},
\[
\fC_N(\Gamma) \simeq \left(%
\begin{array}{cc}
  \ast_{2,2N} & x_1+x_2-x_7-x_8 \\
  \ast_{2,2N-2} & x_1x_2-x_7x_8 \\
  \ast_{2,2N} & x_7+x_9-x_4-x_5 \\
  \ast_{2,2N-2} & x_7x_9-x_4x_5 \\
  \ast_{2,2N} & x_8+x_3-x_9-x_6 \\
  \ast_{2,2N-2} & x_8x_3-x_9x_6  
\end{array}%
\right)_{R}\{0,-2\},
\]
where $R=R_\partial \otimes_\Q \Q[x_7,x_8,x_9]$. So 
\[
\fC_N(\Gamma)/\mathfrak{I}\cdot\fC_N(\Gamma) \simeq \left(%
\begin{array}{cc}
  0_{2,2N} & -x_7-x_8 \\
  0_{2,2N-2} & -x_7x_8 \\
  0_{2,2N} & x_7+x_9 \\
  0_{2,2N-2} & x_7x_9 \\
  0_{2,2N} & x_8-x_9 \\
  0_{2,2N-2} & 0 
\end{array}%
\right)_{\Q[x_7,x_8,x_9]}\{0,-2\},
\]
Applying Proposition \ref{prop-contraction-weak} successively to the matrix factorization on the right hand side, we get that
\begin{eqnarray*}
H_{R_\partial}(\fC_N(\Gamma)) & \cong & H\left(\left(%
\begin{array}{cc}
  0_{2,2N} & 0 \\
  0_{2,2N-2} & 0 \\
  0_{2,2N-2} & 0 
\end{array}%
\right)_{\Q[x_9]/(x_9^2)}\right)\{0,-2\} \\
& \cong &  H\left(\left(%
\begin{array}{cc}
  0_{2,2N} & 0 \\
  0_{2,2N-2} & 0 \\
  0_{2,2N-2} & 0 
\end{array}%
\right)_{\Q}\right)\{0,-2\} \oplus  H\left(\left(%
\begin{array}{cc}
  0_{2,2N} & 0 \\
  0_{2,2N-2} & 0 \\
  0_{2,2N-2} & 0 
\end{array}%
\right)_{\Q}\right).
\end{eqnarray*}
From this, we get
\begin{equation}\label{eq-RIII-gdim-Gamma}
\gdim_{R_\partial} \fC_N(\Gamma) = (1+\xi^{-2})(1+\tau\alpha^{-1}\xi^{-N+1})(1+\tau\alpha^{-1}\xi^{-N+3})^2.
\end{equation}
Equation \eqref{eq-RIII-gdim-Gamma} implies that $\dim_\Q H_{R_\partial}(\fC_N(\Gamma))=16$. So $\fC_N(\Gamma)$ is homotopically finite over $R_\partial$. Moreover, comparing equations \eqref{eq-RIII-gdim-Gamma0}, \eqref{eq-RIII-gdim-Gamma1} to \eqref{eq-RIII-gdim-Gamma}, we have $\gdim_{R_\partial} \fC_N(\Gamma) = \gdim_{R_\partial} \fC_N(\Gamma_0) + \gdim_{R_\partial} \fC_N(\Gamma_1)$.
\end{proof}

\begin{lemma}\label{lemma-RIIIinv-decomp-hmf}
In the category $\hmf_{R_\partial,w}$, we have
\begin{eqnarray*}
\Hom_\hmf(\fC_N(\Gamma_0),\fC_N(\Gamma_0)) \cong \Hom_\hmf(\fC_N(\Gamma_1),\fC_N(\Gamma_1)) & \cong & \Q, \\
\Hom_\hmf(\fC_N(\Gamma_0),\fC_N(\Gamma_1)) \cong \Hom_\hmf(\fC_N(\Gamma_1),\fC_N(\Gamma_0)) & \cong & 0.
\end{eqnarray*}
\end{lemma}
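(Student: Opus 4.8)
The plan is to compute each of the four $\Hom_{\hmf}$ spaces by the same method used in Lemmas \ref{lemma-edge-sliding-unique}, \ref{lemma-hmf-decomp-II}, \ref{lemma-def-chi} and \ref{lemma-RII-hmf}: namely, translate each $\Hom$ space into $H^{0,0,0}$ of an explicit Koszul matrix factorization via Lemma \ref{lemma-Hom-space}(3) and Lemma \ref{lemma-dual-Koszul}, then strip it down with Proposition \ref{prop-b-contraction} (and, for the off-diagonal cases, Proposition \ref{prop-contraction-weak}) until the base ring is small enough to read off the relevant graded piece by inspection.

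First I would record the explicit Koszul presentations of $\fC_N(\Gamma_0)$ and $\fC_N(\Gamma_1)$ over $R_\partial$; these are already written out in \eqref{eq-RIII-mf-Gamma0} and \eqref{eq-RIII-mf-Gamma1} in the proof of Lemma \ref{lemma-RIIIinv-decomp-obj}. For the diagonal cases, $\Hom_{\hmf}(\fC_N(\Gamma_i),\fC_N(\Gamma_i)) \cong H^{0,0,0}(\Hom_{R_\partial}(\fC_N(\Gamma_i),\fC_N(\Gamma_i)))$ by Lemma \ref{lemma-Hom-space}(3), and Lemma \ref{lemma-dual-Koszul} turns $\Hom_{R_\partial}(\fC_N(\Gamma_i),\fC_N(\Gamma_i))$ into $\fC_N(\Gamma_i)\otimes_{R_\partial}(\cdots)$, a Koszul factorization of $0$ over $R_\partial$ whose $x$- and $a$-columns are explicit. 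Applying Proposition \ref{prop-b-contraction} to kill the regular entries in the $x$-column (the differences $x_1+x_2-x_4-x_5$, $x_1x_2-x_4x_5$, $x_3-x_6$ and, in the $\Gamma_1$ case, the three elementary-symmetric differences) reduces the base ring to a quotient $R'$ in which the surviving factorization has the form $\bigl(\,\text{(entries in }a,x)\;,\;0\,\bigr)$; then, as an $R'$-module, it splits as a tensor product of rank-$2$ pieces $R'\{1,?\}\oplus R'\langle 1\rangle$ exactly as in Lemma \ref{lemma-RII-hmf}, and one checks that its $\zed_2\oplus\zed^{\oplus2}$-degree $(0,0,0)$ component is one-dimensional over $\Q$. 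This gives $\dim_\Q \Hom_{\hmf} \le 1$; the reverse inequality is automatic because $\gdim_{R_\partial}\fC_N(\Gamma_i)\neq 0$ (visible from \eqref{eq-RIII-gdim-Gamma0}, \eqref{eq-RIII-gdim-Gamma1}), so $\fC_N(\Gamma_i)\not\simeq 0$ and $\id_{\fC_N(\Gamma_i)}$ is not null-homotopic. Hence both diagonal $\Hom_{\hmf}$ are $\cong\Q$, spanned by the identity.

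For the off-diagonal cases the same translation gives $\Hom_{\hmf}(\fC_N(\Gamma_0),\fC_N(\Gamma_1)) \cong H^{0,0,0}(\Hom_{R_\partial}(\fC_N(\Gamma_0),\fC_N(\Gamma_1)))$ and similarly with $\Gamma_0,\Gamma_1$ swapped; by Lemma \ref{lemma-dual-Koszul} each is $H^{0,0,0}$ of a Koszul factorization of $0$ over $R_\partial$ whose $x$-column now contains \emph{both} the $\Gamma_0$-differences and the $\Gamma_1$-differences. Here I would use Proposition \ref{prop-b-contraction}/\ref{prop-contraction-weak} to contract against, say, the $\Gamma_0$-entries $x_1+x_2-x_4-x_5$, $x_1x_2-x_4x_5$ and $x_3-x_6$ (which form a regular sequence in $R_\partial$), passing to the ring $R_\partial/(x_1+x_2-x_4-x_5,\,x_1x_2-x_4x_5,\,x_3-x_6)$; in that quotient the remaining $\Gamma_1$-type entry $x_1x_2x_3-x_4x_5x_6$ becomes $x_3(x_1x_2-x_4x_5)+\dots = 0$ after the substitution, while the first two $\Gamma_1$-entries also collapse, leaving a factorization all of whose $x$-column entries vanish but with a grading shift $\{0,-2\}$ (or $\{0,2\}$, depending on direction). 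Because of this net $\xi$-shift, the degree $(0,0,0)$ piece of the resulting tensor product of rank-$2$ modules is zero — this is the analogue of the vanishing computation in Lemma \ref{lemma-hmf-decomp-II}. So both off-diagonal $\Hom_{\hmf}$ spaces are $0$.

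I expect the main obstacle to be bookkeeping rather than conceptual: one must keep the grading shifts straight through the chain of applications of Lemma \ref{lemma-dual-Koszul} (which contributes $\langle l\rangle\{\sum(1-\deg_a a_{j,1}),\sum(N+1-\deg_x a_{j,1})\}$) and Proposition \ref{prop-b-contraction} (which removes rows but leaves the ambient shift intact), and verify carefully that in the off-diagonal computation the surviving factorization genuinely has \emph{all} $x$-column entries in the maximal ideal after the substitutions, so that the module splits as a pure tensor product of rank-$2$ pieces with a nonzero overall $\xi$-power, forcing the $(0,0,0)$-component to vanish. I would present the diagonal and off-diagonal computations in parallel, citing Lemma \ref{lemma-RII-hmf} as the template to avoid repeating the routine module-splitting argument, and note that the $N\ge 0$ hypothesis is used only through $\deg_x X_i\ge 2$, exactly as in the earlier lemmas.
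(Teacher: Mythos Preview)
Your plan is correct and is exactly the approach the paper takes: it proves only $\Hom_{\hmf}(\fC_N(\Gamma_1),\fC_N(\Gamma_1))\cong\Q$ and $\Hom_{\hmf}(\fC_N(\Gamma_1),\fC_N(\Gamma_0))\cong 0$ explicitly (leaving the other two as ``similar''), in each case using Lemma~\ref{lemma-dual-Koszul} followed by row-contraction to reach a Koszul factorization with zero right column over a small quotient ring, then reading off the $(0,0,0)$ graded piece of the resulting tensor product of rank-two pieces. One small correction: for the diagonal cases the right-column entries are not literally single indeterminates of $R_\partial$, so the paper invokes Proposition~\ref{prop-contraction-weak} rather than Proposition~\ref{prop-b-contraction} at that step; the effect on the computation is identical.
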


\begin{proof}
We only prove that $\Hom_\hmf(\fC_N(\Gamma_1),\fC_N(\Gamma_1)) \cong  \Q$ and $\Hom_\hmf(\fC_N(\Gamma_1),\fC_N(\Gamma_0)) \cong 0$ here. The proofs of the other two isomorphisms are very similar and left to the reader. 

First, we compute  $\Hom_\hmf(\fC_N(\Gamma_1),\fC_N(\Gamma_1))$. Since $\gdim_{R_\partial} \fC_N(\Gamma_1) \neq 0$, we know that $\fC_N(\Gamma_1)$ is not homotopic to $0$. So $\id_{\fC_N(\Gamma_1)}$ is not homotopic to $0$. This implies that $\dim_\Q \Hom_\hmf(\fC_N(\Gamma_1),\fC_N(\Gamma_1)) \geq 1$. By equation \eqref{eq-RIII-mf-Gamma1} and Lemma \ref{lemma-dual-Koszul}, we have
\[
\Hom_{R_\partial}(\fC_N(\Gamma_1),\fC_N(\Gamma_1)) \simeq \left(%
\begin{array}{cc}
  \ast_{2,2N} & x_1+x_2+x_3-x_4-x_5-x_6 \\
  \ast_{2,2N-2} & x_1x_2+x_2x_3+x_3x_1-x_4x_5-x_5x_6-x_6x_4 \\
  \ast_{2,2N-4} & x_1x_2x_3-x_4x_5x_6 \\
  \ast_{2,2N} & -(x_1+x_2+x_3-x_4-x_5-x_6) \\
  \ast_{2,2N-2} & -(x_1x_2+x_2x_3+x_3x_1-x_4x_5-x_5x_6-x_6x_4) \\
  \ast_{2,2N-4} & -(x_1x_2x_3-x_4x_5x_6) 
\end{array}%
\right)_{R_\partial}\left\langle 3\right\rangle \{3,3N-9\}.
\]
Applying Proposition \ref{prop-contraction-weak} to the top three rows of the right hand side, we get
\[
\Hom_{\hmf}(\fC_N(\Gamma_1),\fC_N(\Gamma_1)) \cong H^{0,0,0}\left(\left(%
\begin{array}{cc}
  \ast_{2,2N} & 0 \\
  \ast_{2,2N-2} & 0 \\
  \ast_{2,2N-4} & 0 
\end{array}%
\right)_{R'}\left\langle 3\right\rangle \{3,3N-9\}\right),
\]
where $R'=R_\partial/(x_1+x_2+x_3-x_4-x_5-x_6,x_1x_2+x_2x_3+x_3x_1-x_4x_5-x_5x_6-x_6x_4,x_1x_2x_3-x_4x_5x_6)$. As a $\zed_2 \oplus \zed^{\oplus 2}$-graded $R'$-module, 
\[
\left(%
\begin{array}{cc}
  \ast_{2,2N} & 0 \\
  \ast_{2,2N-2} & 0 \\
  \ast_{2,2N-4} & 0 
\end{array}%
\right)_{R'}\left\langle 3\right\rangle \{3,3N-9\} \cong
(R'\left\langle 1\right\rangle \{1,N-1\} \oplus R') \otimes_{R'} (R'\left\langle 1\right\rangle \{1,N-3\} \oplus R') \otimes_{R'} (R'\left\langle 1\right\rangle \{1,N-5\} \oplus R'),
\]
whose homogeneous component of $\zed_2 \oplus \zed^{\oplus 2}$-degree $(0,0,0)$ is $1$-dimensional over $\Q$. This implies that $\dim_\Q \Hom_\hmf(\fC_N(\Gamma_1),\fC_N(\Gamma_1)) \leq 1$. Thus, $\Hom_\hmf(\fC_N(\Gamma_1),\fC_N(\Gamma_1)) \cong  \Q$.

Similarly, by equations \eqref{eq-RIII-mf-Gamma0}, \eqref{eq-RIII-mf-Gamma1}, Lemma \ref{lemma-dual-Koszul} and Proposition \ref{prop-b-contraction}, we have
\begin{eqnarray*}
\Hom_{R_\partial}(\fC_N(\Gamma_1),\fC_N(\Gamma_0)) & \simeq & \left(%
\begin{array}{cc}
  \ast_{2,2N} & x_1+x_2-x_4-x_5 \\
  \ast_{2,2N-2} & x_1x_2-x_4x_5 \\
  \ast_{2,2N} & x_3-x_6 \\
  \ast_{2,2N} & -(x_1+x_2+x_3-x_4-x_5-x_6) \\
  \ast_{2,2N-2} & -(x_1x_2+x_2x_3+x_3x_1-x_4x_5-x_5x_6-x_6x_4) \\
  \ast_{2,2N-4} & -(x_1x_2x_3-x_4x_5x_6) 
\end{array}%
\right)_{R_\partial}\left\langle 3\right\rangle \{3,3N-7\} \\
& \simeq & \left(%
\begin{array}{cc}
  \ast_{2,2N} & 0 \\
  \ast_{2,2N-2} & 0 \\
  \ast_{2,2N-4} & 0 
\end{array}%
\right)_{\hat{R}}\left\langle 3\right\rangle \{3,3N-7\},
\end{eqnarray*}
where $\hat{R}=R_\partial/(x_1+x_2-x_4-x_5, x_1x_2-x_4x_5,x_3-x_6)\cong\Q[a,x_3]\otimes_\Q\Sym(\{x_1,x_2\})$. As a $\zed_2 \oplus \zed^{\oplus 2}$-graded $\hat{R}$-module, 
\[
\left(%
\begin{array}{cc}
  \ast_{2,2N} & 0 \\
  \ast_{2,2N-2} & 0 \\
  \ast_{2,2N-4} & 0 
\end{array}%
\right)_{\hat{R}}\left\langle 3\right\rangle \{3,3N-7\} \cong
(\hat{R}\left\langle 1\right\rangle \{1,N-1\} \oplus \hat{R}) \otimes_{\hat{R}} (\hat{R}\left\langle 1\right\rangle \{1,N-3\} \oplus \hat{R}) \otimes_{\hat{R}} (\hat{R}\left\langle 1\right\rangle \{1,N-3\} \oplus \hat{R}\{0,2\}),
\]
whose homogeneous component of $\zed_2 \oplus \zed^{\oplus 2}$-degree $(0,0,0)$ vanishes. This implies that
\[
\Hom_\hmf(\fC_N(\Gamma_1),\fC_N(\Gamma_0)) \cong H^{0,0,0}\left(\left(%
\begin{array}{cc}
  \ast_{2,2N} & 0 \\
  \ast_{2,2N-2} & 0 \\
  \ast_{2,2N-4} & 0 
\end{array}%
\right)_{\hat{R}}\left\langle 3\right\rangle \{3,3N-7\}\right) \cong 0.
\]
\end{proof}

We are now ready to prove Proposition \ref{prop-RIIIinv-decomp}. 

\begin{proof}[Proof of Proposition \ref{prop-RIIIinv-decomp}]
\begin{figure}[ht]
$
\xymatrix{
\input{RIII-decomp0} \ar@<1ex>[rd]^<<<<<<<<<<<<<<<{\phi} \ar@<10ex>[rr]^<<<<<<<<<<<<<<<<<<<<{f} && \input{RIII-decomp} \ar@<-8ex>[ll]^>>>>>>>>>>>>>>>>>>>>{\bar{f}} \ar@<1ex>[ld]^<<<<<<<<<<<<<<<{\chi^1} \\
& \input{RIII-decomp2} \ar@<1ex>[lu]^>>>>>>>>>>>>>>>{\bar{\phi}} \ar@<1ex>[ru]^>>>>>>>>>>>>>>>{\chi^0} &
}
$
\caption{}\label{RIIIinv-decomp-f-fig}

\end{figure}

Define morphisms $f:\fC_N(\Gamma_0) \rightarrow \fC_N(\Gamma)$ and $\bar{f}:\fC_N(\Gamma) \rightarrow \fC_N(\Gamma_0)$ by Figure \ref{RIIIinv-decomp-f-fig}. That is, $f = \chi^0\circ \phi$ and $\bar{f} = \bar{\phi}\circ \chi^1$, where 
\begin{itemize}
	\item $\phi$ and $\bar{\phi}$ are the morphisms associated to the edge splitting/merging in the left side of $\Gamma_0$ and $\Gamma_2$ defined in Lemma \ref{lemma-phi},
	\item $\chi^0$ and $\chi^1$ are the $\chi$-morphisms associated to the right side of $\Gamma$ and $\Gamma_2$ defined in Lemma \ref{lemma-def-chi}.
\end{itemize}
Note that 
\begin{itemize}
	\item $\phi$ and $\bar{\phi}$ are homogeneous morphisms of $\zed_2 \oplus \zed^{\oplus 2}$-degree $(0,0,-1)$,
	\item $\chi^0$ and $\chi^1$ are homogeneous morphisms of $\zed_2 \oplus \zed^{\oplus 2}$-degree $(0,0,1)$.
\end{itemize}
So $f$ and $\bar{f}$ are homogeneous morphisms of $\zed_2 \oplus \zed^{\oplus 2}$-degree $(0,0,0)$. Using Lemmas \ref{lemma-phi} and \ref{lemma-def-chi} again, we get
\begin{equation}\label{eq-RIIIinv-decomp-f-comp}
\bar{f} \circ f = \bar{\phi}\circ \chi^1 \circ \chi^0\circ \phi \simeq \bar{\phi}\circ \mathsf{m}(x_6-x_8) \circ \phi = -\bar{\phi}\circ \mathsf{m}(x_8) \circ \phi + \mathsf{m}(x_6) \circ \bar{\phi}\circ \phi \approx \id_{\fC_N(\Gamma_0)}.
\end{equation}

\begin{figure}[ht]
$
\xymatrix{
\input{RIII-decomp1} \ar@<1ex>[d]^>>>>>{\varphi} \ar@<10ex>[rr]^{g} && \input{RIII-decomp} \ar@<-8ex>[ll]^{\bar{g}} \ar@<1ex>[d]^>>>>>{\tilde{\chi}^0} \\
\input{RIII-decomp3} \ar@<1ex>[u]^<<<<<{\bar{\varphi}} \ar@<10ex>[rr]^{h} && \input{RIII-decomp3-prime} \ar@<1ex>[u]^<<<<<{\tilde{\chi}^1} \ar@<-8ex>[ll]^{\bar{h}}
}
$
\caption{}\label{RIIIinv-decomp-g-fig}

\end{figure}

Next, define $g:\fC_N(\Gamma_1) \rightarrow \fC_N(\Gamma)$ and $\bar{g}:\fC_N(\Gamma) \rightarrow \fC_N(\Gamma_1)$ by Figure \ref{RIIIinv-decomp-g-fig}. That is, $g= \tilde{\chi}^1 \circ h \circ \varphi$ and $\bar{g} = \bar{\varphi} \circ \bar{h} \circ \tilde{\chi}^0$, where
\begin{itemize}
	\item $\varphi$ and $\bar{\varphi}$ are the morphisms associated to the splitting/merging of the upper-left $2$-colored edge in $\Gamma_1$ defined in Lemma \ref{lemma-phi},
	\item $h$ and $\bar{h}$ are the homotopy equivalences induced by the edge sliding given in Lemmas \ref{lemma-edge-sliding}, \ref{lemma-edge-sliding-unique} and are homotopy inverses of each other,
	\item $\tilde{\chi}^0$ and  $\tilde{\chi}^1$ are the $\tilde{\chi}$-morphisms associated to the lower half of $\Gamma$ and $\Gamma_3'$ defined in Lemma \ref{lemma-def-tilde-chi}.
\end{itemize}
Note that 
\begin{itemize}
	\item $\varphi$ and $\bar{\varphi}$ are homogeneous morphisms of $\zed_2 \oplus \zed^{\oplus 2}$-degree $(0,0,-1)$,
	\item $h$ and $\bar{h}$ are homogeneous morphisms of $\zed_2 \oplus \zed^{\oplus 2}$-degree $(0,0,0)$,
	\item $\tilde{\chi}^0$ and $\tilde{\chi}^1$ are homogeneous morphisms of $\zed_2 \oplus \zed^{\oplus 2}$-degree $(0,0,1)$.
\end{itemize}
So $g$ and $\bar{g}$ are homogeneous morphisms of $\zed_2 \oplus \zed^{\oplus 2}$-degree $(0,0,0)$. Using Lemmas \ref{lemma-phi} and \ref{lemma-def-tilde-chi} again, we get
\begin{eqnarray}
\label{eq-RIIIinv-decomp-g-comp} \bar{g} \circ g & = & \bar{\varphi} \circ \bar{h} \circ \tilde{\chi}^0 \circ \tilde{\chi}^1 \circ h \circ \varphi \simeq \bar{\varphi} \circ \bar{h} \circ \mathsf{m}(x_6-x_7) \circ h \circ \varphi = \bar{\varphi} \circ \mathsf{m}(x_6-x_7) \circ \bar{h} \circ h \circ \varphi \\
\nonumber & \simeq & \bar{\varphi} \circ \mathsf{m}(x_6-x_7) \circ \varphi = \mathsf{m}(x_6) \circ \bar{\varphi} \circ \varphi - \bar{\varphi} \circ \mathsf{m}(x_7) \circ \varphi \approx \id_{\fC_N(\Gamma_1)}.
\end{eqnarray}

From Lemma \ref{lemma-RIIIinv-decomp-hmf}, we know that $\Hom_\hmf(\fC_N(\Gamma_0),\fC_N(\Gamma_1)) \cong \Hom_\hmf(\fC_N(\Gamma_1),\fC_N(\Gamma_0)) \cong 0$. So 
\begin{equation}\label{eq-RIIIinv-decomp-fg-comp}
\bar{f} \circ g \simeq 0 \hspace{2pc} \text{ and } \hspace{2pc} \bar{g} \circ f \simeq 0.
\end{equation}

Now consider the morphisms
\[
\xymatrix{
\fC_N(\Gamma) \ar@<1ex>[rr]^{\left(%
\begin{array}{c}
\bar{f} \\
\bar{g}
\end{array}%
\right)}&& {\left.%
\begin{array}{c}
\fC_N(\Gamma_0) \\
\oplus \\
\fC_N(\Gamma_1)
\end{array}%
\right.} \ar@<1ex>[ll]^{\left(%
\begin{array}{cc}
f,&g
\end{array}%
\right)}
}.
\]
From homotopies \eqref{eq-RIIIinv-decomp-f-comp}, \eqref{eq-RIIIinv-decomp-g-comp} and \eqref{eq-RIIIinv-decomp-fg-comp}, we know that, after possibly scaling $f$ and $g$, 
\[
{\left(%
\begin{array}{c}
\bar{f} \\
\bar{g}
\end{array}%
\right)}
\circ
{\left(%
\begin{array}{cc}
f,&g
\end{array}%
\right)}
\simeq 
{\left(%
\begin{array}{cc}
\id_{\fC_N(\Gamma_0)} & 0 \\
0 & \id_{\fC_N(\Gamma_1)}
\end{array}%
\right)}.
\]
Recall that, by Proposition \ref{fully-additive-hmf}, $\hmf_{R_\partial,w}$ is fully additive. So, by Lemma \ref{lemma-fully-additive-split}, there exists an object $M$ of $\hmf_{R_\partial,w}$ such that $\fC_N(\Gamma) \cong \fC_N(\Gamma_0) \oplus \fC_N(\Gamma_1) \oplus M$. From Lemma \ref{lemma-RIIIinv-decomp-obj}, we know that 
\[
\gdim_{R_\partial} M = \gdim_{R_\partial} \fC_N(\Gamma) - \gdim_{R_\partial} \fC_N(\Gamma_0) - \gdim_{R_\partial} \fC_N(\Gamma_1) =0.
\] 
By Corollary \ref{cor-homology-detects-homotopy}, this implies that $M \simeq 0$. Thus, $\fC_N(\Gamma) \cong \fC_N(\Gamma_0) \oplus \fC_N(\Gamma_1)$.
\end{proof}

\begin{figure}[ht]
$
\xymatrix{
\input{RIII-decomp1} \ar@<1ex>[d]^>>>>>{\psi} \ar@<10ex>[rr]^{\mathfrak{g}} && \input{RIII-decomp} \ar@<-8ex>[ll]^{\bar{\mathfrak{g}}} \ar@<1ex>[d]^>>>>>{\tilde{\chi}^0_u} \\
\input{RIII-decomp4} \ar@<1ex>[u]^<<<<<{\bar{\psi}} \ar@<10ex>[rr]^{\mathfrak{h}} && \input{RIII-decomp4-prime} \ar@<1ex>[u]^<<<<<{\tilde{\chi}^1_u} \ar@<-8ex>[ll]^{\bar{\mathfrak{h}}}
}
$
\caption{}\label{RIIIinv-decomp-g-fig-2}

\end{figure}

\begin{corollary}\label{cor-RIIIinv-decomp-g-def2}
Let $\mathfrak{g}:\fC_N(\Gamma_1) \rightarrow \fC_N(\Gamma)$ and $\bar{\mathfrak{g}}: \fC_N(\Gamma) \rightarrow \fC_N(\Gamma_1)$ be the morphisms defined by Figure \ref{RIIIinv-decomp-g-fig-2}. That is, $\mathfrak{g} = \tilde{\chi}^1_u \circ \mathfrak{h} \circ \psi$ and $\bar{\mathfrak{g}} = \bar{\psi} \circ \bar{\mathfrak{h}} \circ \tilde{\chi}^0_u$, where
\begin{itemize}
	\item $\psi$ and $\bar{\psi}$ are the morphisms associated to the splitting/merging of the lower-left $2$-colored edge in $\Gamma_1$ defined in Lemma \ref{lemma-phi},
	\item $\mathfrak{h}$ and $\bar{\mathfrak{h}}$ are the homotopy equivalences induced by the edge sliding given in Lemmas \ref{lemma-edge-sliding}, \ref{lemma-edge-sliding-unique} and are homotopy inverses of each other,
	\item $\tilde{\chi}^0_u$ and  $\tilde{\chi}^1_u$ are the $\tilde{\chi}$-morphisms associated to the upper half of $\Gamma$ and $\Gamma_3'$ defined in Lemma \ref{lemma-def-tilde-chi}.
\end{itemize}
Then $\mathfrak{g} \approx g$ and $\bar{\mathfrak{g}} \approx \bar{g}$, where $g$ and $\bar{g}$ are the morphisms defined by Figure \ref{RIIIinv-decomp-g-fig}.
\end{corollary}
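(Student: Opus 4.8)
The plan is to deduce everything from the fact that the relevant $\Hom_\hmf$-spaces are one-dimensional, so that a morphism between the matrix factorizations in question is, up to homotopy and scaling, pinned down once we know it is homotopically non-trivial and of the correct degree. First I would record the degrees: by Lemma \ref{lemma-phi} the morphisms $\psi,\bar\psi$ have $\zed_2\oplus\zed^{\oplus 2}$-degree $(0,0,-1)$; by Lemmas \ref{lemma-edge-sliding} and \ref{lemma-edge-sliding-unique} the edge-sliding equivalences $\mathfrak h,\bar{\mathfrak h}$ have degree $(0,0,0)$; and by Lemma \ref{lemma-def-tilde-chi} the morphisms $\tilde\chi^0_u,\tilde\chi^1_u$ have degree $(0,0,1)$. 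Hence $\mathfrak g=\tilde\chi^1_u\circ\mathfrak h\circ\psi$ and $\bar{\mathfrak g}=\bar\psi\circ\bar{\mathfrak h}\circ\tilde\chi^0_u$ are homogeneous of $\zed_2\oplus\zed^{\oplus 2}$-degree $(0,0,0)$, the same degree as $g$ and $\bar g$.

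Next I would identify the ambient $\Hom$-spaces. By Proposition \ref{prop-RIIIinv-decomp}, $\fC_N(\Gamma)\cong\fC_N(\Gamma_0)\oplus\fC_N(\Gamma_1)$ in $\hmf_{R_\partial,w}$, so additivity of $\Hom_\hmf$ together with Lemma \ref{lemma-RIIIinv-decomp-hmf} gives
\[
\Hom_\hmf(\fC_N(\Gamma_1),\fC_N(\Gamma))\cong\Hom_\hmf(\fC_N(\Gamma_1),\fC_N(\Gamma_0))\oplus\Hom_\hmf(\fC_N(\Gamma_1),\fC_N(\Gamma_1))\cong 0\oplus\Q\cong\Q,
\]
and symmetrically $\Hom_\hmf(\fC_N(\Gamma),\fC_N(\Gamma_1))\cong\Q$. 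Since any two non-zero vectors in a one-dimensional $\Q$-space are scalar multiples of each other, it now suffices to prove that each of $g,\mathfrak g$ (resp. $\bar g,\bar{\mathfrak g}$) is homotopically non-trivial. For $g$ and $\bar g$ this is already contained in the proof of Proposition \ref{prop-RIIIinv-decomp}: homotopy \eqref{eq-RIIIinv-decomp-g-comp} shows $\bar g\circ g\approx\id_{\fC_N(\Gamma_1)}$, so neither $g$ nor $\bar g$ can be homotopic to $0$.

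It then remains to show $\mathfrak g\not\simeq 0$ and $\bar{\mathfrak g}\not\simeq 0$, and for this I would run the computation \eqref{eq-RIIIinv-decomp-g-comp} verbatim with the roles of the upper and lower halves of $\Gamma$ interchanged. Explicitly, $\bar{\mathfrak g}\circ\mathfrak g=\bar\psi\circ\bar{\mathfrak h}\circ\tilde\chi^0_u\circ\tilde\chi^1_u\circ\mathfrak h\circ\psi$; by Part (3) of Lemma \ref{lemma-def-tilde-chi}, $\tilde\chi^0_u\circ\tilde\chi^1_u$ is homotopic to multiplication by the appropriate difference of an internal and a boundary variable; this multiplication operator, being induced by a scalar of $R_\partial$, commutes with $\bar{\mathfrak h}$, after which $\bar{\mathfrak h}\circ\mathfrak h\simeq\id$ by Lemma \ref{lemma-edge-sliding-unique}; finally Part (3) of Lemma \ref{lemma-phi} applied to $\psi,\bar\psi$ collapses the remaining expression to $\pm\id_{\fC_N(\Gamma_1)}$, exactly as in \eqref{eq-RIIIinv-decomp-g-comp}. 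Thus $\bar{\mathfrak g}\circ\mathfrak g\approx\id_{\fC_N(\Gamma_1)}$, so $\mathfrak g$ and $\bar{\mathfrak g}$ are homotopically non-trivial, and hence $\mathfrak g\approx g$ and $\bar{\mathfrak g}\approx\bar g$. The main obstacle is purely the bookkeeping in this last step — keeping track of precisely which variable is produced by $\tilde\chi^0_u\circ\tilde\chi^1_u$ and verifying it is the one to which Lemma \ref{lemma-phi}(3) applies after passing through the edge slide — but this is routine and structurally identical to the argument already carried out for $g$.
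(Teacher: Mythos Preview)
Your proposal is correct and follows essentially the same approach as the paper's own proof: compute the degrees, show $\bar{\mathfrak g}\circ\mathfrak g\approx\id_{\fC_N(\Gamma_1)}$ by the same argument as \eqref{eq-RIIIinv-decomp-g-comp}, and then use Proposition~\ref{prop-RIIIinv-decomp} together with Lemma~\ref{lemma-RIIIinv-decomp-hmf} to see that the relevant $\Hom_\hmf$-spaces are one-dimensional, forcing $\mathfrak g\approx g$ and $\bar{\mathfrak g}\approx\bar g$. The paper's write-up is terser---it simply asserts that the verification of $\bar{\mathfrak g}\circ\mathfrak g\approx\id$ is ``similar to the proof of Proposition~\ref{prop-RIIIinv-decomp}''---but your more detailed outline of that step is accurate.
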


\begin{proof}
Similar to the proof of Proposition \ref{prop-RIIIinv-decomp}, one can check that $\mathfrak{g}$ and $\bar{\mathfrak{g}}$ are homogeneous morphisms of $\zed_2 \oplus \zed^{\oplus 2}$-degree $(0,0,0)$ satisfying $\bar{\mathfrak{g}} \circ \mathfrak{g} \approx \id _{\fC_N(\Gamma_1)}$. Thus, $g$, $\bar{g}$, $\mathfrak{g}$ and $\bar{\mathfrak{g}}$ are all homotopically non-trivial homogeneous morphisms of $\zed_2 \oplus \zed^{\oplus 2}$-degree $(0,0,0)$. By Proposition \ref{prop-RIIIinv-decomp} and Lemma \ref{lemma-RIIIinv-decomp-hmf}, we have that 
\[
\Hom_\hmf(\fC_N(\Gamma),\fC_N(\Gamma_1)) \cong \Hom_\hmf(\fC_N(\Gamma_0),\fC_N(\Gamma_1)) \oplus \Hom_\hmf(\fC_N(\Gamma_1),\fC_N(\Gamma_1)) \cong \Q.
\]
This implies that $\bar{\mathfrak{g}} \approx \bar{g}$. Similarly, we have $\Hom_\hmf(\fC_N(\Gamma_1),\fC_N(\Gamma)) \cong \Q$, which implies that $\mathfrak{g} \approx g$.
\end{proof}

\subsection{Invariance under braid-like Reidemeister III moves}

\begin{figure}[ht]
$
\xymatrix{
\input{RIII1} && \input{RIII2}
}
$
\caption{}\label{RIIIinv-fig}

\end{figure}

\begin{proposition}\label{prop-RIIIinv}
Let $T_1$ and $T_2$ be the tangle diagrams in Figure \ref{RIIIinv-fig}. Then, for $N\geq 0$, $\fC_N(T_1) \simeq \fC_N(T_2)$ as chain complexes over the category $\hmf_{R_\partial,w}$, where 
\begin{eqnarray*}
R_\partial & = & \Q[a,x_1,x_2,x_3,x_4,x_5,x_6], \\
w & = & a(x_1^{N+1}+x_2^{N+1}+x_3^{N+1}-x_4^{N+1}-x_5^{N+1}-x_6^{N+1}).
\end{eqnarray*}
\end{proposition}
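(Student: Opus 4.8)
The plan is to follow the strategy of \cite{KR1,KR2}: fully resolve all three crossings of $T_1$ and of $T_2$ so that each $\fC_N(T_i)$ becomes a cube of matrix factorizations of MOY graphs with differential built from the $\chi$-morphisms of Lemma \ref{lemma-def-chi}, then use the Direct Sum Decomposition of Proposition \ref{prop-RIIIinv-decomp} at the ``wide-edge'' vertex of the cube, and finally cancel the redundant summands by repeated application of the Gaussian Elimination Lemma (Lemma \ref{lemma-gaussian-elimination}) until both complexes reach a common reduced form that is manifestly symmetric under the braid-like Reidemeister III move.

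Concretely, after fully resolving, one locates in the cube for $T_1$ the resolution whose underlying MOY graph is, after a marking change via Lemma \ref{lemma-marking-independence}, precisely the graph $\Gamma$ of Figure \ref{RIIIinv-decomp-fig}. Proposition \ref{prop-RIIIinv-decomp} then gives $\fC_N(\Gamma)\cong \fC_N(\Gamma_0)\oplus\fC_N(\Gamma_1)$, and crucially the decomposition comes equipped with the explicit inclusion/projection morphisms $f=\chi^0\circ\phi$, $\bar f=\bar\phi\circ\chi^1$ and $g$, $\bar g$ (equivalently $\mathfrak g$, $\bar{\mathfrak g}$, by Corollary \ref{cor-RIIIinv-decomp-g-def2}). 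Substituting this splitting into the cube, I would read off the matrix of the new differential between the $\fC_N(\Gamma_0)\oplus\fC_N(\Gamma_1)$ layer and the adjacent vertices in terms of composites of the cube's $\chi$-morphisms with $f,\bar f,g,\bar g$. The relations $\bar\phi\circ\chi^0\approx\id$, $\chi^1\circ\phi\approx\id$, $\bar\varphi\circ\mathsf m(x)\circ\varphi\approx\id$, etc.\ supplied by Lemmas \ref{lemma-phi}, \ref{lemma-def-chi} and \ref{lemma-def-tilde-chi} show that one component of each such differential is, up to a nonzero scalar, a homotopy equivalence; Gaussian elimination then cancels a pair of summands at a time, and the correction terms it introduces are controlled by the vanishing $\Hom$-spaces of Lemma \ref{lemma-RIIIinv-decomp-hmf} (so that cross-terms like $\bar f\circ g$ and $\bar g\circ f$ are null-homotopic) together with Corollary \ref{cor-RIIIinv-decomp-g-def2}. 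After all cancellations the complex for $T_1$ collapses to a short explicit complex involving only $\fC_N(\Gamma_1)$ and one or two other fixed resolutions.

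Repeating the identical reduction for $T_2$, using the mirror-placed crossing, produces the same short explicit complex: the surviving matrix factorizations are those of MOY graphs that do not distinguish the two sides of the braid-like Reidemeister III move (they depend only on the boundary alphabets $x_1,\dots,x_6$ and on $\Gamma_0,\Gamma_1$), and the surviving differential is, up to homotopy and scaling, the unique homotopically nontrivial morphism of the appropriate degree between them, again by the uniqueness statements in Lemmas \ref{lemma-def-chi}, \ref{lemma-def-tilde-chi} and \ref{lemma-RIIIinv-decomp-hmf}. Hence $\fC_N(T_1)\simeq\fC_N(T_2)$ as chain complexes over $\hmf_{R_\partial,w}$. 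I expect the main obstacle to be the bookkeeping in the middle step: deciding in which order to perform the Gaussian eliminations, and verifying that the correction terms produced along the way are exactly (rather than merely up to a scalar, which is harmless via the relation $\approx$) the morphisms appearing in the symmetric reduced complex. This is where the explicit graphical descriptions of $\phi,\bar\phi$, the $\chi$- and $\tilde\chi$-morphisms, and the edge-sliding equivalences $h,\bar h$, $\mathfrak h,\bar{\mathfrak h}$ must be assembled carefully, since each individual composite is only pinned down up to homotopy and scaling.
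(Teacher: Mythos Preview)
Your strategy matches the paper's: resolve the cube, apply Proposition~\ref{prop-RIIIinv-decomp} at the fully-singular vertex, do Gaussian eliminations, and then match the two reduced complexes via one-dimensionality of the relevant $\Hom$-spaces. Three points, however, need correction or strengthening.

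First, you omit a second decomposition that the paper uses alongside Proposition~\ref{prop-RIIIinv-decomp}: the vertex $\Gamma_{110}$ (two wide edges stacked on the left, an arc on the right) must be split as $\fC_N(\Gamma_0)\{0,1\}\oplus\fC_N(\Gamma_0)\{0,-1\}$ via Lemma~\ref{lemma-decomp-II}, with the explicit $\phi,\bar\phi,J,P$ of \eqref{eq-RIII-decompII-morph}. Without this, the first Gaussian elimination (cancelling $\fC_N(\Gamma_0)$ from $\Gamma_{111}$ against a copy inside $\Gamma_{110}$ via $\bar\phi\circ\chi^1_r\circ f\approx\id$) cannot be set up.

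Second, the reduced complex is not as small as you suggest. After all cancellations the paper arrives at a complex of the schematic shape
\[
\Gamma_1 \;\to\; \Gamma_{101}\oplus\Gamma_{011} \;\to\; \Gamma_0\oplus\Gamma_{001} \;\to\; \Gamma_{000},
\]
six terms across four homological degrees (diagram~\eqref{eq-RIII-T1-chain5}), not ``$\Gamma_1$ and one or two other fixed resolutions.'' What makes this shape usable is that it is invariant under horizontal reflection of the underlying MOY graphs, so the analogous reduction of $\fC_N(T_2)$ lands in a complex of the \emph{same} shape.

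Third, the final matching does not follow from Lemmas~\ref{lemma-def-chi}, \ref{lemma-def-tilde-chi}, \ref{lemma-RIIIinv-decomp-hmf} alone. The paper needs two further facts about the arrows in the reduced diagram: that each $\Hom_\hmf$-space between consecutive terms is one-dimensional (Lemma~\ref{lemma-RIIIinv-arrow-hmf}, proved by the usual Koszul/dimension count), and that each arrow is homotopically nontrivial (Lemma~\ref{lemma-RIIIinv-arrow-comp}, proved by specializing $a\mapsto 1$ and invoking the corresponding result in \cite{KR1}). These are exactly the computations your last paragraph gestures at but does not supply; once you have them, the two reduced complexes are forced to be isomorphic over $\hmf_{R_\partial,w}$, as you say.
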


\begin{figure}[ht]
$
\xymatrix{
 & \input{RIII-res110} & \input{RIII-res100} &\\
\input{RIII-res111} & \input{RIII-res101} & \input{RIII-res010} & \input{RIII-res000}\\
 & \input{RIII-res011} & \input{RIII-res001} &
}
$
\caption{}\label{RIIIinv-res-fig}

\end{figure}

Let us consider $T_1$. Its resolutions are listed in Figure \ref{RIIIinv-res-fig}. We call the three crossings in $T_1$ the upper crossing, the lower crossing, the right crossing and denote by 
\begin{itemize}
	\item $\chi_u^0$ and $\chi_u^1$ the $\chi$-morphisms associated to the upper crossing,
	\item $\chi_l^0$ and $\chi_l^1$ the $\chi$-morphisms associated to the lower crossing,
	\item $\chi_r^0$ and $\chi_r^1$ the $\chi$-morphisms associated to the right crossing.
\end{itemize}
Then 
{\tiny
\begin{equation}\label{eq-RIII-T1-chain}
\fC_N(T_1)= 0 \rightarrow \underbrace{\fC_N(\Gamma_{111})\left\langle 3\right\rangle\{3,3N\}}_{-3} \xrightarrow{d_{-3}} \underbrace{{\left.%
\begin{array}{c}
\fC_N(\Gamma_{110})\left\langle 3\right\rangle\{3,3N-1\} \\
\oplus \\
\fC_N(\Gamma_{101})\left\langle 3\right\rangle\{3,3N-1\} \\
\oplus \\
\fC_N(\Gamma_{011})\left\langle 3\right\rangle\{3,3N-1\}
\end{array}%
\right.}}_{-2}  \xrightarrow{d_{-2}} \underbrace{{\left.%
\begin{array}{c}
\fC_N(\Gamma_{100})\left\langle 3\right\rangle\{3,3N-2\} \\
\oplus \\
\fC_N(\Gamma_{010})\left\langle 3\right\rangle\{3,3N-2\} \\
\oplus \\
\fC_N(\Gamma_{001})\left\langle 3\right\rangle\{3,3N-2\}
\end{array}%
\right.}}_{-1}   \xrightarrow{d_{-1}} \underbrace{\fC_N(\Gamma_{000})\left\langle 3\right\rangle\{3,3N-3\}}_{0} \rightarrow 0,
\end{equation}}

\noindent where 
\begin{eqnarray*}
d_{-3} & = & {\left(%
\begin{array}{c}
\chi^1_r \\
-\chi^1_l \\
\chi^1_u
\end{array}%
\right)}, \\
d_{-2} & = &  {\left(%
\begin{array}{ccc}
-\chi^1_l & -\chi^1_r & 0 \\
\chi^1_u & 0 & -\chi^1_r \\
0 & \chi^1_u & \chi^1_l
\end{array}%
\right)}, \\
d_{-1} & = & (\chi^1_u,\chi^1_l,\chi^1_r).
\end{eqnarray*}
From Lemma \ref{lemma-RIIIinv-decomp-obj}, we know that $\fC_N(\Gamma_{111})$ is an object of $\hmf_{R_\partial,w}$. It is straightforward to verify that $\fC_N(\Gamma_{\ve\mu\nu})$ is also an object of $\hmf_{R_\partial,w}$ for any other resolution $\Gamma_{\ve\mu\nu}$ of $T_1$. Thus, $\fC_N(T_1)$ is a chain complex over $\hmf_{R_\partial,w}$.

\begin{figure}[ht]
$
\xymatrix{
\input{RIII-res0} && \input{RIII-res1}
}
$
\caption{}\label{RIIIinv-res-fig-2}

\end{figure}

Next, we consider the MOY graphs in Figure \ref{RIIIinv-res-fig-2}. It is obvious that 
\begin{equation}\label{eq-RIII-Gamma0}
\Gamma_0 = \Gamma_{100} = \Gamma_{010}.
\end{equation} 
By Lemma \ref{lemma-decomp-II}, we know that
\begin{equation}\label{eq-RIII-decompII}
\fC_N(\Gamma_{110}) \simeq \fC_N(\Gamma_{0})\{0,1\} \oplus \fC_N(\Gamma_{0})\{0,-1\}.
\end{equation}
From the proof of Proposition \ref{prop-RIIinv}, we know that homotopy equivalence \eqref{eq-RIII-decompII} is given by a pair of morphisms of the form
\begin{equation}\label{eq-RIII-decompII-morph}
\xymatrix{
\fC_N(\Gamma_{110}) \ar@<1ex>[rr]^{\left(%
\begin{array}{c}
\bar{\phi} \\
P
\end{array}%
\right)} &&  {\left.%
\begin{array}{c}
\fC_N(\Gamma_{0})\{0,1\} \\
\oplus \\
\fC_N(\Gamma_{0})\{0,-1\}
\end{array}%
\right.}  \ar@<1ex>[ll]^{\left(%
\begin{array}{cc}
J,& \phi
\end{array}%
\right)},
}
\end{equation}
where $\phi$ and $\bar{\phi}$ are the morphisms associated to the splitting and merging of the $2$-colored edge in $\Gamma_0$. By Proposition \ref{prop-RIIIinv-decomp} and the proof of Lemma \ref{lemma-edge-sliding}, we know that 
\begin{equation}\label{eq-RIII-decompIV}
\fC_N(\Gamma_{111}) \simeq \fC_N(\Gamma_{0}) \oplus \fC_N(\Gamma_{1}).
\end{equation}
From the proof of Proposition \ref{prop-RIIIinv-decomp}, we know that homotopy equivalence \eqref{eq-RIII-decompIV} is given by the morphisms
\begin{equation}\label{eq-RIII-decompIV-morph}
\xymatrix{
\fC_N(\Gamma_{111}) \ar@<1ex>[rr]^{\left(%
\begin{array}{c}
\bar{f} \\
\bar{g}
\end{array}%
\right)} &&  {\left.%
\begin{array}{c}
\fC_N(\Gamma_{0})\\
\oplus \\
\fC_N(\Gamma_{1})
\end{array}%
\right.}  \ar@<1ex>[ll]^{\left(%
\begin{array}{cc}
f,& g
\end{array}%
\right)},
}
\end{equation}
where $f$, $\bar{f}$ and $g$, $\bar{g}$ are the morphisms given by Figures \ref{RIIIinv-decomp-f-fig} and \ref{RIIIinv-decomp-g-fig}.

Substituting \eqref{eq-RIII-Gamma0}-\eqref{eq-RIII-decompIV-morph} into $\fC_N(T_1)$, we get that, as a chain complex over $\hmf_{R_\partial,w}$, 
{\tiny
\begin{equation}\label{eq-RIII-T1-chain2}
\fC_N(T_1)\cong 0 \rightarrow \underbrace{\left.%
\begin{array}{c}
\fC_N(\Gamma_{0})\left\langle 3\right\rangle\{3,3N\}\\\
\oplus \\
\fC_N(\Gamma_{1})\left\langle 3\right\rangle\{3,3N\}\
\end{array}%
\right.}_{-3} \xrightarrow{\tilde{d}_{-3}} \underbrace{{\left.%
\begin{array}{c}
\fC_N(\Gamma_{0})\{3,3N\} \\
\oplus \\\fC_N(\Gamma_{0})\{3,3N-2\} \\
\oplus \\
\fC_N(\Gamma_{101})\left\langle 3\right\rangle\{3,3N-1\} \\
\oplus \\
\fC_N(\Gamma_{011})\left\langle 3\right\rangle\{3,3N-1\}
\end{array}%
\right.}}_{-2}  \xrightarrow{\tilde{d}_{-2}} \underbrace{{\left.%
\begin{array}{c}
\fC_N(\Gamma_{0})\left\langle 3\right\rangle\{3,3N-2\} \\
\oplus \\
\fC_N(\Gamma_{0})\left\langle 3\right\rangle\{3,3N-2\} \\
\oplus \\
\fC_N(\Gamma_{001})\left\langle 3\right\rangle\{3,3N-2\}
\end{array}%
\right.}}_{-1}   \xrightarrow{\tilde{d}_{-1}} \underbrace{\fC_N(\Gamma_{000})\left\langle 3\right\rangle\{3,3N-3\}}_{0} \rightarrow 0,
\end{equation}}

\noindent where 
\begin{eqnarray*}
\tilde{d}_{-3} & =  & {\left(%
\begin{array}{cc}
\bar{\phi}\circ\chi^1_r \circ f & \bar{\phi}\circ\chi^1_r \circ g \\
P\circ\chi^1_r \circ f & P\circ\chi^1_r \circ g \\
-\chi^1_l \circ f & -\chi^1_l \circ g\\
\chi^1_u \circ f & \chi^1_u \circ g
\end{array}%
\right)}, \\
\tilde{d}_{-2} & =  & {\left(%
\begin{array}{cccc}
-\chi^1_l\circ J & -\chi^1_l\circ \phi & -\chi^1_r & 0 \\
\chi^1_u \circ J & \chi^1_u \circ \phi & 0 & -\chi^1_r \\
0 & 0& \chi^1_u & \chi^1_l
\end{array}%
\right)}, \\
\tilde{d}_{-1} & =  & (\chi^1,\chi^1,\chi^1_r),
\end{eqnarray*}
and the morphism $\chi^1$ in $\tilde{d}_{-1}$ is the $\chi^1$-morphism associated to the $2$-colored edge in $\Gamma_0$.

Similar to the proof of Lemma \ref{lemma-RIIIinv-decomp-hmf}, one can check that, in $\hmf_{R_\partial,w}$, $\Hom_\hmf(\fC_N(\Gamma_1),\fC_N(\Gamma_0)) \cong 0$. This implies that the entry $\bar{\phi}\circ\chi^1_r \circ g$ of $\tilde{d}_{-3}$ is homotopic to $0$. Recall that, by Figure \ref{RIIIinv-decomp-f-fig}, $f$ is defined to be the composition $\fC_N(\Gamma_0) \xrightarrow{\phi} \fC_N(\Gamma_{110}) \xrightarrow{\chi_r^0} \fC_N(\Gamma_{111})$. So we have that, by Lemmas \ref{lemma-phi} and \ref{lemma-def-chi},
\[
\bar{\phi}\circ\chi^1_r \circ f = \bar{\phi}\circ\chi^1_r \circ \chi^0_r \circ \phi \simeq \bar{\phi}\circ \mathsf{m}(x_6-x_8) \circ \phi = \mathsf{m}(x_6) \circ\bar{\phi}\circ\phi - \bar{\phi}\circ \mathsf{m}(x_8) \circ \phi \approx \id_{\fC_N(\Gamma_0)}.
\]
Now we apply Gaussian Elimination(Lemma \ref{lemma-gaussian-elimination}) to the homotopy equivalence $\bar{\phi}\circ\chi^1_r \circ f$ in $\tilde{d}_{-3}$. Note that, since $\bar{\phi}\circ\chi^1_r \circ g \simeq 0$, the correction term from Lemma \ref{lemma-gaussian-elimination} is $0$ in this case. Thus, as a chain complex over $\hmf_{R_\partial,w}$,
{\tiny
\begin{equation}\label{eq-RIII-T1-chain3}
\fC_N(T_1) \simeq  0 \rightarrow \underbrace{
\fC_N(\Gamma_{1})\left\langle 3\right\rangle\{3,3N\}}_{-3} \xrightarrow{\hat{d}_{-3}} \underbrace{{\left.%
\begin{array}{c}
\fC_N(\Gamma_{0})\{3,3N-2\} \\
\oplus \\
\fC_N(\Gamma_{101})\left\langle 3\right\rangle\{3,3N-1\} \\
\oplus \\
\fC_N(\Gamma_{011})\left\langle 3\right\rangle\{3,3N-1\}
\end{array}%
\right.}}_{-2}  \xrightarrow{\hat{d}_{-2}} \underbrace{{\left.%
\begin{array}{c}
\fC_N(\Gamma_{0})\left\langle 3\right\rangle\{3,3N-2\} \\
\oplus \\
\fC_N(\Gamma_{0})\left\langle 3\right\rangle\{3,3N-2\} \\
\oplus \\
\fC_N(\Gamma_{001})\left\langle 3\right\rangle\{3,3N-2\}
\end{array}%
\right.}}_{-1}   \xrightarrow{\hat{d}_{-1}} \underbrace{\fC_N(\Gamma_{000})\left\langle 3\right\rangle\{3,3N-3\}}_{0} \rightarrow 0,
\end{equation}}

\noindent where 
\begin{eqnarray*}
\hat{d}_{-3} & =  & {\left(%
\begin{array}{c}
 P\circ\chi^1_r \circ g \\
 -\chi^1_l \circ g\\
 \chi^1_u \circ g
\end{array}%
\right)}, \\
\hat{d}_{-2} & =  & {\left(%
\begin{array}{ccc}
 -\chi^1_l\circ \phi & -\chi^1_r & 0 \\
 \chi^1_u \circ \phi & 0 & -\chi^1_r \\
 0& \chi^1_u & \chi^1_l
\end{array}%
\right)}, \\
\hat{d}_{-1} & =  & (\chi^1,\chi^1,\chi^1_r),
\end{eqnarray*}
and the morphism $\chi^1$ in $\hat{d}_{-1}$ is again the $\chi^1$-morphism associated to the $2$-colored edge in $\Gamma_0$.

Consider the entries $-\chi^1_l\circ \phi$ and $\chi^1_u \circ \phi$ in $\hat{d}_{-2}$. Applying the argument used to establish homotopies \eqref{eq-phi-bar-chi0} and \eqref{eq-chi1-phi}, we get that $-\chi^1_l\circ \phi\approx \chi^1_u \circ \phi \approx \id_{\fC_N(\Gamma_0)}$. Applying Gaussian Elimination(Lemma \ref{lemma-gaussian-elimination}) to the homotopy equivalence $-\chi^1_l\circ \phi$, we get that as a chain complex over $\hmf_{R_\partial,w}$,
{\tiny
\begin{equation}\label{eq-RIII-T1-chain4}
\fC_N(T_1) \simeq  0 \rightarrow \underbrace{
\fC_N(\Gamma_{1})\left\langle 3\right\rangle\{3,3N\}}_{-3} \xrightarrow{\check{d}_{-3}} \underbrace{{\left.%
\begin{array}{c}
\fC_N(\Gamma_{101})\left\langle 3\right\rangle\{3,3N-1\} \\
\oplus \\
\fC_N(\Gamma_{011})\left\langle 3\right\rangle\{3,3N-1\}
\end{array}%
\right.}}_{-2}  \xrightarrow{\check{d}_{-2}} \underbrace{{\left.%
\begin{array}{c}
\fC_N(\Gamma_{0})\left\langle 3\right\rangle\{3,3N-2\} \\
\oplus \\
\fC_N(\Gamma_{001})\left\langle 3\right\rangle\{3,3N-2\}
\end{array}%
\right.}}_{-1}   \xrightarrow{\check{d}_{-1}} \underbrace{\fC_N(\Gamma_{000})\left\langle 3\right\rangle\{3,3N-3\}}_{0} \rightarrow 0,
\end{equation}}

\noindent where 
\begin{eqnarray*}
\check{d}_{-3} & =  & {\left(%
\begin{array}{c}
 -\chi^1_l \circ g\\
 \chi^1_u \circ g
\end{array}%
\right)}, \\
\check{d}_{-2} & =  & {\left(%
\begin{array}{cc}
  c\cdot \chi^1_r & -\chi^1_r \\
  \chi^1_u & \chi^1_l
\end{array}%
\right)}, \\
\check{d}_{-1} & =  & (\chi^1,\chi^1_r),
\end{eqnarray*}
$c$ is a non-zero scalar\footnote{The entry $c\cdot \chi^1_r$ in $\check{d}_{-2}$ comes from the correction term in Lemma \ref{lemma-gaussian-elimination}.}, and the morphism $\chi^1$ in $\check{d}_{-1}$ is once more the $\chi^1$-morphism associated to the $2$-colored edge in $\Gamma_0$.

Schematically, we represent chain complex \eqref{eq-RIII-T1-chain4} by the diagram
\begin{equation}\label{eq-RIII-T1-chain5}
\xymatrix{
 & \Gamma_{101} \ar[r] \ar[rdd] & \Gamma_{0} \ar[rd] & \\
\Gamma_{1} \ar[ru] \ar[rd]& && \Gamma_{000}, \\
& \Gamma_{011} \ar[r] \ar[ruu] & \Gamma_{001} \ar[ru] &
}
\end{equation}
in which each arrow represents the corresponding entry in the above matrix presentation of the differential map $\check{d}$. The following two lemmas are straightforward adaptations of \cite[Lemmas 26 and 27]{KR1}.

\begin{lemma}\cite[Lemmas 27]{KR1}\label{lemma-RIIIinv-arrow-comp}
The composition of any pair of consecutive arrows in diagram \eqref{eq-RIII-T1-chain5} is homotopically non-trivial.
\end{lemma}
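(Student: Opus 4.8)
The plan is to mimic the proof of \cite[Lemma~27]{KR1} by reducing every composition of consecutive arrows in diagram \eqref{eq-RIII-T1-chain5} to a composition of $\chi$- and edge morphisms between matrix factorizations of the MOY graphs involved, and then to verify non-triviality by the same homology-detection technique used throughout Section~\ref{sec-MOY-mf}: compute the relevant $\Hom_\hmf$ space (which, by Lemmas~\ref{lemma-def-chi}, \ref{lemma-hmf-decomp-II} and the decomposition in Proposition~\ref{prop-RIIIinv-decomp}, is one-dimensional over $\Q$ in each relevant bidegree) and check that the induced map on $H_{R_\partial}(-)$ is non-zero. There are only finitely many pairs of consecutive arrows to consider: from $\Gamma_1$ through $\Gamma_{101}$ to $\Gamma_0$; from $\Gamma_1$ through $\Gamma_{101}$ to $\Gamma_{001}$ (the long diagonal); from $\Gamma_1$ through $\Gamma_{011}$ to $\Gamma_0$ (the other long diagonal); from $\Gamma_1$ through $\Gamma_{011}$ to $\Gamma_{001}$; from $\Gamma_{101}$ to $\Gamma_0$ to $\Gamma_{000}$; from $\Gamma_{011}$ to $\Gamma_{001}$ to $\Gamma_{000}$; and the two ``crossing'' compositions $\Gamma_{101}\to\Gamma_{001}\to\Gamma_{000}$ and $\Gamma_{011}\to\Gamma_0\to\Gamma_{000}$. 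By the symmetry of the diagram under reflection it suffices to treat roughly half of these.

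First I would fix the explicit matrix-factorization models: using Proposition~\ref{prop-b-contraction} one writes each $\fC_N(\Gamma_{\ve\mu\nu})$ as a small Koszul matrix factorization over $R_\partial$, exactly as in the proof of Lemma~\ref{lemma-RIIIinv-decomp-obj}, and records the maps $\check d_{-3},\check d_{-2},\check d_{-1}$ in those terms. Each entry is, up to homotopy and the scalars from Gaussian elimination, one of the morphisms $\chi^1_u,\chi^1_l,\chi^1_r,\phi,\bar\phi$ (or the composite $g=\tilde\chi^1_u\circ\mathfrak h\circ\psi$, resp.\ $\bar g$). So a composition of two consecutive arrows is, up to projective homotopy, an explicit composition such as $\chi^1\circ\chi^1_u\circ\phi$, $\chi^1_r\circ(-\chi^1_l\circ g)$, etc. Then I would invoke the relations of Lemmas~\ref{lemma-phi}, \ref{lemma-def-chi} and \ref{lemma-def-tilde-chi} — in particular $\bar\phi\circ\mathsf m(x_i)\circ\phi\approx\id$, $\chi^1\circ\chi^0\simeq(x_2-x_1)\id$, and the analogous identities for $\tilde\chi$ — to simplify the composite to a multiplication by a (possibly zero, possibly unit) element of $R_\partial$ tensored with an identity, or to a morphism that visibly induces a non-zero map on $H_{R_\partial}$. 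Concretely, on the homology $H_{R_\partial}$, which kills the maximal ideal $\mathfrak I$, a $\chi$-morphism of $x$-degree $1$ induces a non-zero map between the relevant one-dimensional graded pieces precisely because of the non-vanishing already established in Lemmas~\ref{lemma-def-chi}, \ref{lemma-def-tilde-chi}; composing two such maps between graphs whose homologies are connected by these morphisms again gives a non-zero map, since the target $\Hom_\hmf$ space is one-dimensional and the composite is not the zero class.

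The key step, and the main obstacle, is the long-diagonal compositions $\Gamma_1 \to \Gamma_{101} \to \Gamma_{001}$ (and its mirror), because here both arrows involve the ``fat'' graphs containing a $3$-colored edge, so neither factor is simply a $\chi^1$ between thin resolutions: the first arrow contains the composite $g$, built from an edge sliding $\mathfrak h$, an edge split $\psi$, and a $\tilde\chi$-morphism, and the second arrow is a $\chi^1$. To handle this I would first use Corollary~\ref{cor-RIIIinv-decomp-g-def2} to replace $g$ by the homotopic morphism $\mathfrak g=\tilde\chi^1_u\circ\mathfrak h\circ\psi$ whose graphical description makes the subsequent $\chi^1$ act in a geometrically adjacent region; then the composition can be rearranged (using that $\tilde\chi$ and the unrelated $\chi^1$ commute up to homotopy, and that edge sliding is functorial up to the unique homotopy class by Lemma~\ref{lemma-edge-sliding-unique}) into a form $\mathsf m(p)\circ(\text{edge split})\circ(\text{edge slide})$ with $p$ a unit coefficient, which is non-trivial on homology. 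All the remaining compositions are of the ``easy'' type — two $\chi^1$'s, or a $\chi^1$ and an edge split/merge — and yield to the identities of Lemmas~\ref{lemma-phi} and~\ref{lemma-def-chi} directly, so the proof is a finite case check. I would present it by doing one representative easy case and one representative hard (long-diagonal) case in detail and asserting that the others are identical up to the reflection symmetry of diagram \eqref{eq-RIII-T1-chain5}, as in \cite{KR1}.
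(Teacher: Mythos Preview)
Your approach is correct in spirit but takes a much longer route than the paper does. The paper observes that the standard quotient map $\pi_1:R_\partial\to R_\partial/(a-1)\cong R$ induces a functor $\varpi_1:\hmf_{R_\partial,w}\to\hmf_{R,w_1}$ (where $w_1=w|_{a=1}$) that sends each $\fC_N(\Gamma_{\ve\mu\nu})$ to the matrix factorization used in \cite{KR1} and each arrow in diagram~\eqref{eq-RIII-T1-chain5} to the corresponding morphism there. Since functors preserve homotopies, a null-homotopic composition in $\hmf_{R_\partial,w}$ would have null-homotopic image in $\hmf_{R,w_1}$; but \cite[Lemma~27]{KR1} already shows those images are homotopically non-trivial. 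This one-line reduction replaces the entire case-by-case computation you propose.

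Your direct approach---writing down Koszul models, composing the explicit $\chi$- and edge-morphisms, and checking non-vanishing on $H_{R_\partial}$---is essentially a reproduction of the \cite{KR1} argument in the bigraded setting. It would work, and has the advantage of being self-contained (not relying on the earlier paper), but at the cost of repeating a lengthy calculation. The paper's approach buys brevity and makes transparent that nothing new is happening here beyond what was already established for the potential $x^{N+1}$: the extra variable $a$ can simply be specialized away to detect non-triviality.
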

\begin{proof}
Let $R=\Q[x_1,x_2,x_3,x_4,x_5,x_6]$ and $w_1= x_1^{N+1}+x_2^{N+1}+x_3^{N+1}-x_4^{N+1}-x_5^{N+1}-x_6^{N+1}$. The standard quotient map $\pi_1:R_\partial \rightarrow R_\partial/(a-1) \cong R$ induces a functor $\varpi_1:\hmf_{R_\partial,w} \rightarrow \hmf_{R,w_1}$ that takes an object $M$ of $\hmf_{R_\partial,w}$ to the object $M/(a-1)M$ of $\hmf_{R,w_1}$. Comparing the definitions in the current paper to those in \cite{KR1}, one can see that:
\begin{itemize}
	\item for any MOY graph $\Gamma$ in this subsection, $\varpi_1(\fC_N(\Gamma))$ is the matrix factorization associated to $\Gamma$ in \cite{KR1},
	\item for any morphism in this subsection, $\varpi_1$ maps it to the corresponding morphism in \cite{KR1}.
\end{itemize}
As a functor from $\hmf_{R_\partial,w}$ to $\hmf_{R,w_1}$, $\varpi_1$ maps homotopic morphisms to homotopic morphisms. From \cite[Lemmas 27]{KR1}, we know that the image under $\varpi_1$ of the composition of any pair of consecutive arrows in diagram \eqref{eq-RIII-T1-chain5} is homotopically non-trivial. This implies that the composition of any pair of consecutive arrows in diagram \eqref{eq-RIII-T1-chain5} is homotopically non-trivial.
\end{proof}

\begin{lemma}\cite[Lemmas 26]{KR1}\label{lemma-RIIIinv-arrow-hmf}
Assume that $\Gamma$ and $\Gamma'$ are two MOY graphs in diagram \eqref{eq-RIII-T1-chain5} and that there is an arrow pointing from $\Gamma$ to $\Gamma'$ in diagram \eqref{eq-RIII-T1-chain5}. Then, in the category $\hmf_{R_\partial,w}$, we have 
\[
\Hom_\hmf(\fC_N(\Gamma), \fC_N(\Gamma')\{0,-1\}) \cong \Q.
\]
In particular, this space is spanned over $\Q$ by the corresponding arrow in diagram \eqref{eq-RIII-T1-chain5}.
\end{lemma}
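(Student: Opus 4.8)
The plan is to prove, for each arrow $\fC_N(\Gamma) \to \fC_N(\Gamma')$ appearing in diagram \eqref{eq-RIII-T1-chain5}, that $\Hom_\hmf(\fC_N(\Gamma), \fC_N(\Gamma')\{0,-1\})$ is at least and at most one-dimensional over $\Q$, and then to observe that the arrow itself, being a non-zero vector in this space, must span it. The lower bound is immediate from Lemma \ref{lemma-RIIIinv-arrow-comp}: since the composition of any two consecutive arrows in \eqref{eq-RIII-T1-chain5} is homotopically non-trivial, each individual arrow is a homotopically non-trivial homogeneous morphism of $\zed_2\oplus\zed^{\oplus 2}$-degree $(0,0,1)$, hence a non-zero element of $\Hom_\hmf(\fC_N(\Gamma), \fC_N(\Gamma')\{0,-1\})$, and so $\dim_\Q \Hom_\hmf(\fC_N(\Gamma), \fC_N(\Gamma')\{0,-1\}) \geq 1$.

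For the upper bound I would first invoke Lemma \ref{lemma-Hom-space} (each $\fC_N(\Gamma)$ is finitely generated over $R_\partial$) to identify $\Hom_\hmf(\fC_N(\Gamma), \fC_N(\Gamma')\{0,-1\})$ with the degree-$(0,0,1)$ homology $H^{0,0,1}$ of the complex $\Hom_{R_\partial}(\fC_N(\Gamma),\fC_N(\Gamma'))$, so that it suffices to bound the $\Q$-dimension of that homogeneous component by $1$. Next, after choosing compatible markings and using Lemma \ref{lemma-marking-independence}, I would write $\Hom_{R_\partial}(\fC_N(\Gamma),\fC_N(\Gamma'))$ as a Koszul matrix factorization over $R_\partial$ via Lemma \ref{lemma-dual-Koszul} --- the tensor product of $\fC_N(\Gamma')$ with the dual Koszul factorization of $\fC_N(\Gamma)$ --- then strip off the internal indeterminates of $\Gamma$ and $\Gamma'$ using Proposition \ref{prop-b-contraction}, and finally apply Proposition \ref{prop-contraction-weak} successively to eliminate the right-column entries that are non-zero-divisors modulo the accumulated ideal, exactly as in the proofs of Lemmas \ref{lemma-edge-sliding-unique}, \ref{lemma-def-chi} and \ref{lemma-RIIIinv-decomp-hmf}. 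What should remain is a Koszul matrix factorization of $0$ with all right-column entries equal to $0$, over a polynomial quotient ring $R'$; as a $\zed_2\oplus\zed^{\oplus 2}$-graded $R'$-module this is a tensor product of rank-two free $R'$-modules, each a direct sum of a copy of $R'$ and a shifted copy of $R'\left\langle 1\right\rangle$, and since its mf-differential is $0$ its degree-$(0,0,1)$ homology equals the degree-$(0,0,1)$ part of this module, which a direct degree count shows to be at most one-dimensional over $\Q$.

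The arrows in \eqref{eq-RIII-T1-chain5} come in only a few combinatorial types --- arrows landing on $\Gamma_{000}$, arrows between single-resolution graphs and $\Gamma_0$, arrows out of $\Gamma_1$ --- and the computation above is structurally the same in each; alternatively, one can apply the functor $\varpi_1$ of Lemma \ref{lemma-RIIIinv-arrow-comp} to reduce the ``shape'' of the $\Hom$ complex to the corresponding computation in \cite[Lemma 26]{KR1}, the only new ingredient being that the extra indeterminate $a$ contributes a single free $\Q[a]$-factor and hence leaves the relevant dimension unchanged. Either way, combining the two bounds gives $\Hom_\hmf(\fC_N(\Gamma),\fC_N(\Gamma')\{0,-1\})\cong\Q$, and then, since the relevant arrow is a non-zero vector in a one-dimensional space, it spans. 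The main obstacle I anticipate is the bookkeeping: verifying the non-zero-divisor hypothesis of Proposition \ref{prop-contraction-weak} at each contraction step and tracking the $\zed^{\oplus 2}$-degree shifts carefully enough that the degree-$(0,0,1)$ component is correctly isolated in every one of the arrow-types; the homological-algebra content is otherwise routine.
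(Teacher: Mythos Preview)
Your proposal is correct and follows essentially the same route as the paper: the lower bound from Lemma~\ref{lemma-RIIIinv-arrow-comp}, the upper bound by rewriting $\Hom_{R_\partial}(\fC_N(\Gamma),\fC_N(\Gamma'))$ as a Koszul matrix factorization via Lemma~\ref{lemma-dual-Koszul}, contracting with Propositions~\ref{prop-b-contraction} and~\ref{prop-contraction-weak}, and finishing with a degree count on the resulting free module. The paper carries this out explicitly only for the arrow $\Gamma_{101}\to\Gamma_0$ and leaves the remaining cases to the reader, so your outline is in fact slightly more complete; your alternative suggestion via $\varpi_1$ is not used in the paper and would need a separate argument that passing to $a=1$ preserves the relevant dimension, so it is safer to stick with the direct computation.
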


\begin{proof}
The arrow pointing from $\Gamma$ to $\Gamma'$ in diagram \eqref{eq-RIII-T1-chain5} is an element of $\Hom_\hmf(\fC_N(\Gamma), \fC_N(\Gamma')\{0,-1\})$ and, by Lemma \ref{lemma-RIIIinv-arrow-comp}, is homotopically non-trivial. Thus, $\dim_\Q \Hom_\hmf(\fC_N(\Gamma), \fC_N(\Gamma')\{0,-1\}) \geq 1$. It remains to show that $\dim_\Q \Hom_\hmf(\fC_N(\Gamma), \fC_N(\Gamma')\{0,-1\}) \leq 1$. We only check this for the pair $\Gamma_{101}$ and $\Gamma_0$. The proofs for the other pairs are similar and left to the reader.

By Definition \ref{def-MOY-mf} and Proposition \ref{prop-b-contraction},
\begin{eqnarray*}
\fC_N(\Gamma_{101}) & \simeq & {\left(%
\begin{array}{cc}
 \ast_{2,2N} & x_1+x_2-x_4-x_8\\
 \ast_{2,2N-2} & x_1x_2-x_4x_8\\
 \ast_{2,2N} & x_8+x_3-x_5-x_6\\
 \ast_{2,2N-2} & x_8x_3-x_5x_6
\end{array}%
\right)_{R_\partial[x_8]}\{0,-2\}} \\
& \simeq & {\left(%
\begin{array}{cc}
 \ast_{2,2N} & x_1+x_2+x_3-x_4-x_5-x_6\\
 \ast_{2,2N-2} & x_1x_2-x_4(x_5+x_6-x_3)\\
 \ast_{2,2N-2} & (x_5+x_6-x_3)x_3-x_5x_6
\end{array}%
\right)_{R_\partial}\{0,-2\}}
\end{eqnarray*}
and
\[
\fC_N(\Gamma_0) \simeq \left(%
\begin{array}{cc}
 \ast_{2,2N} & x_1+x_2-x_4-x_5\\
 \ast_{2,2N-2} & x_1x_2-x_4x_5\\
 \ast_{2,2N} & x_3-x_6
\end{array}%
\right)_{R_\partial}\{0,-1\}.
\]
So, by Lemma \ref{lemma-dual-Koszul} and Proposition \ref{prop-contraction-weak},
\begin{eqnarray*}
&& \Hom_{\hmf}(\fC_N(\Gamma_{101}), \fC_N(\Gamma_0)\{0,-1\}) \\
& \simeq & H^{0,0,0}\left(\left(%
\begin{array}{cc}
 \ast_{2,2N} & x_1+x_2-x_4-x_5\\
 \ast_{2,2N-2} & x_1x_2-x_4x_5\\
 \ast_{2,2N} & x_3-x_6 \\
 \ast_{2,2N} & -(x_1+x_2+x_3-x_4-x_5-x_6)\\
 \ast_{2,2N-2} & -(x_1x_2-x_4(x_5+x_6-x_3))\\
 \ast_{2,2N-2} & -((x_5+x_6-x_3)x_3-x_5x_6)
\end{array}%
\right)_{R_\partial} \left\langle 3\right\rangle \{3,3N-7\}\right) \\
& \simeq & H^{0,0,0}\left( \left(%
\begin{array}{cc}
 \ast_{2,2N} & 0\\
 \ast_{2,2N-2} & 0\\
 \ast_{2,2N-2} & 0
\end{array}%
\right)_{R'} \left\langle 3\right\rangle \{3,3N-7\}\right),
\end{eqnarray*}
where $R'=R_\partial/(x_1+x_2-x_4-x_5,x_1x_2-x_4x_5,x_3-x_6)$. As a $\zed_2 \oplus \zed^{\oplus 2}$-graded $R'$-module,
\begin{eqnarray*}
&& {\left(%
 \begin{array}{cc}
 \ast_{2,2N} & 0\\
 \ast_{2,2N-2} & 0\\
 \ast_{2,2N-2} & 0
\end{array}%
\right)_{R'} \left\langle 3\right\rangle \{3,3N-7\}} \\
& \cong &
(R' \left\langle 1\right\rangle \{1,N-1\} \oplus R') \otimes_{R'} (R' \left\langle 1\right\rangle \{1,N-3\} \oplus R') \otimes_{R'}(R' \left\langle 1\right\rangle \{1,N-3\} \oplus R'),
\end{eqnarray*}
whose homogeneous component of $\zed_2 \oplus \zed^{\oplus 2}$-degree $(0,0,0)$ is $1$-dimensional. This implies that
\[
\Hom_\hmf(\fC_N(\Gamma_{101}), \fC_N(\Gamma_0)\{0,-1\}) \cong H^{0,0,0}\left(\left(%
 \begin{array}{cc}
 \ast_{2,2N} & 0\\
 \ast_{2,2N-2} & 0\\
 \ast_{2,2N-2} & 0
\end{array}%
\right)_{R'} \left\langle 3\right\rangle \{3,3N-7\} \right)
\]
is at most $1$-dimensional. Thus, $\Hom_\hmf(\fC_N(\Gamma_{101}), \fC_N(\Gamma_0)\{0,-1\}) \cong \Q$.
\end{proof}

Proposition \ref{prop-RIIIinv} now follows easily.

\begin{proof}[Proof of Proposition \ref{prop-RIIIinv}]
Note that diagram \eqref{eq-RIII-T1-chain5} is invariant under horizontal reflection of the MOY graphs. So, using similar argument, we can show that $\fC_N(T_2)$ is also homotopic as a chain complex over $\hmf_{R_\partial,w}$ to a chain complex of the schematic form \eqref{eq-RIII-T1-chain5} such that arrows in this new chain complex also satisfy Lemma \ref{lemma-RIIIinv-arrow-comp}. By Lemma \ref{lemma-RIIIinv-arrow-hmf}, corresponding arrows in the two schematic forms \eqref{eq-RIII-T1-chain5} for $\fC_N(T_1)$ and $\fC_N(T_2)$ are scalar multiples of each other. Using Lemma \ref{lemma-RIIIinv-arrow-comp}, it is easy to verify that these two chain complexes of schematic form \eqref{eq-RIII-T1-chain5} are isomorphic to each other as chain complexes over $\hmf_{R_\partial,w}$. This proves that $\fC_N(T_1) \simeq \fC_N(T_2)$ as chain complexes over $\hmf_{R_\partial,w}$.
\end{proof}

\section{The Decategorification of $\fH_N$}\label{sec-decat}

In this section, we establish the skein description of the decategorification $\fP_N$ of $\fH_N$ in Theorem \ref{thm-decat} and demonstrate the failure of $\fP_N$ to detect transverse non-simplicity from certain flype moves. 

\subsection{A skein description for $\fP_N$}\label{subsec-skein} The goal of this subsection is to prove Theorem \ref{thm-decat}. We start by recalling the ``invariant computation tree" constructed in \cite{FW}. 

\begin{definition}\cite[Definitions 1.1 and 1.3]{FW}\label{def-invariant-computation-tree}
For an $m$-strand closed braid $B_\pm = \beta \sigma_i^{\pm} \gamma$, the Conway split of $B_\pm$ at the crossing $\sigma_i^\pm$ produces two $m$-strand braids $B_0 = \beta \gamma$ and $B_\mp =  \beta \sigma_i^{\mp} \gamma$.

An invariant computation tree is a connected rooted oriented binary tree with each node labeled by a closed braid satisfying:
\begin{enumerate}
	\item If a node is labeled by a braid $B$ and its two children are labeled by $B'$ and $B''$, then $B'$ and $B''$ are obtained from $B$ by first applying a sequence of transverse Markov moves\footnote{Transverse Markov moves are call ``invariant Markov moves" in \cite{FW}.} and then doing a Conway split.
	\item Every terminal node is labeled with a closed braid with no crossings.
\end{enumerate}
\end{definition}

\begin{theorem}\cite[Theorem 1.7]{FW}\label{thm-invariant-computation-tree}
For any closed braid $B$, there exists an invariant computation tree whose root is labeled by $B$.
\end{theorem}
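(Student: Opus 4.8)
The plan is to prove the theorem by a double induction on the complexity of a closed braid: the outer induction is on the braid index $m$, and the inner one is on the word length $c$ (the number of crossings) of a fixed braid word $\beta\in\mathbf{B}_m$ representing the given closed braid $B=\hat\beta$. The base cases are immediate: a closed braid with $c=0$ is a terminal node by definition, and a closed $1$-braid is the crossingless unknot. So fix $m\geq2$ and $c\geq1$, assume the theorem for all closed braids on fewer than $m$ strands and for all closed $m$-braids of word length less than $c$, and reduce $B=\hat\beta$ to simpler closed braids via transverse Markov moves and one Conway split, so that the inductive hypotheses apply to the children. Throughout, only braid relations, conjugations, and positive stabilizations/destabilizations are used, as required by the transverse setting.

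The first reduction handles the case in which some generator $\sigma_i$ does not occur in $\beta$. After a cyclic permutation (a conjugation), $\hat\beta$ is then the split union of the closures of a braid in $\mathbf{B}_i$ and a braid in $\mathbf{B}_{m-i}$; since $1\le i\le m-1$, both summands have braid index $<m$, and an invariant computation tree for $\hat\beta$ is built by carrying out a tree for the first summand (with the second carried along disjointly) and then, at each resulting node, a tree for the second summand, so that every leaf is crossingless. From now on we may assume every $\sigma_i$ occurs in $\beta$, in particular $\sigma_{m-1}$ does. The second reduction is the case where $\sigma_{m-1}$ occurs exactly once. Using braid relations and conjugation we bring $\beta$ to the form $w\sigma_{m-1}^{\varepsilon}$ with $w\in\mathbf{B}_{m-1}$. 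If $\varepsilon=+1$ this is a positive stabilization, a transverse Markov move, so a tree for $\hat w$ (which exists by the inductive hypothesis on braid index) becomes a tree for $\hat\beta$ after prepending this move to the reductions at its root. If $\varepsilon=-1$, we Conway split at the unique $\sigma_{m-1}^{-1}$: one child is $\hat w\sqcup U$ with $w\in\mathbf{B}_{m-1}$, covered by the first reduction, and the other is $\widehat{w\sigma_{m-1}}$, a positive stabilization of $\hat w$, covered by induction on the braid index.

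It remains to treat the hard case: every generator occurs in $\beta$ and $\sigma_{m-1}$ occurs at least twice. The aim is to find, after a suitable sequence of transverse Markov moves, a crossing of $\beta$ whose Conway split yields two closed braids of strictly smaller complexity — the smoothed child having $c-1$ crossings (hence covered by the inner hypothesis), and the switched child, after further transverse Markov moves, falling into one of the cases already handled (a braid on fewer strands, or with fewer crossings). I expect this to be the main obstacle, and it is the combinatorial heart of the argument: switching a crossing changes neither the braid index nor the crossing number, so one must argue that by first manoeuvring the word — using braid relations to bring two occurrences of $\sigma_{m-1}$ into position so as to create a cancelling pair $\sigma_{m-1}\sigma_{m-1}^{-1}$, or to expose a sub-braid that can be positively destabilized — a switch can always be arranged to decrease the complexity. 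This bookkeeping is exactly what is carried out in \cite[\S2]{FW}; granting it, all children of $B$ have strictly smaller complexity in the lexicographic order on $(m,c)$, the induction closes, and the theorem follows.
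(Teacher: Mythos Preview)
The paper does not prove this statement; it is quoted from \cite[Theorem~1.7]{FW} and used as a black box in the proof of Theorem~\ref{thm-decat}. There is no paper-side argument to compare against.

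On its own terms, your inductive framework is sound and your two easy reductions (a missing generator $\sigma_i$; or $\sigma_{m-1}$ occurring exactly once) are handled correctly. But Case~3 is the entire substance of the Franks--Williams theorem, and you do not prove it: you state the desired conclusion and then write ``granting it'' with a pointer to \cite[\S2]{FW}. The difficulty is real --- switching a crossing preserves both $m$ and $c$, so neither coordinate of your lexicographic complexity drops for the switched child, and the assertion that transverse Markov moves can always be arranged to force a subsequent decrease is precisely what requires the Franks--Williams combinatorics. In effect your write-up goes slightly further than the paper (which offers no argument at all) but ultimately rests on the same external reference for the essential step.
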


Before proving Theorem \ref{thm-decat}, we establish the following lemma, which will be used to prove Part 3 of Theorem \ref{thm-decat}.

\begin{lemma}\label{lemma-hat-P-unlink}
Denote by $U^{\sqcup m}$ the $m$-strand closed braid with no crossings. Define 
\[
\mathscr{H}_N(U^{\sqcup m}) := H(H(\fC_N(U^{\sqcup m})/a\fC_N(U^{\sqcup m}),d_{mf}),d_\chi),
\] 
which inherits the $\zed_2\oplus \zed^{\oplus 3}$-grading of $\fC_N(U^{\sqcup m})$. Its graded Euler characteristic is 
\[
\mathscr{P}_N(U^{\sqcup m}) := \sum_{(\ve,i,j,k)\in \zed_2 \oplus \zed^{\oplus3}}  (-1)^i \tau^\ve \alpha^j \xi^k \dim_\Q\mathscr{H}_N^{\ve,i,j,k}(U^{\sqcup m}) \in \zed[[\alpha,\xi]][\alpha^{-1},\xi^{-1},\tau]/(\tau^2-1).
\]
Then
\[
\mathscr{P}_N(U^{\sqcup m}) = \left(\frac{1+\tau\alpha^{-1}\xi^{-N+1}}{1-\xi^2}\right)^{m}.
\]
\end{lemma}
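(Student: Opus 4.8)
The statement is purely a computation of the graded Euler characteristic of $\mathscr{H}_N(U^{\sqcup m})$, so the approach is to compute $\fC_N(U^{\sqcup m})/a\fC_N(U^{\sqcup m})$ explicitly and read off its homology. First I would put one marked point $x_\ell$ on each of the $m$ circles. Since $U^{\sqcup m}$ has no crossings, $d_\chi = 0$, and $\fC_N(U^{\sqcup m})$ is just a tensor product over $\Q[a]$ of the matrix factorizations associated to the $m$ circles. Each circle, viewed as a MOY graph with a single $2$-valent vertex, has $\fC_N(\text{circle}) \cong \big((N+1)ax_\ell^N,\,0\big)_{\Q[a,x_\ell]}$, which by Lemma \ref{lemma-freedom} (the sequence $\{0\}$ being irrelevant, or more simply because $(N+1)$ is a unit) is isomorphic to $(ax_\ell^N,0)_{\Q[a,x_\ell]}$, exactly as in the computation of $\fH_N(U)$ in the proof of Corollary \ref{cor-unknot-neg-stab}. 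Hence $\fC_N(U^{\sqcup m}) \cong \bigotimes_{\ell=1}^m (ax_\ell^N,0)_{\Q[a,x_\ell]}$ over $\Q[a]$.

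Next I would pass to the quotient by $a$. Since $a$ is a non-zero-divisor on each $\Q[a,x_\ell]$ and on the tensor product, and setting $a=0$ commutes with tensor products, we get
\[
\fC_N(U^{\sqcup m})/a\fC_N(U^{\sqcup m}) \cong \bigotimes_{\ell=1}^m \big(0,\,0\big)_{\Q[x_\ell]},
\]
i.e. each factor becomes $\Q[x_\ell] \xrightarrow{0} \Q[x_\ell]\{-1,-N+1\} \xrightarrow{0} \Q[x_\ell]$, a matrix factorization of $0$ over $\Q[x_\ell]$ with zero differential. Taking $d_{mf}$-homology (which is trivial since $d_{mf}=0$ here) gives $H(\fC_N(U^{\sqcup m})/a\fC_N(U^{\sqcup m}),d_{mf})$ equal to the module itself, and then $d_\chi = 0$ gives $\mathscr{H}_N(U^{\sqcup m})$ equal to this same $\zed_2\oplus\zed^{\oplus3}$-graded module, concentrated in homological degree $0$. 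For a single factor the graded dimension is $\dim_\Q\big(\Q[x_\ell] \oplus \Q[x_\ell]\{-1,-N+1\}\big)$; since $\deg x_\ell = (0,2)$, $\sum_k \xi^k \dim_\Q (\Q[x_\ell])_k = \frac{1}{1-\xi^2}$, so the single-factor contribution is $\frac{1+\tau\alpha^{-1}\xi^{-N+1}}{1-\xi^2}$. The Euler characteristic of a tensor product over $\Q$ is the product of the Euler characteristics (the Künneth formula, trivially here since everything is free and $d=0$), so $\mathscr{P}_N(U^{\sqcup m}) = \left(\frac{1+\tau\alpha^{-1}\xi^{-N+1}}{1-\xi^2}\right)^m$, as claimed. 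Throughout one must keep careful track of the fact that $\dim_\Q$ of $\Q[x_\ell]$ in each $x$-degree is infinite but the formal power series $\frac{1}{1-\xi^2}$ records the graded dimension correctly, and that the footnote in Definition \ref{def-decat} guarantees all the relevant gradings are bounded below so these power series make sense in $\zed[[\alpha,\xi]][\alpha^{-1},\xi^{-1},\tau]/(\tau^2-1)$.

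**The main obstacle.** There is no deep obstacle here; the only subtlety is bookkeeping of the grading shifts. In particular I should double-check the $\zed_2$- and $a$-degree of the middle term of $(ax_\ell^N,0)_{\Q[a,x_\ell]}$: from Definition \ref{def-koszul-mf}, $(a_0,a_1)_R = R \xrightarrow{a_0} R\{1-\deg_a a_0, N+1-\deg_x a_0\}\xrightarrow{a_1} R$, and with $a_0 = ax_\ell^N$ we have $\deg_a a_0 = 2$, $\deg_x a_0 = 2N$, giving the shift $\{-1, -N+1\}$, consistent with the computation in the proof of Corollary \ref{cor-unknot-neg-stab}. After passing to $a=0$ this becomes $(0,0)_{\Q[x_\ell]} = \Q[x_\ell]\xrightarrow{0}\Q[x_\ell]\{-1,-N+1\}\xrightarrow{0}\Q[x_\ell]$, whose two summands sit in $\zed_2$-degrees $0$ and $1$ respectively, contributing $1$ and $\tau\alpha^{-1}\xi^{-N+1}$ to the graded dimension of a single circle. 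The rest is the elementary multiplicativity argument, so the proof is short.
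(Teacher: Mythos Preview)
Your proof is correct and follows essentially the same approach as the paper: mark each strand once, observe that $d_\chi=0$ since there are no crossings, reduce modulo $a$ to obtain a Koszul matrix factorization over $\Q[x_1,\dots,x_m]$ with all entries zero, and read off the graded dimension as the $m$-th power of $\frac{1+\tau\alpha^{-1}\xi^{-N+1}}{1-\xi^2}$. The only cosmetic difference is that the paper writes the tensor product as a single $m$-row Koszul matrix factorization rather than factor by factor.
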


\begin{proof}
Put one marked point on each strand of $U^{\sqcup m}$. Note $U^{\sqcup m}$ has no crossings. It is straightforward to check that
\[
\fC_N(U^{\sqcup m})/a\fC_N(U^{\sqcup m}) \cong 0 \rightarrow  \underbrace{\left(%
\begin{array}{cc}
  0_{2,2N} & 0 \\
  0_{2,2N} & 0 \\
  \vdots & \vdots \\
  0_{2,2N} & 0
\end{array}%
\right)_{\Q[x_1,\dots,x_m]}}_{0} \rightarrow 0,
\]
where the Koszul matrix factorization has $m$ rows and $0_{2,2N}$ is a ``homogeneous $0$ of $\zed^{\oplus 2}$-degree $(2,2N)$". Note that both $d_{mf}$ and $d_\chi$ in $\fC_N(U^{\sqcup m})/a\fC_N(U^{\sqcup m})$ are $0$. So
\[
\mathscr{P}_N(U^{\sqcup m}) = \gdim_\Q \left(%
\begin{array}{cc}
  0_{2,2N} & 0 \\
  0_{2,2N} & 0 \\
  \vdots & \vdots \\
  0_{2,2N} & 0
\end{array}%
\right)_{\Q[x_1,\dots,x_m]} = \left(\frac{1+\tau\alpha^{-1}\xi^{-N+1}}{1-\xi^2}\right)^{m}.
\]
\end{proof}

We are now ready to prove Theorem \ref{thm-decat}.

\begin{proof}[Proof of Theorem \ref{thm-decat}]
Part 1 is a direct consequence of Theorem \ref{thm-trans-link-homology}. 

Now we consider Part 2. From local chain complexes \eqref{eq-def-chain-crossing+} and \eqref{eq-def-chain-crossing-}, one can see that
\begin{eqnarray*}
\fP_N () & = & \tau\alpha\xi^{N-1}\fP_N () - \tau\alpha\xi^{N}\fP_N (\setlength{\unitlength}{.5pt}
\begin{picture}(40,30)(-20,10)

\put(20,0){\vector(-2,1){20}}

\put(-20,0){\vector(2,1){20}}

\put(0,30){\vector(-2,1){20}}

\put(0,30){\vector(2,1){20}}

\put(0,10){\vector(0,1){20}}

\put(3,16){\tiny{$2$}}

\end{picture}), \\
\fP_N () & = & \tau\alpha^{-1}\xi^{-N+1}\fP_N () - \tau\alpha^{-1}\xi^{-N}\fP_N ().
\end{eqnarray*}
It follows easily from these equations that
\[
\alpha^{-1}\xi^{-N} \fP_N () - \alpha\xi^{N} \fP_N () = \tau (\xi^{-1}-\xi)\fP_N ().
\]
This proves Part 2.

Next, we prove Part 3. For $m \geq 2$, denote by $\sigma_{m-1}^{\pm 1}$ the closed $m$-braid with a single $\pm$ crossing between the $(m-1)$-th and the $m$-th strands. By Part 2, we have 
\begin{equation}\label{eq-unlink-1}
\fP_N(U^{\sqcup m}) = \frac{\tau}{\xi^{-1}-\xi} \cdot (\alpha^{-1}\xi^{-N} \fP_N (\sigma_{m-1}) - \alpha\xi^{N} \fP_N (\sigma_{m-1}^{-1})).
\end{equation}
From Part 1, we know that 
\begin{equation}\label{eq-unlink-2}
\fP_N (\sigma_{m-1}) = \fP_N(U^{\sqcup m-1}).
\end{equation}
By Theorem \ref{thm-neg-stabilization} and Lemma \ref{lemma-hat-P-unlink}, we have
\begin{equation}\label{eq-unlink-3}
\fP_N (\sigma_{m-1}^{-1}) = \alpha^{-2}(\fP_N(U^{\sqcup m-1})- \mathscr{P}_N(U^{\sqcup m-1})) = \alpha^{-2}(\fP_N(U^{\sqcup m-1})- \left(\frac{1+\tau\alpha^{-1}\xi^{-N+1}}{1-\xi^2}\right)^{m-1}).
\end{equation}
Plugging \eqref{eq-unlink-2} and \eqref{eq-unlink-3} into \eqref{eq-unlink-1}, we get that, for $m \geq 2$,
\begin{equation}\label{eq-unlink-4}
\fP_N(U^{\sqcup m}) = \tau\alpha^{-1}([N] \fP_N(U^{\sqcup m-1}) + \frac{\xi^N}{\xi^{-1}-\xi}\left(\frac{1+\tau\alpha^{-1}\xi^{-N+1}}{1-\xi^2}\right)^{m-1}).
\end{equation}
where $[N]:= \frac{\xi^{-N}-\xi^N}{\xi^{-1} -\xi}$. From Corollary \ref{cor-unknot-neg-stab}, we have
\begin{equation}\label{eq-unlink-5}
\fP_N(U^{\sqcup 1}) = \fP_N(U) = \tau\alpha^{-1}([N] \frac{1}{1-\alpha^2} + \frac{\xi^N}{\xi^{-1}-\xi}).
\end{equation}
Let $\mathsf{f}_m = \frac{\fP_N(U^{\sqcup m})}{(\tau\alpha^{-1}[N])^m}$. Then, by \eqref{eq-unlink-4} and \eqref{eq-unlink-5}, the sequence $\{\mathsf{f}_m\}$ satisfies the recursive relation 
\begin{equation}\label{eq-unlink-6}
\begin{cases}
\mathsf{f}_m = \mathsf{f}_{m-1} + \frac{\xi^N}{(\xi^{-N}-\xi^N)}\left(\frac{\tau\alpha\xi^{-1}+\xi^{-N}}{\xi^{-N}-\xi^N}\right)^{m-1} & \text{for } m \geq 2, \\
\mathsf{f}_1 =  \frac{1}{1-\alpha^2} + \frac{\xi^N}{\xi^{-N}-\xi^N}. &
\end{cases}
\end{equation}
Therefore, 
\begin{eqnarray*}
\mathsf{f}_m &  = & \frac{1}{1-\alpha^2} + \frac{\xi^N}{(\xi^{-N}-\xi^N)} \sum_{l=0}^{m-1} \left(\frac{\tau\alpha\xi^{-1}+\xi^{-N}}{\xi^{-N}-\xi^N}\right)^{l} \\
& = & \frac{1}{1-\alpha^2} +\frac{\xi^N}{(\xi^{-N}-\xi^N)} \cdot \frac{1-\left(\frac{\tau\alpha\xi^{-1}+\xi^{-N}}{\xi^{-N}-\xi^N}\right)^{m}}{1-\frac{\tau\alpha\xi^{-1}+\xi^{-N}}{\xi^{-N}-\xi^N}} \\
& = & \frac{1}{1-\alpha^2} + \frac{\left(\frac{\tau\alpha\xi^{-1}+\xi^{-N}}{\xi^{-N}-\xi^N}\right)^{m}-1}{\tau\alpha\xi^{-N-1} +1}.
\end{eqnarray*}
Thus, 
\begin{equation}\label{eq-unlink-7}
\fP_N(U^{\sqcup m})=(\tau\alpha^{-1}[N])^m\mathsf{f}_m = (\tau \alpha^{-1} [N])^m ( \frac{1}{1-\alpha^2} + \frac{\left(\frac{\tau \alpha \xi^{-1} + \xi^{-N}}{\xi^{-N}-\xi^N}\right)^m-1}{\tau\alpha\xi^{-N-1} +1}).
\end{equation}
This proves Part 3.

Finally, from Theorem \ref{thm-invariant-computation-tree}, it is easy to see that Parts 1--3 uniquely determine $\fP_N$. So Part 4 is true.
\end{proof}

\subsection{Flype moves}\label{subsec-flype} It is not yet clear whether $\fP_N$ is truly a new invariant for transverse links or just a weird normalization of the classical HOMFLYPT polynomial. In this subsection, we prove Corollary \ref{cor-flype}, which suggests that $\fP_N$ can not detect transverse non-simplicity from flype moves.

\begin{proof}[Proof of Corollary \ref{cor-flype}]
First we prove that $\fP_N(B_1) = \fP_N(B_2)$. If $r=0$, then $B_1$ and $B_2$ are transverse isotopic. So $\fP_N(B_1) = \fP_N(B_2)$. It remains to prove $\fP_N(B_1) = \fP_N(B_2)$ for $r\neq 0$. We only prove this for $r>0$. The proof for $r<0$ is very similar and left to the reader. All braids in this part of the proof are $3$-braids. Note that, by the skein relation in Theorem \ref{thm-decat},
\[
\fP_N(\sigma_1^{2p+1}\sigma_2^{2r}\sigma_1^{2q}\sigma_2^{-1}) = \tau \alpha \xi^N(\xi^{-1}-\xi) \fP_N (\sigma_1^{2p+1}\sigma_2^{2r-1}\sigma_1^{2q}\sigma_2^{-1}) + \alpha^2 \xi^{2N} \fP_N (\sigma_1^{2p+1}\sigma_2^{2r-2}\sigma_1^{2q}\sigma_2^{-1}).
\]
Applying this successively, we get two polynomials $f_{0,r},f_{-1,r} \in \zed[\alpha,\xi,\alpha^{-1},\xi^{-1},\tau]/(\tau^2-1)$ such that
\[
\fP_N(B_1)= \fP_N(\sigma_1^{2p+1}\sigma_2^{2r}\sigma_1^{2q}\sigma_2^{-1}) = f_{0,r} \cdot \fP_N(\sigma_1^{2p+1}\sigma_2^{0}\sigma_1^{2q}\sigma_2^{-1}) + f_{-1,r} \cdot \fP_N(\sigma_1^{2p+1}\sigma_2^{-1}\sigma_1^{2q}\sigma_2^{-1}).
\]
Similarly,
\[
\fP_N(B_2)= \fP_N(\sigma_1^{2p+1}\sigma_2^{-1}\sigma_1^{2q}\sigma_2^{2r}) = f_{0,r} \cdot \fP_N(\sigma_1^{2p+1}\sigma_2^{-1}\sigma_1^{2q}\sigma_2^{0}) + f_{-1,r} \cdot \fP_N(\sigma_1^{2p+1}\sigma_2^{-1}\sigma_1^{2q}\sigma_2^{-1}).
\]
But $\sigma_1^{2p+1}\sigma_2^{0}\sigma_1^{2q}\sigma_2^{-1}$ and $\sigma_1^{2p+1}\sigma_2^{-1}\sigma_1^{2q}\sigma_2^{0}$ are transverse isotopic. So 
\[
\fP_N(\sigma_1^{2p+1}\sigma_2^{0}\sigma_1^{2q}\sigma_2^{-1})=  \fP_N(\sigma_1^{2p+1}\sigma_2^{-1}\sigma_1^{2q}\sigma_2^{0})
\]
and, therefore, $\fP_N(B_1) = \fP_N(B_2)$.

Now we prove that $\fP_N(B_3) = \fP_N(B_4)$. In the rest of this proof, each closed braid is denoted by a pair $(b,m)$, where $b$ is the braid word and $m$ is the number of strands in the closed braid. Recall that
\begin{eqnarray*}
B_3 & = & (\sigma_1\sigma_2^{-1}\sigma_1\sigma_2^{-1}\sigma_3^3\sigma_2\sigma_3^{-1}, 4), \\
B_4 & = & (\sigma_1\sigma_2^{-1}\sigma_1\sigma_2^{-1}\sigma_3^{-1}\sigma_2\sigma_3^{3},4).
\end{eqnarray*}
Applying the skein relation in Theorem \ref{thm-decat} successively to the crossings in $\sigma_3^{3}$ in $B_3$ and $B_4$, we know that there are polynomials $g_{0,r},g_{-1,r} \in \zed[\alpha,\xi,\alpha^{-1},\xi^{-1},\tau]/(\tau^2-1)$ such that
\begin{eqnarray*}
\fP_N(B_3) & = & g_{0,r} \cdot \fP_N(\sigma_1\sigma_2^{-1}\sigma_1\sigma_2^{-1}\sigma_3^0\sigma_2\sigma_3^{-1}, 4) + g_{-1,r} \cdot \fP_N(\sigma_1\sigma_2^{-1}\sigma_1\sigma_2^{-1}\sigma_3^{-1}\sigma_2\sigma_3^{-1}, 4), \\
\fP_N(B_4) & = & g_{0,r} \cdot \fP_N(\sigma_1\sigma_2^{-1}\sigma_1\sigma_2^{-1}\sigma_3^{-1}\sigma_2\sigma_3^{0},4) + g_{-1,r} \cdot \fP_N(\sigma_1\sigma_2^{-1}\sigma_1\sigma_2^{-1}\sigma_3^{-1}\sigma_2\sigma_3^{-1},4).
\end{eqnarray*}
Note that, by Theorem \ref{thm-decat},
\begin{eqnarray*}
&& \fP_N(\sigma_1\sigma_2^{-1}\sigma_1\sigma_2^{-1}\sigma_3^0\sigma_2\sigma_3^{-1}, 4) = \fP_N(\sigma_1\sigma_2^{-1}\sigma_1\sigma_3^{-1}, 4) \\
& = & \alpha^{-2}\xi^{-2N} \fP_N(\sigma_1\sigma_2^{-1}\sigma_1\sigma_3, 4)   -\tau\alpha^{-1}\xi^{-N}(\xi^{-1}-\xi)\fP_N(\sigma_1\sigma_2^{-1}\sigma_1, 4) \\
& = & \alpha^{-2}\xi^{-2N} \fP_N(\sigma_1\sigma_2^{-1}\sigma_1, 3)   -\tau\alpha^{-1}\xi^{-N}(\xi^{-1}-\xi)\fP_N(\sigma_1\sigma_2^{-1}\sigma_1, 4) \\
\end{eqnarray*}
and 
\begin{eqnarray*}
&&\fP_N(\sigma_1\sigma_2^{-1}\sigma_1\sigma_2^{-1}\sigma_3^{-1}\sigma_2\sigma_3^{0},4) = \fP_N(\sigma_1\sigma_2^{-1}\sigma_1\sigma_2^{-1}\sigma_3^{-1}\sigma_2,4) \\
& = & \alpha^{-2}\xi^{-2N} \fP_N(\sigma_1\sigma_2^{-1}\sigma_1\sigma_2^{-1}\sigma_3\sigma_2,4) - \tau\alpha^{-1}\xi^{-N}(\xi^{-1}-\xi) \fP_N(\sigma_1\sigma_2^{-1}\sigma_1\sigma_2^{-1}\sigma_2,4) \\
& = & \alpha^{-2}\xi^{-2N} \fP_N(\sigma_1\sigma_2^{-1}\sigma_1,3) - \tau\alpha^{-1}\xi^{-N}(\xi^{-1}-\xi) \fP_N(\sigma_1\sigma_2^{-1}\sigma_1,4). \\
\end{eqnarray*}
Thus, $\fP_N(\sigma_1\sigma_2^{-1}\sigma_1\sigma_2^{-1}\sigma_3^0\sigma_2\sigma_3^{-1}, 4) = \fP_N(\sigma_1\sigma_2^{-1}\sigma_1\sigma_2^{-1}\sigma_3^{-1}\sigma_2\sigma_3^{0},4)$. So $\fP_N(B_3) = \fP_N(B_4)$.
\end{proof}

\section{Relation to the $\slmf(N)$ Khovanov-Rozansky homology}\label{sec-module-structure}

We prove Theorem \ref{thm-sl-N-rel} in this section. Let us start with two algebraic lemmas.

\begin{lemma}\label{lemma-a-1-inj}
Let $\Q[a]$ be the graded polynomial ring with grading given by $\deg_a a =2$. Suppose that $M$ is a $\zed$-graded $\Q[a]$-module whose grading is bounded below and that $f(a)=\sum_{j=0}^m c_j a^j \in \Q[a]$ satisfies $c_0 \neq 0$. Then the endomorphism $M\xrightarrow{f(a)}M$ is injective. 
\end{lemma}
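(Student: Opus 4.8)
The plan is to argue by contradiction. Suppose $M \xrightarrow{f(a)} M$ is not injective, so there is a nonzero $x \in M$ with $f(a)x = 0$. Since the grading of $M$ is bounded below, write $x = \sum_{i} x_i$ as a finite sum of homogeneous components, and let $x_{i_0}$ be the nonzero homogeneous component of lowest degree, say of degree $d_0$. Because $\deg_a a = 2$, the operator $a^j$ raises degree by $2j$, so $f(a) = \sum_{j=0}^m c_j a^j$ decomposes $M$-degree by $M$-degree: the degree-$d_0$ component of $f(a)x$ is exactly $c_0 x_{i_0}$ (all other terms $c_j a^j x_i$ with $j \geq 1$ or $i \neq i_0$ have strictly larger degree, using that the degrees appearing in $x$ are bounded below by $d_0$). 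Hence $c_0 x_{i_0} = 0$, and since $c_0 \neq 0$ in the field $\Q$, we get $x_{i_0} = 0$, a contradiction.

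More carefully, the key step is to isolate the lowest-degree piece: I would let $D$ be the set of degrees $d$ such that the degree-$d$ homogeneous component $x_d$ of $x$ is nonzero; $D$ is finite and nonempty, so it has a minimum $d_0$. Comparing degree-$d_0$ components on both sides of $f(a)x = 0$, every contribution $c_j a^j x_d$ lands in degree $d + 2j$, which equals $d_0$ only when $j = 0$ and $d = d_0$. Therefore the degree-$d_0$ component of $f(a)x$ is $c_0 x_{d_0}$, forcing $c_0 x_{d_0} = 0$ and thus $x_{d_0} = 0$, contradicting minimality.

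There is essentially no genuine obstacle here; the only thing to be slightly careful about is that the boundedness-below hypothesis is exactly what guarantees the minimum $d_0$ exists (without it, one could imagine infinitely descending degrees and no ``lowest term'' to compare). I would state this cleanly rather than belabor it. The argument does not require $M$ to be finitely generated or free, only graded with grading bounded below, and it uses only that $\Q$ is a domain (indeed a field) so that $c_0 \neq 0$ implies multiplication by $c_0$ is injective on the $\Q$-vector space $M_{d_0}$.
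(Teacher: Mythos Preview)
Your proof is correct and essentially identical to the paper's: both argue by contradiction, take the lowest-degree homogeneous component of a putative nonzero kernel element, and observe that the degree-$d_0$ part of $f(a)x$ is $c_0 x_{d_0}$, forcing $x_{d_0}=0$. The only cosmetic difference is that the paper writes out the general degree-$j$ component $c_0 u_j + \sum_{i=1}^m c_i a^i u_{j-2i}$ before specializing to $j=j_0$.
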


\begin{proof}
Assume the multiplication by $f(a)$ is not injective on $M$. Then there is an element $u$ of $M$ such that $u\neq 0$ and $f(a)u=0$. Write $u=\sum_j u_j$, where $u_j$ is the homogeneous part of $u$ of degree $j$. Since the grading of $M$ is bounded below, there exists a $j_0$ such that $u_{j_0} \neq 0$ and $u_j=0$ if $j<j_0$. Note that $c_0 u_j +\sum_{i=1}^m c_i a^i u_{j-2i}$ is the homogeneous part of $f(a)u$ of degree $j$ and $\sum_j (c_0 u_j +\sum_{i=1}^m c_i a^i u_{j-2i}) = f(a)u=0$. So $c_0 u_j +\sum_{i=1}^m c_i a^i u_{j-2i}=0$ for all $j$. When $j=j_0$, this implies $u_{j_0}=0$, which is a contradiction.
\end{proof}

\begin{lemma}\label{lemma-fg-graded-module-structure}
Suppose that $M$ is a finitely generated $\zed$-graded $\Q[a]$-module. Then, as a $\zed$-graded $\Q[a]$-module, $M\cong (\bigoplus_{j=1}^m \Q[a]\{s_j\}) \bigoplus (\bigoplus_{k=1}^n \Q[a]/(a^{l_k})\{t_k\})$, where the sequences $\{s_1,\dots,s_m\} \subset \zed$ and $\{(l_1,t_1),\dots,(l_n,t_n)\} \subset \zed^{\oplus 2}$ are uniquely determined by $M$ up to permutation.
\end{lemma}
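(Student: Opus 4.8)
This is the classical structure theorem for finitely generated modules over a principal ideal domain, refined to keep track of the grading, so the plan is to run the standard PID argument while checking at each step that one may choose the pivots and generators homogeneously. First I would observe that $\Q[a]$ is a graded PID: every ideal is generated by a single homogeneous element (namely $a^l$ up to a unit, or $0$, or all of $\Q[a]$), so a graded submodule of a free graded module is again free with a homogeneous basis. Then, given a finitely generated $\zed$-graded $\Q[a]$-module $M$, I would present it as $M \cong F/K$ where $F = \bigoplus_{i=1}^N \Q[a]\{r_i\}$ is a finitely generated free graded module (choose a finite set of homogeneous generators of $M$) and $K \subseteq F$ is the kernel of the projection, which is a graded submodule, hence free with a homogeneous basis of some rank $p \leq N$.

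Next I would diagonalize the inclusion $K \hookrightarrow F$ by a graded Smith normal form argument. Writing the inclusion as a matrix with homogeneous entries with respect to homogeneous bases of $K$ and $F$, I would perform homogeneous row and column operations: swapping basis elements, scaling by nonzero constants in $\Q$, and adding a homogeneous multiple of one basis element to another (the latter is the only place degree bookkeeping matters — one may add $c a^s e_i$ to $e_j$ only when the degrees match, which is exactly what homogeneity of the matrix entry guarantees). Using a Euclidean-style induction on the minimal degree of a nonzero entry (or simply on the number of monomials, since $\Q[a]$ has a degree function), I would bring the matrix to the form $\mathrm{diag}(a^{l_1},\dots,a^{l_p})$ padded by zero columns, with homogeneous basis changes throughout. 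This yields $M \cong \bigoplus_{i=1}^{p} \Q[a]/(a^{l_i})\{t_i\} \;\oplus\; \bigoplus_{i=p+1}^{N} \Q[a]\{s_i\}$ as graded modules, where those $l_i = 0$ contribute trivial summands that are discarded and renumbered. I expect the diagonalization step — verifying that every required elementary operation can be taken homogeneous and that the induction terminates while preserving the grading — to be the main (though routine) obstacle; it is the graded analogue of Smith normal form and requires care but no new idea.

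Finally I would prove uniqueness of the data $\{s_j\}$ and $\{(l_k,t_k)\}$ up to permutation. The free rank $m$ is recovered as $m = \dim_{\Q(a)} M \otimes_{\Q[a]} \Q(a)$ (or as the rank of the maximal free graded summand), and its graded shifts $\{s_j\}$ are then read off from the graded dimensions of $M$ in high degrees, where the torsion contributes nothing. For the torsion part $T(M) = \bigoplus_k \Q[a]/(a^{l_k})\{t_k\}$, I would invoke Lemma \ref{lemma-a-1-inj} to note $T(M)$ is exactly the $a$-power torsion submodule, and then recover each pair $(l_k,t_k)$ from the graded dimensions of the subquotients $a^r T(M)/a^{r+1}T(M)$ for varying $r$, which are graded $\Q$-vector spaces whose Hilbert series determine the multiset $\{(l_k,t_k)\}$. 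This is the standard uniqueness argument for the PID structure theorem decorated with the grading, and presents no difficulty once the existence part is in hand.
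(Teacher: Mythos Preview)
Your proposal is correct and follows essentially the same route as the paper: present $M$ as a quotient $F/K$ of a finitely generated free graded module by a graded submodule (which is again free with a homogeneous basis, the paper invoking Lemma~\ref{lemma-homogeneous-basis-exists} here), then diagonalize the inclusion $K\hookrightarrow F$ homogeneously. The paper outsources the diagonalization to an external reference rather than sketching the graded Smith normal form as you do, and for uniqueness of the free shifts it reads off the cleaner invariant $\gdim_\Q \mathcal{M}/a\mathcal{M} = \sum_j \alpha^{s_j}$ with $\mathcal{M}=M/M_a$ instead of your high-degree argument, but the substance is the same.
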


\begin{proof}
Let $G$ be a finite homogeneous generating set of $M$. Denote by $\pi$ the $\Q[a]$-module map $\pi:\bigoplus_{u\in G} \Q[a]\{\deg u\} \rightarrow M$ which maps the $1$ in  $\Q[a]\{\deg u\}$ to $u$. Since $\pi$ is homogeneous, $\ker \pi$ inherits the grading of $\bigoplus_{u\in G} \Q[a]\{\deg u\}$. Since $\Q[a]$ is a principal ideal domain, $\ker \pi$ is also a free $\Q[a]$-module. Note that the gradings on $\bigoplus_{u\in G} \Q[a]\{\deg u\}$ and $\ker \pi$ are both bounded below. So, by Lemma \ref{lemma-homogeneous-basis-exists}, both of these admit homogeneous bases over $\Q[a]$. From this, it is easy to verify that there is a decomposition of chain complexes of graded $\Q[a]$ modules 
\begin{eqnarray*}
&& 0 \rightarrow \underbrace{\ker \pi}_{-1} \hookrightarrow \underbrace{\bigoplus_{u\in G} \Q[a]\{\deg u\}}_{0} \rightarrow 0 \\
& \cong & (\bigoplus_{j=1}^m 0 \rightarrow \underbrace{\Q[a]\{s_j\}}_{0} \rightarrow 0) \bigoplus (\bigoplus_{k=1}^n 0 \rightarrow \underbrace{\Q[a]\{2l_k+t_k\}}_{-1} \xrightarrow{a^{l_k}} \underbrace{\Q[a]\{t_k\}}_{0} \rightarrow 0).
\end{eqnarray*}
(See, for example, \cite[Lemma 4.13]{Wu-torsion}.) This implies that
\[
M \cong(\bigoplus_{u\in G} \Q[a]\{\deg u\})/\ker \pi \cong (\bigoplus_{j=1}^m \Q[a]\{s_j\}) \bigoplus (\bigoplus_{k=1}^n \Q[a]/(a^{l_k})\{t_k\}).
\]

The uniqueness part of the lemma is a slight refinement of the usual uniqueness theorem of the standard decompositions of finitely generated modules over a principal ideal domain. Suppose that
\begin{eqnarray*}
M & \cong & (\bigoplus_{j=1}^m \Q[a]\{s_j\}) \bigoplus (\bigoplus_{k=1}^n \Q[a]/(a^{l_k})\{t_k\}) \\
& \cong & (\bigoplus_{j=1}^{m'} \Q[a]\{s_j'\}) \bigoplus (\bigoplus_{k=1}^{n'} \Q[a]/(a^{l_k'})\{t_k'\}).
\end{eqnarray*}
Define $M_a = \{ r\in M ~|~ a^k r=0 \text{ for some } k\geq 0\}$. Then $M_a$ is a submodule of $M$ and 
\[
M_a \cong \bigoplus_{k=1}^n \Q[a]/(a^{l_k})\{t_k\} \cong \bigoplus_{k=1}^{n'} \Q[a]/(a^{l_k'})\{t_k'\}.
\]
By \cite[Lemma 4.14]{Wu-torsion}, this means that the sequences $\{(l_1,t_1),\dots,(l_n,t_n)\}$ and $\{(l_1',t_1'),\dots,(l_{n'}',t_{n'}')\}$ are permutations of each other. Moreover, we have
\[
\mathcal{M} := M/M_a \cong \bigoplus_{j=1}^m \Q[a]\{s_j\} \cong \bigoplus_{j=1}^{m'} \Q[a]\{s_j'\}.
\]
Thus, as a $\zed$-graded $\Q$-space, the graded dimension of $\mathcal{M}/a\mathcal{M}$ is 
\[
\gdim_\Q \mathcal{M}/a\mathcal{M} = \sum_{j=1}^m \alpha^{s_j} = \sum_{j=1}^{m'} \alpha^{s_j'}.
\]
This implies that $\{s_1,\dots,s_m\}$ and $\{s_1',\dots,s_{m'}'\}$ are permutations of each other. 
\end{proof}

Now we are ready to prove Theorem \ref{thm-sl-N-rel}.

\begin{proof}[Proof of Theorem \ref{thm-sl-N-rel}]
We prove Part 1 first. Comparing our definition of $\fC_N(B)$ to that in \cite{KR1}, one sees that $(\fC_N(B)/(a-1)\fC(B), d_{mf}, d_\chi)$ is isomorphic to the chain complex $(C(B), d_{mf},d_\chi)$ constructed in \cite{KR1} to define the $\slmf(N)$ link homology $H_N(B)$. Moreover, this isomorphism preserves the $\zed_2$-, homological and $x$-gradings. Recall that $\fC_N(B)$ is a free $\Q[a]$-module. So we have a short exact sequence
\[
0 \rightarrow \fC_N(B) \xrightarrow{a-1} \fC_N(B) \xrightarrow{\pi_1} C(B) \rightarrow 0
\] 
preserving the $\zed_2$-, homological and $x$-gradings, where $\pi_1$ is the standard quotient map 
\[
\fC_N(B) \rightarrow \fC_N(B)/(a-1)\fC_N(B) \cong C(B).
\] 
This short exact sequence induces a long exact sequence
{\tiny
\[
\cdots \rightarrow H^{\ve,i,\star,\star}(\fC_N(B),d_{mf}) \xrightarrow{a-1} H^{\ve,i,\star,\star}(\fC_N(B),d_{mf}) \rightarrow H^{\ve,i,\star,\star}(C(B),d_{mf}) \rightarrow H^{\ve+1,i,\star,\star}(\fC_N(B),d_{mf})\{-1,-N-1\} \xrightarrow{a-1} \cdots
\]}\vspace{-1pc}

\noindent preserving the $x$-grading. By Lemma \ref{lemma-a-1-inj}, the multiplication by $a-1$ is an injective homomorphism. So this long exact sequence gives a short exact sequence
\[
0\rightarrow (H^{\ve,\star,\star,\star}(\fC_N(B),d_{mf}),d_\chi) \xrightarrow{a-1} (H^{\ve,\star,\star,\star}(\fC_N(B),d_{mf}),d_\chi) \rightarrow (H^{\ve,\star,\star,\star}(C(B),d_{mf}),d_\chi) \rightarrow 0
\]
preserving the homological and $x$-gradings, which, in turn, induces a long exact sequence
\[
\cdots \rightarrow \fH_N^{\ve,i,\star,k}(B) \xrightarrow{a-1} \fH_N^{\ve,i,\star,k}(B) \rightarrow H_N^{\ve,i,k}(B) \rightarrow \fH_N^{\ve,i+1,\star,k}(B) \xrightarrow{a-1} \cdots
\]
Again, by Lemma \ref{lemma-a-1-inj}, the multiplication by $a-1$ is an injective homomorphism. So we get a short exact sequence 
\[
0 \rightarrow \fH_N^{\ve,i,\star,k}(B) \xrightarrow{a-1} \fH_N^{\ve,i,\star,k}(B) \rightarrow H_N^{\ve,i,k}(B) \rightarrow 0,
\]
which implies that $H_N^{\ve,i,k}(B) \cong \fH_N^{\ve,i,\star,k}(B)/(a-1)\fH_N^{\ve,i,\star,k}(B)$. This proves Part 1 of the theorem.

Now we prove part 2. Recall that $\fC_N^{\ve,i,\star,k}(B)$ is finitely generated over $\Q[a]$ for each triple $(\ve,i,k) \in\zed_2\oplus \zed^{\oplus2}$. Since $\Q[a]$ is a Noetherian ring, this implies that $\fH_N^{\ve,i,\star,k}(B)$ is a finitely generated $\zed$-graded
$\Q[a]$-module. Thus, by Lemma \ref{lemma-fg-graded-module-structure}, there is a decomposition
\[
\fH_N^{\ve,i,\star,k}(B)\cong (\bigoplus_{p=1}^{m_{\ve,i,k}} \Q[a]\{s_p\}) \bigoplus (\bigoplus_{q=1}^{n_{\ve,i,k}} \Q[a]/(a^{l_q})\{t_q\}),
\]
which is unique up to permutation of direct sum components. The only thing remaining is that $m_{\ve,i,k} = \dim_\Q H_N^{\ve,i,k}(B)$, which follows from Part 1 and the simple fact that
\begin{eqnarray*}
\Q[a]/(a-1) & \cong & \Q, \\
(\Q[a]/(a^l)) / (a-1) & \cong & 0.
\end{eqnarray*}
This completes the proof.
\end{proof}

\end{document}